\newcommand{\CC}{\mathbb{C}}
\newcommand{\sS}{\mathbb{S}}
\newcommand{\tsS}{\widetilde{\sS}}
\newcommand{\NN}{\mathbb{N}}
\newcommand{\RR}{\mathbb{R}}
\newcommand{\calC}{\mathcal{C}}
\newcommand{\calF}{\mathcal{F}}
\newcommand{\calD}{\mathcal{D}}
\newcommand{\calT}{\mathcal{T}}
\newcommand{\scrD}{\mathscr{D}}
\newcommand{\Lloc}{L^1_{\mathrm{loc}}}
\newcommand{\frakp}{\mathfrak{p}}
\newcommand{\frakq}{\mathfrak{q}}
\newcommand{\frakw}{\mathfrak{w}}
\newcommand{\frakb}{\mathfrak{b}}
\newcommand{\frakT}{\mathfrak{T}}
\newcommand{\sigmaEss}{\sigma_{\mathrm{ess}}}
\newcommand{\sigmaAC}{\sigma_{\mathrm{ac}}}
\newcommand{\sigmaP}{\sigma_{\mathrm{p}}}
\newcommand{\sigmaSC}{\sigma_{\mathrm{sc}}}
\newcommand{\sigmaS}{\sigma_{\mathrm{s}}}
\newcommand{\Id}{\operatorname{Id}}
\newcommand{\sign}[1]{\operatorname{sign}({#1})}
\newcommand{\intr}{\operatorname{int}}
\newcommand{\pl}[1]{\foreignlanguage{polish}{#1}}
\newcommand{\abs}[1]{\lvert {#1} \rvert}
\newcommand{\sprod}[2]{\langle {#1}, {#2} \rangle}
\newcommand{\tr}{\operatorname{tr}}
\newcommand{\sym}{\operatorname{sym}}
\newcommand{\GL}{\operatorname{GL}}
\newcommand{\SL}{\operatorname{SL}}
\newcommand{\discr}{\operatorname{discr}}
\newcommand{\cl}{\operatorname{cl}}
\newcommand{\vphi}{\varphi}
\newcommand{\ud}{{\rm d}}
\newcommand{\ue}{\textrm{e}}
\newcommand{\supp}{\operatornamewithlimits{supp}}
\newcommand{\Dom}{\operatorname{Dom}}
\newcommand{\AC}{\operatorname{AC}}
\newcommand{\ACloc}{\AC_\mathrm{loc}}
\newcommand{\Wrk}{\operatorname{Wr}}
\newtheorem{claim}{Claim}
\newtheorem*{theorem*}{Theorem}
\newtheorem{theorem}{Theorem}[section]
\newtheorem{proposition}[theorem]{Proposition}
\newtheorem{lemma}[theorem]{Lemma}
\newtheorem{corollary}[theorem]{Corollary}
\newcounter{thm}
\newtheorem{main_theorem}[thm]{Theorem}
\numberwithin{equation}{section}
\theoremstyle{definition}
\newtheorem{example}{Example}
\newtheorem{remark}[theorem]{Remark}
\title{Sturm--Liouville operators with periodically modulated parameters. Part I: Regular case}
\author{Grzegorz Świderski}
\address{
	\pl{
		Grzegorz \'Swiderski \\
		Wydzia\l{} Matematyki,
        Politechnika Wroc\l{}awska\\
        Wyb. Wyspia\'{n}skiego 27\\
        50-370 Wroc\l{}aw\\
        Poland}
}
\email{grzegorz.swiderski@pwr.edu.pl}
\author{Bartosz Trojan}
\address{
		Bartosz Trojan\\
		Wydzia\l{} Matematyki,
        Politechnika Wroc\l{}awska\\
        Wyb. Wyspia\'{n}skiego 27\\
        50-370 Wroc\l{}aw\\
        Poland}
\email{bartosz.trojan@gmail.com}
\subjclass[2020]{Primary 34B24, 34L05; Secondary 34L20, 34C11}
\keywords{Sturm--Liouville operators; spectral analysis; density of states; Tur\'{a}n determinants; Christoffel--Darboux kernels}
\begin{document}
\selectlanguage{english}
\allowdisplaybreaks

\begin{abstract}
	We introduce a new class of Sturm--Liouville operators with periodically modulated parameters. Their spectral properties 
	depend on the monodromy matrix of the underlying periodic problem computed for the spectral parameter equal to $0$. 
	Under certain assumptions, by studying the asymptotic behavior of Christoffel functions and density of states, we prove that 
	the spectral density is a continuous positive everywhere function on the real line.
\end{abstract}

\maketitle

\section{Introduction}
A triple $(p, q, w)$ of functions on $\RR_+$ is called \emph{Sturm--Liouville parameters} if both $p$ and $w$ are positive almost 
everywhere, $q$ is real-valued, and $1/p, q, w \in \Lloc([0, \infty))$. Given Sturm--Liouville parameters one can consider 
the differential expression
\begin{equation}
    \label{eq:int:1}
    \tau u = \frac{1}{w} \big( -(pu')' + q u \big)
\end{equation}
whose domain is
\[
    \Dom(\tau) = \big\{ u \in \ACloc([0, \infty)) : pu' \in \ACloc([0,\infty)) \big\},
\]
whereas $\ACloc([0, \infty))$ denotes the space of locally absolutely continuous functions on $[0, \infty)$. Then the right-hand 
side of \eqref{eq:int:1} exists almost everywhere. Let $\sS^1$ be the unit sphere in $\RR^2$. For any $\eta \in \sS^1$ one can define the operator~$H_\eta$ by setting
\[
	H_\eta u = \tau u.
\]
Its domain is
\begin{align*}
    \Dom(H_\eta) = 
    \big\{ u \in \Dom(\tau) : 
    u, \tau u \in L^2([0, \infty),w), 
    \eta_2 u(0) - \eta_1 pu'(0) = 0 
    \big\}.
\end{align*}
The operator $H_\eta$ is called \emph{Sturm--Liouville operator}. The aim of this paper is to study spectral properties of 
Sturm--Liouville operators under certain hypotheses on their parameters.

The topic of Sturm--Liouville operators is a classical one (see, e.g. \cite{EverittHistory2005} for historical development in the early
20th century). It has been thoroughly studied in numerous textbooks (see, e.g. \cite{Marchenko2011, Zettl2005, Bennewitz2020, 
GesztesyBook2024, Weidmann1987}), which contain an extensive number of further references. Let us recall that taking 
$p \equiv 1$ and $w \equiv 1$, the operator $H_\eta$ is \emph{1-dimensional Schr\"{o}dinger operator} (on the half-line), which spectral
properties are important in quantum mechanics (see, e.g. \cite{Flugge1999, Teschl2014}). We refer to \cite{Everitt2005} for a list of 
important Sturm--Liouville parameters and some properties of the corresponding operators.

Spectral properties of $H_\eta$ are intimately related to analysis of solutions of eigenvalue equation $\tau u = z u$ for 
$z \in \CC$. In fact, in view of \emph{Weyl's alternative}, exactly one of the cases holds: 
\begin{itemize}
    \item For every $z \in \CC$ each solution to $\tau u = z u$ belongs to $L^2([0,\infty),w)$ (\emph{limit circle case}).
	\item There are $z \in \CC$ and $u \notin L^2([0,\infty),w)$ such that $\tau u = z u$ 
	(\emph{limit point case}).
\end{itemize}
Let us recall that the limit point case implies that, actually, for each $z \in \CC \setminus \RR$ the subset of 
$L^2([0,\infty),w)$ consisting of solutions to $\tau u = z u$ is one-dimensional. Therefore, the Weyl's theorem asserts that 
$H_\eta$ is self-adjoint if and only if $\tau$ is in the limit point case. 
If this is the case, then there exists a positive Borel measure on $\RR$, $\mu_\eta$, satisfying $\int_\RR \frac{1}{1+\lambda^2} \ud \mu_\eta(\lambda) < \infty$ and a unitary operator $\calF_\eta : L^2([0,\infty),w) \to L^2(\mu_\eta)$ such that $\calF_\eta H_\eta \calF_\eta^{-1} = M$ where
\[
    (Mf)(\lambda) = \lambda f(\lambda), \quad f \in \Dom(M)
\]
whereas $\Dom(M) = \{ f \in L^2(\mu_\eta) : \lambda \mapsto \lambda f(\lambda) \in L^2(\mu_\eta) \}$ (see, e.g. \cite[Theorem 4.3.7]{Bennewitz2020}). Therefore, all 
spectral information on $H_\eta$ is carried by the measure $\mu_\eta$. Recall that the measure $\mu_\eta$ comes from the Herglotz representation of Weyl--Titchmarsh~$m$ function, which encodes the initial conditions of $L^2([0,\infty),w)$ solutions of equation $\tau u = z u$ for $\Im z > 0$. Since the measure $\mu_\eta$ is canonical, it is of interest to study its properties in detail.

A well-studied example of Sturm–Liouville operators is the class with periodic coefficients (see, e.g. \cite{Brown2013}). 
For definiteness, let $(\frakp,\frakq,\frakw)$ be some Sturm--Liouville parameters which are $\omega$-periodic with some period~$\omega>0$. 
In particular, $\tau$ is in the limit point case. Then the so-called \emph{monodromy matrix}, $\frakT(\omega;z)$, (see Appendix~\ref{sec:11} for the precise definition) allows one to describe spectral properties of $H_\eta$. More precisely, let\footnote{For any $2 \times 2$ matrix $X$ we define its \emph{discriminant} by $\discr X = (\tr X)^2 - 4 \det X$.}
\[
    \Lambda_- = \{ z \in \RR : \discr \frakT(\omega;z) < 0 \}.
\]
It turns out that
\[
    \sigmaAC(H_\eta) = \sigmaEss(H_\eta) = \cl(\Lambda_-),
    \quad \text{and} \quad
    \sigmaSC(H_\eta) = \emptyset,
    \quad \text{and} \quad
    \sigmaP(H_\eta) \cap \Lambda_- = \emptyset.
\]
Moreover,
\begin{equation}
    \label{eq:int:2}
    \Lambda_- = \bigcup_{k=1}^\infty I_k
\end{equation}
where $I_k$ are pairwise disjoint non-empty open bounded intervals whose closures might touch one another.

In this article, we are particularly interested in a seemingly new class of Sturm--Liouville parameters. To be more precise, 
we say that $(p, q, w)$ are \emph{$\omega$-periodically modulated Sturm--Liouville parameters} if $(p, q, w)$ are
Sturm--Liouville parameters satisfying: There are $\omega$-periodic Sturm--Liouville parameters $(\frakp, \frakq, \frakw)$,
such that $p,\frakp \in \ACloc([0,\infty))$ are positive everywhere and 
\begin{equation}
    \label{eq:int:16}
    \lim_{n \to \infty} 
    \int_{0}^{\omega} \frac{w_n(t)}{p_n(t)} \ud t = 0, \quad 
    \lim_{n \to \infty} 
    \int_{0}^{\omega} \bigg|\frac{q_n(t)}{p_n(t)} - \frac{\frakq(t)}{\frakp(t)} \bigg| \ud t = 0,  \quad
    \lim_{n \to \infty} 
    \int_{0}^{\omega} \bigg|\frac{p_n'(t)}{p_n(t)} - \frac{\frakp'(t)}{\frakp(t)} \bigg| \ud t = 0,
\end{equation}
where
\[
    p_n(t) = p(t+n\omega), \quad
    q_n(t) = q(t+n\omega), \quad
    w_n(t) = w(t+n\omega), \quad n \geq 0, t \in \RR_+.
\]
We additionally assume that
\begin{equation}
    \lim_{x \to \infty} p(x) = +\infty.
\end{equation}
This class is inspired by a class of unbounded Jacobi matrices introduced in \cite{JanasNaboko2002}.

Let us recall that the Liouville transformation (see, e.g. \cite[Section 7]{Everitt2005}) applied to Sturm--Liouville parameters 
$(p,q,w)$, under certain regularity conditions, allows one to construct a unitary equivalent Schr\"{o}dinger operator $H$
on $[0,\infty)$ with a real-valued potential $V$. Our interest in $\omega$-periodically modulated Sturm--Liouville parameters
is partially motivated by the fact that these parameters sometimes lead to potentials being rapidly oscillating with unbounded 
amplitudes. In particular, Example~\ref{ex:2} describes potentials of a form
\begin{equation}
    \label{eq:124}
    V(x) = (1+x)^a \frakq \big( (1+x)^{(2+a)/2} -1 \big), \quad x \geq 0,
\end{equation}
for any $a>0$ and
\begin{equation}
    \label{eq:125}
    V(x) = \ue^{2x} \frakq(\ue^{x} - 1), \quad x \geq 0.
\end{equation}
The function $\frakq \in \Lloc([0,\infty))$ which appears in \eqref{eq:124} and \eqref{eq:125} is assumed to be real-valued 
and $\omega$-periodic. Recently in \cite[Example 1.5]{Lukic2025}, the authors studied potentials of the form
\begin{equation}
    \label{eq:126}
    V(x) = x^a \sin(x^b), \quad x \geq 0.
\end{equation}
They proved that if $c := b-a-1$ is positive, then $\sigmaEss(H) = [0,\infty)$. Observe that for $\frakq(t) = \sin(t)$
the potential \eqref{eq:124}, corresponds to $c = -a/2 < 0$. In view of Example \ref{ex:4}, there is 
$a_{\mathrm{crit}} \approx 0.968$ such that for all $a \in (a_\mathrm{crit},2]$ we have $\sigmaAC(H) = \RR$. Moreover,
if $a \in (0,\infty) \setminus [a_\mathrm{crit}, 2]$, then all self-adjoint extensions of $H$ have empty essential spectrum.
In other words, we observe spectral phase transition (cf. \cite{JanasNaboko2002} for a similar phenomenon in the discrete setting). 
Consequently, the spectral behavior of $H$ for $a_{\mathrm{crit}}$ may be of particular interest. We note that potentials of the form 
\eqref{eq:126} were also investigated in \cite{White1983}, although the conditions considered there guarantee that $H$ is semi-bounded.

In this paper, we show that spectral properties of $H_\eta$ for $\omega$-periodically modulated Sturm--Liouville parameters 
depend on the value of the monodromy matrix $\frakT(\omega;z)$ for $z=0$. In fact we have the following cases:
\begin{enumerate}[label=\rm (\Roman*), start=1, ref=\Roman*]
	\item \label{eq:PI} $|\tr \frakT(\omega;0)| < 2$;
	\item If $|\tr \frakT(\omega;0)| = 2$, then we have two subcases:
	\begin{enumerate}[label=\rm (\alph*), start=1, ref=II(\alph*)]
		\item 
		\label{eq:PIIa} $\frakT(\omega; 0)$ is diagonalizable;
		\item
		\label{eq:PIIb} $\frakT(\omega; 0)$ is \emph{not} diagonalizable;
	\end{enumerate}
	\item \label{eq:PIII} $|\frakT(\omega;0)|>2$.
\end{enumerate}
It turns out that we can describe them in terms of the intervals $I_k$ (cf. \eqref{eq:int:2}). More precisely, we are in 
Case~\ref{eq:PI} if $0 \in \Lambda_-$; in Case~\ref{eq:PIIa} if $0$ lies at the boundary of exactly two intervals; 
in Case~\ref{eq:PIIb} if $0$ lies at the boundary of exactly one interval, and in Case~\ref{eq:PIII} if 
$0 \notin \cl(\Lambda_-)$. In this article we shall consider regular cases: Case \ref{eq:PI} and Case \ref{eq:PIII}. The critical 
cases: Case \ref{eq:PIIa} and Case \ref{eq:PIIb}, are more involved and will be studied in the sequel.

A standard way of spectral analysis of Sturm--Liouville operators is the \emph{method of subordinacy} initially introduced 
by Gilbert and Pearson in \cite{Gilbert1987} for one dimensional Schr\"{o}dinger operators. Later, this method has been extended to the
Sturm--Liouville setting by Clark and Hinton in \cite{Clark1993}. Let us summarize the method of subordinacy: Suppose that $\tau$ 
is in the limit-point case. Let us denote by $K_L$ the \emph{Christoffel--Darboux kernels} that is (see, e.g. \cite{Maltsev2010})
\[
    K_L(z_1,z_2;\eta) = 
    \int_0^L 
    \mathsf{s}_\eta(t;z_1) \overline{\mathsf{s}_\eta(t;z_2)} w(t) {\: \rm d} t, \quad L > 0, z_1,z_2 \in \CC,
\]
where $\mathsf{s}_{\eta}(\cdot;z)$ is the solution of the equation $\tau u = z u$ satisfying $(u(0),pu'(0))^t = \eta$.
By \cite[Theorem 3.1]{Clark1993}, if for some compact interval with non-empty interior $K \subset \RR$, 
\begin{equation}
    \label{eq:int:7}
    \limsup_{L \to \infty} \sup_{\lambda \in K} \sup_{\eta,\eta' \in \sS^1} 
	\frac{K_L(\lambda,\lambda;\eta)}{K_L(\lambda,\lambda;\eta')}< \infty,
\end{equation}
then $\mu_\eta$ is absolutely continuous on $\intr(K)$, and there are two positive constants $c_1$ and $c_2$, such that the 
density of $\mu_\eta$ satisfies
\begin{equation}
    \label{eq:int:8}
    c_1 < \mu_\eta'(\lambda) < c_2
\end{equation}
for almost all $\lambda \in K$. In particular, for any $\eta \in \sS^1$, the operator $H_\eta$ is absolutely continuous 
on $\intr(K)$ and $K \subset \sigmaAC(H_\eta)$. 

In our study we impose on Sturm--Liouville parameters the following regularity conditions: We say that a function $f :[0, \infty) \to \RR$ 
belongs to $\calD_1^\omega(L^1;\RR)$, if
\[
    \sup_{n \geq 0} \int_{0}^\omega |f(n\omega+s)| \ud s < \infty,
\]
and
\[
    \sum_{n=0}^\infty \int_{0}^\omega \big| f \big( (n+1)\omega + s \big) - f(n\omega + s) \big| \ud s < \infty.
\]
These class has been introduced by Stolz in \cite{Stolz1991a}. We assume that
\begin{equation}
    \label{eq:D1-reg}
    \frac{q}{p}, \frac{w}{p}, \frac{p'}{p} \in \calD_1^\omega(L^1;\RR).
\end{equation}
Additionally, by analogy with Jacobi matrices, the \emph{Carleman condition} is defined as
\begin{equation}
    \label{eq:26}
    \int_0^\infty \frac{w(t)}{p(t)} \ud t = \infty.
\end{equation}
We set
\begin{equation}
    \label{eq:int:17}
    \varrho_n(t) = \sum_{j=0}^n \frac{w_j(t)}{p_j(t)}, \quad\text{and}\quad    
    \rho_L = \int_0^L \frac{w(t')}{p(t')} \ud t'.
\end{equation}
We are now in a position to state our main result, which addresses Case~\ref{eq:PI}.
\begin{main_theorem}
    \label{thm:A}
    Let $\omega > 0$. Suppose that $(p, q, w)$ are $\omega$-periodically modulated Sturm--Liouville parameters, such that 
	the monodromy matrix $\frakT$ corresponding to $(\frakp, \frakq, \frakw)$ satisfies $\abs{\tr \frakT(\omega; 0)} < 2$. 
    Assume that \eqref{eq:D1-reg} holds true. Then $\tau$ is in the limit circle case if \eqref{eq:26} is violated. 
    Suppose that
    \[
        \lim_{n \to \infty} \varrho_n(t) = \infty 
        \quad \text{and} \quad
        \lim_{n \to \infty} \frac{w_{n+1}(t)}{w_n(t)} = 1 \quad \text{for almost all } t \in [0,\omega].
    \]
	If there is $s \in [0,\omega]$ such that
    \begin{equation}
        \label{eq:int:6}
		\lim_{n \to \infty} 
        \int_0^\omega 
        \bigg| 
        \frac{1}{\gamma_n(s)} \frac{w_n(s+t)}{p_n(s+t)} 
	   - \frac{1}{\gamma} \frac{\frakw(s+t)}{\frakp(s+t)} 
       \bigg| \ud t
       =
       0,
    \end{equation}
    where
    \[
        \gamma_n(t) = \int_0^\omega \frac{w_n(t+t')}{p_n(t+t')} \ud t', \quad\text{and}\quad
        \gamma = \int_0^\omega \frac{\frakw(t')}{\frakp(t')} \ud t',
    \]
    and \eqref{eq:26} is satisfied, then there is a positive continuous function $g$ such that
    \begin{equation} 
        \label{eq:int:4}
        \lim_{L \to \infty} 
        \frac{1}{\rho_L} K_{L}(\lambda,\lambda;\eta) = g(\lambda;\eta)
    \end{equation}
	locally uniformly with respect to $(\lambda,\eta) \in \RR \times \sS^1$. Moreover, the measure $\mu_\eta$ is 
	absolutely continuous on $\RR$, and its density $\mu_\eta'$ can be expressed as
    \begin{equation}
        \label{eq:int:5}
        \mu_\eta'(\lambda) = 
        \frac{1}{\pi} 
        \frac{|\partial_z \tr \frakT(\omega;0)|}{\gamma \sqrt{-\discr \frakT(\omega;0)}} 
        \frac{1}{g(\lambda;\eta)}, \quad \lambda \in \RR.
    \end{equation}
\end{main_theorem}
Let us observe that the formula~\eqref{eq:int:5} implies that the density $\mu'_\eta$ is a continuous and positive function on 
the real line. To our knowledge, this conclusion has been established only in a few instances (cf. \cite{Hinton1986, Behncke1991, Behncke1991a} and 
\cite[Theorem~1.10]{Lukic2024}). We next notice that \eqref{eq:int:4} implies \eqref{eq:int:7}, however the statement \eqref{eq:int:5} 
is stronger than \eqref{eq:int:8}. Let us recall that continuity and positivity of $\mu'_\eta$ together 
with \eqref{eq:int:4} yields important information on the sine-kernel universality of the scaling limits of Christoffel--Darboux kernels 
(see \cite{Eichinger2021,Eichinger2024}). 

The proof of Theorem~\ref{thm:A} consists of several steps. The main one is to determine the asymptotic behavior of
$\mathsf{s}_\eta(\cdot;z)$ for $z \in \RR$. To do so, we let $u(\cdot;z)$ to be any solution of $\tau u = z u$. We write
\begin{equation}
    \label{eq:int:11}
    \mathbf{u}(t;z) = T(t;z) \mathbf{u}(0;z) 
    \quad \text{where} \quad
    \mathbf{u}(t;z) =
    \begin{pmatrix}
        u(t;z) \\
        \partial_t u(t;z)
    \end{pmatrix}, \quad t \in [0,\infty), z \in \CC,
\end{equation}
and whereas the function $T: [0,\infty) \times \CC \to \GL(2, \CC)$, called \emph{transfer matrix},
is the unique solution to 
\[
    T(t;z) = \Id + \int_0^t b(t';z) T(t';z) \ud t',
    \quad \text{where} \quad
    b(t;z) = 
    \begin{pmatrix}
        0 & 1 \\
        \frac{q(t)-zw(t)}{p(t)} & -\frac{p'(t)}{p(t)}
    \end{pmatrix}.
\]
Let us define
\begin{equation}
    \label{eq:178}
    \tsS^1 = 
    \big\{ \eta \in \RR^2 : |\eta_1|^2 + |p(0) \eta_2|^2 = 1 \big\}.
\end{equation}
For $\eta \in \tsS$, we let $\mathbf{u}$ be the solution to \eqref{eq:int:11} satisfying $\mathbf{u}(0;z) = \eta$. Then we set
\begin{equation}
    \label{eq:int:10}
     \mathbf{u}_n(t,\eta;z) = \mathbf{u}(t+n\omega;z), \quad t \in [0,\omega], n \in \NN_0.
\end{equation}
Setting 
\[
    X_n(t;z) = T(t+(n+1)\omega;z) T(t+n\omega;z)^{-1}, \quad n \geq 0,
\]
we easily see that
\begin{equation}
    \label{eq:int:9}
    \mathbf{u}_{n+1}(t,\eta;z) = X_{n}(t;z) \mathbf{u}_n(t,\eta;z), \quad n \geq 0.
\end{equation}
In the following theorem we compute the asymptotic behavior of $u_n(t,\eta;z)$ which is the first 
component of $\mathbf{u}_n(t,\eta;z)$. For its proof see Theorem \ref{thm:2}.
\begin{main_theorem}
    \label{thm:B}
    Let $\omega > 0$. Suppose that $(p, q, w)$ are $\omega$-periodically modulated Sturm--Liouville parameters such that
    the monodromy matrix $\frakT$ corresponding to $(\frakp, \frakq, \frakw)$ satisfies $\abs{\tr \frakT(\omega; 0)} < 2$. 
    Assume that \eqref{eq:D1-reg} holds true. Then for each compact set $K \subset \RR$, there are $M \geq 1$ and a non-vanishing
	continuous function $\varphi : [0,\omega] \times \tsS^1 \times K \to \CC$ such that
    \begin{equation}
		\label{eq:int:18}
		\frac{u_n(t,\eta;z)}{\prod_{k=M}^{n-1} \lambda_k^+(t;z)}
		=
		\frac{|\varphi(t,\eta;z)|}{\sqrt{4-|\tr \frakT(\omega;0)|^2}}
		\sin \bigg( \sum_{k=M}^{n-1} \theta_k(t;z) + \arg \varphi(t, \eta; z) \bigg) + E_n(t, \eta; z)
	\end{equation}
    where $\lambda^+_k(t;z)$ is the eigenvalue of $X_k(t;z)$ with the positive imaginary part,
    \[
		\theta_k(t;z) = \arccos \bigg( \frac{\tr X_k(t;z)}{2 \sqrt{\det X_k(t;z)}} \bigg),
    \]
    and
    \[
        \lim_{n \to \infty} \sup_{(t, \eta, z) \in [0,\omega] \times \tsS^1 \times K} |E_n(t, \eta; z)| = 0.
    \]
\end{main_theorem}
Now, to get the asymptotic behavior of $\mathsf{s}_\eta$ it is enough to use Theorem~\ref{thm:B}.
Moreover, by applying an averaging procedure, we obtained a discrete variant of \eqref{eq:int:4}, see Theorem~\ref{thm:3} 
for details. Next, thanks to the uniformity of the asymptotics with respect to $t$ and $\eta$, under the condition \eqref{eq:int:6} we
to get \eqref{eq:int:4}, see Corollary~\ref{cor:4} for details. However, to obtain a genuine asymptotic behavior,  one needs 
to show that $\varphi$ is non-vanishing. To do so we introduced a new object which, by analogy with Jacobi matrices, will be called 
\emph{Tur\'{a}n determinant}. It is defined by the formula
\[
    \scrD_n(t,\eta;z) =
    \det
    \begin{pmatrix}
        u_{n+1}(t, \eta; z) & u_n(t, \eta; z) \\
        \partial_t u_{n+1}(t, \eta; z) & \partial_t u_n(t, \eta; z)
    \end{pmatrix}, \quad n \geq 0, t \in [0,\omega],z \in \CC.
\]
The classical Tur\'{a}n determinants play a key r\^ole in approximating the density of the scalar spectral measure for various 
classes of Jacobi matrices (see, e.g., \cite{Nevai1983, GeronimoVanAssche1991, Nevai1992, PeriodicII, PeriodicIII, 
SwiderskiTrojan2019, jordan}). 
\begin{example} 
	\label{ex:1}
	Let $p(t) \equiv 1, q(t) \equiv 0, w(t) \equiv 1$. By direct computations, it can be shown that for 
	$\eta \in \{(1,0)^t, (0,1)^t \}$ and any $\omega > 0$,
	\begin{equation}
		\label{eq:int:12}
		\mathds{1}_{(0,\infty)}(\lambda)
		\det
		\begin{pmatrix}
		\mathsf{s}_\eta(t+\omega;\lambda) & \mathsf{s}_\eta(t;\lambda) \\
		\partial_t \mathsf{s}_\eta(t+\omega;\lambda) & \partial_t \mathsf{s}_\eta(t;\lambda)
		\end{pmatrix}
		\ud \mu_\eta(\lambda) 
		= 
		\mathds{1}_{(0,\infty)}(\lambda) \frac{1}{\pi} \sin(\sqrt{\lambda} \omega) \ud \lambda, \quad t > 0.
	\end{equation}
	Notice that the right-hand side of \eqref{eq:int:12} is independent of both $t$ and $\eta$.
\end{example}
Example~\ref{ex:1} suggests a conjecture analogous to the case of Jacobi matrices, that the function $h$ in 
\eqref{eq:int:13} is related to the measure $\mu_\eta$. As the proof of the following theorem is less involved than the proof
of Theorem~\ref{thm:A}, this conjecture would show the continuity and positivity of the measure $\mu_\eta$ in a more direct way.
\begin{main_theorem}
    \label{thm:C}
	Suppose that all hypothesis of Theorem~\ref{thm:B} are satisfied. Then there is a positive 
	continuous function $h$ such that
    \begin{equation}
        \label{eq:int:13}
        \lim_{n \to \infty} p_n(t) |\scrD_n(t,\eta;z)| = h(t,\eta;z)
    \end{equation}
    locally uniformly with respect to $(t,\eta,z) \in [0,\omega] \times \tsS^1 \times \RR$.
\end{main_theorem}
For the proof of Theorem~\ref{thm:C} see Theorem~\ref{thm:1}. An important consequence of Theorem~\ref{thm:C} is 
Corollary~\ref{cor:1}, which is instrumental in showing that the function $\varphi$ is non-vanishing. It also implies, 
see Corollary~\ref{cor:7}, that $\tau$ is in the limit circle case if the Carleman condition~\eqref{eq:26} is violated.

Now, having established the asymptotic behavior of $\mathsf{s}_\eta$, we can show \eqref{eq:int:4}. In order to express the value of $g$ 
in terms of \eqref{eq:int:5} we study a well-known object called \emph{density of states}. More precisely, for any $L>0$ one can restrict 
the operator $H_\eta$ to $L^2([0,L], w)$ by imposing the Dirichlet boundary condition at $L$, see \eqref{eq:129} for details. This restriction, 
$H_\eta^L$, is a self-adjoint operator with the pure point spectrum. We set
\[
    \nu_\eta^L = \sum_{\lambda \in \sigma(H_\eta^L)} \delta_\lambda
\]
where $\delta_\lambda$ denotes the Dirac measure at $\lambda$. Then $\nu_\eta^L$ is an infinite Radon measure. We show the following theorem.
\begin{main_theorem}
    \label{thm:D}
    Suppose that all hypothesis of Theorem~\ref{thm:B} are satisfied. Assume that \eqref{eq:26} and \eqref{eq:int:6} hold true. If 
    $f \in \calC_b(\RR)$ is such that
    \footnote{By $\calC_b(\RR)$ (resp. $\calC_c(\RR)$) we denote the Banach space of bounded (resp. compactly supported) continuous functions 
    on the real line with the supremum norm.}
    \begin{equation}
        \label{eq:int:14}
        \sup_{\lambda \in \RR} (1+\lambda^2) |f(\lambda)| < \infty,
    \end{equation}
    then for each $t \in [0,\omega]$,
    \begin{equation}
        \label{eq:int:15}
        \lim_{n \to \infty} \frac{1}{\rho_{t+n\omega}} \int_\RR f(\lambda) \nu_\eta^{t+n\omega}(\ud \lambda) =
        \int_\RR f(\lambda) \nu_\infty( \ud \lambda)
    \end{equation}
    where the measure $\nu_\infty$ is absolutely continuous with the density
    \[
        \frac{\ud \nu_\infty}{\ud \lambda} \equiv
        \frac{1}{\pi}
        \frac{|\partial_z \tr \frakT(\omega;0)|}{\gamma \sqrt{-\discr \frakT(\omega;0)}}.
    \]
\end{main_theorem}
The convergence of the form \eqref{eq:int:15} is usually established for $\calC_c(\RR)$, 
and $\rho_L = L$ (see, e.g. \cite[Section 2.4]{Brown2013}, \cite[Chapter 2.8.3]{Berezin1991}, \cite{Eichinger2025}). However, in view of 
\eqref{eq:int:17} and \eqref{eq:int:16} we have $\lim_{L \to \infty} \rho_L/L = 0$. Next, let us recall that Cauchy transform of a positive 
measure $\nu$ on the real line is defined by
\[
    \calC[\nu](z) = \int_\RR \frac{1}{\lambda-z} \nu(\ud \lambda), \quad z \in \CC \setminus \RR
\]
provided the integral exists. It can be shown that $\calC[\nu_\eta^{L}]$ exists and is equal to $-\partial_z \log \mathsf{s}_\eta(L;z)$, 
see \eqref{eq:74}. In view of our recent result \cite[Lemma 4.1]{SwiderskiTrojan2023}, to prove \eqref{eq:int:15} it suffices to understand 
the asymptotic behavior of $\mathsf{s}_\eta(t + n\omega; z)$ for any $z \in \mathbb{C} \setminus \mathbb{R}$ as $n \to \infty$. To this end, 
we derive in Theorem~\ref{thm:4} a basis of solutions to \eqref{eq:int:11} with prescribed asymptotic behavior. Then, formula~\eqref{eq:int:5} 
follows easily from \eqref{eq:int:4} and \eqref{eq:int:15}; see the proof of Corollary~\ref{cor:6} for details. This completes the proof of 
Theorem \ref{thm:A}.

Let us now turn to Case \ref{eq:PIII}.
\begin{main_theorem}
    \label{thm:E}
    Let $\omega > 0$. Suppose that $(p, q, w)$ are $\omega$-periodically modulated Sturm--Liouville parameters, such that the monodromy
    matrix $\frakT$ corresponding to $(\frakp, \frakq, \frakw)$ satisfies $\abs{\tr \frakT(\omega; 0)} > 2$. Assume that \eqref{eq:D1-reg}
    holds true. If $\tau$ is in the limit-point case, then for any $\eta \in \sS^1$ the operator satisfies 
    $\sigmaEss(H_\eta) = \emptyset$.
\end{main_theorem}
The proof of Theorem~\ref{thm:E} is based on a general consequence of subordinacy method, see Theorem~\ref{thm:11}. Namely, in the proof 
of Theorem~\ref{thm:10} for any compact $K \subset \RR$ we construct a family $\{ u(\cdot,\lambda) : \lambda \in K \}$ of non-zero solutions 
of $\tau u (\cdot;\lambda) = \lambda u(\cdot;\lambda)$ which decay exponentially fast and which are continuous in the $L^2_\mathrm{loc}([0,\infty),w)$ sense.

The main concern of this paper is the investigation of periodically modulated Sturm--Liouville parameters. However, in view of recent interest of asymptotically periodic Sturm--Liouville parameters, see \cite{Behrndt2023}, we show in Appendix~\ref{sec:11} how our techniques can be adapted to this case. The resulting proofs are sometimes easier in this setup, and at the same time, we obtained stronger and more general results than \cite[Theorem 1.1]{Behrndt2023}, see, e.g. Corollary~\ref{cor:9} and Theorem~\ref{thm:13}.

The paper is organized as follows: In Section \ref{sec:7} we introduce Sturm--Liouville, describe the class of $\omega$-periodically modulated
parameters and prove basic properties of the corresponding solution. The Stolz class is defined in Section \ref{sec:2}. In the following section
we describe how to diagonalize $X_n$. In Section \ref{sec:4} we study the asymptotic behavior of Tur\'an determinants, which in 
Section \ref{sec:5} is used to derive the asymptotic behavior of generalized eigenvectors. The asymptotic behavior of Christoffel--Darboux
kernels is studied in Section \ref{sec:6}. The density of states is studied in Section \ref{sec:1}. In Section \ref{sec:9} we consider
Case \ref{eq:PIII} showing that it leads to the empty essential spectrum. Section \ref{sec:10} provides a list of examples. Finally, in Appendix
\ref{sec:11} we describe how to apply our techniques to study asymptotically periodic Sturm--Liouville parameters.

\subsection*{Acknowledgments}
The first author thanks Mateusz Piorkowski for some useful comments.
The authors would like to thank Mateusz Kwaśnicki for sharing his knowledge about regulary varing functions, from which Example~\ref{ex:5} originated.

\section{Sturm--Liouville equations}
\label{sec:7}
In order to understand spectral properties of $H_\eta$ we need to study the eigenvalue problem
\begin{equation}
	\label{eq:2}
	\tau u = z u
\end{equation}
where $\tau$ is defined in \eqref{eq:int:1} and the equality is almost everywhere. In this paper we assume that Sturm–Liouville 
parameters $(p, q, w)$ are such that $p \in \ACloc([0,\infty))$ is positive everywhere.
In view of \cite[Theorem D.2]{Bennewitz2020}, the solution to \eqref{eq:2} has absolutely continuous derivative. Therefore, setting
\[
	\mathbf{u}(t) = 
	\begin{pmatrix}
		u(t) \\
		\partial_t u(t)
	\end{pmatrix}
\]
for given $z \in \CC$ and $t_0 \in \RR_+$, the equation \eqref{eq:2} can be written in the following form
\begin{equation}
	\label{eq:10}
	 \mathbf{u}(t) = \mathbf{u}(t_0) + \int_{t_0}^t b(t'; z) \mathbf{u}(t') {\: \rm d} t', \qquad t \in \RR_+,
\end{equation}
where
\begin{equation}
	\label{eq:5}
	b(t; z) = 
	\begin{pmatrix}
		0 & 1 \\
		\frac{q(t) - z w(t)}{p(t)} & -\frac{p'(t)}{p(t)}
	\end{pmatrix}, \quad t \in \RR_+, z \in \CC.
\end{equation}
By \cite[Theorem D.2]{Bennewitz2020}, there is the unique solution to \eqref{eq:10}, and 
$\mathbf{u}(\cdot; z) \in \ACloc\big(\RR_+; \CC^2\big)$. Moreover, by \cite[Theorem D.5]{Bennewitz2020},
the mapping $\CC \ni z \mapsto \mathbf{u}(\cdot; z)$ is entire with values in $\ACloc\big(\RR_+; \CC^2\big)$.
Let us observe that $u(t; z) = \sprod{\mathbf{u}(t; z)}{e_1}$ satisfies \eqref{eq:2}. Here $\{e_1, e_2\}$ denotes 
the standard base in $\RR^2$, that is
\[
    e_1 = 
    \begin{pmatrix}
        1 \\
        0
    \end{pmatrix}, \quad
    e_2 = 
    \begin{pmatrix}
        0 \\
        1
    \end{pmatrix}.
\]
For any solutions $\mathbf{u},\mathbf{v}$ of \eqref{eq:10} we define their \emph{Wronskian} by the formula
\begin{equation}
	\label{eq:23a}
	\Wrk(\mathbf{u}, \mathbf{v})(t; z) = 
	p(t) \det \big( \mathbf{u}(t; z), \mathbf{v}(t; z) \big), \quad t \in \RR_+, z \in \CC.
\end{equation}
By \eqref{eq:10}, Liouville's formula (see e.g. \cite[formula (1.2.15)]{Zettl2005}) and \eqref{eq:5} we have
\begin{align}
	\label{eq:23}
	\det \big( \mathbf{u}(t; z), \mathbf{v}(t; z) \big) 
	&=
	\det \big( \mathbf{u}(t_0; z), \mathbf{v}(t_0; z) \big) \cdot 
	\exp \bigg( \int_{t_0}^t \tr b(s; z) \ud s \bigg) \\
	\nonumber
	&=
	\det \big( \mathbf{u}(t_0; z), \mathbf{v}(t_0; z) \big) \cdot 
	\exp \bigg( -\int_{t_0}^t \frac{p'(s)}{p(s)} \ud s \bigg) \\
	\nonumber
	&=
	\det \big( \mathbf{u}(t_0; z), \mathbf{v}(t_0; z) \big) \cdot 
	\frac{p(t_0)}{p(t)}.
\end{align}
Thus,
\begin{equation}
	\label{eq:23b}
	\Wrk(\mathbf{u}, \mathbf{v})(t; z) =
	p(t_0) \det \big( \mathbf{u}(t_0; z), \mathbf{v}(t_0; z) \big).
\end{equation}
Taking $t_0 = 0$, for $z \in \CC$, by $\mathscr{S}(\cdot; z)$ and $\mathscr{C}(\cdot; z)$ we denote two solutions of 
\eqref{eq:2}, with boundary conditions
\begin{equation}
	\label{eq:69}
	\begin{cases}
		\mathscr{C}(0; z) = 1, \\
		\partial_x \mathscr{C}(0; z) = 0,
	\end{cases}
	\qquad
	\begin{cases}
		\mathscr{S}(0; z) = 0, \\
		\partial_x \mathscr{S}(0; z) = 1.
	\end{cases}
\end{equation}
Then $\mathscr{C}$ and $\mathscr{S}$ are jointly continuous on $[0, \infty) \times \CC$ and entire with respect to $z$. 
We define the \emph{transfer matrix} as
\begin{equation}
	\label{eq:12}
	T(t; z) = 
	\begin{pmatrix}
		\mathscr{C}(t; z) & \mathscr{S}(t; z) \\
		\partial_x \mathscr{C}(t; z) & \partial_x \mathscr{S}(t; z)
	\end{pmatrix}.
\end{equation}
Then it satisfies the matrix equation
\begin{equation}
	\label{eq:47}
	T(t; z) = \Id + \int_0^t b(t'; z) T(t'; z) \ud t', \qquad t \in \RR_+.
\end{equation}
If $\mathbf{u}(\cdot; z)$ satisfies \eqref{eq:10} then for each $t, s \in \RR_+$,
\begin{equation}
	\label{eq:6}
	\mathbf{u}(t; z) = T(t; z) T(s; z)^{-1} \mathbf{u}(s; z).
\end{equation}
By \eqref{eq:23}, we get
\begin{equation}
	\label{eq:16}
	\frac{ \det T(t; z) } {\det T(s; z)} = \frac{p(s)}{p(t)}, \qquad \text{for all } s, t \in \RR_+.
\end{equation}
A list of explicit examples of Sturm--Liouville equations can be found in \cite[Chapter 14]{Zettl2005} and \cite{Everitt2005}.

\subsection{Classes of Sturm--Liouville parameters}
Since the periodic case is well-understand, our aim is to study a class which is its certain perturbation. For $\omega > 0$, we 
say that $(\frakp, \frakq, \frakw)$ are \emph{$\omega$-periodic Sturm--Liouville parameters} if $(\frakp, \frakq, \frakw)$ 
are Sturm--Liouville parameters satisfying
\[
	\frakp(t+\omega) = \frakp(t), \quad
	\frakq(t+\omega) = \frakq(t), \quad
	\frakw(t+\omega) = \frakw(t), \quad\text{ for all } t \in \RR_+.
\]
We assume that $\frakp \in \ACloc([0,\infty))$ is positive everywhere.
Let
\begin{equation}
	\label{eq:15}
	\mathfrak{b}(t; z) = 
	\begin{pmatrix}
		0 & 1 \\
		\frac{\frakq(t) - z \frakw(t)}{\frakp(t)} & -\frac{\frakp'(t)}{\frakp(t)}
	\end{pmatrix},
	\quad t \in \RR_+, z \in \CC.
\end{equation}
By $\frakT(\cdot; z)$ we denote the corresponding transfer matrix, that is
\begin{equation}
	\label{eq:50}
	\frakT(t; z) = \Id + \int_0^t \mathfrak{b}(t'; z) \frakT(t'; z) \ud t', \qquad t \in \RR_+.
\end{equation}

\begin{lemma}
	\label{lem:8}
	Let $z \in \CC$.
	If $\tr \frakT(\omega; z) \in (-2, 2)$ then $\partial_z \tr \frakT(\omega; z) \neq 0$.
\end{lemma}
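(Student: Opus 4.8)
The plan is to combine the classical eigenvalue‑perturbation formula for the Floquet multipliers with a Wronskian identity, and to rule out non‑real $z$ by a Lagrange‑identity computation. First, since $(\frakp,\frakq,\frakw)$ are $\omega$‑periodic, \eqref{eq:16} gives $\det \frakT(\omega;z) = \frakp(0)/\frakp(\omega) = 1$, so the eigenvalues $\lambda_\pm$ of $\frakT(\omega;z)$ satisfy $\lambda_+\lambda_- = 1$ and $\lambda_+ + \lambda_- = \tr \frakT(\omega;z)$. The hypothesis $\tr \frakT(\omega;z) \in (-2,2)$ forces $\lambda_\pm = e^{\pm i\theta}$ with $\theta \in (0,\pi)$; in particular $\lambda_+ \neq \lambda_-$, both eigenvalues are simple, and they depend analytically on $z$ near the point in question. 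Let $v_\pm$ be right eigenvectors and $\ell_\pm$ left eigenvectors of $\frakT(\omega;z)$ normalised by $\ell_\pm v_\pm = 1$ and $\ell_\pm v_\mp = 0$, and let $f_\pm$ be the first components of the Floquet solutions $\mathbf{w}_\pm(s) = \frakT(s)v_\pm$, so that $\tau f_\pm = z f_\pm$ and $f_\pm(s+\omega) = \lambda_\pm f_\pm(s)$. By \eqref{eq:23b} their Wronskian $W_0 := \Wrk(\mathbf{w}_+,\mathbf{w}_-) = \frakp(0)\det(v_+,v_-)$ is a nonzero constant.

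Next I would derive a closed formula for $\partial_z \tr \frakT(\omega;z)$. Differentiating \eqref{eq:50} in $z$ and solving by Duhamel's principle gives $\partial_z \frakT(\omega;z) = \frakT(\omega;z)\int_0^\omega \frakT(s;z)^{-1}(\partial_z\frakb)(s;z)\frakT(s;z)\,\ud s$, where $\partial_z\frakb = \bigl(\begin{smallmatrix} 0 & 0 \\ -\frakw/\frakp & 0\end{smallmatrix}\bigr)$. The perturbation identity $\lambda_\pm' = \ell_\pm\bigl(\partial_z\frakT(\omega;z)\bigr)v_\pm$ for simple eigenvalues, together with $\ell_\pm\frakT(\omega) = \lambda_\pm\ell_\pm$ and the relations $\ell_+\frakT(s)^{-1}\mathbf{w}_-(s) = \ell_+ v_- = 0$ and $\ell_+\frakT(s)^{-1}\mathbf{w}_+(s) = \ell_+ v_+ = 1$ (which determine the left Floquet covector and let me express the integrand through $f_+$, $f_-$ and $W_0$), collapses the matrix integral to a scalar one. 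Summing the two branches I expect to obtain $\partial_z \tr \frakT(\omega;z) = \frac{\lambda_+ - \lambda_-}{W_0}\int_0^\omega \frakw(s)\,f_+(s)\,f_-(s)\,\ud s$. Since $\lambda_+ \neq \lambda_-$ and $W_0 \neq 0$, it remains only to show that this integral does not vanish.

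To finish I would first show that $\tr \frakT(\omega;z) \in (-2,2)$ already forces $z \in \RR$. Writing the Lagrange identity for $f_+$ on $[0,\omega]$, the boundary term $\bigl[\frakp\, f_+'\, \overline{f_+}\bigr]_0^\omega$ vanishes because $\abs{\lambda_+} = 1$ and $\frakp(\omega) = \frakp(0)$, whence $(\Im z)\int_0^\omega \frakw\abs{f_+}^2\,\ud s = 0$; as $\frakw > 0$ and $f_+ \not\equiv 0$ this yields $\Im z = 0$. For real $z$ the function $\overline{f_+}$ solves $\tau u = z u$ with multiplier $\overline{\lambda_+} = \lambda_-$, so by simplicity of $\lambda_-$ we have $f_- = c\,\overline{f_+}$ for some $c \neq 0$, and therefore $\int_0^\omega \frakw\, f_+ f_-\,\ud s = c\int_0^\omega \frakw\abs{f_+}^2\,\ud s \neq 0$. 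Combined with the displayed formula this gives $\partial_z \tr \frakT(\omega;z) \neq 0$.

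The main obstacle is the reduction to real $z$: the positivity argument works only once $f_-$ is proportional to $\overline{f_+}$, which fails for genuinely complex $z$, and it is the Lagrange‑identity computation with the quasi‑periodic boundary terms that rules such $z$ out. Deriving the multiplier‑derivative formula is routine but requires care with the left eigenvector and with the nonvanishing of the Wronskian $W_0$.
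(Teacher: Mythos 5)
Your argument is correct, but it follows a genuinely different route from the paper's. The paper proceeds by contradiction with a bare-hands computation: using variation of parameters it writes $\tr \partial_z \frakT(\omega; z)$ as $-\int_0^\omega Q\big(a_{11}(s), a_{12}(s)\big)\, \frac{\frakw(s)}{\frakp(s)}\, \ud s$ for an explicit quadratic form $Q$ whose coefficients are entries of $\frakT(\omega; z)$, invokes continuity to find a point $s$ where $Q$ vanishes, and then extracts an algebraic contradiction from $\det \frakT(\omega; z) = 1$ and $\big(\tr \frakT(\omega; z)\big)^2 < 4$. You instead run the classical Floquet--Hill argument: the simple-eigenvalue perturbation formula $\lambda_\pm' = \ell_\pm\, \partial_z \frakT(\omega; z)\, v_\pm$ combined with the constancy of the Wronskian yields the closed formula $\partial_z \tr \frakT(\omega; z) = \frac{\lambda_+ - \lambda_-}{W_0} \int_0^\omega \frakw\, f_+ f_-\, \ud s$, the Lagrange identity with quasi-periodic boundary terms forces $z \in \RR$, and then $f_- = c\, \overline{f_+}$ turns the integral into a nonzero multiple of $\int_0^\omega \frakw\, |f_+|^2\, \ud s > 0$. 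I checked the key reductions --- the evaluation of $\ell_\pm \frakT(s)^{-1}$ against $(0, f_\pm(s))^t$ via Cramer's rule together with $\det\big(\mathbf{w}_+(s), \mathbf{w}_-(s)\big) = W_0/\frakp(s)$, and the cancellation of the boundary term using $|\lambda_+| = 1$ and $\frakp(\omega) = \frakp(0)$ --- and they are sound. Your route needs more machinery (left eigenvectors, analytic dependence of simple eigenvalues), but it buys an explicit signed formula for the derivative of the discriminant and it treats complex $z$ cleanly by first proving reality; note that the paper's final algebraic step, which concludes $a_{11}(s; z)^2 a_{12}(s; z)^2 = 0$ from the sign of $(\tr \frakT(\omega;z))^2 - 4$, tacitly uses that the matrix entries are real, so your preliminary reduction to $z \in \RR$ is not wasted effort. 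The paper's proof, in exchange, is shorter and entirely self-contained at the level of $2\times 2$ matrix algebra.
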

\begin{proof}
	By \eqref{eq:50}, we have
	\[
		\partial_z \frakT(t; z) = \int_0^t 
		\frakb(s; z) \partial_z \frakT(s; z) + \partial_z \frakb(s; z) \frakT(s; z) {\: \rm d} s.
	\]
	Hence, by variation of parameters, we get
	\[
		\partial_z \frakT(t; z) = \frakT(t; z)
		\int_0^t \frakT(s; z)^{-1} \partial_z b(s; z)  \frakT(s; z) {\: \rm d} s.
	\]
	Suppose, contrary to our claim, that
	\begin{equation}
		\label{eq:18}
		\tr \partial_z \frakT(\omega; z) = 0.
	\end{equation}
	Setting
	\[
		\frakT(s; z) = 
		\begin{pmatrix}
			a_{11}(s; z) & a_{12}(s; z) \\
			a_{21}(s; z) & a_{22}(s; z)
		\end{pmatrix},
	\]
	we can write
	\begin{align*}
		\tr \partial_z \frakT(\omega; z) 
		=
		-
		\int_0^\omega
		\big(
		&a_{11}(\omega; z) a_{11}(s; z) a_{12}(s; z) - a_{12}(\omega; z) a_{11}(s; z)^2 \\
		&+ a_{21}(\omega; z) a_{12}(s; z)^2 - a_{22}(\omega; z) a_{11}(s; z) a_{12}(s; z)
		\big) \frac{\frakw(s)}{\frakp(s)} {\: \rm d} s.
	\end{align*}
	Since $\frakT(\cdot; z)$ is continuous, by \eqref{eq:18} we conclude that there is $s \in [0, \omega]$ such that
	\[
		\big(a_{11}(\omega; z) - a_{22}(\omega; z)\big) 
		a_{11}(s; z) a_{12}(s; z) 
		= a_{12}(\omega; z) a_{11}(s; z)^2 - a_{21}(\omega; z) a_{12}(s; z)^2.
	\]
	Thus
	\[
		\big(a_{11}(\omega; z) - a_{22}(\omega; z)\big)^2 
		a_{11}(s; z)^2 a_{12}(s; z)^2 
		= \big( a_{12}(\omega; z) a_{11}(s; z)^2 - a_{21}(\omega; z) a_{12}(s; z)^2\big)^2.
	\]
	Since $\det \frakT(\omega; z) = 1$, we have
	\[
		\big(a_{11}(\omega; z) - a_{22}(\omega; z)\big)^2
		= 
		\big( a_{11}(\omega; z) + a_{22}(\omega; z)\big)^2 
		- 4 - 4 a_{12}(\omega; z) a_{21}(\omega; z),
	\]
	therefore
	\[
		\big((a_{11}(\omega; z) + a_{22}(\omega; z))^2 - 4\big) a_{11}(s; z)^2 a_{12}(s; z)^2 \\
		=\big(a_{12}(\omega; z) a_{11}(s; z)^2 + a_{21}(\omega; z) a_{12}(s; z)^2\big)^2.
	\]
	Now, in view of $\tr \frakT(\omega; z) \in (-2, 2)$, we must have
	\[
		a_{11}(s; z)^2 a_{12}(s; z)^2 = 0
	\]
	and
	\[
		a_{12}(\omega; z) a_{11}(s; z)^2 + a_{21}(\omega; z) a_{12}(s; z)^2 = 0.
	\]
	Because $\det \frakT(s; z) = 1$, if $a_{11}(s; z) = 0$, then $a_{12}(s; z) \neq 0$, and so $a_{21}(\omega; z) = 0$. 
	Consequently,
	\[
		1 = \det \frakT(\omega; z) = a_{11}(\omega; z) a_{22}(\omega; z),
	\]
	which leads to contradiction
	\[
		\big(a_{11}(\omega; z) - a_{22}(\omega; z)\big)^2 
		=
		\big(a_{11}(\omega; z) + a_{22}(\omega; z)\big)^2 - 4 < 0.
	\]
	Analogously, we treat the case $a_{12}(s; z) = 0$.
\end{proof}
We say that $(p, q, w)$ are \emph{$\omega$-periodically modulated Sturm--Liouville parameters} if $(p, q, w)$ are
Sturm--Liouville parameters such that there are $\omega$-periodic Sturm--Liouville parameters $(\frakp, \frakq, \frakw)$,
with properties that $p, \frakp \in \ACloc([0,\infty))$ are positive everywhere
and
\begin{align}
	\label{eq:3a}
	&\lim_{n \to \infty} 
	\int_{0}^{\omega} \frac{w(n \omega + t)}{p(n\omega + t)} \ud t = 0, \\
	\label{eq:3b}
	&\lim_{n \to \infty} 
	\int_{0}^{\omega} \bigg|\frac{q(n \omega + t)}{p(n \omega + t)} - \frac{\frakq(t)}{\frakp(t)} \bigg| \ud t = 0, \\
	\label{eq:3c}
	&\lim_{n \to \infty} 
	\int_{0}^{\omega} \bigg|\frac{p'(n \omega + t)}{p(n\omega + t)} - \frac{\frakp'(t)}{\frakp(t)} \bigg| \ud t = 0.
\end{align}
We additionally assume that for each $t \in [0, \omega]$,
\begin{equation}
    \label{eq:3d}
    \lim_{t \to \infty} p(t+n\omega) = +\infty.
\end{equation}
For $\eta \in \tsS^1$ (see \eqref{eq:178}), we denote by $\mathbf{u}$ the solution to
\begin{equation}
	\label{eq:61}
	\mathbf{u}(t; z) = \eta + \int_0^t b(t'; z) \mathbf{u}(t'; z) \ud t', \quad t \geq 0,
\end{equation}
where $b(t; z)$ is given by the formula \eqref{eq:5}. Then we set
\begin{equation}
    \label{eq:179}
	\mathbf{u}_n(s, \eta; z) = \mathbf{u}(s + n \omega; z), \quad s \in [0,\omega], n \in \NN_0.
\end{equation}
We notice that by \eqref{eq:6} we have $\mathbf{u}_0(s,\eta;z) = T(s;z) \eta$ and
\begin{equation}
	\label{eq:14}
	\mathbf{u}_{n+1}(s, \eta; z) = X_n(s; z) \mathbf{u}_n(s, \eta; z)
\end{equation}
where
\begin{equation}
	\label{eq:99}
	X_n(s; z) = U_{s; n}(\omega; z)
\end{equation}
and whereas
\begin{equation}
	\label{eq:41}
	U_{s; n}(t; z) = T(s + t + n \omega; z) T(s+n\omega; z)^{-1}.
\end{equation}
Let us observe that for each $z \in \CC$, $U_{s; n}(\cdot; z)$ satisfies 
\begin{equation}
	\label{eq:52}
	U(t) = \Id + \int_0^t b_{s; n}(t; z) U(t') \ud t', \quad t \geq 0
\end{equation}
where
\[
	b_{s; n}(t; z) = b(s+t+n\omega; z).
\]
We set
\[
	p_{s; n}(t) = p(s + t + n \omega), \quad 
	q_{s; n}(t) = q(s + t + n \omega), \quad
	w_{s; n}(t) = w(s + t + n \omega),
\]
and
\[
	u_n(s, \eta; z) = \sprod{\mathbf{u}_n(s, \eta; z)}{e_1}.
\]
If $s = 0$ we just write $b_n(t) = b(t+n\omega)$, and
\[
	p_n(t) = p(t + n \omega), \quad
	q_n(t) = q(t + n \omega), \quad
	w_n(t) = w(t + n \omega).
\]
\begin{proposition}
	\label{prop:4}
	Let $\omega > 0$. Suppose that $(p, q, w)$ are $\omega$-periodically modulated Sturm--Liouville parameters,
	and let $\frakT$ be the transfer matrix corresponding to $(\frakp, \frakq, \frakw)$. Then
	\begin{equation}
		\label{eq:60}
		\lim_{n \to \infty} U_{s; n}(t; z) = \frakT_s(t; 0)
	\end{equation}
	locally uniformly with respect to $s, t \in [0, \omega]$ and $z \in \CC$, where
	\[
		\frakT_s(t; z) = \frakT(s+t; z) \frakT(s; z)^{-1}.
	\]
	Moreover, for every compact set $K \subset \CC$,
	\[
		\sup_{s,t \in [0, \omega]} \sup_{z \in K}
		\big\|
		U_{s; n+1}(s; z) - U_{s; n}(t; z)
		\big\|
		\leq
		C \int_0^{2 \omega} \big\| b_{n+1}(t') - b_n(t') \big\| {\: \rm d} t'.
	\]
\end{proposition}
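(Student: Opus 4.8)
The plan is to recognize both sides of \eqref{eq:60} as solutions of linear Volterra integral equations driven by $b$ and $\frakb$ respectively, and then to compare them by Gronwall's inequality. First I would record the equation satisfied by $\frakT_s(\cdot;z)$: differentiating $\frakT_s(t;z)=\frakT(s+t;z)\frakT(s;z)^{-1}$ in $t$ and invoking \eqref{eq:50} gives
\[
	\frakT_s(t;z)=\Id+\int_0^t \frakb(s+t';z)\frakT_s(t';z)\ud t', \qquad \frakT_s(0;z)=\Id,
\]
while by \eqref{eq:52} the matrix $U_{s;n}(\cdot;z)$ solves the same type of equation with driving term $b(s+\cdot+n\omega;z)$. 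The whole statement thus reduces to controlling, in $L^1$, the difference of the two driving terms.

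The key analytic input is the convergence
\[
	\lim_{n\to\infty}\ \sup_{z\in K}\ \int_0^{2\omega}\big\| b(r+n\omega;z)-\frakb(r;0)\big\|\ud r=0
\]
for every compact $K\subset\CC$. Over the first period $[0,\omega]$ this is exactly the content of \eqref{eq:3a}--\eqref{eq:3c}, the only $z$-dependent contribution $-z\,w(r+n\omega)/p(r+n\omega)$ being controlled uniformly in $z\in K$ by \eqref{eq:3a}; over the second period one substitutes $r\mapsto r+\omega$ and uses the $\omega$-periodicity of $\frakb(\cdot;0)$ to reduce to the same three assumptions evaluated at $n+1$. Since $s\in[0,\omega]$, the substitution $r=s+t'$ shows that $\int_0^\omega\|b(s+t'+n\omega;z)-\frakb(s+t';0)\|\ud t'$ is bounded by the displayed quantity (because $[s,s+\omega]\subseteq[0,2\omega]$), hence tends to $0$ uniformly in $s$ as well. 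Along the way I would also note that $\int_0^{2\omega}\|b(r+n\omega;z)\|\ud r$ is bounded uniformly in $n$ and $z\in K$ (by the same estimate together with $\frakb(\cdot;0)\in L^1([0,2\omega])$), and that $\sup_{s,t\in[0,\omega]}\|\frakT_s(t;0)\|<\infty$, since $\frakT$ is continuous and $\det\frakT(s;0)=\frakp(0)/\frakp(s)$ is bounded away from zero on $[0,\omega]$.

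Granting this, \eqref{eq:60} follows from Gronwall's inequality: subtracting the two integral equations and writing $D_n(t)=U_{s;n}(t;z)-\frakT_s(t;0)$ yields
\[
	D_n(t)=\int_0^t b(s+t'+n\omega;z)D_n(t')\ud t'+\int_0^t\big(b(s+t'+n\omega;z)-\frakb(s+t';0)\big)\frakT_s(t';0)\ud t',
\]
where the second integral is at most $\big(\sup_{s,t}\|\frakT_s(t;0)\|\big)$ times the vanishing quantity above, and the Gronwall factor $\exp\!\big(\int_0^{2\omega}\|b(r+n\omega;z)\|\ud r\big)$ is uniformly bounded. This gives $\|D_n(t)\|\le \varepsilon_n e^{C_K}$ with $\varepsilon_n\to0$, uniformly in $s,t\in[0,\omega]$ and $z\in K$. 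The final estimate is obtained by the same scheme applied to the pair $U_{s;n+1},U_{s;n}$: for $E_n(t)=U_{s;n+1}(t;z)-U_{s;n}(t;z)$ one gets the analogous identity in which the source term is $\int_0^t\big(b(s+t'+(n+1)\omega;z)-b(s+t'+n\omega;z)\big)U_{s;n}(t';z)\ud t'$, bounded after $r=s+t'$ by $\big(\sup_{n,s,t,z}\|U_{s;n}(t;z)\|\big)\int_0^{2\omega}\|b_{n+1}(r)-b_n(r)\|\ud r$; Gronwall then produces the claimed inequality with $C$ depending only on $K$. The main—though mild—obstacle is the bookkeeping of uniformity over $(s,t,z)$ simultaneously: because the coefficients converge only in $L^1$ and not pointwise, the comparison must be carried out through the integral (Volterra) form of the equations rather than any pointwise ODE argument, and the shift by $s$ must be tracked carefully so that all integrals remain confined to two periods.
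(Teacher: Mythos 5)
Your proposal is correct and follows essentially the same route as the paper: both arguments compare the Volterra integral equations for $U_{s;n}(\cdot;z)$ and $\frakT_s(\cdot;0)$ via Gronwall's inequality, reduce the $s$-shifted $L^1$ difference of the coefficient matrices to an integral over $[0,2\omega]$ so that \eqref{eq:3a}--\eqref{eq:3c} apply uniformly in $s$, and treat $U_{s;n+1}-U_{s;n}$ by the same scheme. The only cosmetic difference is that the paper routes the comparison through the variation-of-parameters identity \eqref{eq:11} (using uniform bounds on $\frakT_s(t;0)$ and its inverse), whereas you subtract the two integral equations directly and absorb $\int_0^{2\omega}\|b(r+n\omega;z)\|\,\mathrm{d}r$ into the Gronwall factor; both are valid.
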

\begin{proof}
	Since for each $z \in \CC$, $\frakT_s(\cdot; z)$ satisfies
	\[
		\mathfrak{U}(t) = \Id + \int_0^t \frakb_s(t'; z) \mathfrak{U}(t') \ud t', \quad t' \geq 0,
	\]
	by \eqref{eq:52}, the variation of parameters gives
	\begin{equation}	
		\label{eq:11}
		U_{s; n}(t; z) = 
		\frakT_s(t; 0) 
		+
		\frakT_s(t; 0)
		\int_0^t
		\frakT_s(t'; 0)^{-1} \big(b_{s; n}(t'; z) - \frakb_s(t'; 0)\big) U_{s;n} (t'; z) {\: \rm d} t'.
	\end{equation}
	Since $\|\frakT_s(t; 0)\|$ is uniformly bounded from above as well as from below, we have
	\[
		\big\| U_{s; n}(t; z) \big\| \leq 
		C \bigg(1 + \int_0^t \big\| b_{s; n}(t'; z) - \frakb_s(t'; 0)\big\|\cdot \big\|U_{s; n}(t'; z) \big\|
		{\: \rm d} t'\bigg).
	\]
	Thus, by the Gronwall's inequality
	\[
		\big\|U_{s; n}(t; z)\big\| \leq C \exp\bigg(\int_0^{\omega} \big\| b_{s; n}(t'; z) - \frakb_s(t'; 0)\big\| 
		{\: \rm d} t'\bigg).
	\]
	By \eqref{eq:3a}--\eqref{eq:3c}, $U_{s; n}$ is locally uniformly bounded. Hence, by \eqref{eq:11},
	\[
		\big\|U_{s;n}(t; z) - \frakT_s(t; 0)\big\|
		\leq
		C
		\int_0^t \big\| b_{s; n}(t'; z) - \frakb_s(t'; 0)\big\| {\: \rm d} t'.
	\]
	Moreover,
	\[
		\big\| b_{s; n}(t; z) - \frakb_s(t; 0)\big\|
		\leq
		\abs{z} \bigg|\frac{w_n(s+t)}{p_n(s+t)}\bigg|
		+
		\bigg| \frac{q_n(s+t)}{p_n(s+t)} - \frac{\frakq(s+t)}{\frakq(p+t)} \bigg|
		+
		\bigg| \frac{p'_n(s+t)}{p_n(s+t)} - \frac{\frakp'(s+t)}{\frakp(s+t)}\bigg|,
	\]
	thus 
	\begin{align*}
		\int_0^\omega \big\| b_{s; n}(t'; z) - \frakb_s(t'; 0)\big\| {\: \rm d} t'
		&=
		\int_s^{\omega+s}  \big\| b_n(t'; z) - \frakb(t'; 0)\big\| {\: \rm d} t' \\
		&=
		\bigg( \int_0^\omega + \int_\omega^{s+\omega} - \int_0^s\bigg) \big\| b_n(t'; z) - \frakb(t'; 0) \big\| {\: \rm d} t' \\
		&\leq
		\int_0^{2\omega} \big\| b_n(t'; z) - \frakb(t'; 0)\big\| {\: \rm d} t'.
	\end{align*}
	Consequently, by \eqref{eq:3a}--\eqref{eq:3c}, we obtain \eqref{eq:60}. Lastly, by \eqref{eq:11}
	\begin{align*}
		\big\|U_{s; n+1}(t; z) - U_{s; n}(t; z)\big\| 
		&\leq
		C \bigg(
		\int_0^t \big\|b_{s; n+1}(t'; z) - b_{s; n}(t'; z)\big\| \cdot \big\|U_{s; n}(t'; z)\big\| {\: \rm d} t' \\
		&\phantom{\leq C \bigg(}
		+ \int_0^t \big\|b_{s; n}(t'; z)\big\| \cdot \big\|U_{s; n+1}(t'; z) - U_{s; n}(t'; z)\big\| {\: \rm d} t'\bigg) \\
		&\leq
		C'\bigg(\int_0^t \big\|b_{s; n+1}(t'; z) - b_{s;n}(t'; z)\big\| {\: \rm d} t' \\
		&\phantom{\leq C'\bigg(}
		+ \int_0^t \big\|U_{s; n+1}(t'; z) - U_{s; n}(t'; z)\big\| {\: \rm d} t'\bigg).
	\end{align*}
	In view of the Gronwall's inequality,
	\begin{align*}
		\big\|U_{s; n+1}(t; z) - U_{s; n}(t; z)\big\|
		&\leq
		C'
		\int_0^\omega  \big\|b_{s; n+1}(t'; z) - b_{s;n}(t'; z)\big\| {\: \rm d} t' \\
		&\leq
		C' \int_0^{2\omega} \big\|b_{n+1}(t'; z) - b_{n}(t'; z)\big\| {\: \rm d} t'
	\end{align*}
	and the proposition follows.
\end{proof}

    \begin{corollary} \label{cor:8}
    Let $\omega > 0$. Suppose that $(p, q, w)$ are $\omega$-periodically modulated Sturm--Liouville parameters,
    and let $\frakT$ be the transfer matrix corresponding to $(\frakp, \frakq, \frakw)$. If $\tr \frakT(\omega; 0) \in (-2,2)$, then for any compact $K \subset \CC$ there are constants $c>0$ and $M \geq 1$ such that for any $z \in K$, $t \in [0,\omega]$ and $n \geq M$
    \begin{equation}
        \label{eq:150}
        |u_n(t,\eta;z)|^2
        \leq
        \| \mathbf{u}_n(t,\eta;z) \|^2 
        \leq 
        c \big( |u_n(t,\eta;z)|^2 + |u_{n+1}(t,\eta;z)|^2\big).
    \end{equation}
    \end{corollary}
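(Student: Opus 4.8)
The plan is to view the passage from $\mathbf{u}_n$ to the pair $(u_n, u_{n+1})$ as an invertible linear map with uniformly bounded inverse. The left inequality in \eqref{eq:150} is immediate, since $|u_n|^2 = |\sprod{\mathbf{u}_n}{e_1}|^2 \le \|\mathbf{u}_n\|^2$. For the right inequality, I would write $\mathbf{u}_n(t,\eta;z) = (u_n, \partial_t u_n)^t$ and use \eqref{eq:14} to express the first component of $\mathbf{u}_{n+1} = X_n(t;z)\mathbf{u}_n$, obtaining the linear system
\[
  \begin{pmatrix} u_n \\ u_{n+1} \end{pmatrix}
  = \Psi_n(t;z)\, \mathbf{u}_n,
  \qquad
  \Psi_n(t;z) = \begin{pmatrix} 1 & 0 \\ [X_n(t;z)]_{11} & [X_n(t;z)]_{12} \end{pmatrix},
\]
whose determinant is exactly the off-diagonal entry $[X_n(t;z)]_{12}$. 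Thus it suffices to show that $\Psi_n$ is uniformly bounded and that $[X_n(t;z)]_{12}$ is bounded away from $0$, uniformly over $t \in [0,\omega]$, $z \in K$ and $n \ge M$; then $\mathbf{u}_n = \Psi_n^{-1}(u_n, u_{n+1})^t$ gives the right inequality with $c = \sup_{n \ge M} \|\Psi_n^{-1}\|^2$, a bound automatically independent of $\eta$.

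Next I would pass to the limit using Proposition~\ref{prop:4}: recalling \eqref{eq:99} and \eqref{eq:41}, $X_n(t;z) = U_{t;n}(\omega;z) \to \frakT_t(\omega;0)$ uniformly on $[0,\omega]\times K$, where $\frakT_t(\omega;0) = \frakT(t+\omega;0)\frakT(t;0)^{-1}$. Crucially the limit is evaluated at $z=0$ and is therefore independent of $z$, which is what makes a single threshold $M$ work on all of $K$. Using the periodicity identity $\frakT(t+\omega;z) = \frakT(t;z)\frakT(\omega;z)$ (a consequence of periodicity of $\frakb$), the limit matrix is the conjugate $\frakT_t(\omega;0) = \frakT(t;0)\,\frakT(\omega;0)\,\frakT(t;0)^{-1}$; hence it is real, has $\tr \frakT_t(\omega;0) = \tr\frakT(\omega;0) \in (-2,2)$, and by \eqref{eq:16} together with $\omega$-periodicity of $\frakp$ it has determinant $1$.

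The crux — and the main obstacle — is to show that the off-diagonal entry $[\frakT_t(\omega;0)]_{12}$ of this limit is nonzero. Here I would argue by contradiction: if it vanished, then $\frakT_t(\omega;0)$ would be lower triangular with diagonal entries $a$ and $a^{-1}$ (since its determinant is $1$), forcing $|\tr \frakT_t(\omega;0)| = |a + a^{-1}| \ge 2$ for any real $a \neq 0$, contradicting $\tr\frakT_t(\omega;0) \in (-2,2)$. Thus $[\frakT_t(\omega;0)]_{12} \neq 0$ for every $t$. Since $t \mapsto \frakT_t(\omega;0)$ is continuous and $[0,\omega]$ is compact, there is $\delta>0$ with $|[\frakT_t(\omega;0)]_{12}| \ge 2\delta$ for all $t$; then the uniform convergence $X_n(t;z) \to \frakT_t(\omega;0)$ on $[0,\omega]\times K$ provides $M$ with $|[X_n(t;z)]_{12}| \ge \delta$ for $n \ge M$. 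The same convergence bounds $\|X_n(t;z)\|$, hence $\|\Psi_n\|$, from above, so $\|\Psi_n^{-1}\| \le \|\Psi_n\|/\delta$ is uniformly bounded, which completes the argument.
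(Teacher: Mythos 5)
Your proof is correct and follows essentially the same route as the paper's: both reduce the right-hand inequality to the uniform lower bound $|[X_n(t;z)]_{12}| \ge \delta$ for $n \ge M$, obtained from the locally uniform convergence $X_n(t;z) \to \frakT_t(\omega;0)$ of Proposition~\ref{prop:4} together with the fact that $|\tr \frakT(\omega;0)| < 2$ forces the $(1,2)$ entry of the limit to be nonzero (you argue via the triangular/trace dichotomy, the paper via negativity of the discriminant — the same fact). Your packaging of the step as invertibility of the matrix $\Psi_n$ with uniformly bounded inverse is only a cosmetic variation on the paper's direct estimate $|\partial_t u_n| \le c(|u_n| + |u_{n+1}|)$.
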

    \begin{proof}
 	Let us fix $z \in K$. Since the sequence $(\mathbf{u}_n(t; z) : n \geq 0)$ satisfies \eqref{eq:14}, we have
	\[
		u_{n+1}(t; z) = [X_n(t; z)]_{11} \cdot u_n(t; z) + [X_n(t; z)]_{12} \cdot \partial_t u_n(t; z).
	\]
	In view of \eqref{eq:99} and Proposition \ref{prop:4},
	\[
		\lim_{n \to \infty} \discr X_n(t; z) = \discr \frakT(\omega; 0) < 0,
	\]
	hence there are $M \geq 1$ and $\delta > 0$ such that for all $n \geq M$,
	\[
		\abs{[X_n(t; z)]_{12}} \geq \delta > 0.
	\]
	Consequently, there is $c > 0$ such that for all $n \geq M_1$,
	\[
		\abs{\partial_t u_n(t; z)} \leq c (\abs{u_n(t; z)} + \abs{u_{n+1}(t; z)}\big),
	\]
	which easily implies \eqref{eq:150}.
    \end{proof}

\begin{remark}
	\label{rem:1}
	Let us observe that
	\[
		\frakT_s(\omega; z) = \frakT(s; z) \frakT(\omega; z) \frakT(s; z)^{-1}.
	\]
	Indeed, we notice that given $z \in \CC$, both $\frakT(\cdot ; z) \frakT(\omega; z)$ and $\frakT(\cdot + \omega; z)$ satisfies
	\[
		\mathfrak{U}(s) = \frakT(\omega; z) +
		\int_0^s \mathfrak{b}(s'; z) \mathfrak{U}(s') \ud s', \quad s \geq 0,
	\]
	which implies that
	\[
		\frakT(s; z) \frakT(\omega; z) = \frakT(s + \omega; z), \quad s \in [0, \omega], z \in \CC.
	\]
\end{remark}

\begin{lemma}
	\label{lem:3}
	Let $\omega > 0$. Let $p, \frakp \in \ACloc([0,\infty))$ be positive everywhere.
	Suppose that $\frakp$ is $\omega$-periodic and 
	\[
		\lim_{n \to \infty} 
		\int_0^\omega \bigg| \frac{p'_n(s)}{p_n(s)} - \frac{\frakp'(s)}{\frakp(s)}\bigg| {\: \rm d} s = 0,
	\]
	then
	\[
		\lim_{n \to \infty} \sup_{t \in [0, \omega]}
		{\bigg| \frac{p_n(t)}{p_n(0)}  -  \frac{\frakp(t)}{\frakp(0)} \bigg|} = 0.
	\]
\end{lemma}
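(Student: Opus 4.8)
The plan is to express each normalized function as the exponential of an integrated logarithmic derivative and then reduce the claim to uniform convergence of those integrals, which is handed to us directly by the hypothesis. First I would observe that since $p, \frakp \in \ACloc([0,\infty))$ are positive everywhere, the translates $p_n$ are absolutely continuous on $[0,\omega]$ and, being continuous and strictly positive on a compact interval, are bounded below there by a positive constant (depending on $n$). Hence $\log p_n$ is absolutely continuous on $[0,\omega]$ with $(\log p_n)'(s) = p_n'(s)/p_n(s)$ for almost every $s$, and the fundamental theorem of calculus yields
\[
	\frac{p_n(t)}{p_n(0)} = \exp\left( \int_0^t \frac{p_n'(s)}{p_n(s)} \ud s \right), \quad t \in [0,\omega],
\]
together with the analogous identity for $\frakp$ involving $\frakp'/\frakp$ (here $\frakp'/\frakp \in L^1([0,\omega])$ by the same positivity and continuity argument).

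Next I would set $\Phi_n(t) = \int_0^t \frac{p_n'(s)}{p_n(s)} \ud s$ and $\Phi(t) = \int_0^t \frac{\frakp'(s)}{\frakp(s)} \ud s$, so that the conclusion reads $\sup_{t \in [0,\omega]} |e^{\Phi_n(t)} - e^{\Phi(t)}| \to 0$. Because
\[
	\sup_{t \in [0,\omega]} |\Phi_n(t) - \Phi(t)| \leq \int_0^\omega \left| \frac{p_n'(s)}{p_n(s)} - \frac{\frakp'(s)}{\frakp(s)} \right| \ud s,
\]
the hypothesis immediately gives $\Phi_n \to \Phi$ uniformly on $[0,\omega]$.

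Finally I would transfer this convergence through the exponential. Since $\Phi$ is continuous on the compact interval $[0,\omega]$ it is bounded, and the uniform convergence $\Phi_n \to \Phi$ forces $\sup_{n} \sup_{t \in [0,\omega]} |\Phi_n(t)| < \infty$; thus all the values $\Phi_n(t)$ and $\Phi(t)$ lie in a single compact subset $J \subset \RR$ on which $\exp$ is Lipschitz with some constant $L$. Consequently
\[
	\sup_{t \in [0,\omega]} \left| \frac{p_n(t)}{p_n(0)} - \frac{\frakp(t)}{\frakp(0)} \right| \leq L \sup_{t \in [0,\omega]} |\Phi_n(t) - \Phi(t)| \longrightarrow 0,
\]
which is exactly the asserted limit.

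There is no deep difficulty in this argument; it is essentially a bookkeeping exercise. The only two points requiring genuine care are the justification that $\log p_n$ is absolutely continuous on $[0,\omega]$ so that the fundamental theorem of calculus applies — which rests on the strict positivity and continuity of $p_n$ producing a uniform-in-$s$ lower bound for each fixed $n$ — and the observation that the functions $\Phi_n$ remain in a common compact set, so that the merely local Lipschitz behavior of $\exp$ can be invoked uniformly in $n$. I expect the former to be the subtler of the two, since it is where the regularity assumption $p \in \ACloc([0,\infty))$ is genuinely used.
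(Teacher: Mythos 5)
Your proof is correct and follows essentially the same route as the paper: both write $p_n(t)/p_n(0)$ as the exponential of $\int_0^t p_n'/p_n\,\ud s$ and bound the difference of logarithms by the hypothesis integral; you merely spell out the final passage through $\exp$ (local Lipschitz continuity on a common compact set), which the paper leaves implicit.
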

\begin{proof}
	Since
	\[
		\log p_n(t) - \log p_n(0) = \int_0^t \frac{p'_n(s)}{p_n(s)}{\: \rm d} s,
	\]
	we have
	\begin{align*}
		\bigg|\log \bigg( \frac{p_n(t)}{p_n(0)}\bigg) - \log \bigg( \frac{\frakp(t)}{\frakp(0)} \bigg) \bigg|
		&\leq
		\int_0^t \bigg|\frac{p'_n(s)}{p_n(s)} - \frac{\frakp'(s)}{\frakp(s)} \bigg| {\: \rm d} s \\
		&\leq
		\int_0^\omega \bigg|\frac{p'_n(s)}{p_n(s)} - \frac{\frakp'(s)}{\frakp(s)} \bigg| {\: \rm d} s
	\end{align*}
	and the lemma follows.
\end{proof}

For each $s \in [0, \omega]$ and $n \in \NN$, we set
\[
	\gamma_n(s) = \int_0^\omega \frac{w_n(s + t)}{p_n(s +t)} \ud t,
\]
and
\[
	\gamma = \int_0^\omega \frac{\frakw(t)}{\frakp(t)} \ud t.
\]
\begin{lemma}
	\label{lem:5}
	Let $\omega > 0$. Suppose that $(p, q, w)$ are $\omega$-periodically modulated Sturm--Liouville parameters, and let 
	$\frakT$ be the transfer matrix corresponding to $(\frakp, \frakq, \frakw)$. Suppose that there is $s \in [0, \omega]$
	such that
	\begin{equation}
		\label{eq:119}
		\lim_{n \to \infty} \int_0^\omega \bigg| \frac{1}{\gamma_n(s)} \frac{w_n(s + t)}{p_n(s+t)} 
		- \frac{1}{\gamma} \frac{\frakw(s+t)}{\frakp(s+t)} \bigg| \ud t = 0,
	\end{equation}
	then
	\[
		\lim_{n \to \infty}
		\frac{1}{\gamma_n(s)}
		\partial_z X_n(s; z) =
		\frac{1}{\gamma}
		\partial_z \frakT_s(\omega; 0) 
	\]
	locally uniformly with respect to $z \in \CC$.
\end{lemma}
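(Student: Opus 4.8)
The plan is to differentiate the defining integral equation for $U_{s;n}$ with respect to $z$ and apply variation of parameters, exactly as in the proof of Lemma~\ref{lem:8}. Since $X_n(s;z) = U_{s;n}(\omega;z)$ solves \eqref{eq:52} and
\[
	\partial_z b_{s;n}(t';z) = -\frac{w_n(s+t')}{p_n(s+t')} \begin{pmatrix} 0 & 0 \\ 1 & 0 \end{pmatrix},
\]
the same computation as in Lemma~\ref{lem:8} yields
\[
	\partial_z X_n(s;z) = -U_{s;n}(\omega;z) \int_0^\omega \frac{w_n(s+t')}{p_n(s+t')} U_{s;n}(t';z)^{-1} \begin{pmatrix} 0 & 0 \\ 1 & 0 \end{pmatrix} U_{s;n}(t';z) \ud t'.
\]
Dividing by $\gamma_n(s)$ turns the scalar factor into the density $\frac{1}{\gamma_n(s)} \frac{w_n(s+t')}{p_n(s+t')}$ on $[0,\omega]$, whose total mass equals $1$ by the definition of $\gamma_n(s)$.

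Next I would pass to the limit inside the integral. By Proposition~\ref{prop:4}, $U_{s;n} \to \frakT_s(\cdot;0)$ uniformly on $[0,\omega]\times K$ for each compact $K \subset \CC$; moreover, by \eqref{eq:16} and Lemma~\ref{lem:3}, $\det U_{s;n}(t';z) = p_n(s)/p_n(s+t')$ converges uniformly to $\frakp(s)/\frakp(s+t') = \det \frakT_s(t';0)$, which is bounded away from $0$ on $[0,\omega]$, so the inverses $U_{s;n}(t';z)^{-1}$ converge uniformly to $\frakT_s(t';0)^{-1}$. Consequently the matrix factor
\[
	f_n(t';z) = U_{s;n}(t';z)^{-1} \begin{pmatrix} 0 & 0 \\ 1 & 0 \end{pmatrix} U_{s;n}(t';z)
\]
converges uniformly on $[0,\omega]\times K$ to $f_\infty(t';z) = \frakT_s(t';0)^{-1} \begin{pmatrix} 0 & 0 \\ 1 & 0 \end{pmatrix} \frakT_s(t';0)$, and the $f_n$ are uniformly bounded.

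The core step is to show that the integral of $\frac{1}{\gamma_n(s)} \frac{w_n(s+t')}{p_n(s+t')} f_n(t';z)$ over $[0,\omega]$ converges, uniformly in $z \in K$, to that of $\frac{1}{\gamma} \frac{\frakw(s+t')}{\frakp(s+t')} f_\infty(t';z)$. I would split the difference as
\[
	\int_0^\omega \frac{1}{\gamma_n(s)} \frac{w_n(s+t')}{p_n(s+t')} \big( f_n - f_\infty \big)(t';z) \ud t' + \int_0^\omega \Big( \frac{1}{\gamma_n(s)} \frac{w_n(s+t')}{p_n(s+t')} - \frac{1}{\gamma} \frac{\frakw(s+t')}{\frakp(s+t')} \Big) f_\infty(t';z) \ud t'.
\]
The first integral is bounded by $\sup_{[0,\omega]\times K} \|f_n - f_\infty\|$, since the normalized weight integrates to $1$, and hence vanishes uniformly by Proposition~\ref{prop:4}; the second is bounded by $\sup_{[0,\omega]\times K} \|f_\infty\|$ times the $L^1$-distance of the two densities, which tends to $0$ by the hypothesis \eqref{eq:119}. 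This is the one place where the two distinct modes of convergence (uniform for the matrices, $L^1$ for the weights) must be reconciled, and the fact that the normalized weight is a probability density is precisely what controls the first term without any bound on $f_\infty$ beyond boundedness; this is the main obstacle.

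Finally I would identify the limit with $\frac{1}{\gamma}\partial_z \frakT_s(\omega;0)$. Applying the same variation-of-parameters formula to $\frakT_s(\cdot;z)$, which solves the integral equation appearing in the proof of Proposition~\ref{prop:4}, and evaluating the $z$-derivative at $z=0$ gives
\[
	\partial_z \frakT_s(\omega;0) = -\frakT_s(\omega;0) \int_0^\omega \frac{\frakw(s+t')}{\frakp(s+t')} \frakT_s(t';0)^{-1} \begin{pmatrix} 0 & 0 \\ 1 & 0 \end{pmatrix} \frakT_s(t';0) \ud t'.
\]
Thus the limiting expression obtained above, namely $-\frakT_s(\omega;0)\int_0^\omega \frac{1}{\gamma}\frac{\frakw(s+t')}{\frakp(s+t')} f_\infty(t';z)\ud t'$, is exactly $\frac{1}{\gamma}\partial_z \frakT_s(\omega;0)$, which completes the proof. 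Everything outside the mixed-convergence estimate is a direct transcription of the variation-of-parameters computation already used for Lemma~\ref{lem:8}.
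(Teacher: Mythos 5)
Your proposal is correct and follows essentially the same route as the paper: express $\partial_z X_n$ via variation of parameters applied to \eqref{eq:52}, then split the difference into one term controlled by the uniform convergence $U_{s;n}\to\frakT_s(\cdot;0)$ from Proposition~\ref{prop:4} and one controlled by the $L^1$ hypothesis \eqref{eq:119}. The only cosmetic difference is the choice of cross term (the paper pairs the periodic density with the perturbed matrices and invokes dominated convergence, whereas you pair the perturbed density with the limiting matrices and use that the normalized weight is a probability density), which changes nothing of substance.
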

\begin{proof}
	Since $U_{s; n}$ satisfies \eqref{eq:52}, we get
	\[
		\partial_z U_{s; n}(t; z) = \int_0^t b_{s; n}(t'; z) \partial_z U_{s; n}(t'; z) 
		+ \partial_z b_{s; n}(t'; z) U_{s; n}(t'; z) \ud t', \quad t \geq 0.
	\]
	By variation of parameters, we obtain
	\[
		\partial_z U_{s; n}(t; z) = 
		U_{s; n}(t; z) \int_0^t U_{s; n}(t'; z)^{-1} \partial_z b_{s; n}(t'; z) U_{s; n}(t'; z) {\: \rm d} t'.
	\]
	Analogously, we have
	\begin{equation}
		\label{eq:53}
		\partial_z \frakT_s(t; 0) =
		\frakT_s(t; 0) \int_0^t \frakT_s(t'; 0)^{-1} \partial_z \frakb_s(t'; 0) \frakT_s(t'; 0) {\: \rm d} t'.
	\end{equation}
	Since
	\[
		\partial_z b_{s; n}(t'; z) = -\frac{w_n(s+t')}{p_{n}(s+t')} 
		\begin{pmatrix}
			0 & 0 \\
			1 & 0
		\end{pmatrix},
	\]
	and
	\[
		\partial_z b_{s; n}(t'; z) = -\frac{\frakw(s+t')}{\frakp(s+t')}
		\begin{pmatrix}
			0 & 0 \\
			1 & 0
		\end{pmatrix},
	\]
	by \eqref{eq:119}, we obtain
	\[
		\lim_{n \to \infty} 
		\int_0^\omega
		\bigg|
		\frac{1}{\gamma_n(s)}
		\partial_z b_{s; n}(t'; z)
		-
		\frac{1}{\gamma}
		\partial_z \frakb(t'; 0)
		\bigg|
		{\: \rm d} t' = 0.
	\]
	Moreover, by Proposition \ref{prop:4} and the dominated convergence theorem, we obtain
	\begin{align*}
		\lim_{n \to \infty}
		&U_{s; n}(\omega; z) 
		\int_0^\omega U_{s; n}(t', z)^{-1}
		\partial_z \frakb_s(t'; 0)
		U_{s; n}(t'; z) {\: \rm d} t' \\ 
		&=
		\frakT_s(\omega; 0) 
		\int_0^\omega \frakT_s(t'; 0)^{-1}
		\partial_z \frakb_s(t'; 0) \frakT_s(t'; 0)
		{\: \rm d} t',
	\end{align*}
	therefore
	\begin{align*}
		&
		\lim_{n \to \infty}
		\frac{1}{\gamma_n(s)} \partial_z X_{n}(s; z)
		-
		\frac{1}{\gamma}
		\partial_z \frakT_s(\omega; 0) \\
		&=
		\lim_{n \to \infty}
		U_{s; n}(\omega; z) 
		\int_0^\omega U_{s; n}(t', z)^{-1}
		\bigg(
		\frac{1}{\gamma_{n}(s)} 
		\partial_z b_{s; n}(t'; z)
		-
		\frac{1}{\gamma}
		\partial_z \frakb(t'; 0)
		\bigg)
		U_{s; n}(t'; z) {\: \rm d} t' = 0
	\end{align*}
	which in view of \eqref{eq:53} completes the proof.
\end{proof}

\begin{corollary}
	\label{cor:3}
	For each $s \in [0, \omega]$ and $z \in \CC$,
	\[
		\tr \partial_z \frakT_s(\omega; z) = \tr \partial_z \frakT(\omega; z).
	\]
\end{corollary}
\begin{proof}
	By Remark \ref{rem:1},
	\[
		\frakT_s(\omega; z) = \frakT(s; z) \frakT(\omega; z) \frakT(s; z)^{-1}.
	\]
	Hence,
	\begin{align*}
		\partial_z \frakT_s(\omega; z) 
		&= \partial_z \frakT(s; z) \frakT(\omega; z) \frakT(s; z)^{-1}
		+ \frakT(s; z) \partial_z \frakT(\omega; z) \frakT(s; z)^{-1} \\
		&\phantom{=}-
		\frakT(s; z) \frakT(\omega; z) \frakT(s; z)^{-1} \partial \frakT(s; z) \frakT(s; z)^{-1}.
	\end{align*}
	Since
	\begin{align*}
		\tr \Big(\frakT(s; z) \frakT(\omega; z) \frakT(s; z)^{-1} \partial \frakT(s; z) \frakT(s; z)^{-1}\Big)
		&=
		\tr \Big( \frakT(\omega; z) \frakT(s; z)^{-1} \partial \frakT(s; z) \Big) \\
		&=
		\tr \Big( \partial \frakT(s; z) \frakT(\omega; z) \frakT(s; z)^{-1} \Big),
	\end{align*}
	and
	\[
		\tr \Big( \frakT(s; z) \partial_z \frakT(\omega; z) \frakT(s; z)^{-1} \Big)
		=
		\tr \partial_z \frakT(\omega; z),
	\]
	the corollary follows.
\end{proof}

\begin{remark} 
	\label{rem:4}
	Certain \emph{discrete analogues} of Sturm--Liouville operators with unbounded parameters were studied in \cite{Clark1996}. 
	They are equivalent to Jacobi matrices acting on weighed $\ell^2$-spaces.
\end{remark}

\subsection{The Stolz class}
\label{sec:2}
Let $(X, \|\cdot\|_X)$ be a semi-normed space. A sequence of elements $(x_n : n \in \NN_0)$ of $X$ belongs to $\calD_1(X)$ if
\[
	\sup_{n \in \NN} \|x_n\|_X < \infty,
	\quad\text{and}\quad
	\sum_{n = 0}^\infty \|\Delta x_n\|_X < \infty
\]
where
\[
	\Delta x_n = x_{n+1} - x_n, \qquad n \in \NN_0.
\]
For example $\calD_1(L^1(I, X)))$ consists of sequences $(f_n) \subset L^1(I; X)$ such that
\[
	\sup_{n \in \NN_0} \int_I \|f_n(s)\|_X {\: \rm d} s < \infty, \quad\text{and}\quad
	\sum_{n = 0}^\infty \int_I \|\Delta f_n(s)\|_X {\: \rm d} s.
\]
Given $\omega > 0$, by $\calD_1^\omega(X)$ we denote the set of functions $f: [0, \infty) \rightarrow X$, such that
$(f(n\omega) : n \in \NN)$ belongs to $\calD_1(X)$. The set $\calD_1^\omega(L^1; X)$ consists of functions
$f: [0, \infty) \rightarrow X$ such that the sequence $(f(n\omega + s) : n \in \NN)$ belongs to 
$\calD_1\big(L^1_s( [0, \omega]; X)\big)$. Next, we say that $f$ belongs to $\calD_1^\omega\big(L^\infty L^1; X)\big)$ if
$(f(n\omega + t + s) : n \in \NN_0)$ belongs to $\calD_1\big(L_t^\infty([0,\omega]; L^1_s([0, \omega]; X))\big)$, that is
\[
	\sup_{t \in [0, \omega]} \sup_{n \in \NN_0} \int_0^\omega \big\|f(n \omega + t + s)\big\|_X {\: \rm d} s < \infty,
	\quad\text{and}\quad
	\sum_{n = 0}^\infty \sup_{t \in [0, \omega]} \int_0^\omega 
	\big\|\Delta f(n\omega + t+ s)\big\|_X {\: \rm d} s < \infty.
\]
Finally, we say that a collection $(f_t : t \in \calT)$ belongs to $\calD_1^\omega(L^1; X)$ uniformly with respect to
$t \in \calT$, if and only if
\[
	\sup_{t \in \calT} \sup_{n \in \NN_0} \int_0^\omega \big\|f_t(n\omega+s)\big\|_X {\: \rm d} s <\infty
\]
and the series
\[
	\sum_{n = 0}^\infty \int_0^\omega \big\|\Delta f_t(n\omega+s)\big\| {\: \rm d} s
\]
converges uniformly with respect to $t \in \calT$.

Let us notice that for $f : [0, \infty) \rightarrow X$, we have
\begin{equation}
	\label{eq:24}
	\begin{aligned}
	\sum_{n = 0}^\infty \int_0^\omega \big\|\Delta f(n \omega + s) \big\|_X {\: \rm d} s
	&=
	\sum_{n = 0}^\infty \int_{n\omega}^{(n+1)\omega} \big\|\Delta_\omega f(s) \big\|_X {\: \rm d} s \\
	&=
	\int_0^\infty \big\|\Delta_\omega f(s) \big\|_X {\: \rm d} s
	\end{aligned}
\end{equation}
where 
\[
	\Delta_\omega f(x) = f(x+\omega) - f(x).
\]
Hence, $f \in \calD_1^\omega(L^1; X)$ if and only if $\Delta_\omega f \in L^1([0, \infty); X)$ and
\[
	\sup_{n \in \NN_0} \int_0^\omega \|f(n\omega +s)\big\|_X {\: \rm d }s < \infty.
\]
For $f \in \calD_1^\omega(L^1; X)$ we also have
\begin{align*}
	\sum_{n = 0}^\infty \sup_{t \in [0, \omega]} \int_0^\omega \| \Delta f(n\omega+s+t)\|_X {\: \rm d} s 
	&=
	\sum_{n = 0}^\infty \sup_{t \in [0, \omega]} \int_{t+n\omega}^{t+(n+1)\omega} \| \Delta_\omega f(s)\|_X {\: \rm d} s \\
	&\leq
	\sum_{n = 0}^\infty \int_{n\omega}^{(n+2)\omega} \| \Delta_\omega f(s)\|_X {\: \rm d} s \\
	&\leq
	2 \int_0^\infty \| \Delta_\omega f(s)\|_X {\: \rm d} s,
\end{align*}
and
\begin{align*}
	\sup_{t \in [0, \omega]} \sup_{n \in \NN_0} \int_0^\omega \|f(n\omega + t + s)\|_X {\: \rm d } s
	&=
	\sup_{t \in [0, \omega]} \sup_{n \in \NN_0} \int_{n\omega+t}^{(n+1)\omega+t} \|f(s)\|_X {\: \rm d } s \\
	&\leq
	2 \sup_{n \in \NN_0} \int_0^\omega \|f(n\omega + s)\|_X {\: \rm d } s.
\end{align*}
Therefore, $f \in \calD^\omega_1(L^\infty L^1; X)$. In particular, $f_t$, where
\[
	f_t(s) = f(t + s), \quad s > 0,
\]
belongs to $\calD^\omega_1(L^1; X)$ uniformly with respect to $t \in [0, \omega]$.

\begin{proposition}
	\label{prop:2}
	Let $\omega > 0$. Suppose that $(p, q, w)$ are $\omega$-periodically modulated Sturm--Liouville parameters.
	Let $K$ be a compact subset of $\CC$. If
	\[
		\frac{q}{p}, \frac{w}{p}, \frac{p'}{p} \in \calD^\omega_1\big(L^1; \RR\big),
	\]
	then the sequence $(X_n : n \in \NN)$ belongs to $\calD_1\big([0, \omega]\times K; \GL(2, \CC))$.
\end{proposition}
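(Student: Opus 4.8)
The plan is to read off the two defining properties of $\calD_1\big([0,\omega]\times K;\GL(2,\CC)\big)$ directly from Proposition~\ref{prop:4}, using on the space of continuous $\Mat(2,\CC)$-valued functions on $[0,\omega]\times K$ the supremum semi-norm $\|F\|_X=\sup_{(s,z)\in[0,\omega]\times K}\|F(s;z)\|$. Uniform boundedness $\sup_{n}\|X_n\|_X<\infty$ is immediate: since $X_n(s;z)=U_{s;n}(\omega;z)$ and the Gronwall bound in the proof of Proposition~\ref{prop:4} shows that $U_{s;n}$ is locally uniformly bounded, the values $X_n(s;z)$ stay bounded uniformly over $s\in[0,\omega]$, $z\in K$ and $n\in\NN$. (Alternatively, the local uniform convergence $X_n\to\frakT_\cdot(\omega;0)$ together with the summability of increments proved below already forces boundedness.)

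For the increments $\Delta X_n=X_{n+1}-X_n$, note that $\Delta X_n(s;z)=U_{s;n+1}(\omega;z)-U_{s;n}(\omega;z)$, so the last estimate of Proposition~\ref{prop:4}, evaluated at $t=\omega$, yields
\[
\sup_{s\in[0,\omega]}\sup_{z\in K}\|\Delta X_n(s;z)\|
\leq C\int_0^{2\omega}\sup_{z\in K}\big\|b_{n+1}(t';z)-b_n(t';z)\big\|\,\ud t'.
\]
It remains to sum the right-hand side over $n$. Writing $b_{n+1}(t')-b_n(t')=\Delta_\omega b(t'+n\omega)$ and inspecting the entries of $b$ in \eqref{eq:5}, only the bottom row depends on $n$, so with $R=\sup_{z\in K}|z|$ one has
\[
\sup_{z\in K}\big\|\Delta_\omega b(u;z)\big\|
\leq
\Big|\Delta_\omega\tfrac{q}{p}(u)\Big|
+R\,\Big|\Delta_\omega\tfrac{w}{p}(u)\Big|
+\Big|\Delta_\omega\tfrac{p'}{p}(u)\Big|
\]
after folding the norm-equivalence constant into $C$.

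Finally I would substitute $u=t'+n\omega$ to rewrite $\int_0^{2\omega}\|\Delta_\omega b(t'+n\omega)\|\,\ud t'=\int_{n\omega}^{(n+2)\omega}\|\Delta_\omega b(u)\|\,\ud u$; summing in $n$, every point of $[0,\infty)$ is counted at most twice, exactly as in the overlap computations following \eqref{eq:24}, so that
\[
\sum_{n=0}^\infty\|\Delta X_n\|_X
\leq
2C\int_0^\infty\sup_{z\in K}\big\|\Delta_\omega b(u;z)\big\|\,\ud u
\leq
2C\int_0^\infty\Big(\Big|\Delta_\omega\tfrac{q}{p}\Big|+R\,\Big|\Delta_\omega\tfrac{w}{p}\Big|+\Big|\Delta_\omega\tfrac{p'}{p}\Big|\Big)(u)\,\ud u.
\]
By \eqref{eq:24} each of the three integrals on the right equals the $\calD_1^\omega(L^1;\RR)$ tail of $q/p$, $w/p$ and $p'/p$ respectively, all finite by hypothesis, which gives summability and completes the proof. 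The only point requiring care is the bookkeeping of uniformity over the compact spectral set $K$: the parameter $z$ enters $b$ only through the linear term $-z\,w/p$, so a single factor $R=\sup_{z\in K}|z|$ controls it and the estimate of Proposition~\ref{prop:4} is already uniform in $z$; one must simply take the supremum over $z$ before integrating in $t'$ and before summing in $n$. I expect no serious obstacle beyond this routine telescoping/overlap argument.
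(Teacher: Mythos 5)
Your proposal is correct and follows essentially the same route as the paper: bound $\|b_{n+1}-b_n\|$ by the three $\Delta_\omega$ increments (with the factor $\sup_K|z|$ on the $w/p$ term), invoke the final estimate of Proposition~\ref{prop:4} to control $\sup_{t,z}\|\Delta X_n\|$ by $\int_{n\omega}^{(n+2)\omega}$ of those increments, and sum with the overlap factor $2$ to land on the $\calD_1^\omega(L^1;\RR)$ hypothesis via \eqref{eq:24}. The only addition is your explicit treatment of uniform boundedness of $(X_n)$, which the paper leaves implicit but which indeed follows from the Gronwall bound in Proposition~\ref{prop:4}.
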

\begin{proof}
	Since
	\begin{align*}
		\big\| b_{n+1}(t ; z) - b_{n} (t; z) \big\|
		\leq
		\bigg|\Delta_\omega \bigg( \frac{q}{p} \bigg) (t+n \omega)\bigg|
		+
		\bigg|\Delta_\omega \bigg(\frac{w}{p}\bigg) (t+n \omega)\bigg|
		\abs{z}
		+
		 \bigg|\Delta_\omega \bigg(\frac{p'}{p}\bigg) (t+n \omega)\bigg|,
	\end{align*}
	by Proposition \ref{prop:4}, we obtain
	\begin{align*}
		\sup_{t \in [0, \omega]} \sup_{z \in K}
		\big\|\Delta X_n(t; z) \big\|
		&\leq
		c\int_{n\omega}^{(n+2)\omega}
		\bigg|\Delta_\omega \bigg(\frac{q}{p}\bigg)(s)\bigg| 
		+
		\bigg|\Delta_\omega \bigg(\frac{w}{p}\bigg)(s)\bigg| 
		+
		\bigg|\Delta_\omega \bigg(\frac{p'}{p}\bigg)(s)\bigg| {\: \rm d} s,\\
	\end{align*}
	thus
	\begin{align*}
		\sum_{n = 0}^\infty 
		\sup_{t \in [0, \omega]} \sup_{z \in K}
		\big\|\Delta X_n(t; z) \big\|
		&\leq
		2c
		\int_0^\infty \bigg|\Delta_\omega \bigg(\frac{q}{p}\bigg)(s)\bigg| +
		\bigg|\Delta_\omega \bigg(\frac{w}{p}\bigg)(s)\bigg| +
		\bigg|\Delta_\omega \bigg(\frac{p'}{p}\bigg)(s)\bigg| {\: \rm d} s
	\end{align*}
	which completes the proof.
\end{proof}

\section{Diagonalization of transfer matrix}
\label{sec:3} 
\subsection{Analytic tools}
\label{sec:2.1}
Given $z \in \CC$, let us consider the equation
\[
	\xi^2 - 2 z \xi + 1 = 0.
\]
If $z \notin [-1, 1]$, it has two distinct solutions
\begin{equation}
	\label{eq:56}
	\xi_+(x) = z + \exp f(z), \quad\text{and}\quad \xi_-(x) = z - \exp f(z)
\end{equation}
where $\exp f(z)$ is the holomorphic function in $\CC \setminus [-1, 1]$ such that
\[
	\exp f(z) = \sqrt{z-1} \sqrt{z+1}, \quad z \in \CC \setminus (-\infty, 1]
\]
whereas $\sqrt{.}$ denotes the principal branch of the square root. Let us recall that
\begin{equation}
	\label{eq:19}
	\xi_+(\CC_{\varepsilon}) \subset \CC_{\varepsilon}, \quad \varepsilon \in \{-1, 1\}.
\end{equation}
We extend $\xi_+$ to $\CC$ by setting 
\[
	\xi_+(z) = \lim_{n \to +\infty} \xi_+(z_n)
\]
where $(z_n : n \in \NN) \subset \CC_+$ converges to $z \in (-1, 1)$. We set $\xi_-(z) = 2z - \xi_+(z)$.

\subsection{Diagonalization}
\label{sec:2.2}
Let us describe diagonalization procedure of transfer matrices. Given $\omega > 0$, let $(p, q, w)$ be
$\omega$-periodically modulated Sturm--Liouville parameters with differentiable $p$. Suppose that the transfer matrix $\frakT$ 
corresponding to $(\frakp, \frakq, \frakw)$ satisfies $\abs{\tr \frakT(\omega; 0)} < 2$. In view of Remark \ref{rem:1},
for each $s \in [0, \omega]$,
\[
	\tr \frakT_s(\omega; 0) = \tr \frakT(\omega; 0), \quad\text{and}\quad
	\det \frakT_s(\omega; 0) = \det \frakT(\omega; 0),
\]
and so
\[
	\discr \frakT_s(\omega; 0) = \discr \frakT(\omega; 0) < 0.
\]
In particular, we must have we must have $[\frakT_s(\omega; 0)]_{12} \neq 0$. Therefore, by Proposition \ref{prop:4}, for each 
compact set $K \subset \CC$ there are $\delta > 0$ and $L_0 \geq 1$ such that for all $n \geq L_0$, $t \in [0, \omega]$ and 
$z \in K$, 
\begin{equation}
	\label{eq:48}
	\big|\discr X_n(t; z)\big| \geq \delta, \quad \text{and} \quad 
	\abs{[X_n(t; z)]_{12}} \geq \delta.
\end{equation}
Consequently, $X_n(t; z)$ has two distinct eigenvalues
\begin{equation} \label{eq:106}
	\lambda^+_n(t; z) = \sqrt{\det X_n(t; z)} \: \xi_+\bigg(\frac{\tr X_n(t; z)}{2 \sqrt{\det X_n(t; z)}} \bigg),
	\quad\text{and}\quad
	\lambda^-_n(t; z) = \sqrt{\det X_n(t; z)} \: \xi_-\bigg(\frac{\tr X_n(t; z)}{2 \sqrt{\det X_n(t; z)}} \bigg)
\end{equation}
where $\xi_+$ and $\xi_-$ are constructed in Section \ref{sec:2.1}. For $z \in K$ we set
\[
	\lambda^+_\infty = \frac{\tr \frakT(\omega; 0) + i \sqrt{-\discr \frakT(\omega; 0)}}{2},
	\quad\text{and}\quad
	\lambda^-_\infty = \frac{\tr \frakT(\omega; 0) - i \sqrt{-\discr \frakT(\omega; 0)}}{2}.
\]
\begin{lemma}
	\label{lem:6}
	Let $\omega > 0$. Suppose that $(p, q, w)$ are $\omega$-periodically modulated Sturm--Liouville parameters
	such that the transfer matrix $\frakT$ corresponding to $(\frakp, \frakq, \frakw)$ satisfies 
	$\abs{\tr \frakT(\omega; 0)} < 2$. Let $K$ be a compact subset of $\CC_\varsigma$ with nonempty interior where 
	$\varsigma = \sign{\tr \partial_z \frakT(\omega; 0)}$. Suppose that there is $s \in [0, \omega]$ such that
	\begin{equation}
		\label{eq:51}
		\lim_{n \to \infty} \int_0^\omega \bigg| \frac{1}{\gamma_n(s)} \frac{w_n(s + t)}{p_n(s + t)} -
		\frac{1}{\gamma} \frac{\frakw(s+t)}{\frakp(s+t)} \bigg| \ud t = 0.
	\end{equation}
	Then there is $M \geq 1$ such that for all $n \geq M$ and $z \in K$,
	\[
		\Im \bigg(\frac{\tr X_n(s; z)}{2 \sqrt{\det X_n(s; z)}} \bigg) > 0.
	\]
\end{lemma}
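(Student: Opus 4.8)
The plan is to reduce the claim to a statement about $\tr X_n(s;z)$ alone, and then to extract the sign of its imaginary part from the rescaled $z$-derivative supplied by Lemma~\ref{lem:5}. First I would dispose of the denominator. By \eqref{eq:41} and \eqref{eq:16},
\[
	\det X_n(s; z) = \frac{\det T(s+(n+1)\omega; z)}{\det T(s + n\omega; z)} = \frac{p_n(s)}{p_{n+1}(s)},
\]
which is a positive real number independent of $z$. Hence $\sqrt{\det X_n(s;z)}$ is a positive real, and
\[
	\Im \bigg( \frac{\tr X_n(s; z)}{2 \sqrt{\det X_n(s; z)}} \bigg)
	=
	\frac{\Im \tr X_n(s; z)}{2 \sqrt{\det X_n(s; z)}},
\]
so it suffices to prove $\Im \tr X_n(s; z) > 0$ for $z \in K$ and $n$ large. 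Set $\tau_n(z) = \tr X_n(s; z)$. It is entire in $z$, and since $(p,q,w)$ are real-valued the transfer matrix, hence $X_n(s; \cdot)$, is real on $\RR$; thus $\tau_n$ is real on $\RR$. Consequently $\Im \tau_n$ vanishes on $\RR$, and since $\tfrac{\ud}{\ud y}\Im\tau_n(x+iy) = \Re\tau_n'(x+iy)$,
\[
	\Im \tau_n(x + i y) = \int_0^y \Re \tau_n'(x + i t) \ud t, \qquad x, y \in \RR.
\]

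Next I would feed in the derivative asymptotics. Taking the trace in the conclusion of Lemma~\ref{lem:5} and invoking Corollary~\ref{cor:3} yields
\[
	\lim_{n \to \infty} \frac{1}{\gamma_n(s)} \tau_n'(z) = \frac{1}{\gamma} \tr \partial_z \frakT(\omega; 0)
\]
locally uniformly with respect to $z \in \CC$. By Lemma~\ref{lem:8} the number $c := \tr \partial_z \frakT(\omega; 0)$ is nonzero; it is real (as $\frakT(\omega; \cdot)$ is real on $\RR$) and $\sign{c} = \varsigma$, while $\gamma_n(s), \gamma > 0$. I would then fix a compact rectangle $\widetilde K \subset \CC$ containing $K$ together with every vertical segment joining a point of $K$ to the real axis. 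Applying the local uniform convergence on $\widetilde K$ with tolerance $|c|/(2\gamma)$ produces $M \geq 1$ such that for all $n \geq M$ and all $\zeta \in \widetilde K$,
\[
	\varsigma \, \Re \tau_n'(\zeta) > \frac{|c|}{2 \gamma} \gamma_n(s) > 0.
\]

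Finally I would read off the sign. For $z = x + iy \in K$ one has $\sign{y} = \varsigma$, and the whole segment $\{x + it\}$ (with $t$ between $0$ and $y$) lies in $\widetilde K$, so there $\Re \tau_n'(x+it)$ has constant sign $\varsigma$; hence $\Im \tau_n(z) = \int_0^y \Re \tau_n'(x + it) \ud t > 0$ in both cases $\varsigma = \pm 1$, which together with the first step gives the assertion. The main obstacle is conceptual rather than computational: because $\tr X_n(s; z) \to \tr \frakT(\omega; 0) \in \RR$ by Proposition~\ref{prop:4}, the quantity whose sign we need is itself of vanishing order $\gamma_n(s)$, so the zeroth-order limit is useless and one must work at the next order. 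Lemma~\ref{lem:5} is precisely the tool isolating that order (the $z$-derivative rescaled by $\gamma_n(s)$), and the reflection identity for $\Im \tau_n$ is what converts derivative control into the desired positivity; the only delicate point is keeping all integration segments inside a single compact set so that the convergence may be applied uniformly.
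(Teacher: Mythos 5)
Your proof is correct and follows essentially the same route as the paper: reduce to the sign of $\Im \tr X_n(s;z)$ (using that $\det X_n(s;z)=p_n(s)/p_{n+1}(s)>0$ is independent of $z$), exploit reality of $\tr X_n(s;\cdot)$ on $\RR$ to express $\Im\tr X_n(s;x+iy)$ through $\Re\tr\partial_z X_n$ along the vertical segment (the paper uses the mean value theorem where you use the fundamental theorem of calculus), and then conclude via Lemma~\ref{lem:5}, Corollary~\ref{cor:3} and Lemma~\ref{lem:8}. If anything, your version is slightly more careful than the paper's in enlarging $K$ to a compact set containing the vertical segments, which is needed since the intermediate point $x+i\xi$ need not lie in $K$.
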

\begin{proof}
	By Lemma \ref{lem:8} and Corollary \ref{cor:3}, 
	\[
		\partial_z \tr \frakT_s(\omega; 0) = \partial_z \tr \frakT(\omega; 0) \neq 0.
	\]
	Let $z = x + i y$. By the mean value theorem
	\[
		\tr X_n(s; x+iy) - \tr X_n(s; x) = i y \tr \big( \partial_z X_n(s; x+i\xi)\big)
	\]
	where $\xi$ depends on $x$, $y$, $t$ and $n$, and it belongs to $(0, y)$. Since $\tr X_n(s; x)$ is a real number we have
	\begin{equation}
		\label{eq:17}
		\Im \big( \tr X_n(s; x+iy) \big) = y \Re \Big( \tr \big(\partial_z X_n(s; x + i \xi) \big)\Big).
	\end{equation}
	By Lemma \ref{lem:5} and Corollary \ref{cor:1}, we have
	\[
		\lim_{n \to \infty} 
		\frac{1}{\gamma_n(s)} 
		\tr \partial_z X_n(s; z) = 
		\frac{1}{\gamma}
		\tr \partial_z \frakT(\omega; 0),
	\]
	locally uniformly with respect to $z \in \CC$. Since $\tr \partial_z \frakT(\omega; 0) \in \RR \setminus \{0\}$, for
	almost every $s \in [0, \omega]$ there is $M=M_s \geq 1$ such that for all $n \geq M$, and $z \in K$,
	\[
		\mathrm{sign} \Big( \Re \big( \tr \partial_z X_n(s; z) \big)\Big)
		= 
		\mathrm{sign} \Big( \tr \partial_z \frakT(\omega; 0)\Big)
	\]
	which completes the proof.
\end{proof}
In view of Lemma \ref{lem:6}, if $K \subset \CC_\varsigma$ where $\varsigma = \sign{\tr \partial_z \frakT(\omega; 0)}$, then
for each $s \in [0, \omega]$ satisfying \eqref{eq:51}, the functions $\lambda^+_n(s; \cdot)$ and $\lambda^-_n(s; \cdot)$
are continuous on $K$ and holomorphic in $\intr{K}$.

\begin{proposition}
	\label{prop:1}
	Let $\omega > 0$. Suppose that $(p, q, w)$ are $\omega$-periodically modulated Sturm--Liouville, such that the transfer 
	matrix $\frakT$ corresponding to $(\frakp, \frakq, \frakw)$ satisfies $\abs{\tr \frakT(\omega; 0)} < 2$. Let $K$ be a 
	compact subset of $\CC_\varsigma$ with nonempty interior where $\varsigma = \sign{\tr \partial_z \frakT(\omega; 0)}$. 
	Suppose that there is $s \in [0, \omega]$ such that
	\[
		\lim_{n \to \infty} \int_0^\omega \bigg|\frac{1}{\gamma_n(s)} \frac{w_n(s+t)}{p_n(s+t)} 
		- \frac{1}{\gamma} \frac{\frakw(s+t)}{\frakp(s+t)} \bigg|{\: \rm d }t = 0.
	\]
	If the Carleman's condition \eqref{eq:26} is satisfied, then there is $M \geq 1$ such that
	\begin{equation} 
		\label{eq:77}
		\inf_{z \in K} \prod_{j \geq M} \bigg| \frac{\lambda^+_j(s; z)}{\lambda^-_j(s; z)}\bigg| =
		+\infty.
	\end{equation}
\end{proposition}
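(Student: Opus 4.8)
The plan is to take logarithms and reduce \eqref{eq:77} to showing that $\sum_{j \geq M} \log\bigl|\lambda^+_j(s;z)/\lambda^-_j(s;z)\bigr|$ diverges to $+\infty$ uniformly in $z \in K$. Since $\det \frakT(\omega;0) = 1$, the two roots $\xi_+,\xi_-$ from \eqref{eq:56} satisfy $\xi_+ \xi_- = 1$, so writing $\zeta_n(s;z) = \tr X_n(s;z) / \bigl(2\sqrt{\det X_n(s;z)}\bigr)$ and using \eqref{eq:106} I get $\lambda^+_n / \lambda^-_n = \xi_+(\zeta_n)^2$, whence $\log|\lambda^+_n/\lambda^-_n| = 2\log|\xi_+(\zeta_n)|$. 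The whole problem thus becomes a lower bound for $\log|\xi_+(\zeta_n)|$ that sums to infinity against the Carleman series.

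First I would prove a local lower bound $2\log|\xi_+(\zeta_n)| \geq \Im \zeta_n$ for all large $n$, uniformly on $K$. Writing $\zeta_n = \cosh \theta_n$ with $\theta_n = \alpha_n + i\beta_n$ and $\alpha_n = \log|\xi_+(\zeta_n)| \geq 0$ (recall $\xi_+$ maps $\CC\setminus[-1,1]$ outside the closed unit disk), one has $\Im \zeta_n = \sinh \alpha_n \, \sin \beta_n$. By Proposition~\ref{prop:4} and $\det \frakT_s(\omega;0)=1$, the quantity $\zeta_n \to \zeta_\infty := \tfrac12 \tr \frakT(\omega;0) \in (-1,1)$ uniformly, so $\alpha_n \to 0$ and $\beta_n \to \arccos \zeta_\infty \in (0,\pi)$ uniformly. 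Lemma~\ref{lem:6} gives $\Im \zeta_n > 0$ for large $n$, and then $\sinh \alpha_n \leq 2\alpha_n$ (valid once $\alpha_n$ is small) together with $\sin \beta_n \leq 1$ yields $2\alpha_n \geq \Im \zeta_n$, as desired; in particular each factor $|\lambda^+_n/\lambda^-_n|$ exceeds $1$.

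Next I would show $\Im \zeta_n \geq c\, \gamma_n(s)$ for large $n$, uniformly on $K$. Because $\det X_n = p_n(s)/p_{n+1}(s)$ is positive real by \eqref{eq:16} and tends to $1$, we have $\Im \zeta_n = \Im \tr X_n / \bigl(2\sqrt{\det X_n}\bigr)$. Exactly as in the proof of Lemma~\ref{lem:6}, the mean value theorem gives $\Im \tr X_n(s;x+iy) = y\, \Re\bigl(\tr \partial_z X_n(s;x+i\xi)\bigr)$ for some $\xi$ between $0$ and $y$, while Lemma~\ref{lem:5} gives $\tr \partial_z X_n(s;\cdot) = \gamma_n(s)\bigl(\tfrac1\gamma \tr \partial_z \frakT(\omega;0) + o(1)\bigr)$ locally uniformly. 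Since $\tr \partial_z \frakT(\omega;0) \in \RR \setminus \{0\}$ by Lemma~\ref{lem:8} and $K \subset \CC_\varsigma$ with $\varsigma = \sign{\tr \partial_z \frakT(\omega;0)}$, the product $y\cdot \tr \partial_z \frakT(\omega;0) = |y|\,|\tr \partial_z \frakT(\omega;0)|$ is positive, with $|y|$ bounded below on the compact $K$ (which lies in an open half-plane). This delivers the claimed bound $\Im\zeta_n \geq c\,\gamma_n(s)$.

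Combining the two bounds, $\log|\lambda^+_n/\lambda^-_n| \geq c\,\gamma_n(s)$ for $n \geq M$, uniformly on $K$. Since $\gamma_j(s) = \int_{s+j\omega}^{s+(j+1)\omega} w/p$, the sum telescopes to $\sum_{j \geq M} \gamma_j(s) = \int_{s+M\omega}^\infty w/p = \infty$ by the Carleman condition \eqref{eq:26}; bounding each partial product from below by $\exp\bigl(c \sum_{j=M}^N \gamma_j(s)\bigr)$ uniformly in $z$ and letting $N \to \infty$ gives \eqref{eq:77}. The main obstacle is the uniform estimate $\Im \zeta_n \asymp \gamma_n(s)$ of the third paragraph: it is here that the half-plane hypothesis $K \subset \CC_\varsigma$, the nonvanishing $\tr \partial_z \frakT(\omega;0) \neq 0$ (Lemma~\ref{lem:8}), and the precise rate furnished by Lemma~\ref{lem:5} must be combined to pin down both the correct sign and the correct order of decay; the remaining steps are soft.
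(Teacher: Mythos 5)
Your proof is correct, and at its core it follows the same route as the paper: both arguments reduce \eqref{eq:77} to the divergence of $\sum_j \gamma_j(s)$ by establishing the uniform lower bound $\abs{\Im \tr X_n(s;z)} \geq c\,\gamma_n(s)$ via the mean value identity \eqref{eq:17}, Lemma~\ref{lem:5}, the sign condition $K \subset \CC_\varsigma$, and then the Carleman condition \eqref{eq:26}. The only genuine difference is in the soft reduction from the product estimate to this sum: the paper invokes \cite[Propositions 5.3 and 5.4]{SwiderskiTrojan2023} — first reducing \eqref{eq:77} to $\sum_n \inf_{z \in K}\big(\abs{\xi_+(\zeta_n)}-1\big)=\infty$ and then using the inequality $\abs{\xi_+(v)}-1 \geq \abs{\Im v}/\big(2\sqrt{\abs{v-1}\abs{v+1}}\big)$ — whereas you take logarithms directly, use $\xi_+\xi_-=1$ to write $\log\abs{\lambda_n^+/\lambda_n^-} = 2\log\abs{\xi_+(\zeta_n)}$, and derive the bound $2\log\abs{\xi_+(\zeta_n)} \geq \Im\zeta_n$ from scratch via the parametrization $\zeta_n = \cosh(\alpha_n+i\beta_n)$, $\Im\zeta_n = \sinh\alpha_n\sin\beta_n \leq 2\alpha_n$ for $\alpha_n$ small. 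Your version is self-contained and makes transparent exactly which elementary property of the Joukowski inverse is needed (namely that $\log\abs{\xi_+}$ dominates $\Im\zeta$ near $(-1,1)$ in the upper half-plane), at the cost of redoing, in the special case at hand, work that the companion paper packages in greater generality; the paper's citations buy brevity and a uniform treatment across the series of papers. All the delicate points — positivity of $\Im\zeta_n$ from Lemma~\ref{lem:6}, the use of $\det X_n(s;z)=p_n(s)/p_{n+1}(s)>0$ so that $\Im\zeta_n$ is a positive multiple of $\Im\tr X_n$, uniformity of the constants over the compact $K$ where $\abs{\Im z}$ is bounded below, and the monotone divergence of the partial products — are handled correctly.
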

\begin{proof}
	By \cite[Proposition 5.4]{SwiderskiTrojan2023}, it is enough to show that
	\[
		\sum_{n \geq m} \inf_{z \in K}
		\bigg(\bigg|\xi_+\bigg(\frac{\tr X_n(s; z)}{2 \sqrt{\det X_n(s; z)}}\bigg)\bigg| - 1\bigg) = \infty.
	\]
	We set
	\[
		y_j(s; z) = \frac{\tr X_j(s; z)}{2 \sqrt{\det X_n(s; z)}}, \quad z \in K.
	\]
	Observe that
	\[
		\lim_{j \to \infty} y_j(t; z) = \frac{\tr \frakT(\omega; 0)}{2} \in (-1, 1)
	\]
	uniformly with respect to $z \in K$. Moreover, by \eqref{eq:16}
	\[
		\det X_j(s; z) = \frac{p_j(s)}{p_{j+1}(s)}
	\]
	thus by Lemma \ref{lem:3},
	\[
		\lim_{j \to \infty} \det X_j(s; z) = 1
	\]
	uniformly with respect to $z \in K$. Since
	\[
		\abs{\xi_+(v)} -1 \geq \frac{\abs{\Im v}}{2\sqrt{\abs{v-1} \abs{v+1}}}
	\]
	(see \cite[Proposition 5.3]{SwiderskiTrojan2023}), it is enough to show that
	\begin{equation}
		\label{eq:27}
		\sum_{j \geq M} \inf_{z \in K} \big| \Im \tr X_j(s; z) \big| = + \infty.
	\end{equation}
	By \eqref{eq:17} and Lemma \ref{lem:5}, there are $M \geq 1$ and $c > 0$ such that for all $n \geq M$
	and $z \in K$,
	\[
		\big|\Im \tr X_n(s; z) \big| \geq c \gamma_n(s),
	\]
	thus
	\begin{align*}
		\sum_{n \geq M} \inf_{z \in K} \big| \Im \tr X_j(s; z) \big|
		&\geq
		c \sum_{n \geq M} \gamma_n(s).
	\end{align*}
	Since
	\[
		\sum_{N \geq n \geq M} \gamma_n(s) = \int_{s+M \omega}^{s+N\omega} \frac{w(t)}{p(t)} \ud t
		\geq \int_{(M+1)\omega}^{N\omega} \frac{w(t)}{p(t)} \ud t,
	\]
	by \eqref{eq:26}, we conclude \eqref{eq:27} and the proposition follows.
\end{proof}

\begin{proposition}
	\label{prop:3}
	Let $\omega > 0$. Suppose that $(p, q, w)$ are $\omega$-periodically modulated Sturm--Liouville parameters, such that 
	the transfer matrix $\frakT$ corresponding to $(\frakp, \frakq, \frakw)$ satisfies $\abs{\tr \frakT(\omega; 0)} < 2$. 
	Let $K$ be a compact subset of $\CC_\varsigma$ with nonempty interior where $\varsigma 
	= \sign{\tr \partial_z \frakT(\omega; 0)}$. Suppose that there is $s \in [0, \omega]$, such that
	\[
		\lim_{n \to \infty} 
		\int_0^\omega \bigg| \frac{1}{\gamma_n(s)} \frac{w_n(s+t)}{p_n(s+t)} -
		\frac{1}{\gamma} \frac{\frakw(s+t)}{\frakp(s+t)} \bigg| \ud t = 0,
	\]
	then
	\[
		\lim_{n \to \infty} \lambda_n^+(s; z) = \lambda_\infty^+
	\]
	uniformly with respect to $z \in K$. Additionally,
	\[
		\lim_{n \to \infty}
		\frac{1}{\gamma_n(s)}
		\frac{\partial_z \lambda^+_n(s; z)}{\lambda_n^+(s; z)} =
		\frac{1}{\gamma}
		\frac{\partial_z \tr \frakT(\omega; 0)}{i \sqrt{-\discr \frakT(\omega; 0)}} 
	\]
	uniformly with respect to $z \in K$.
\end{proposition}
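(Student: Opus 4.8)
The plan is to read off both limits from the explicit diagonalization in \eqref{eq:106} together with the convergence results already proved. I begin with the first assertion. Writing $y_n(s;z) = \tfrac{\tr X_n(s;z)}{2\sqrt{\det X_n(s;z)}}$, so that $\lambda_n^+(s;z) = \sqrt{\det X_n(s;z)}\,\xi_+(y_n(s;z))$ by \eqref{eq:106}, I first recall (exactly as in the proof of Proposition~\ref{prop:1}, using $\det X_n(s;z)=p_n(s)/p_{n+1}(s)$ from \eqref{eq:16} and Lemma~\ref{lem:3}) that $\det X_n(s;z)\to 1$ and $y_n(s;z)\to \tfrac{1}{2}\tr\frakT(\omega;0)\in(-1,1)$ uniformly on $K$, the limit being constant in $z$. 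By Lemma~\ref{lem:6}, for $n$ large and $z\in K$ we have $\Im y_n(s;z)>0$, so $y_n$ approaches the interval $(-1,1)$ from $\CC_+$, precisely the side along which $\xi_+$ was extended continuously in Section~\ref{sec:2.1}. Continuity of $\xi_+$ then yields $\xi_+(y_n)\to \xi_+\big(\tfrac12\tr\frakT(\omega;0)\big)$, and multiplying by $\sqrt{\det X_n}\to 1$ gives the limit of $\lambda_n^+$. To identify it with $\lambda_\infty^+$ I use $\det\frakT(\omega;0)=1$ (Liouville's formula for the $\omega$-periodic problem), whence $-\discr\frakT(\omega;0)=4-\big(\tr\frakT(\omega;0)\big)^2$ and $\xi_+(v)=v+i\sqrt{1-v^2}$ for $v\in(-1,1)$, which matches the definition of $\lambda_\infty^+$.

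For the second assertion I avoid differentiating $\xi_+$ and instead differentiate the characteristic relation. The crucial observation is that, again by \eqref{eq:16}, $\det X_n(s;z)=p_n(s)/p_{n+1}(s)$ is \emph{independent of $z$}, so $\partial_z \det X_n(s;z)=0$. Differentiating $(\lambda_n^+)^2-(\tr X_n)\,\lambda_n^+ + \det X_n = 0$ in $z$ and dividing by $\lambda_n^+$ (which is nonzero since its limit $\lambda_\infty^+$ satisfies $\lambda_\infty^+\lambda_\infty^-=\det\frakT(\omega;0)=1$) gives
\[
\frac{\partial_z\lambda_n^+(s;z)}{\lambda_n^+(s;z)} = \frac{\partial_z \tr X_n(s;z)}{2\lambda_n^+(s;z)-\tr X_n(s;z)}.
\]
The denominator equals $\lambda_n^+(s;z)-\lambda_n^-(s;z)$, which is bounded away from zero uniformly by \eqref{eq:48}, and by the first part it converges to $2\lambda_\infty^+-\tr\frakT(\omega;0)=i\sqrt{-\discr\frakT(\omega;0)}$ uniformly on $K$.

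It remains to divide by $\gamma_n(s)$ and pass to the limit. By Lemma~\ref{lem:5} together with Corollary~\ref{cor:3}, $\tfrac{1}{\gamma_n(s)}\partial_z\tr X_n(s;z)\to \tfrac{1}{\gamma}\tr\partial_z\frakT_s(\omega;0)=\tfrac{1}{\gamma}\partial_z\tr\frakT(\omega;0)$ uniformly on $K$. Combining the two displays yields
\[
\frac{1}{\gamma_n(s)}\frac{\partial_z\lambda_n^+(s;z)}{\lambda_n^+(s;z)} \longrightarrow \frac{1}{\gamma}\,\frac{\partial_z\tr\frakT(\omega;0)}{i\sqrt{-\discr\frakT(\omega;0)}},
\]
as claimed. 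Uniformity on $K$ is automatic because every ingredient converges uniformly there and the denominator stays bounded away from $0$. The only genuinely delicate point is the correct labeling of the eigenvalues: one must be sure that $\lambda_n^+$ tends to $\lambda_\infty^+$ rather than to $\lambda_\infty^-$, and this is exactly what the sign condition $K\subset\CC_\varsigma$ with $\varsigma=\sign{\tr\partial_z\frakT(\omega;0)}$ and Lemma~\ref{lem:6} secure, by forcing $y_n$ onto the upper side where $\xi_+$ is continuous. Everything else is bookkeeping of uniform limits.
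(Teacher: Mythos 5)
Your proof is correct and follows essentially the same route as the paper: identify $\lim_n\lambda_n^+$ via Lemma~\ref{lem:6} and the upper-half-plane extension of $\xi_+$, then compute $\partial_z\lambda_n^+/\lambda_n^+$ and pass to the limit using Lemma~\ref{lem:5} and Corollary~\ref{cor:3}. Your differentiation of the characteristic polynomial $(\lambda_n^+)^2-(\tr X_n)\lambda_n^++\det X_n=0$ (with $\partial_z\det X_n=0$) is algebraically identical to the paper's identity $\partial_z\lambda_n^+/\lambda_n^+=\partial_z y_n/(\xi_+(y_n)-y_n)$, so this is only a cosmetic variation.
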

\begin{proof}
	We set
	\[
		y_n(s; z) = \frac{\tr X_n(s; z)}{2 \sqrt{\det X_n(s; z)}}.
	\]
	By Lemma \ref{lem:6} there is $M \geq 1$ such that for all $n \geq M$ and $z \in K$, we have $y_n \in \CC_+$, thus by 
	\eqref{eq:19} $\xi_+(y_n) \in \CC_+$. Since by \eqref{eq:16}
	\[
		\det X_n(s; z) = \frac{p_n(s)}{p_{n+1}(s)},
	\]
	we deduce that 
	\begin{equation}
		\label{eq:28}
		\Im\big(\lambda_n^+(s; z) \big) > 0.
	\end{equation}
	We next observe that the limit
	\begin{equation}
		\label{eq:29}
		\lim_{n \to \infty} \lambda^+_n(s; z) = \lim_{n \to \infty} \xi_+\big(y_n(s; z)\big)
	\end{equation}
	exists and satisfies 
	\[
		\lambda^2 - (\tr \frakT(\omega; 0)) \lambda + 1 = 0,
	\]
	which is the characteristic equation of $\frakT(\omega; 0)$. The equation has two solutions $\lambda_\infty^+$ and 
	$\lambda_\infty^-$. By \eqref{eq:28}, we must have
	\begin{equation}
		\label{eq:46}
		\lim_{n \to \infty} \lambda^+_n(s; z) = \lambda_\infty^+.
	\end{equation}
	We now compute
	\begin{align*}
		\frac{\partial_z \lambda_n^+(s; z)}{\lambda_n^+(s; z)} 
		&= 
		\frac{\xi_+'(y_n(s; z))}{\xi_+(y_n(s; z))} \partial_z y_n(s; z) \\
		&=
		\frac{\partial_z y_n(s; z)}{\xi_+(y_n(s; z)) - y_n(s; z)}.
	\end{align*}
	By \eqref{eq:29} and \eqref{eq:46}, we get
	\[
		\lim_{n \to \infty} \xi_+\big(y_n(s; z)\big) - y_n(s; z) 
		= 
		\frac{i}{2} \sqrt{-\discr \frakT(\omega; 0)}.
	\]
	Lastly, by Lemma \ref{lem:5}  
	\begin{align*}
		\lim_{n \to \infty}
		\frac{1}{\gamma(s)}
		\partial_z y_n(s; z) 
		&=
		\frac{1}{2}
		\lim_{n \to \infty}
		\frac{1}{\gamma_n(s)}
		\tr \partial_z X_n(s; z) \\
		&=
		\frac{1}{2} \cdot 
		\frac{1}{\gamma}
		\partial_z \tr \frakT(\omega; 0)
	\end{align*}
	which easily completes the proof.
\end{proof}

By \eqref{eq:48}, for each $t \in [0, \omega]$, $z \in K$ and $n \geq L_0$, the matrix $X_n(t; z)$ can be diagonalized
\begin{equation}
	\label{eq:31a}
	X_n(t; z) = C_n(t; z) D_n(t; z) C_n(t; z)^{-1}
\end{equation}
where
\begin{equation}
	\label{eq:31b}
	C_n = 
	\begin{pmatrix}
		1 & 1 \\
		\frac{\lambda_n^+ - [X_n]_{11}}{[X_n]_{12}} & \frac{\lambda_n^- - [X_n]_{11}}{[X_n]_{12}}
	\end{pmatrix}
	\quad\text{and}\quad
	D_n = \begin{pmatrix}
	\lambda_n^+ & 0 \\
	0 & \lambda_n^-
	\end{pmatrix}.
\end{equation}
By restricting to real line, we have the following two statements.
\begin{claim}
	\label{clm:3}
	For all $n > m \geq L_0$, $t \in [0, \omega]$ and $z \in \RR \cap K$, we have
	\begin{equation}
		\label{eq:33}
		p_n(t) \prod_{k = m}^{n-1} \abs{\lambda_k^+(t; z)}^2
		=
		p_m(t).
	\end{equation}
\end{claim}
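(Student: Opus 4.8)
The plan is to turn the product of squared moduli of eigenvalues into a telescoping product of ratios $p_k(t)/p_{k+1}(t)$, using that for real spectral parameter each $X_k(t;z)$ is a \emph{real} matrix whose two eigenvalues form a complex-conjugate pair. The identity \eqref{eq:33} then collapses by cancellation.

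First I would note that for $z \in \RR$ the coefficient matrix $b(t;z)$ in \eqref{eq:5} has real entries, so by \eqref{eq:47} the transfer matrix $T(\cdot;z)$ is real-valued, and hence so is $X_k(t;z) = T(t+(k+1)\omega;z) T(t+k\omega;z)^{-1}$, cf. \eqref{eq:99} and \eqref{eq:41}. For $k \geq L_0$ the estimate \eqref{eq:48} forces $\discr X_k(t;z) < 0$ (the discriminant is real for real $z$, is bounded away from $0$, and tends to $\discr \frakT(\omega;0) < 0$ by Proposition~\ref{prop:4}), so $X_k(t;z)$ has a genuinely non-real pair of eigenvalues. A real matrix with non-real eigenvalues has them in a conjugate pair, whence $\lambda_k^-(t;z) = \overline{\lambda_k^+(t;z)}$; this is also consistent with $\Im \lambda_k^+(t;z) > 0$, cf. \eqref{eq:28}.

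The key identity now follows from $\det X_k(t;z) = \lambda_k^+(t;z) \lambda_k^-(t;z)$ together with the conjugacy just established: for $k \geq L_0$ and $z \in \RR \cap K$,
\[
    \abs{\lambda_k^+(t;z)}^2 = \lambda_k^+(t;z) \overline{\lambda_k^+(t;z)} = \lambda_k^+(t;z) \lambda_k^-(t;z) = \det X_k(t;z) = \frac{p_k(t)}{p_{k+1}(t)},
\]
where the last equality is \eqref{eq:16} applied to $X_k(t;z)$. Multiplying over $k = m, m+1, \dots, n-1$ telescopes to $\prod_{k=m}^{n-1} \abs{\lambda_k^+(t;z)}^2 = p_m(t)/p_n(t)$, and multiplying by $p_n(t)$ yields \eqref{eq:33}. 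There is essentially no obstacle here; the only point demanding care is the reduction to a conjugate eigenvalue pair, which rests on the reality of $X_k(t;z)$ for real $z$ and on the sign of the discriminant guaranteed by \eqref{eq:48}.
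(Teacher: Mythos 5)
Your proof is correct and follows essentially the same route as the paper: establish $\lambda_k^-(t;z) = \overline{\lambda_k^+(t;z)}$ for real $z$, deduce $\abs{\lambda_k^+(t;z)}^2 = \det X_k(t;z) = p_k(t)/p_{k+1}(t)$ via \eqref{eq:16}, and telescope. The extra care you take in justifying the conjugate pair (reality of $X_k$ plus negativity of the discriminant) is a sound elaboration of what the paper states without comment.
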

To see this, let us observe that for all $t \in [0, \omega]$ and $z \in \RR$ we have
\[
	\lambda_n^-(t; z) = \overline{\lambda_n^+(t; z)}.
\]
Hence, for $k \geq L_0$,
\[
	|\lambda_k^+(t; z)|^2 
	= \lambda_k^+(t; z) \overline{\lambda_k^+(t; z)} \\
	= \det D_k(t; z) 
	= \det X_k(t; z) \\
	= \frac{p_{k}(t)}{p_{k+1}(t)},
\]
which easily leads to \eqref{eq:33}.

\begin{claim}
	\label{clm:1}
	There is $c > 0$ such that for all $n > m \geq L_0$, $\eta \in \sS^1$, $s \in [0, \omega]$ and $z \in \RR \cap K$,
	\[
		\|\mathbf{u}_n(s, \eta; z) \| \leq c \Big(\prod_{k = m}^{n-1} \abs{\lambda_k^+(t; z)} \Big)
		\|\mathbf{u}_m(s, \eta; z)\|.
	\]
\end{claim}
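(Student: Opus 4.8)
The plan is to pass to the transfer-matrix product and control it by a discrete Levinson-type argument. Writing $\Pi_{m,n}(s;z) = X_{n-1}(s;z)\cdots X_m(s;z)$, iteration of \eqref{eq:14} gives $\mathbf{u}_n(s,\eta;z) = \Pi_{m,n}(s;z)\,\mathbf{u}_m(s,\eta;z)$, so it suffices to show $\|\Pi_{m,n}(s;z)\| \le c \prod_{k=m}^{n-1}\abs{\lambda_k^+(s;z)}$ uniformly in $s\in[0,\omega]$, $z\in\RR\cap K$ and $n>m\ge L_0$. I factor out the eigenvalue moduli: set $\hat{X}_k = \abs{\lambda_k^+(s;z)}^{-1} X_k(s;z)$, so $\Pi_{m,n} = \big(\prod_{k=m}^{n-1}\abs{\lambda_k^+}\big)\hat{X}_{n-1}\cdots\hat{X}_m$, and the claim reduces to uniform boundedness of the normalized product $\hat{\Pi}_{m,n} = \hat{X}_{n-1}\cdots\hat{X}_m$. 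The decisive feature for real $z$ is that, by \eqref{eq:106} and the discussion preceding Claim~\ref{clm:3}, $\lambda_k^-(s;z) = \overline{\lambda_k^+(s;z)}$, so the diagonal factor $\hat{D}_k = \abs{\lambda_k^+}^{-1} D_k = \diag(e^{i\theta_k}, e^{-i\theta_k})$ from \eqref{eq:31a}--\eqref{eq:31b} is unitary; in particular $\|\hat{D}_k\| = 1$.

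Next I would telescope using $\hat{X}_k = C_k \hat{D}_k C_k^{-1}$. Inserting this into $\hat{\Pi}_{m,n}$ and regrouping produces
\[
	\hat{\Pi}_{m,n} = C_{n-1}\,\hat{D}_{n-1}\,(C_{n-1}^{-1}C_{n-2})\,\hat{D}_{n-2}\cdots (C_{m+1}^{-1}C_m)\,\hat{D}_m\,C_m^{-1},
\]
and I rewrite each bridging factor as $C_{k+1}^{-1}C_k = \Id - C_{k+1}^{-1}\Delta C_k$, with $\Delta C_k = C_{k+1}-C_k$. Since $\|\hat{D}_k\| = 1$, taking norms gives
\[
	\|\hat{\Pi}_{m,n}\| \le \|C_{n-1}\|\,\|C_m^{-1}\|\prod_{k=m}^{n-2}\big(1 + \|C_{k+1}^{-1}\|\,\|\Delta C_k\|\big).
\]
Thus everything comes down to two uniform estimates: $\|C_k\|, \|C_k^{-1}\| \le B$ for $k\ge L_0$, and $\sum_{k\ge L_0}\sup\|\Delta C_k\| < \infty$; granting these, $\|\hat\Pi_{m,n}\| \le B^2\exp\big(B\sum_k\|\Delta C_k\|\big)$ is bounded and the claim follows.

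The bounds on $C_k, C_k^{-1}$ are routine from \eqref{eq:31b}: the entries of $C_k$ are controlled because $X_k$ is bounded on $[0,\omega]\times K$ (Proposition~\ref{prop:4}), $\abs{[X_k]_{12}}\ge\delta$ by \eqref{eq:48}, and $\lambda_k^\pm$ converge (Proposition~\ref{prop:3}); while $\abs{\det C_k} = \abs{\lambda_k^+ - \lambda_k^-}/\abs{[X_k]_{12}} = \sqrt{\abs{\discr X_k}}/\abs{[X_k]_{12}} \ge \sqrt{\delta}/\sup\abs{[X_k]_{12}}$ stays bounded away from $0$, bounding $C_k^{-1}$. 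The genuine obstacle is the summability of $\|\Delta C_k\|$, i.e. that $(C_k)$ lies in the Stolz class $\calD_1$ uniformly on $[0,\omega]\times(\RR\cap K)$. I would deduce this from Proposition~\ref{prop:2}, which gives $(X_k)\in\calD_1([0,\omega]\times K)$, together with the fact that, on the region cut out by \eqref{eq:48} (where $\det X_k = p_k/p_{k+1}$ is bounded away from $0$ by Lemma~\ref{lem:3} and the eigenvalues are separated), the map $X_k \mapsto C_k$ is Lipschitz: $C_k$ is a rational function of the entries of $X_k$ and of $\lambda_k^\pm$ with denominator $[X_k]_{12}$, whence $\|\Delta C_k\| \lesssim \|\Delta X_k\| + \abs{\Delta \lambda_k^+} + \abs{\Delta\lambda_k^-}$, and the right-hand side is summable once $\lambda_k^\pm\in\calD_1$, which again follows from $X_k\in\calD_1$ by the same Lipschitz principle.

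One point requires care and is where I expect the main difficulty. For real $z$ the normalized trace $y_k = \tr X_k/(2\sqrt{\det X_k})$ is real and, by Proposition~\ref{prop:3}, converges to $\tr\frakT(\omega;0)/2 \in (-1,1)$, so $\xi_+$ is evaluated not on $\CC\setminus[-1,1]$ but through its extension to $(-1,1)$ from Section~\ref{sec:2.1}. I would therefore first choose $M\ge L_0$ so large that $y_k$ lies in a fixed compact subinterval $I\Subset(-1,1)$ for all $k\ge M$, $s\in[0,\omega]$ and $z\in\RR\cap K$; on $I$ the extended $\xi_+$ is smooth, hence Lipschitz, so $\abs{\Delta\xi_+(y_k)}\lesssim\abs{\Delta y_k}$, and $\sum_k\abs{\Delta y_k}<\infty$ since $y_k$ is a Lipschitz function of $X_k\in\calD_1$ (using $\det X_k$ bounded away from $0$). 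This yields $(\lambda_k^\pm)_{k\ge M}\in\calD_1$, then $(C_k)_{k\ge M}\in\calD_1$, and the finitely many indices $L_0\le k<M$ only affect the constant $c$.
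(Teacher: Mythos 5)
Your argument is correct and follows essentially the same route as the paper: the paper diagonalizes $X_k=C_kD_kC_k^{-1}$, writes $\mathbf{u}_n$ as the product $X_{n-1}\cdots X_m\mathbf{u}_m$, and invokes Proposition~1 of \cite{SwiderskiTrojan2019} to bound the product by $c\prod_k\|D_k\|$ with $\|D_k\|=\abs{\lambda_k^+}$, the needed $\calD_1$-control of $(C_k)$ and $(D_k)$ being supplied by Proposition~\ref{prop:2} and Section~\ref{sec:2.2}. Your telescoping of the normalized product, together with the uniform bounds on $C_k^{\pm1}$ and the summability of $\Delta C_k$, is precisely an inline proof of that cited proposition, so the two arguments coincide in substance.
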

For the proof, we observe that by \eqref{eq:14}
\[
	\mathbf{u}_n = X_n \cdots X_m \mathbf{u}_{m}.
\]
Since by \eqref{eq:31a},
\[
	X_n X_{n-1} \cdots X_m = C_n \bigg(\prod_{k = m}^n D_k C_k C_k^{-1} \bigg) C_{m-1}^{-1},
\]
by \cite[Proposition 1]{SwiderskiTrojan2019}, we get
\[
	\big\|\mathbf{u}_n \big\| \leq c \Big( \prod_{k = m}^{n-1} \|D_k\| \Big) \|\mathbf{u}_{m}\|
\]
which proves the claim since $\|D_j\| = \abs{\lambda_j}$.

Next, in view of \eqref{eq:24}, Proposition \ref{prop:2} and \cite[Lemma 2]{SwiderskiTrojan2019}, we obtain the following 
corollary.
\begin{corollary}
	\label{cor:2}
	Let $\omega > 0$. Suppose that $(p, q, w)$ are $\omega$-periodically modulated Sturm--Liouville parameters, such that 
	the transfer matrix $\frakT$ corresponding to $(\frakp, \frakq, \frakw)$ satisfies $\abs{\tr \frakT(\omega; 0)} < 2$. 
	Let $K$ be a compact subset of $\CC_\varsigma$ with nonempty interior where 
	$\varsigma = \sign{\tr \partial_z \frakT(\omega; 0)}$. If
	\[
		\frac{q}{p}, \frac{w}{p}, \frac{p'}{p} \in \calD^\omega_1\big(L^1; \RR\big),
	\]
	then there is $M \geq 1$ such that both sequences $(C_n : n \geq M)$ and $(D_n : n \geq M)$ belong to 
	$\calD_1\big([0,\omega]\times K; \GL(2, \CC)\big)$.
\end{corollary}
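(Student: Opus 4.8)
The plan is to obtain the corollary as a direct consequence of Proposition~\ref{prop:2} together with the stability of the Stolz class under suitable operations. Proposition~\ref{prop:2} places $(X_n : n \in \NN)$ in $\calD_1\big([0,\omega]\times K; \GL(2,\CC)\big)$, and both $C_n$ and $D_n$ are obtained from $X_n$ by applying one and the same map, assembled from the elementary operations of addition, multiplication, taking reciprocals, the positive square root, and the function $\xi_+$. Since $\calD_1$ is closed under addition and under multiplication of uniformly bounded sequences (immediate from \eqref{eq:24}), and, by \cite[Lemma 2]{SwiderskiTrojan2019}, under composition with maps that are Lipschitz on the relevant range and under reciprocals of sequences bounded away from zero, it suffices to check that, for $n$ large, the data feeding each of these operations stay in a fixed compact set on which the operation is Lipschitz, uniformly in $(t,z)\in[0,\omega]\times K$.

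I would first dispose of the elementary pieces. From $(X_n)\in\calD_1$ each entry $[X_n]_{ij}$ lies in $\calD_1$, and hence so do $\tr X_n$ and $\det X_n$, being a sum and a quadratic polynomial in the (uniformly bounded) entries. By \eqref{eq:16} one has $\det X_n = p_n(t)/p_{n+1}(t)$, which is real, strictly positive, independent of $z$, and (as in the proof of Proposition~\ref{prop:3}) tends uniformly to $1$; thus for $n$ large $\det X_n$ stays in a compact subinterval of $(0,\infty)$, on which $x\mapsto\sqrt{x}$ is Lipschitz, so $\sqrt{\det X_n}\in\calD_1$. By \eqref{eq:48} we have $|[X_n]_{12}|\geq\delta>0$ for $n\geq L_0$, so $1/[X_n]_{12}\in\calD_1$. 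Combining these, $y_n := \tr X_n/\big(2\sqrt{\det X_n}\big)$ is a product of $\calD_1$ sequences with denominator bounded away from $0$, hence $y_n\in\calD_1$; moreover, since $X_n\to\frakT_t(\omega;0)$ uniformly by Proposition~\ref{prop:4}, one has $y_n\to\tfrac12\tr\frakT(\omega;0)=:v_0$, which lies strictly inside $(-1,1)$.

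The only genuinely delicate step is the application of $\xi_+$, and this is where I expect the main obstacle. The limit $v_0$ sits on the branch cut $[-1,1]$ of $\xi_+$, across which $\xi_+$ is discontinuous, so I must ensure that $(y_n)$ approaches $v_0$ from a single side. This is exactly the purpose of the restriction $K\subset\CC_\varsigma$ with $\varsigma=\sign{\tr\partial_z\frakT(\omega;0)}$: arguing as in Lemma~\ref{lem:6} (tracking the sign of $\Re\tr\partial_z X_n$ through \eqref{eq:17}), there is $M\geq1$ such that $\Im y_n>0$ for all $n\geq M$ and $z\in K$, so the tail $(y_n)_{n\geq M}$ takes values in a compact subset of $\CC_+\cup(-1,1)$ that avoids the branch points $\pm1$ (as $v_0$ is bounded away from them). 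On such a set $\xi_+$ extends to a $C^1$ map with bounded derivative, whence $\xi_+(y_n)\in\calD_1$; then $\xi_-(y_n)=2y_n-\xi_+(y_n)\in\calD_1$ as well. Since $\lambda_n^\pm=\sqrt{\det X_n}\,\xi_\pm(y_n)$ by \eqref{eq:106}, both eigenvalue sequences lie in $\calD_1$, and therefore so does the diagonal matrix $D_n$.

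Finally, for $C_n$ the only non-constant entries are the second-row quotients $\big(\lambda_n^\pm-[X_n]_{11}\big)/[X_n]_{12}$ appearing in \eqref{eq:31b}. Each numerator is a difference of $\calD_1$ sequences, and division by $[X_n]_{12}$, which lies in $\calD_1$ and is bounded below by $\delta$, keeps the quotient in $\calD_1$; the remaining entries are constant. Hence $(C_n)\in\calD_1$, completing the argument. All the estimates above are uniform in $(t,z)\in[0,\omega]\times K$ because the bounds from Proposition~\ref{prop:2}, \eqref{eq:48}, and Proposition~\ref{prop:4} are, and because the Lipschitz constants of $\sqrt{\cdot}$ and $\xi_+$ on the fixed compact regions are independent of $n$; the one point requiring real care remains keeping $(y_n)$ on one side of the branch cut, which the half-plane hypothesis $K\subset\CC_\varsigma$ is designed to guarantee.
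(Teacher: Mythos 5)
Your overall strategy coincides with the paper's: the printed proof of Corollary~\ref{cor:2} is a one-line appeal to \eqref{eq:24}, Proposition~\ref{prop:2} and the external Lemma~2 of \cite{SwiderskiTrojan2019}, i.e.\ precisely the principle that $\calD_1$ is stable under uniformly Lipschitz operations applied to sequences whose values stay in a fixed compact set, and your decomposition of $C_n$ and $D_n$ into such operations (entries, $\tr$, $\det$, $\sqrt{\cdot}$ near $1$ via \eqref{eq:16}, $1/[X_n]_{12}$ via \eqref{eq:48}, and finally $\xi_\pm$) is exactly what that citation is meant to cover. You are also right that the only genuinely delicate point is the application of $\xi_+$ near the cut $[-1,1]$: if $y_n=\tr X_n/(2\sqrt{\det X_n})$ crossed the cut infinitely often near $\tfrac{1}{2}\tr\frakT(\omega;0)$, each crossing would contribute to $\sum_n\|\Delta D_n\|$ an amount bounded below by roughly $\sqrt{-\discr\frakT(\omega;0)}$, so one-sidedness is genuinely needed for the labeling \eqref{eq:106} to have summable increments.

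The gap is in how you secure that one-sidedness. You invoke ``the argument of Lemma~\ref{lem:6}'', but that argument runs through Lemma~\ref{lem:5}, whose hypothesis is the averaged condition \eqref{eq:119} (equivalently \eqref{eq:51}); this condition is \emph{not} among the hypotheses of Corollary~\ref{cor:2}. Moreover, even where \eqref{eq:51} is available it is assumed only at a single distinguished $s\in[0,\omega]$, and Lemma~\ref{lem:6} then controls $\Im y_n(s;\cdot)$ only at that $s$, whereas the corollary asserts membership in $\calD_1([0,\omega]\times K;\GL(2,\CC))$, i.e.\ uniformity over all $t\in[0,\omega]$. So the step ``there is $M$ with $\Im y_n(t;z)>0$ for all $n\geq M$, $t\in[0,\omega]$, $z\in K$'' is not justified by the machinery you cite. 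The conclusion of that step is nevertheless true under the stated hypotheses, but it needs a different argument: in the computation underlying Lemma~\ref{lem:8}, $\tr\partial_z X_n(t;z)$ equals minus the integral, against the positive measure $\frac{w_n(t+t')}{p_n(t+t')}\,{\rm d}t'$, of a function converging uniformly to $Q\big(a_{11}(t'),a_{12}(t')\big)$, where $Q$ is a quadratic form whose discriminant equals $\discr\frakT(\omega;0)<0$; hence $Q$ is definite, the limiting integrand is single-signed and bounded away from $0$, so $\Re\tr\partial_z X_n$ eventually has the constant sign $\varsigma$ uniformly in $(t,z)\in[0,\omega]\times K$, and \eqref{eq:17} then yields $\Im\tr X_n>0$ on $\CC_\varsigma$. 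Without some such argument (or without adding \eqref{eq:51} and restricting to the distinguished $s$, which is how the corollary is actually deployed in Theorem~\ref{thm:4}), your treatment of the one step you yourself single out as critical is incomplete.
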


\section{Tur\'an determinants}
\label{sec:4}
In this section we introduce generalized Tur\'an determinants, and study their asymptotic behavior in the case of 
$\omega$-periodically modulated Sturm--Liouville parameters. To be precise, assume that $(p, q, w)$ are $\omega$-periodically
modulated Sturm--Liouville parameters. If $u$ is a non-trivial solution of \eqref{eq:2},
then \emph{generalized $\omega$-shifted Tur\'an determinants} is defined as
\begin{align*}
	\scrD(t; z) 
	&=
	\det
	\begin{pmatrix}
		u(t+\omega;z) & u(t;z) \\
		\partial_t u(t+\omega;z) & \partial_t u(t; z)
	\end{pmatrix} \\
	&=
	u(t+\omega; z) \partial_t u(t; z) - u(t; z) \partial_t u(t+\omega; z).
\end{align*}
Given $\eta \in \tsS^1$ (cf. \eqref{eq:178}) and $s \in [0, \omega]$, $n \in \NN_0$ we set
\[
	\scrD_n(s, \eta; z) = \scrD(s + n \omega; z), \quad z \in \RR
\]
where $u = \sprod{\mathbf{u}}{e_1}$ and $\mathbf{u}$ satisfies \eqref{eq:61}.
\begin{theorem}
	\label{thm:1}
	Let $\omega > 0$. Suppose that $(p, q, w)$ are $\omega$-periodically modulated Sturm--Liouville parameters, such that 
	the transfer matrix $\frakT$ corresponding to $(\frakp, \frakq, \frakw)$ satisfies $\abs{\tr \frakT(\omega; 0)} < 2$. 
	Assume that 
	\[
		\frac{q}{p}, \frac{w}{p}, \frac{p'}{p} \in \calD_1^\omega(L^1; \RR).
	\]
	Then for each solution $\mathbf{u}$ of \eqref{eq:61} the limit 
	\[
		\lim_{n \to \infty} p_n(t) |\scrD_n(t, \eta; z)|
	\]
	exists locally uniformly with respect to $(t, \eta, z) \in [0, \omega] \times \tsS^1 \times \RR$, and it is a positive 
	continuous function.
\end{theorem}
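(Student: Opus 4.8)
The plan is to diagonalize the one-step transfer matrices $X_n$ and track the Tur\'an determinant in the resulting eigenbasis, exploiting that on the real axis the two eigenvalues of $X_n$ have equal modulus. Fix a compact $K \subset \RR$; throughout I work on the compact parameter set $(t,\eta,z) \in [0,\omega] \times \tsS^1 \times K$. By Proposition~\ref{prop:4} we have $X_n(t;z) \to \frakT_t(\omega;0)$ with $\abs{\tr \frakT_t(\omega;0)} = \abs{\tr \frakT(\omega;0)} < 2$, so \eqref{eq:48} supplies $\delta > 0$ and $L_0$ with $\abs{\discr X_n} \geq \delta$ and $\abs{[X_n]_{12}} \geq \delta$ for $n \geq L_0$; hence for such $n$ the matrix $X_n$ diagonalizes as in \eqref{eq:31a}--\eqref{eq:31b}, and for real $z$ the eigenvalues are complex conjugate, $\lambda_n^- = \overline{\lambda_n^+}$. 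Writing $v_n = C_n^{-1}\mathbf{u}_n$ and noting $\scrD_n = \det(\mathbf{u}_{n+1},\mathbf{u}_n) = \det(X_n \mathbf{u}_n, \mathbf{u}_n)$ by \eqref{eq:14}, a direct computation gives $\scrD_n = \det(C_n)\,(\lambda_n^+ - \lambda_n^-)\,v_n^+ v_n^-$; since $\det C_n = (\lambda_n^- - \lambda_n^+)/[X_n]_{12}$ and $v_n^- = \overline{v_n^+}$ on $\RR$, this simplifies to
\[
\abs{\scrD_n(t,\eta;z)} = \frac{\abs{\lambda_n^+ - \lambda_n^-}^2}{\abs{[X_n]_{12}}}\,\abs{v_n^+}^2 .
\]
Because $\lambda_n^\pm \to \lambda_\infty^\pm$ and $[X_n]_{12} \to [\frakT_t(\omega;0)]_{12} \neq 0$ locally uniformly (Proposition~\ref{prop:4} and continuity of $\xi_+$), the prefactor tends to the positive continuous function $-\discr \frakT(\omega;0)/\abs{[\frakT_t(\omega;0)]_{12}}$, and the whole problem reduces to showing that $p_n(t)\,\abs{v_n^+}^2$ converges, locally uniformly, to a positive continuous limit.

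To this end I would pass to a normalized sequence. Set $\Pi_n = \diag\big(\prod_{k=L_0}^{n-1}\lambda_k^+, \prod_{k=L_0}^{n-1}\lambda_k^-\big)$ and $\hat v_n = \Pi_n^{-1} v_n$. Since $v_{n+1} = (C_{n+1}^{-1} C_n) D_n v_n$ and $D_n \Pi_n = \Pi_{n+1}$, the recurrence becomes $\hat v_{n+1} = A_n \hat v_n$ with $A_n = \Id + \Pi_{n+1}^{-1} R_n \Pi_{n+1}$ and $R_n = -C_{n+1}^{-1} \Delta C_n$. Here is the crucial point: for $z \in \RR$ one has $\abs{\lambda_k^+} = \abs{\lambda_k^-}$, so conjugation by $\Pi_{n+1}$ multiplies the off-diagonal entries of $R_n$ by unimodular factors and hence preserves the norm up to a constant. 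Combined with the uniform bound $\sum_n \|\Delta C_n\| < \infty$ on $[0,\omega]\times K$ (which follows from Proposition~\ref{prop:2} and \eqref{eq:48} exactly as in Corollary~\ref{cor:2}, now for compact $K \subset \RR$) and the uniform bound on $\|C_{n+1}^{-1}\|$ (from $\abs{\det C_n} = \abs{\lambda_n^+ - \lambda_n^-}/\abs{[X_n]_{12}}$ being bounded above and below), this yields $\sum_n \sup \|A_n - \Id\| < \infty$. Therefore the ordered products $\prod_{k=L_0}^{n-1} A_k$ converge, locally uniformly, to an invertible limit $P$, so $\hat v_n \to \hat v_\infty = P \hat v_{L_0}$ locally uniformly. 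Finally, Claim~\ref{clm:3}, i.e.\ \eqref{eq:33}, gives $\prod_{k=L_0}^{n-1} \abs{\lambda_k^+}^2 = p_{L_0}(t)/p_n(t)$, whence $p_n(t)\,\abs{v_n^+}^2 = p_{L_0}(t)\,\abs{\hat v_n^+}^2 \to p_{L_0}(t)\,\abs{\hat v_\infty^+}^2$.

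It remains to verify that this limit is positive and continuous. Continuity is immediate, since it is a locally uniform limit of jointly continuous functions (the solution $\mathbf{u}$ is jointly continuous and entire in $z$, and $C_{L_0}$ is continuous). For positivity I again use the equal-modulus property: since $\eta \in \tsS^1$ is nonzero and the transfer matrix is invertible, $\mathbf{u}_{L_0} \neq 0$, so $\hat v_{L_0} = C_{L_0}^{-1}\mathbf{u}_{L_0} \neq 0$, and invertibility of $P$ forces $\hat v_\infty \neq 0$; moreover, for real $z$ the symmetry $v_n^- = \overline{v_n^+}$ together with $\overline{\Pi_n^+} = \Pi_n^-$ yields $\hat v_\infty^- = \overline{\hat v_\infty^+}$, so $\hat v_\infty^+ = 0$ would force $\hat v_\infty = 0$, a contradiction. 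Hence $\hat v_\infty^+ \neq 0$ and
\[
\lim_{n \to \infty} p_n(t)\,\abs{\scrD_n(t,\eta;z)} = \frac{-\discr \frakT(\omega;0)}{\abs{[\frakT_t(\omega;0)]_{12}}}\,p_{L_0}(t)\,\abs{\hat v_\infty^+(t,\eta;z)}^2
\]
is a positive continuous function, locally uniformly in $(t,\eta,z)$.

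The genuinely delicate step, where I expect the main obstacle to lie, is the interplay—special to $z \in \RR$—between the equal modulus of $\lambda_n^\pm$ and the $\calD_1$-summability of $\Delta C_n$: the former is needed \emph{both} to keep the off-diagonal corrections $\Pi_{n+1}^{-1}R_n\Pi_{n+1}$ summable (so that the product converges to an invertible $P$) \emph{and} to furnish the conjugation symmetry that prevents the relevant component $\hat v_\infty^+$ from vanishing. Away from this observation the argument is bookkeeping, but it is precisely this real-axis structure that makes both convergence and strict positivity work simultaneously.
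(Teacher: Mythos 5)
Your argument is correct, but it takes a genuinely different route from the paper's. The paper never passes to the eigenbasis: it sets $S_n = p_{n+1}\scrD_n = \sprod{E\mathbf{u}_{n+1}}{\mathbf{u}_n}$ with $E$ the standard symplectic matrix, uses the identity $X^tE = (\det X)EX^{-1}$ together with $\det X_n = p_n/p_{n+1}$ to telescope $S_n - S_{n+k}$, and controls the ratio $S_n/S_{n+k}$ by the quadratic-form lower bound $\abs{\sprod{EX_n\mathbf{u}_n}{\mathbf{u}_n}} \geq c\qnorm{\mathbf{u}_n}^2$, which comes from $\det\big(\sym(EX_n)\big) = -\tfrac14\discr X_n \geq \delta/4$; positivity and continuity are then extracted abstractly by showing that $(\log\abs{S_n})$ is a uniform Cauchy sequence, and no formula for the limit is produced. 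You instead compute $\abs{\scrD_n} = \abs{\lambda_n^+-\lambda_n^-}^2\abs{[X_n]_{12}}^{-1}\abs{v_n^+}^2$ in the eigenbasis, normalize by the eigenvalue products, and obtain convergence of $\hat v_n$ from a Levinson-type convergent product $\prod_k A_k$ — which is essentially the machinery the paper deploys only later, in Theorem~\ref{thm:2}. What your route buys is an explicit limit, $\tfrac{-\discr\frakT(\omega;0)}{\abs{[\frakT_t(\omega;0)]_{12}}}\,p_{L_0}(t)\abs{\hat v_\infty^+}^2$, and a direct proof that the leading coefficient $\hat v_\infty^+$ does not vanish (invertibility of $P$ combined with the conjugation symmetry $\hat v_\infty^- = \overline{\hat v_\infty^+}$ on the real axis); in the paper that non-vanishing statement is precisely Claim~\ref{clm:2} inside the proof of Theorem~\ref{thm:2} and is established \emph{by contradiction using Theorem~\ref{thm:1} as an input}, so you have in effect reversed the logical order of Sections~\ref{sec:4} and~\ref{sec:5} without introducing circularity. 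Two points you should make explicit in a write-up: the summability $\sum_n\sup\qnorm{\Delta C_n}<\infty$ over $[0,\omega]\times K$ with $K\subset\RR$ compact is not literally Corollary~\ref{cor:2} (stated for $K\subset\CC_\varsigma$), although, as you note, it follows verbatim from Proposition~\ref{prop:2}, \eqref{eq:48} and the cited Lemma~2 of \cite{SwiderskiTrojan2019} — this is also how the paper itself uses it in Theorem~\ref{thm:2}; and the invertibility of $P$ deserves the one-line remark that $\sum_n\qnorm{A_n^{-1}-\Id}<\infty$ as well, so the reversed-order products converge and furnish the inverse.
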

\begin{proof}
	We write
	\[
		S_n = p_{n+1} \scrD_n = \sprod{E \mathbf{u}_{n+1}}{\mathbf{u}_{n}}
	\]
	where
	\[
		E = 
		\begin{pmatrix}
			0 & -1\\
			1 & 0
		\end{pmatrix}.
	\]
	Since for each $X \in \GL(2, \RR)$,
	\begin{equation}
		\label{eq:62}
		X^t E = (\det X) E X^{-1},
	\end{equation}
	by \eqref{eq:14}, we get
	\begin{align*}
		S_n 
		&= p_{n+1} \sprod{E X_{n+1}^{-1} \mathbf{u}_{n+2}}{X_n^{-1} \mathbf{u}_{n+1}} \\
		&= p_{n+1} \sprod{(X_n^{-1})^t E X_{n+1}^{-1} \mathbf{u}_{n+2}} {\mathbf{u}_{n+1}} \\
		&= p_{n+1} \det X_n^{-1} \sprod{ E X_n X_{n+1}^{-1} \mathbf{u}_{n+2}} {\mathbf{u}_{n+1}}.
	\end{align*}
	Recall that by \eqref{eq:16}
	\begin{equation}
		\label{eq:44}
		\det X_n(t; z) = \frac{p_n(t)}{p_{n+1}(t)}.
	\end{equation}
	Consequently, for each $k \geq 1$,
	\begin{align*}
		S_n 
		&= p_{n+1} \big(\det X_n^{-1} \cdots \det X_{n+k}^{-1}\big)
		\bigg\langle E  \bigg(\prod_{j = 0}^{k-1} X_{n+j}\bigg) \bigg(\prod_{j=1}^k X_{n+j}\bigg)^{-1} 
		\mathbf{u}_{n+k}, \mathbf{u}_{n+k-1} \bigg\rangle \\
		&= p_{n+1} \frac{p_{n+k+1}}{p_n} 
		\bigg\langle E  \bigg(\prod_{j = 0}^{k-1} X_{n+j}\bigg) \bigg(\prod_{j=1}^k X_{n+j}\bigg)^{-1}
		\mathbf{u}_{n+k}, \mathbf{u}_{n+k-1} \bigg\rangle.
	\end{align*}
	Therefore,
	\[
		S_n - S_{n+k}
		=
		p_{n+k+1} \bigg\langle E Y_{n,k} \bigg(\prod_{j=1}^k X_{n+j} \bigg)^{-1} 
		\mathbf{u}_{n+k}, \mathbf{u}_{n+k-1} \bigg\rangle
	\]
	where
	\[
		Y_{n, k}= \bigg(\prod_{j=0}^{k-1} X_{n+j} \bigg) - \frac{p_{n+1}}{p_n} \bigg(\prod_{j=1}^k X_{n+j} \bigg),
	\]
	which by the Cauchy--Schwarz inequality leads to
	\[
		\big|S_n - S_{n+k} \big|
		\leq
		c
		p_{n+k+1} \bigg\| \bigg(\prod_{j=1}^k X_{n+j}\bigg)^{-1} \bigg\| \cdot \big\|Y_{n, k} \big\| 
		\cdot \big\|\mathbf{u}_{n+k}\big\|^2.
	\]
	Let $K$ be a compact set in $\RR$. By Proposition \ref{prop:4}
	\begin{equation}
		\label{eq:21}
		\lim_{n \to \infty} \sup_{z \in K} \sup_{t \in [0, \omega]} 
		\big\| X_n(t; z) - \frakT_t(\omega; 0) \big\| = 0,
	\end{equation}
	thus there are $n_0 \geq 1$ and $\delta > 0$ such that for all $n \geq n_0$, $t \in [0, \omega]$ and $z \in K$,
	\begin{equation}
		\label{eq:20}
		\discr X_n(t; z) \leq -\delta.
	\end{equation}
	Since
	\[
		\det \big( \sym (E X_n)\big) = - \frac{1}{4} \discr X_n,
	\]
	there is $c > 0$ such that for all $n \geq n_0$, $t \in [0, \omega]$, $\eta \in \tsS^1$, and $z \in K$,
	\[
		\big|\scrD_n(t, \eta; z)\big| = 
		\big| \sprod{E X_n(t; z) {\mathbf{u}_n(t, \eta; z)}}{\mathbf{u}_{n}(t, \eta; z)} \big|
		\geq
		c \| \mathbf{u}_n(t, \eta; z) \|^2.
	\]
	Hence,
	\begin{align*}
		\bigg|
		\frac{S_{n}}{S_{n+k}} - 1
		\bigg|
		\leq
		c \bigg\| \bigg( \prod_{j=1}^k X_{n+j} \bigg)^{-1} \bigg\| \cdot
		\Bigg(
		\bigg\|\prod_{j=0}^{k-1} X_{n+j}  - \prod_{j=1}^k X_{n+j} \bigg\|
		+
		\bigg|1 - \frac{p_{n+1}}{p_n}\bigg| \cdot \bigg\| \prod_{j=1}^k X_{n+j}\bigg\|\Bigg).
	\end{align*}
	In view of Lemma \ref{lem:3}, 
	\[
		\lim_{n \to \infty} \frac{p_{n+1}(t)}{p_{n}(t)} = 1
	\]
	uniformly with respect to $t \in [0, \omega]$.
	Since by Section \ref{sec:2.2}, $(X_n : n \in \NN_0)$ is uniformly diagonalizable on $[0, \omega] \times K$, 
	\cite[Proposition 1 \& 2]{SwiderskiTrojan2019} implies that there is $c > 0$ such that for each $\epsilon > 0$ there is 
	$n_1 \geq n_0$, such that for all $n \geq n_1$, $k \in \NN$, 
	\begin{align*}
		\bigg| \frac{S_{n}}{S_{n+k}} - 1 \bigg|
		&\leq
		c \bigg(\prod_{j = 1}^k \|D_{n+j}\|^{-1} \bigg)\cdot
		\bigg(\epsilon \prod_{j = 1}^k \|D_{n+j}\| + \bigg|1 - \frac{p_{n+1}}{p_n} \bigg| \prod_{j = 0}^k \|D_{n+j}\|\bigg) \\
		&=
		c \bigg( \epsilon + \bigg|1 - \frac{p_{n+1}}{p_n} \bigg| \|D_n\|\bigg)
	\end{align*}
	uniformly on $[0, \omega] \times \tsS^1 \times K$. Consequently, for each $\epsilon > 0$ there is $n_2 \geq n_1$ such that 
	for all $m \geq n \geq n_2$,
	\begin{equation}
		\label{eq:22}
		\sup_{(t, \eta, z) \in [0, \omega] \times \tsS^1 \times K} \bigg|\frac{S_n(t, \eta; z)}{S_m(t, \eta; z)} - 1\bigg| 
		\leq \epsilon.
	\end{equation}
	Since for all $u \in \RR$,
	\[
		\log \abs{u} \leq \abs{1-\abs{u}},
	\]
	we obtain
	\[
		\big| \log \abs{S_n} - \log \abs{S_m} \big| 
		\leq
		\bigg|1 - \frac{S_n}{S_m} \bigg|
		\leq \epsilon,
	\]
	thus $(\log \abs{S_n} : n \in \NN)$ is a uniform Cauchy sequence of continuous functions on 
	$[0, \omega] \times \tsS^1 \times K$. Therefore, it uniformly converges to continuous function on
	$[0, \omega] \times \tsS^1 \times K$. In particular, $(\abs{S_n} : n \in \NN)$ is uniformly bounded on 
	$[0, \omega] \times \tsS^1 \times K$. Thus, by \eqref{eq:22}, $(S_n : n \in \NN)$ is a uniform Cauchy sequence 
	of continuous functions on $[0, \omega] \times \tsS^1 \times K$. This completes the proof of the theorem.
\end{proof}

\begin{corollary}
	\label{cor:1}
	Suppose that the hypotheses of Theorem \ref{thm:1} are satisfied. Then for any compact subset $K \subset \RR$, there is
	$c > 0$ such that for each solution $\mathbf{u}$ of \eqref{eq:10}, every $n \in \NN$, $t \in [0, \omega]$, $\eta \in \tsS^1$,
	and $z \in K$, we have
	\[
		c^{-1} \leq \sqrt{p_n(t)} \big\|\mathbf{u}_n(t, \eta; z) \big\| \leq c. 
	\]
\end{corollary}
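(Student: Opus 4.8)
The plan is to combine the two-sided comparison between the Tur\'an determinant $\scrD_n$ and $\|\mathbf{u}_n\|^2$ that is already isolated inside the proof of Theorem~\ref{thm:1} with the uniform convergence of $p_n \scrD_n$ to a \emph{positive} limit furnished by that theorem, and then to dispose of the finitely many small indices by a compactness argument. Throughout I would fix the compact set $K \subset \RR$ and work on the compact set $[0,\omega]\times\tsS^1\times K$; all constants are allowed to depend on $K$.

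First I would recall from the proof of Theorem~\ref{thm:1} that $\scrD_n = \sprod{E X_n \mathbf{u}_n}{\mathbf{u}_n}$ and that $\det(\sym(E X_n)) = -\tfrac14 \discr X_n$. The uniform bound $\discr X_n \le -\delta$ for $n \ge n_0$, coming from \eqref{eq:48} and Proposition~\ref{prop:4}, shows that the symmetric form $\sym(E X_n)$ is uniformly definite, while $\|X_n\|$ is uniformly bounded by the same proposition. Hence there are constants $0 < c_1 \le c_2$ such that
\[
    c_1 \|\mathbf{u}_n(t,\eta;z)\|^2 \le |\scrD_n(t,\eta;z)| \le c_2 \|\mathbf{u}_n(t,\eta;z)\|^2
\]
for all $n \ge n_0$ and all $(t,\eta,z) \in [0,\omega]\times\tsS^1\times K$.

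Next I would invoke Theorem~\ref{thm:1}, which asserts that $p_n(t)|\scrD_n(t,\eta;z)|$ converges uniformly on $[0,\omega]\times\tsS^1\times K$ to a positive continuous function $h$. Being positive and continuous on a compact set, $h$ is bounded above and below by positive constants, so there is $n_1 \ge n_0$ with $\tfrac12 \inf h \le p_n |\scrD_n| \le 2\sup h$ for all $n \ge n_1$. Dividing the comparison of the previous step by $p_n(t)$ and inserting these bounds yields
\[
    \frac{\inf h}{2 c_2} \le p_n(t)\|\mathbf{u}_n(t,\eta;z)\|^2 \le \frac{2\sup h}{c_1}, \qquad n \ge n_1,
\]
and taking square roots gives the asserted two-sided estimate for all $n \ge n_1$. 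It then remains to treat the finitely many indices $n < n_1$. For each such fixed $n$, the map $(t,\eta,z) \mapsto \sqrt{p_n(t)}\,\|\mathbf{u}_n(t,\eta;z)\|$ is continuous on the compact set $[0,\omega]\times\tsS^1\times K$, and it is strictly positive: $p_n > 0$ everywhere, while $\mathbf{u}_n(t,\eta;z) \ne 0$ because $\mathbf{u}(\cdot;z)$ solves the linear first-order system \eqref{eq:61} with $\mathbf{u}(0;z) = \eta \ne 0$, so by uniqueness it cannot vanish at any point. Consequently it attains a positive minimum and a finite maximum. Taking the minimum of the lower bounds and the maximum of the upper bounds over $n < n_1$ and combining with the uniform range $n \ge n_1$ produces a single constant $c$ valid for all $n$.

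The main obstacle is the \emph{lower} bound, and it is entirely inherited from Theorem~\ref{thm:1}: the positivity of the limit $h$ is precisely what prevents $\|\mathbf{u}_n\|$ from decaying faster than $p_n^{-1/2}$, while the uniform definiteness of $\sym(E X_n)$ (a consequence of $\discr X_n \le -\delta$) is what converts the lower bound on $|\scrD_n|$ into a lower bound on $\|\mathbf{u}_n\|^2$. Once these two ingredients are in place, the remaining steps — uniform convergence to a positive continuous function, a compactness argument for the small indices, and nonvanishing of the solution by uniqueness — are routine.
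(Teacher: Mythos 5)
Your proof is correct. For the lower bound you follow exactly the paper's route: the estimate $\abs{\scrD_n} \geq c \norm{\mathbf{u}_n}^2$ coming from $\det(\sym(E X_n)) = -\tfrac14 \discr X_n \geq \delta/4$, combined with the uniform convergence of $p_n\abs{\scrD_n}$ to the positive continuous limit from Theorem~\ref{thm:1}. For the upper bound, however, you take a genuinely different path: you use the reverse comparison $\abs{\scrD_n} \leq c_2\norm{\mathbf{u}_n}^2$ (Cauchy--Schwarz plus uniform boundedness of $X_n$) together with the boundedness from above of the limit $h$ on the compact set, whereas the paper instead invokes Claim~\ref{clm:1} (the bound $\norm{\mathbf{u}_n} \leq c \prod_{k=m}^{n-1}\abs{\lambda_k^+}\,\norm{\mathbf{u}_m}$ from the uniform diagonalization) and Claim~\ref{clm:3} (the telescoping identity $p_n\prod_{k=m}^{n-1}\abs{\lambda_k^+}^2 = p_m$), so that $\sqrt{p_n}\norm{\mathbf{u}_n} \leq c\sqrt{p_m}\norm{\mathbf{u}_m}$ directly. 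Your version is more symmetric and self-contained, deriving both bounds from the single two-sided comparison between the Tur\'an determinant and $\norm{\mathbf{u}_n}^2$ and thus bypassing the eigenvalue-product machinery entirely; the paper's version does not need the (admittedly trivial) upper comparison $\abs{\scrD_n}\le c_2\norm{\mathbf{u}_n}^2$ but does rely on the diagonalization estimates of Section~\ref{sec:3}. Your explicit treatment of the finitely many indices $n < n_1$ by compactness and non-vanishing of $\mathbf{u}(\cdot;z) = T(\cdot;z)\eta$ (invertibility of $T$) is also correct and fills in a step the paper leaves implicit.
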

\begin{proof}
	We have
	\[
		p_{n+1}(t) \scrD_n(t, \eta; z) \leq c p_{n+1}(t) \big\|\mathbf{u}_n(t, \eta; z)\big\|^2
	\]
	thus the lower estimate is a consequence of Theorem \ref{thm:1} and Lemma \ref{lem:3}. For the upper estimate
	it is enough to use Claim \ref{clm:1} together with Claim \ref{clm:3}.
\end{proof}

\begin{corollary}
	\label{cor:7}
	Suppose that the hypotheses of Theorem~\ref{thm:1} are satisfied. Then $\tau$ is in the circle case if the Carleman condition \eqref{eq:26} is \emph{violated}. Moreover, if $w \in \calD_1^\omega(L^1;\RR)$ and the Carleman's condition is satisfied, then $\tau$ is in the limit point case and for any $\eta \in \sS^1$ and compact $K \subset \RR$ there are constants $c_1,c_2>0$ such that
    \[
        c_1 < \mu_\eta'(\lambda) < c_2
    \]
    for almost all $\lambda \in K$.
\end{corollary}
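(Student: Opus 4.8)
The plan is to turn the two-sided bound of Corollary~\ref{cor:1} into estimates on the $L^2([0,\infty),w)$-mass of solutions, and then to feed the resulting comparison for the Christoffel--Darboux kernel into the subordinacy machinery recalled in \eqref{eq:int:7}--\eqref{eq:int:8}. Fix a compact $K \subset \RR$ and let $c$ be the constant of Corollary~\ref{cor:1}, so that for real $z$, every $n$, $t \in [0,\omega]$ and $\eta \in \tsS^1$ one has $c^{-2} p_n(t)^{-1} \leq \|\mathbf{u}_n(t,\eta;z)\|^2 \leq c^2 p_n(t)^{-1}$. For the limit circle statement I would take any solution $u$ of $\tau u = \lambda u$ with $\lambda$ real, split its $L^2$-norm into periods and use $|u|^2 \leq \|\mathbf{u}\|^2$ together with the upper bound:
\[
    \int_0^\infty |u(t)|^2 w(t) \ud t
    = \sum_{n=0}^\infty \int_0^\omega |u_n(t)|^2 w_n(t) \ud t
    \leq c^2 \sum_{n=0}^\infty \int_0^\omega \frac{w_n(t)}{p_n(t)} \ud t
    = c^2 \int_0^\infty \frac{w(t)}{p(t)} \ud t.
\]
When \eqref{eq:26} is violated the right-hand side is finite; applying this to a fundamental system normalized so that its initial data lie in $\tsS^1$ shows that all solutions of $\tau u = \lambda u$ are square-integrable. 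Since this property is independent of the spectral parameter (see, e.g.\ \cite{Bennewitz2020}), $\tau$ is then in the limit circle case.

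For the remaining assertion suppose \eqref{eq:26} holds and $w \in \calD_1^\omega(L^1;\RR)$. The target is a comparison $c_-\rho_L \leq K_L(\lambda,\lambda;\eta) \leq c_+\rho_L$ with $0<c_-\leq c_+$ uniform in $\lambda\in K$ and $\eta\in\sS^1$. Since $(\mathsf{s}_\eta(0),p\mathsf{s}_\eta'(0))^t = \eta\in\sS^1$ corresponds to initial data in $\tsS^1$, the upper bound follows exactly as in the previous paragraph. For the lower bound I would combine Corollary~\ref{cor:1} with Corollary~\ref{cor:8}: for $n \geq M$ the latter gives $\|\mathbf{u}_n(t)\|^2 \leq c(|u_n(t)|^2 + |u_{n+1}(t)|^2)$, hence $|u_n(t)|^2 + |u_{n+1}(t)|^2 \geq c^{-3}p_n(t)^{-1}$. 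Multiplying by $w_n(t)$, integrating over $[0,\omega]$ and summing over $M \leq n \leq N-1$ bounds $c^{-3}(\rho_{N\omega}-\rho_{M\omega})$ by $A_N+B_N$, where $A_N=\sum_n\int_0^\omega |u_n|^2 w_n$ telescopes into $K_{N\omega}$ and $B_N=\sum_n\int_0^\omega |u_{n+1}|^2 w_n$.

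The crux --- and the step I expect to be the main obstacle --- is the treatment of $B_N$, because it pairs $u_{n+1}$ with the wrong weight $w_n$. Writing $w_n = w_{n+1}-\Delta_\omega w(n\omega+\cdot)$, the leading part of $B_N$ telescopes into $K_{(N+1)\omega}$, while the remainder is dominated by
\[
    \sum_{n \geq M} \Big( \sup_{t \in [0,\omega]} |u_{n+1}(t)|^2 \Big) \int_0^\omega |\Delta_\omega w(n\omega+t)| \ud t
    \leq \frac{c^2}{\inf_{x \geq 0} p(x)} \int_0^\infty |\Delta_\omega w(t)| \ud t,
\]
which is finite because $\Delta_\omega w \in L^1([0,\infty))$ by $w \in \calD_1^\omega(L^1;\RR)$ (cf.\ \eqref{eq:24}) and $\inf_{x\geq 0} p(x) > 0$. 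Hence $c^{-3}(\rho_{N\omega}-\rho_{M\omega}) \leq 2K_{(N+1)\omega}+C$; as $\rho_{N\omega}\to\infty$ by \eqref{eq:26} and $\gamma_N(0)\to 0$ by \eqref{eq:3a}, this gives $K_{N\omega}\geq c_-\rho_{N\omega}$ for large $N$, and the monotonicity of $K_L$ and $\rho_L$ together with $\gamma_N(0)\to0$ interpolates both bounds to arbitrary $L$. It is precisely this replacement of $w_n$ by $w_{n+1}$ up to a summable error that forces the Stolz hypothesis on $w$, which enters nowhere else.

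Finally I would read off the two conclusions. Since \eqref{eq:26} makes $\rho_L\to\infty$, the lower bound gives $K_\infty(\lambda,\lambda;\eta)=\infty$, so $\mathsf{s}_\eta(\cdot;\lambda)\notin L^2([0,\infty),w)$ for real $\lambda$; a fundamental system therefore fails to be square-integrable and $\tau$ is in the limit point case, whence $H_\eta$ is self-adjoint and $\mu_\eta$ is well defined. The uniform comparison then yields $\limsup_{L\to\infty}\sup_{\lambda\in K}\sup_{\eta,\eta'\in\sS^1} K_L(\lambda,\lambda;\eta)/K_L(\lambda,\lambda;\eta') \leq c_+/c_- < \infty$, which is exactly condition \eqref{eq:int:7}; \cite[Theorem~3.1]{Clark1993} then provides the absolute continuity of $\mu_\eta$ on $\intr(K)$ and the two-sided density bound $c_1<\mu_\eta'(\lambda)<c_2$ for almost all $\lambda \in K$.
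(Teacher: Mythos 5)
Your proposal is correct and follows essentially the same route as the paper: the upper bound on the $L^2$-mass from Corollary~\ref{cor:1} gives the limit-circle claim when \eqref{eq:26} fails, and the lower bound combined with Corollary~\ref{cor:8} plus the Stolz hypothesis on $w$ (to absorb the $w_n$ versus $w_{n+1}$ mismatch as a summable error) gives the two-sided comparison $K_L \asymp \rho_L$, from which the limit-point property and the density bounds follow via \cite[Theorem~3.1]{Clark1993}. The paper phrases the key estimate as boundedness of the ratio of kernels for two boundary conditions rather than as a comparison with $\rho_L$, but the telescoping/symmetrization argument and the role of $w \in \calD_1^\omega(L^1;\RR)$ are the same.
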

\begin{proof}
    By Corollary~\ref{cor:8} we have that for any compact $K \subset \RR$ there are constants $c_3>0$ and $M \geq 1$ such that for any $z \in K$, $t \in [0,\omega]$, $\eta \in \tsS^1$ and $n \geq M$,
    \begin{equation}
        \label{eq:156}
        |u_n(t,\eta;z)|^2
        \leq
        \| \mathbf{u}_n(t,\eta;z) \|^2 
        \leq 
        c_3 \big( |u_n(t,\eta;z)|^2 + |u_{n+1}(t,\eta;z)|^2\big).
    \end{equation}
    In particular, by \eqref{eq:156} and Corollary~\ref{cor:1}
    \begin{align*}
        \int_{M\omega}^\infty |u(x,\eta;z)|^2 w(x) \ud x 
        &\leq 
        c_3 \int_{M\omega}^\infty \| \mathbf{u}(x,\eta;z) \|^2 w(x) \ud x \\
        &\leq
        c_3 c \int_{M\omega}^\infty \frac{w(x)}{p(x)} \ud x.
    \end{align*}
    It shows that if the Carleman condition is violated, then $\tau$ is in the limit circle case.
    
    Suppose now that $w \in \calD_1^\omega(L^1;\RR)$ and the Carleman's condition is satisfied. We are going to show that 
    \begin{equation}
        \label{eq:157}
        \limsup_{n \to \infty}
        \sup_{z \in K}
        \sup_{\eta,\tilde{\eta} \in \tsS^1} \frac{\int_{M\omega}^{n\omega} |u(x,\eta;z)|^2 w(x) \ud x}{\int_{M\omega}^{n\omega} |u(x,\tilde{\eta};z)|^2 w(x) \ud x} < \infty.
    \end{equation}
    By \eqref{eq:156} and Corollary~\ref{cor:1} we have
    \begin{equation}
        \label{eq:160}
        \int_{M\omega}^{n\omega} |u(x,\eta;z)|^2 w(x) \ud x 
        =
        \sum_{k=M}^{n-1}
        \int_0^\omega |u_k(t,\eta;z)|^2 w_k(t) \ud t
        \leq 
        c \int_{M\omega}^{n\omega} \frac{w(x)}{p(x)} \ud x.
    \end{equation}
    Next, 
    \begin{multline*}
        \int_{M\omega}^{n\omega} |u(x,\tilde{\eta};z)|^2 w(x) \ud x 
        \geq 
        \frac{1}{2} 
        \sum_{k=M}^{n-2} 
        \int_0^\omega 
        \big( |u_k(t,\eta;z)|^2 w_k(t) + |u_{k+1}(t,\eta;z)|^2 w_{k+1}(t) \big) \ud t \\
        \geq 
        \frac{1}{2} 
        \sum_{k=M}^{n-2} 
        \int_0^\omega
        \big( |u_k(t,\eta;z)|^2 + |u_{k+1}(t,\eta;z)|^2 \big) w_k(t) \ud t \\
        -
        \frac{1}{2}
        \sum_{k=M}^{n-2}
        \int_0^\omega |u_{k+1}(t,\eta;z)|^2 |w_{k+1}(t) - w_k(t) | \ud t.
    \end{multline*}
    Since $w \in \calD_1^\omega(L^1;\RR)$, by Corollary~\ref{cor:1}, \eqref{eq:156} and \eqref{eq:3d} we get for some constant $c_4 > 0$,
    \begin{equation}
        \label{eq:159}
        \int_{M\omega}^{n\omega} |u(x,\tilde{\eta};z)|^2 w(x) \ud x  \geq 
        \frac{1}{2 c_3 c} \int_{M\omega}^{(n-1)\omega} \frac{w(x)}{p(x)} \ud x - \frac{c_4 c}{2}.
    \end{equation}
    By the Carleman condition, it implies
    \begin{equation}
        \label{eq:161}
        \int_{M\omega}^{\infty} |u(x,\tilde{\eta};z)|^2 w(x) \ud x = \infty.
    \end{equation}
    In particular, $\tau$ is in the limit point case.
    Therefore, by combining \eqref{eq:160}, \eqref{eq:159} and \eqref{eq:161}, the formula \eqref{eq:157} easily follows. 
    
    Finally, if \eqref{eq:157} holds, then for any strictly increasing sequence $(L_j:j \geq 1)$ to infinity, let $L_j \in [n_j \omega, n_{j+1} \omega]$. Then for sufficiently large $j$ we have
    \begin{align*}
        \frac{\int_{M\omega}^{L_j} |u(x,\eta;z)|^2 w(x) \ud x}{\int_{M\omega}^{L_j} |u(x,\tilde{\eta};z)|^2 w(x) \ud x} 
        &\leq
        \frac{\int_{M\omega}^{n_{j+1}\omega} |u(x,\eta;z)|^2 w(x) \ud x}{\int_{M\omega}^{n_j\omega} |u(x,\tilde{\eta};z)|^2 w(x) \ud x} \\
        &\leq
        \frac{\int_{M\omega}^{n_{j}\omega} |u(x,\eta;z)|^2 w(x) \ud x}{\int_{M\omega}^{n_j\omega} |u(x,\tilde{\eta};z)|^2 w(x) \ud x} + \frac{c \int_{n_j\omega}^{n_{j+1}\omega} \frac{w(x)}{p(x)} \ud x}{\int_{M\omega}^{n_j\omega} |u(x,\tilde{\eta};z)|^2 w(x) \ud x},
    \end{align*}
    which in view of \eqref{eq:157}, \eqref{eq:3a} and \eqref{eq:161} stays bounded. Therefore, we have proved that
    \[
        \limsup_{L \to \infty}
        \sup_{z \in K}
        \sup_{\eta,\tilde{\eta} \in \tsS^1} \frac{\int_{M\omega}^{L} |u(x,\eta;z)|^2 w(x) \ud x}{\int_{M\omega}^{L} |u(x,\tilde{\eta};z)|^2 w(x) \ud x} < \infty,
    \]
    from which our conclusion follows.
\end{proof}

\section{Asymptotic of generalized eigenvectors}
\label{sec:5}
In this section we determine the asymptotic behavior of generalized solutions of \eqref{eq:61} for $\omega$-periodically
modulated Sturm--Liouville parameters.
\begin{theorem}
	\label{thm:2}
	Let $\omega > 0$. Suppose that $(p, q, w)$ are $\omega$-periodically modulated Sturm--Liouville parameters, such that 
	the transfer matrix $\frakT$ corresponding to $(\frakp, \frakq, \frakw)$ satisfies $\abs{\tr \frakT(\omega; 0)} < 2$. 
	Assume that
	\[
		\frac{q}{p}, \frac{w}{p}, \frac{p'}{p} \in \calD_1^\omega(L^1; \RR).
	\]
	Then for each $K$ a compact subset of $\RR$ there is $M \geq 1$ and a non-vanishing function
	$\vphi: [0, \omega] \times \tsS^1 \times K \rightarrow \CC$ such that for every solution $\mathbf{u}$ of \eqref{eq:61},
	\begin{equation}
		\label{eq:36}
		\lim_{n \to \infty} \sup_{(t, \eta, z) \in [0, \omega] \times \tsS^1 \times K}
		\bigg|\frac{u_{n+1}(t, \eta ; z) - \lambda_n^-(t; z) u_n(t, \eta; z)}
		{\prod_{k = M}^{n-1} \lambda_k^+(t, z)}
		-\vphi(t, \eta; z)\bigg| = 0
	\end{equation}
	where $u_n = \sprod{\mathbf{u}_n}{e_1}$. Furthermore,
	\begin{equation}
		\label{eq:39}
		\frac{u_n(t, \eta ; z)}{\prod_{k = M}^{n-1} \lambda_k^+(t; z)}
		=
		\frac{\abs{\vphi(t, \eta; z)}}{\sqrt{4 - \abs{\tr \frakT(\omega; 0)}^2}}
		\sin\Big(\sum_{k = M}^{n-1} \theta_k(t; z) + \arg \vphi(t, \eta; z) \Big)
		+
		E_n(t, \eta; z)
	\end{equation}
	where
	\[
		\theta_k(t; z) = \arccos\bigg(\frac{\tr X_k(t; z)}{2 \sqrt{\det X_k(t; z)}}\bigg),
	\]
	and
	\[
		\lim_{n \to \infty} \sup_{(t, \eta, z) \in [0, \omega] \times \tsS^1 \times K} {\abs{E_n(t, \eta; z)}} = 0.
	\]
\end{theorem}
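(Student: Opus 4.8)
The plan is to pass to the eigenbasis of the one–step matrices $X_n$ and analyse the resulting coordinates. By Corollary~\ref{cor:2} fix $M \ge L_0$ so large that $(C_n : n \ge M)$ and $(D_n : n \ge M)$ lie in $\calD_1([0,\omega]\times K;\GL(2,\CC))$ and $\sup_{[0,\omega]\times K}\|R_n\| < 1$ for $n \ge M$; recall from Section~\ref{sec:2.2} and Claim~\ref{clm:3} that on $\RR \cap K$ one has $\lambda_n^- = \overline{\lambda_n^+}$ and $|\lambda_n^+|^2 = \det X_n = p_n/p_{n+1}$. Put $\mathbf{v}_n = C_n^{-1}\mathbf{u}_n$. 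Then \eqref{eq:14} together with $X_n = C_n D_n C_n^{-1}$ gives
\[
	\mathbf{v}_{n+1} = (\Id + R_n) D_n \mathbf{v}_n, \qquad R_n := C_{n+1}^{-1} C_n - \Id.
\]
Since $C_n \in \calD_1$ and $C_n^{-1}$ is uniformly bounded (by \eqref{eq:48}, $|[X_n]_{12}| \ge \delta$), one has $\sum_{n \ge M} \sup_{[0,\omega]\times K} \|R_n\| < \infty$.

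Writing $v_n^{(1)}, v_n^{(2)}$ for the components of $\mathbf{v}_n$ and using that the first row of $C_n$ is $(1,1)$ (see \eqref{eq:31b}), the exact relations $u_n = v_n^{(1)} + v_n^{(2)}$ and $u_{n+1} = \lambda_n^+ v_n^{(1)} + \lambda_n^- v_n^{(2)}$ hold, whence $u_{n+1} - \lambda_n^- u_n = (\lambda_n^+ - \lambda_n^-) v_n^{(1)}$. Set $P_n = \prod_{k=M}^{n-1}\lambda_k^+$ and $a_n = v_n^{(1)}/P_n$, $b_n = v_n^{(2)}/P_n$. Dividing the recursion by $P_{n+1} = \lambda_n^+ P_n$ yields $(a_{n+1},b_{n+1})^t = (\Id + R_n)\diag(1,\lambda_n^-/\lambda_n^+)(a_n,b_n)^t$, whose homogeneous factor is unimodular diagonal on $\RR \cap K$; hence $\|(a_n,b_n)\|$ is bounded above and below by $\prod(1\pm\|R_n\|)$-type products, uniformly on $[0,\omega]\times\tsS^1\times K$. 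From $a_{n+1}-a_n = [R_n]_{11} a_n + [R_n]_{12}(\lambda_n^-/\lambda_n^+) b_n$ and $\sum\|R_n\|<\infty$ the sequence $a_n$ is uniformly Cauchy; let $A(t,\eta;z) = \lim_n a_n$. Since $\lambda_n^\pm \to \lambda_\infty^\pm$ (Proposition~\ref{prop:3}), passing to the limit gives
\[
	\varphi(t,\eta;z) := \lim_{n\to\infty}\frac{u_{n+1}-\lambda_n^- u_n}{P_n} = (\lambda_\infty^+-\lambda_\infty^-) A = i\sqrt{4-|\tr\frakT(\omega;0)|^2}\;A,
\]
which is \eqref{eq:36}; continuity of $\varphi$ and uniformity of the convergence follow from the uniform summability of $\|R_n\|$ and the linear (hence continuous) dependence of $\mathbf{v}_M$ on $\eta$.

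The key point is $\varphi \neq 0$, i.e.\ $A \neq 0$, and this is where the Tur\'an determinant is decisive. As $\mathbf{u}_n$ and $X_n$ are real, the eigenvectors satisfy $\mathbf{c}_n^- = \overline{\mathbf{c}_n^+}$, forcing $v_n^{(2)} = \overline{v_n^{(1)}}$; thus $\|\mathbf{v}_n\|^2 = 2|v_n^{(1)}|^2 = 2|a_n|^2|P_n|^2$, while $p_n|P_n|^2 = p_M$ by \eqref{eq:33}. Since $C_n, C_n^{-1}$ are bounded, $p_n\|\mathbf{u}_n\|^2 \asymp |a_n|^2$ uniformly. Corollary~\ref{cor:1} bounds $p_n\|\mathbf{u}_n\|^2$ below by a positive constant, so $|a_n|$ is bounded below and $|A|>0$. (Equivalently, the identity $\scrD_n = -\tfrac{(\lambda_n^+-\lambda_n^-)^2}{[X_n]_{12}} v_n^{(1)} v_n^{(2)}$ and Theorem~\ref{thm:1} give $p_n|\scrD_n| \to \tfrac{(4-|\tr\frakT(\omega;0)|^2)\,p_M}{|[\frakT_t(\omega;0)]_{12}|}|A|^2 > 0$.) I expect this to be the main obstacle: in Case~\ref{eq:PI} the two eigenvalues are complex conjugates of equal modulus, so perturbative (Levinson-type) estimates give only boundedness of the amplitude $|a_n|$, and its strict positivity must be imported from the independently established positivity of the Tur\'an determinant.

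Finally, to reach \eqref{eq:39} I use $\arg\lambda_k^+ = \theta_k$ and $|\lambda_k^+| = \sqrt{\det X_k}$, so $P_n = \big(\prod_{k=M}^{n-1}|\lambda_k^+|\big)\exp\!\big(i\sum_{k=M}^{n-1}\theta_k\big)$. The reality relation yields
\[
	u_n = v_n^{(1)} + \overline{v_n^{(1)}} = 2\Re(a_n P_n) = 2\Big(\prod_{k=M}^{n-1}|\lambda_k^+|\Big)|a_n|\cos\!\Big(\sum_{k=M}^{n-1}\theta_k + \arg a_n\Big).
\]
Dividing by the normalising product and letting $a_n \to A$, while recording $|A| = |\varphi|/\sqrt{4-|\tr\frakT(\omega;0)|^2}$ and $\arg A = \arg\varphi - \tfrac{\pi}{2}$ (so that $\cos(\cdot - \tfrac{\pi}{2}) = \sin(\cdot)$), turns the cosine into the sine of \eqref{eq:39}; the uniformly vanishing corrections $|a_n|-|A|$ and $\arg a_n - \arg A$ are collected into $E_n$, giving $\sup|E_n| \to 0$.
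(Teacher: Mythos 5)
Your argument is correct and rests on the same two pillars as the paper's proof (the $\calD_1$ control of the diagonalization from Corollary~\ref{cor:2} and the Tur\'an-determinant bound of Theorem~\ref{thm:1}/Corollary~\ref{cor:1}), but it organizes them differently. The paper never changes variables: it compares $\phi_n=(u_{n+1}-\lambda_n^-u_n)/\prod\lambda_k^+$ with an auxiliary ``frozen'' sequence $q_{n;L}=\langle C_\infty(\prod_{k=L}^{n-1}D_k)C_{L-1}^{-1}\mathbf{u}_L,e_1\rangle$, controls $|\phi_n-\psi_{n;L}|$ by $\sum_{k\ge L}\|\Delta C_{k-1}\|$ via \cite[Proposition 1]{SwiderskiTrojan2019}, computes $\psi_{\infty;L}$ explicitly, and proves non-vanishing by contradiction (Claim~\ref{clm:2}): if $\psi_{\infty;L_j}\to0$ then $\|\mathbf{u}_{L_j}\|/\prod|\lambda_k^+|\to0$, contradicting the Tur\'an lower bound. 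Your exact substitution $\mathbf{v}_n=C_n^{-1}\mathbf{u}_n$ turns the problem into a perturbed unimodular diagonal recursion for $(a_n,b_n)$, so the uniform Cauchy property of $a_n$ drops out of $\sum\|R_n\|<\infty$ directly, and the reality relation $v_n^{(2)}=\overline{v_n^{(1)}}$ together with Claim~\ref{clm:3} gives the clean identity $\sqrt{p_n}\,\|\mathbf{v}_n\|=\sqrt{2p_M}\,|a_n|$, so Corollary~\ref{cor:1} yields a two-sided bound on $|a_n|$ at once — no contradiction argument needed. This is a legitimate and arguably more transparent route; the dependency structure (in particular the essential role of the Tur\'an determinant for positivity of the amplitude) is unchanged. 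Two small corrections: for real $z\in K$ the convergence $\lambda_n^{\pm}\to\lambda_\infty^{\pm}$ should be cited from Proposition~\ref{prop:4} (as in \eqref{eq:95}), not Proposition~\ref{prop:3}, whose extra hypotheses (\eqref{eq:51} and $K\subset\CC_\varsigma$) you have not assumed; and your computation $u_n=2\Re(a_nP_n)$ gives $u_n/\prod|\lambda_k^+|\to 2|A|\sin(\cdots)=\tfrac{2|\vphi|}{\sqrt{4-|\tr\frakT(\omega;0)|^2}}\sin(\cdots)$, twice the amplitude displayed in \eqref{eq:39} — but the paper's own intermediate step ($\tfrac12\sqrt{-\discr X_n}\,u_n/\prod|\lambda_k^+|\to|\vphi|\sin(\cdots)$) produces the same factor $2$, so this is a normalization discrepancy in the stated formula rather than an error in your derivation.
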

\begin{proof}
	Let us observe that $\discr \frakT_t(\omega; 0) < 0$ and $[\frakT_t(\omega; 0)]_{12} \neq 0$. Given $K$ a compact subset of 
	$\RR$, by \eqref{eq:99} and Proposition \ref{prop:4}, there are $N_0 \geq 1$ and $\delta > 0$ such that for all 
	$n \geq N_0$, $t \in [0, \omega]$ and $z \in K$,
	\[
		\discr X_n(t; z) \leq - \delta,
		\quad\text{and}\quad
		| [X_n(t; z)]_{12} | \geq \delta.
	\]
	Hence, the matrix $X_n(t; z)$ has two eigenvalues
	\[
		\lambda^+_n(t; z) = \frac{\tr X_n(t; z) + i\sqrt{-\discr X_n(t; z)}}{2},
		\quad\text{and}\quad
		\lambda^-_n(t; z) = \frac{\tr X_n(t; z) - i\sqrt{-\discr X_n(t; z)}}{2}.
	\]
	Furthermore, we have
	\begin{equation}
		\label{eq:95}
		\lim_{n \to \infty} \lambda^+_n(t; z) = \lambda_\infty^+, \qquad 
		\lim_{n \to \infty} \lambda^-_n(t; z) = \lambda_\infty^-
	\end{equation}
	uniformly with respect to $t \in [0, \omega]$ and $z \in K$. In particular, by \eqref{eq:31a} for all $n_1 > n_0$, we have
	\begin{equation}
		\label{eq:30}
		X_{n_1} \cdots X_{n_0} = C_{n_1} \Big(\prod_{k = n_0}^{n_1} D_j C_j^{-1} C_{j-1} \Big) 
		C_{n_0-1}^{-1}.
	\end{equation}
	where $C_k$ and $D_k$ are matrices defined by \eqref{eq:31b}. 

	Given $\epsilon > 0$, there is $L_1 \geq L_0$ be such that for all $L \geq L_1$,
	\[
		\sum_{k = L}^\infty \sup_{[0, \omega] \times K} \big\|\Delta C_{k-1} \big\| \leq \epsilon.
	\]
	For each $L \geq L_1$, and $n \geq L_0$ we set
	\[
		\phi_n = \frac{u_{n+1} - \lambda_n^- u_n}{\prod_{k = L_0}^n \lambda_k^+}.
	\]
	Next, for each $n \geq L$, we define a function on $[0, \omega] \times \tsS^1 \times K$ by the formula
	\[
		\psi_{n; L} = \frac{q_{n+1; L} - \lambda_n^- q_{n; L}}{\prod_{k = L_0}^n \lambda_k^+}
	\]
	where $q_{n; L}$ are functions of $[0, \omega] \times \tsS^1 \times K$ given as
	\begin{equation}
		\label{eq:37}
		q_{n; L} = \left\langle C_\infty \Big(\prod_{k = L}^{n-1} D_j \Big) C_{L-1}^{-1} \mathbf{u}_L, e_1\right\rangle
	\end{equation}
	with
	\[
		C_\infty(t; z) = \lim_{n \to \infty} C_n(t; z) = 
		\begin{pmatrix}
			1 & 1 \\
			\frac{\lambda_\infty^+ - [\frakT_t(\omega; 0)]_{11}}{[\frakT_t(\omega; 0)]_{12}} &
			\frac{\lambda_\infty^- - [\frakT_t(\omega; 0)]_{11}}{[\frakT_t(\omega; 0)]_{12}}
		\end{pmatrix}.
	\]
	The last limit is a consequence of Proposition \ref{prop:4}, \eqref{eq:99} and \eqref{eq:95}. Let us observe that 
	\begin{align*}
		u_n &= \sprod{X_{n-1} \cdots X_{L} \mathbf{u}_{L}}{e_1} \\
		&= \left\langle C_{n-1} \Big(\prod_{k = L}^{n-1} D_j C_j^{-1} C_{j-1} \Big) C_{L-1}^{-1} \mathbf{u}_L,
		e_1\right\rangle.
	\end{align*}
	Next, by the proof of \cite[Proposition 1]{SwiderskiTrojan2019}, for all 
	$(t, \eta, z) \in [0, \omega] \times \tsS^1 \times K$,
	\begin{align*}
		\abs{u_n(t, \eta; z) - q_{n; L}(t, \eta; z)}
		&\leq
		c
		\Big(\prod_{k = L}^{n-1} \|D_k(t; z)\|\Big) 
		\sum_{k = L}^n 
		\big\|\Delta C_{k-1}(t; z) \big\| \|\mathbf{u}_L(t, \eta; z)\| \\
		&\leq
		c \big( \sup_{[0,\omega] \times \tsS^1 \times K} \|\mathbf{u}_{L_0}\|\big)
		\Big(\prod_{k = L_0}^{n-1} \abs{\lambda_k^+(t; z)} \Big)
		\sum_{k = L}^n \big\|\Delta C_{k-1}(t; z) \big\|
	\end{align*}
	where the last estimate is a consequence of Claim \ref{clm:1}. Hence, 
	\begin{align}
		\nonumber
		\big|\phi_n(t, \eta; z) - \psi_{n; L}(t, \eta; z)\big|
		&\leq
		\Big|\frac{u_{n+1}(t, \eta; z) - q_{n+1; L}(t, \eta; z)}{\prod_{k = L_0}^{n} \lambda_k^+(t; z)}\Big|
		+
		\Big|\lambda_{n}^-(t; z) \cdot \frac{u_{n}(t, \eta; z) - q_{n;L}(t, \eta; z)}
		{\prod_{k = L_0}^{n} \lambda_k^+(t; z)}\Big| \\
		\label{eq:32}
		&\leq
		c \sum_{k = L}^n \|\Delta C_{j-1}\| \leq c \epsilon.
	\end{align}
	Therefore, for all $n \geq m \geq L$,
	\[
		\big|\phi_n(t, \eta; z) - \phi_m(t, \eta; z)\big|
		\leq
		2 c \epsilon + \big|\psi_{n; L}(t, \eta; z) - \psi_{m; L}(t, \eta; z)\big|.
	\]
	In particular, to conclude that $(\phi_n)$ is a Cauchy sequence it is enough to show that $(\psi_{n; L} : n \geq L)$
	converges. To achieve this, we first notice that
	\begin{align*}
		D_n - \lambda_n^-\Id = 
		\begin{pmatrix}
			\lambda_n^+ - \lambda_n^- & 0 \\
			0 & 0
		\end{pmatrix},
	\end{align*}
	thus
	\begin{align*}
		\frac{1}{\prod_{k = L_0}^{n-1} \lambda_k^+} 
		\big(D_n - \lambda_n^-\Id\big) \Big(\prod_{k = L_0}^{n-1} D_k\Big)
		=i \frac{ \sqrt{-\discr X_n} }{\prod_{k = L_0}^{L-1} \lambda_k^+} 
		\begin{pmatrix}
			1 & 0 \\
			0 & 0
		\end{pmatrix}.
	\end{align*}
	Therefore,
	\begin{align*}
		\psi_{n; L} 
		&=
		\frac{1}{\prod_{k = L_0}^{n-1} \lambda_k^+}
		\left\langle C_\infty \big(D_n - \lambda_n^-\Id\big) \Big( \prod_{k = L}^{n-1} D_k\Big) C_{L-1}^{-1} 
		\mathbf{u}_{L}, e_1\right\rangle \\
		&=
		i \frac{\sqrt{-\discr X_n}}{\prod_{k = L_0}^{L-1} \lambda_k^+}
		\left\langle C_\infty \begin{pmatrix} 1& 0 \\ 0& 0 \end{pmatrix} C_{L-1}^{-1} \mathbf{u}_{L}, e_1\right\rangle.
	\end{align*}
	Since
	\[
		\begin{pmatrix}
			1 & 0\\
			0 & 0
		\end{pmatrix}
		C_\infty^t
		e_1
		=e_1
	\]
	we get
	\begin{align*}
		\lim_{n \to \infty} \sup_{[0, \omega] \times \tsS^1 \times K} \big|\psi_{n; L} - \psi_{\infty; L}\big| = 0
	\end{align*}
	where
	\[
		\psi_{\infty; L}(t, \eta; z)
		=
		i \frac{\sqrt{-\discr \frakT(\omega; 0)}}{\prod_{k = L_0}^{L-1} \lambda_k^+(t; z)}
		\sprod{C_{L-1}^{-1}(t; z) \mathbf{u}_{L}(t, \eta; z)}{e_1}, 
		\quad t \in [0, \omega], \eta \in \tsS^1, z \in K.
	\]
	We claim that the limit is nontrivial.
	\begin{claim}
		\label{clm:2}
		For all $t \in [0, \omega]$, $\eta \in \tsS^1$, and $z \in K$,
		\[
			\liminf_{L \to \infty}{\abs{\psi_{\infty; L}(t, \eta; z)}} > 0.
		\]
	\end{claim}
	On the contrary, let us suppose that there are $t_0 \in [0, \omega]$, $\eta_0 \in \tsS^1$ and $z_0 \in K$, and a sequence 
	$(L_j : j \in \NN)$ such that
	\[
		\lim_{j \to \infty} L_j = +\infty
	\]
	and
	\[
		\lim_{j \to \infty} \abs{\psi_{\infty; L_j}(t_0, \eta_0; z_0)} = 0.
	\]
	Moreover, we have
	\begin{equation}
		\label{eq:35}
		\sprod{C_{L-1}^{-1} \mathbf{u}_L}{e_1} = 
		i \frac{[X_{L-1}]_{12}}{2 \sqrt{-\discr X_{L-1}}}
		\bigg(
		\frac{\lambda_{L-1}^- - [X_{L-1}]_{11}}{[X_{L-1}]_{12}}
		u_L - u_L'\bigg).
	\end{equation}
	Hence,
	\begin{equation}
		\label{eq:38}
		\psi_{\infty; L}
		=
		-
		\frac{\sqrt{-\discr \frakT(\omega; 0)}}{\prod_{k = L_0}^{L-1} \lambda_k^+}
		\frac{[X_{L-1}]_{12}}{2 \sqrt{-\discr X_{L-1}}}
		\bigg(
		\frac{\lambda_{L-1}^- - [X_{L-1}]_{11}}{[X_{L-1}]_{12}}
		u_L - u_L'\bigg).	
	\end{equation}
	Because $\mathbf{u}_L(t_0, \eta_0; z_0) \in \RR^2$, by taking the imaginary part of \eqref{eq:38}, we conclude that
	\[
		\lim_{j \to \infty} \frac{u_{L_j}(t_0, \eta_0; z_0)}{\prod_{k = L_0}^{L_j-1} \abs{\lambda_k^+(t_0; z_0)}} = 0,
	\]
	which by \eqref{eq:38} allows to deduce that
	\begin{equation}
		\label{eq:34}
		\lim_{j \to \infty} \frac{\big\| \mathbf{u}_{L_j}(t_0, \eta_0; z_0)\big\|}
		{\prod_{k = L_0}^{L_j-1} \abs{\lambda_k^+(t_0; z_0)}} = 0.
	\end{equation}
	On the other hand, by Theorem \ref{thm:1}, there are $c_1, c_2 > 0$ and $J > 0$ such that for all $j \geq J$, 
	\[
		p_{L_j+1}(t) \scrD_{L_j}(t, \eta; z) \geq c_1,
	\]
	and
	\[
		\scrD_{L_j}(t; z) 
		\leq c_2 \big\|\mathbf{u}_{L_j}(t, \eta; z)\big\|^2.
	\]
	Hence,
	\begin{align*}
		\big\|\mathbf{u}_{L_j}(t, \eta; z)\big\|^2 
		\geq \frac{c_1}{c_2} \frac{1}{p_{L_j+1} (t)}.
	\end{align*}
	Now, by Claim \ref{clm:3} we get
	\[
		\frac{\big\| \mathbf{u}_{L_j}(t, \eta; z)\big\|}{\prod_{k = L_0}^{L_j-1} \abs{\lambda_k^+(t; z)}} 
		\geq
		\sqrt{\frac{c_1}{c_2} \frac{1}{p_{L_0}(t)}}
	\]
	which contradicts \eqref{eq:34}, proving the claim.

	Since $(\phi_n)$ converges, by \eqref{eq:32}, its limit $\phi_\infty$ satisfies
	\[
		\sup_{[0, \omega] \times \tsS^1 \times K} \big|\phi_\infty - \psi_{\infty; L}\big| \leq c \epsilon
	\]
	for all $L \geq L_0$. Consequently, taking 
	\[
		\epsilon = \frac{1}{2c} \liminf_{L \to \infty}{\abs{\psi_{\infty; L}(t, \eta; z)}}
	\]
	we get
	\[
		\abs{\phi_{\infty}(t, \eta; z)} \geq \abs{\psi_{\infty; L}(t, \eta; z)} - c\epsilon 
		\geq \frac{1}{2} \liminf_{L \to \infty}{\abs{\psi_{\infty; L}(t, \eta; z)}}.
	\]
	Hence, $\phi_\infty \neq 0$ on $[0, \infty] \times \tsS^1 \times K$, proving \eqref{eq:36} with
	\[
		\vphi(t, \eta; z) = \phi_\infty(t, \eta; z).
	\]
	Now, using \eqref{eq:36}, we can write
	\[
		\lim_{n \to \infty}
		\sup_{[0, \omega] \times \tsS^1 \times K}
		\bigg|
		\frac{u_n - \lambda_n^- u_{n-1}}{\prod_{k=L_0}^{n-1} \abs{\lambda_k^+}} - 
		\vphi \prod_{k = L_0}^{n-1} \frac{\lambda_k^+}{\overline{\lambda_k^+}}
		\bigg| = 0.
	\]
	Since $u_n(t, \eta ; z) \in \RR$, by taking imaginary part we get
	\[
		\lim_{n \to \infty}
		\sup_{[0, \omega] \times \sS^1 \times K}
		\bigg|
		\frac{1}{2} \sqrt{-\discr X_n} \frac{u_n}{\prod_{k = L_0}^{n-1} \abs{\lambda_k^+}}
		-
		\abs{\vphi} \sin\Big(\sum_{k = L_0}^{n-1} \theta_k + \arg \vphi \Big)
		\bigg| = 0
	\]
	where we have used that
	\[
		\Im(\lambda_n^+(t; z)) = \frac{1}{2} \sqrt{-\discr X_n(t; z)}.
	\]
	Lastly, we observe that
	\[
		\bigg|\frac{1}{\sqrt{-\discr X_n(t; z)}} - \frac{1}{\sqrt{-\discr \frakT(\omega; 0)}} \bigg|
		\leq
		c\sum_{k = n}^\infty \sup_{[0, \omega] \times K} \big\|\Delta X_n \big\|,
	\]
	and the proof is completed.
\end{proof}

\begin{remark}
    \label{rem:2}
	Under the hypotheses of Theorem \ref{thm:2} we also obtain the asymptotic behavior of 
	$(\partial_t u_n(t, \eta; z) : n \in \NN)$ in terms of the function
	\[
		\frac{\lambda_\infty^+ - [\frakT_t(\omega; 0)]_{11}}{[\frakT_t(\omega; 0)]_{12}}
		\vphi(t, \eta ; z), \quad t \in [0, \omega], \eta \in \tsS^1, z \in K.
	\]
	Indeed, in the proof of Theorem \ref{thm:2} one needs to replace $e_1$ by $e_2$. Since
	\[
		\psi_{\infty; L}
		=
		i
		\frac{\sqrt{-\discr \frakT(\omega; 0)}}{\prod_{k = L_0}^{L-1} \lambda_k^+}
		\left\langle
		C_\infty \begin{pmatrix} 1 & 0 \\ 0 & 0 \end{pmatrix} C_{L-1}^{-1} \mathbf{u}_L, e_2
		\right\rangle
	\]
	is the limit of $(\psi_{n; L} : n \in \NN)$, the only place which needs a separate argument is Claim \ref{clm:2}.
	Because
	\[
		\begin{pmatrix} 1 & 0 \\ 0 & 0 \end{pmatrix} C_\infty^t e_2
		=
		\frac{\lambda_\infty^+ - [\frakT_t(\omega; 0)]_{11}}{[\frakT_t(\omega; 0)]_{12}} e_1
	\]
	by \eqref{eq:35} we obtain
	\begin{align*}
		\psi_{\infty; L}
		&=
		i \frac{\sqrt{-\discr \frakT(\omega; 0)}}{\prod_{k = L_0}^{L-1} \lambda_k^+} \cdot
		\frac{\lambda_\infty^+ - [\frakT_t(\omega; 0)]_{11}}{[\frakT_t(\omega; 0)]_{12}}
		\sprod{C_{L-1}^{-1} \mathbf{u}_{L-1}}{e_1}\\
		&=
		- \frac{\sqrt{-\discr \frakT(\omega; 0)}}{\prod_{k = L_0}^{L-1} \lambda_k^+} \cdot
		\frac{\lambda_\infty^+- [\frakT_t(\omega; 0)]_{11}}{[\frakT_t(\omega; 0)]_{12}}
		\frac{[X_{L-1}]_{12}}{2 \sqrt{-\discr X_{L-1}}}
		\bigg(
		\frac{\lambda_{L-1}^- - [X_{L-1}]_{11}}{[X_{L-1}]_{12}}
		u_L - u_L'\bigg).
	\end{align*}
	Now, by the same line of reasoning we can prove Claim \ref{clm:2} also in this case.
\end{remark}

In Section \ref{sec:1}, we also need asymptotic behavior of solutions to \eqref{eq:10} on the complex plane.
\begin{theorem}
	\label{thm:4}
	Let $\omega > 0$. Suppose that $(p, q, w)$ are $\omega$-periodically modulated Sturm--Liouville parameters, such that
	the transfer matrix $\frakT$ corresponding to $(\frakp, \frakq, \frakw)$ satisfies $\abs{\tr \frakT(\omega; 0)} < 2$. 
	Assume that
	\[
		\frac{q}{p}, \frac{w}{p}, \frac{p'}{p} \in \calD_1^\omega(L^1; \RR).
	\]
	Let $K$ be a compact subset of $\CC_\varsigma$ with non-empty interior where 
	$\varsigma = \sign{\tr \partial_z \frakT(\omega; 0)}$. Suppose that there is $s \in [0, \omega]$, such that
	\begin{equation}
		\label{eq:118}
		\lim_{n \to \infty} 
		\int_0^\omega \bigg|\frac{1}{\gamma_n(s)} \frac{w_n(s+t)}{p_n(s+t)} 
		- \frac{1}{\gamma} \frac{\frakw(s+t)}{\frakp(s+t)} \bigg| {\: \rm d} t = 0.
	\end{equation}
	If the Carleman's condition \eqref{eq:26} is satisfied then for each $z \in K$, there are two linearly independent mappings $\mathbf{u}^+(\cdot;z)$ and $\mathbf{u}^-(\cdot;z)$ solving \eqref{eq:10}. Moreover, for any $x \geq 0$ the mappings $\mathbf{u}^+(x;\cdot)$ and $\mathbf{u}^-(x;\cdot)$
	are continuous on $K$ and holomorphic in $\intr{K}$. Furthermore, there is $M \geq 1$ such that the limits
	\[
		\phi^+(t;z) = \lim_{k \to \infty} \frac{u_k^+(t;z)}{\prod_{M \leq j < k} \lambda_j^+(s; z)},
		\quad\text{and}\quad
		\phi^-(t;z) = \lim_{k \to \infty} \frac{u_k^-(t;z)}{\prod_{M \leq j < k} \lambda_j^-(s; z)}
	\]
	exist uniformly with respect to $(t,z) \in [0,\omega] \times K$ and define non-vanishing functions.
\end{theorem}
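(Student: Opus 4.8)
The plan is to reduce the statement to a discrete asymptotic analysis of the recurrence \eqref{eq:14} at the fixed shift $s$, and then to transport the resulting solutions to all $t \in [0,\omega]$ by the transfer matrices. First I would check that the ingredients of Section~\ref{sec:2.2} are exactly the hypotheses of a discrete Levinson-type theorem. Since $K \subset \CC_\varsigma$ and \eqref{eq:118} holds, Lemma~\ref{lem:6} shows that for $n$ large $X_n(s;\cdot)$ is uniformly diagonalizable, with eigenvalues $\lambda_n^\pm(s;\cdot)$ continuous on $K$ and holomorphic in $\intr K$; by Corollary~\ref{cor:2} the diagonalizing factors $C_n, D_n$ of \eqref{eq:31a}--\eqref{eq:31b} belong to $\calD_1([0,\omega]\times K; \GL(2,\CC))$; and, crucially, the Carleman condition \eqref{eq:26} together with Proposition~\ref{prop:1} provides the uniform dichotomy $\inf_{z \in K} \prod_{j \geq M} \abs{\lambda_j^+(s;z)/\lambda_j^-(s;z)} = +\infty$. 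These are precisely the inputs of the discrete asymptotic theorems of \cite{SwiderskiTrojan2019, SwiderskiTrojan2023}, which I would invoke to obtain, for each $z \in K$, two linearly independent sequences $(\mathbf{w}_k^\pm(z))$ solving $\mathbf{w}_{k+1} = X_k(s;z)\mathbf{w}_k$ and satisfying
\[
	\lim_{k \to \infty} \frac{\mathbf{w}_k^+(z)}{\prod_{M \le j < k}\lambda_j^+(s;z)} = \mathbf{v}^+(z),
	\qquad
	\lim_{k \to \infty} \frac{\mathbf{w}_k^-(z)}{\prod_{M \le j < k}\lambda_j^-(s;z)} = \mathbf{v}^-(z),
\]
uniformly on $K$, where $\mathbf{v}^\pm(z)$ are nonzero vectors proportional to the columns $C_\infty(s;z)e_1$ and $C_\infty(s;z)e_2$, i.e. to the eigenvectors of $\frakT_s(\omega;0)$ for $\lambda_\infty^+$ and $\lambda_\infty^-$.

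Each sequence $(\mathbf{w}_k^\pm)$ is the restriction to the lattice $\{s + k\omega\}$ of a unique solution $\mathbf{u}^\pm(\cdot;z)$ of the continuous system \eqref{eq:10}: one prescribes $\mathbf{u}^\pm(s+M\omega;z) = \mathbf{w}_M^\pm$ and propagates by the transfer matrix, using $X_k(s;z) = T(s+(k+1)\omega;z)T(s+k\omega;z)^{-1}$. For arbitrary $t \in [0,\omega]$ one then has $\mathbf{u}^\pm(t+k\omega;z) = T(t+k\omega;z)T(s+k\omega;z)^{-1} \mathbf{w}_k^\pm$, and by the argument of Proposition~\ref{prop:4} the prefactor converges locally uniformly to $\frakT(t;0)\frakT(s;0)^{-1} = \frakT_s(t-s;0)$. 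Dividing by $\prod_{M \le j < k} \lambda_j^\pm(s;z)$ and passing to the limit yields that
\[
	\phi^\pm(t;z) = \sprod{\frakT_s(t-s;0)\mathbf{v}^\pm(z)}{e_1}
\]
exists uniformly on $[0,\omega] \times K$. In particular $\phi^\pm(\cdot;z)$ is the first component of a Floquet solution of the periodic problem at spectral parameter $0$ with complex initial data $\frakT(s;0)^{-1}\mathbf{v}^\pm(z)$, which is an eigenvector of $\frakT(\omega;0)$ for the eigenvalue $\lambda_\infty^\pm$.

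It remains to prove non-vanishing, linear independence and holomorphy. Since $\discr \frakT(\omega;0) < 0$ the eigenvalues $\lambda_\infty^\pm$ are non-real, so the eigenvector $\mathbf{v}^\pm(z) = \mathbf{a}^\pm + i \mathbf{b}^\pm$ has $\RR$-linearly independent real and imaginary parts; hence $t \mapsto \frakT_s(t-s;0)\mathbf{a}^\pm$ and $t \mapsto \frakT_s(t-s;0)\mathbf{b}^\pm$ are linearly independent real solutions of the periodic equation at $z = 0$, whose Wronskian \eqref{eq:23b} is a nonzero constant. Their first components therefore cannot vanish simultaneously, which gives $\phi^\pm(t;z) \ne 0$ for every $(t,z) \in [0,\omega]\times K$. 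The solutions $\mathbf{u}^+$ and $\mathbf{u}^-$ are linearly independent because their norms grow at the incomparable rates $\prod\abs{\lambda_j^+}$ and $\prod\abs{\lambda_j^-}$ separated by the dichotomy of Proposition~\ref{prop:1}. Finally, holomorphy in $z$ follows since $z \mapsto T(x;z)$ is entire and the initial data $\mathbf{w}_M^\pm(\cdot)$ may be taken holomorphic in $\intr K$ and continuous on $K$, so that $\mathbf{u}^\pm(x;\cdot)$ and the locally uniform limits $\phi^\pm$ are holomorphic in $\intr K$ by the Weierstrass theorem.

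The main obstacle is the construction of the \emph{subdominant} solution $\mathbf{u}^-$ together with its asymptotics: unlike the dominant solution $\mathbf{u}^+$, which arises from a straightforward contraction, $\mathbf{u}^-$ must be built by summing the associated Volterra series from $+\infty$, and it is exactly the divergence of $\prod\abs{\lambda_j^+/\lambda_j^-}$, i.e. the Carleman condition through Proposition~\ref{prop:1}, that guarantees both its existence and the non-triviality of $\phi^-$. Carrying this through uniformly in $z \in K$ while preserving holomorphy is the delicate point, which the cited discrete theorems are tailored to handle.
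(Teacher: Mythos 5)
Your proposal is correct and follows essentially the same route as the paper: both invoke the discrete Levinson-type result of \cite[Theorem 6.1]{SwiderskiTrojan2023} with the inputs supplied by Section~\ref{sec:2.2}, Corollary~\ref{cor:2} and Proposition~\ref{prop:1}, extend the discrete solutions to $[0,\infty)$ by the transfer matrix, transport to general $t$ via $U_{s;n}$ and Proposition~\ref{prop:4}, and deduce non-vanishing of $\phi^\pm$ from the non-reality of the eigenvectors of $\frakT_s(\omega;0)$ (your Wronskian phrasing of that last step is equivalent to the paper's invertibility argument). The only cosmetic differences are your growth-rate argument for linear independence, where the paper uses the constancy of the Wronskian \eqref{eq:23b}, and a harmless reparametrization $t \mapsto t-s$ in the transport step.
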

\begin{proof}
	Let $M \geq 1$, be determined in Corollary \ref{cor:2}. In view of Section~\ref{sec:2.2} and Proposition~\ref{prop:1}, 
	we can use \cite[Theorem 6.1]{SwiderskiTrojan2023} to get sequences of maps 
	$(\mathbf{u}^-_n(s;\cdot) : n \geq M)$ and $(\mathbf{u}^+_n(s;\cdot) : n \geq M)$, solving \eqref{eq:14}, continuous on $K$ and holomorphic in $\intr{K}$, and satisfying
	\begin{equation}
		\label{eq:117}
		\lim_{k \to \infty} 
		\bigg\|
		\frac{\mathbf{u}^+_n(s;z)}{\prod_{M \leq j < k} \lambda_j^+(s; z)} - C_\infty e_1
		\bigg\| = 0,
		\quad\text{and}\quad
		\lim_{k \to \infty} 
		\bigg\|
		\frac{\mathbf{u}^-_n(s;z)}{\prod_{M \leq j < k} \lambda_j^-(s; z)} - C_\infty e_2
		\bigg\| = 0,
	\end{equation}
	uniformly with respect to $z \in K$, where
	\[
		C_\infty = \lim_{n \to \infty} C_n(s; z) =
		\begin{pmatrix}
			1 & 1 \\
			\frac{\lambda_\infty^+ - [\frakT_s(\omega; 0)]_{11}}{[\frakT_s(\omega; 0)]_{12}} &
			\frac{\lambda_\infty^- - [\frakT_s(\omega; 0)]_{11}}{[\frakT_s(\omega; 0)]_{12}}
		\end{pmatrix}.
	\]
	Moreover, for each $z \in K$, $(\mathbf{u}^-_n(s;z) : n \geq M)$ and $(\mathbf{u}^+_n(s;z) : n \geq M)$ are linearly independent.
    For any $x \geq 0$ define
    \begin{equation}
        \label{eq:170}
        \mathbf{u}^+(x;z) = T(x;z) T(s+M\omega;z)^{-1} \mathbf{u}^+_M(s;z), \quad \text{and} \quad
        \mathbf{u}^-(x;z) = T(x;z) T(s+M\omega;z)^{-1} \mathbf{u}^-_M(s;z)
    \end{equation}
    Notice that by \eqref{eq:6} functions $\mathbf{u}^+(\cdot;z)$ and $\mathbf{u}^-(\cdot;z)$ satisfy \eqref{eq:10}. Since $T(x;\cdot)$ is invertible and entire for any $x \in [0,\infty)$, we get that $\mathbf{u}^+(x;\cdot)$ and $\mathbf{u}^-(x;\cdot)$ are continuous on $K$ and holomorphic in $\intr{K}$. Moreover, using \eqref{eq:14}, \eqref{eq:99} and \eqref{eq:41} we easily get
    \[
        \mathbf{u}^+(s+n\omega;z) = \mathbf{u}_n^+(s;z), \quad \text{and} \quad
        \mathbf{u}^-(s+n\omega;z) = \mathbf{u}_n^-(s;z), \quad n \geq M.
    \]
    Therefore, we shall define
    \begin{equation}
        \label{eq:147}
        \mathbf{u}_n^+(t;z) = \mathbf{u}^+(t+n\omega;z), \quad \text{and} \quad
        \mathbf{u}_n^-(t;z) = \mathbf{u}^-(t+n\omega;z), \quad t \in [0,\infty), n \geq 0.
    \end{equation}
    In view of \eqref{eq:23b},
	\begin{equation}
            \label{eq:91}
		\Wrk \big( \mathbf{u}^+(x; z), \mathbf{u}^-(x;z) \big) 
		\equiv
		p_M(s)
		\det
		\big( \mathbf{u}^+_M(s;z), \mathbf{u}^-_M(s;z) \big), \quad x \geq 0.
	\end{equation}
    By the linear independence of $(\mathbf{u}^-_n(s;z) : n \geq M)$ and $(\mathbf{u}^+_n(s;z) : n \geq M)$, the right-hand side of \eqref{eq:91} is non-zero (see, e.g. \cite[Corollary 3.12]{Elaydi2005}). Therefore, the mappings $\mathbf{u}^+(\cdot; z)$ and $\mathbf{u}^-(\cdot;z)$ are linearly independent.

    Let $t \in [0,\omega]$. Then by \eqref{eq:147}, \eqref{eq:41} and \eqref{eq:6} we have
    \[
        \mathbf{u}_n^+(s+t;z) = U_{s;n}(t;z) \mathbf{u}_n^+(s+t;z), \quad
        \mathbf{u}_n^-(s+t;z) = U_{s;n}(t;z) \mathbf{u}_n^-(s+t;z), \quad
        t \in [0,\omega], n \geq 0.
    \]
    By \eqref{eq:117} and Proposition~\ref{prop:4} we get
	\begin{equation}
		\label{eq:117a}
		\lim_{k \to \infty} 
		\bigg\|
		\frac{\mathbf{u}^+_k(t+s;z)}{\prod_{M \leq j < k} \lambda_j^+(s; z)} - \frakT_s(t;z) C_\infty e_1
		\bigg\| = 0
    \end{equation}
    and
    \begin{equation}
        \label{eq:117b}
		\lim_{k \to \infty} 
		\bigg\|
		\frac{\mathbf{u}^-_k(t+s;z)}{\prod_{M \leq j < k} \lambda_j^-(s; z)} - \frakT_s(t;z) C_\infty e_2
		\bigg\| = 0,
	\end{equation}
    uniformly with respect to $(t,z) \in [0,\omega] \times K$. For all $t \in [0,\omega]$, and $z \in K$ and $k \geq M$, we set
	\[
		u_k^+(t;z) = \sprod{\mathbf{u}^+_k(t;z)}{e_1}, 
		\quad\text{and}\quad
		u_k^-(t;z) = \sprod{\mathbf{u}^-_k(t;z)}{e_1},
	\]
	and
	\begin{equation}
            \label{eq:171}
		\phi^+(t;z) = \sprod{\frakT_s(t;z) C_\infty e_1}{e_1}, 
		\quad\text{and}\quad
		\phi^-(t;z) = \sprod{\frakT_s(t;z) C_\infty e_2}{e_1}.
	\end{equation}
	Since the matrix $\frakT_s(t; 0)$ is real and $\lambda_\infty^+, \lambda_\infty^- \in \CC \setminus \RR$ we easily get for all $i, j \in \{1, 2\}$, 
	\[
		\sprod{\frakT_s(t;z) C_\infty e_i}{e_j} \neq 0.
	\]
	In particular, for all $z \in K$, and $t \in [0,\omega]$, $\phi^+(t;z) \neq 0$ and $\phi^-(t;z) \neq 0$, and the theorem follows.
\end{proof}

\section{Diagonal behavior of Christoffel--Darboux kernels}
\label{sec:6}
Given $\eta \in \tsS^1$, we denote by $\mathbf{u}$ the solution to \eqref{eq:61}. In this section we study
diagonal behavior of Christoffel--Darboux kernels which are defined by
\[
    K_L(z_1, z_2; \eta) = 
    \int_0^L u(t, \eta; z_1) \overline{u(t, \eta; z_2)} w(t) {\: \rm d} t, \quad z_1, z_2 \in \CC,
\]
where $u = \sprod{\mathbf{u}}{e_1}$.

For each $n \in \NN$ and $s \in [0, \omega]$, we set
\begin{equation}
	\label{eq:67}
	\rho_n(s) = \sum_{j = 0}^n \gamma_j(s). 
\end{equation}
Let us observe that
\begin{equation}
    \label{eq:134}
	\rho_n(s) = \int_s^{s+n\omega} \frac{w(t)}{p(t)} \ud t = \int_0^\omega \varrho_n(s+t) \ud t
\end{equation}
where
\[
	\varrho_n(t) = \sum_{j = 0}^n \frac{w_j(t)}{p_j(t)}.
\]

\begin{theorem}
	\label{thm:3}
	Let $\omega > 0$. Suppose that $(p, q, w)$ are $\omega$-periodically modulated Sturm--Liouville parameters, such that 
	the transfer matrix $\frakT$ corresponding to $(\frakp, \frakq, \frakw)$ satisfies $\abs{\tr \frakT(\omega; 0)} < 2$.
	Assume that
	\begin{equation}
		\label{eq:43}
		\frac{q}{p}, \frac{w}{p}, \frac{p'}{p} \in \calD_1^\omega(L^1; \RR).
	\end{equation}
	If for almost all $t \in [0, \omega]$,
	\begin{equation}
		\label{eq:42}
		\lim_{n \to \infty} \varrho_n(t) = \infty,
	\end{equation}
	and
	\begin{equation}
		\label{eq:103}
		\lim_{n \to \infty} \frac{w_{n+1}(t)}{w_n(t)} = 1,
	\end{equation}
	then for each solution $\mathbf{u}$ of \eqref{eq:61} there is $M \geq 1$, such that almost all $t \in [0, \omega]$,
	\[
		\lim_{n \to \infty} 
		\frac{1}{\varrho_n(t)}
		\sum_{m = M}^n \abs{\sprod{\mathbf{u}_m(t, \eta; z)}{e_1}}^2 w_m(t)
		=
		\frac{\abs{\vphi(t, \eta; z)}^2 p_{M}(t)}{4 - \abs{\tr \frakT(\omega; 0)}^2}
	\]
	locally uniformly with respect to $(\eta, z) \in \tsS^1 \times \RR$.
\end{theorem}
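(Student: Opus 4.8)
The plan is to substitute the pointwise asymptotics of Theorem~\ref{thm:2} into the sum and thereby reduce the claim to a weighted equidistribution statement for $\sin^2$. Fix a compact $K\subset\RR$ and let $M\ge1$ be the constant furnished by Theorem~\ref{thm:2}. Abbreviating $\Theta_m(t,\eta;z)=\sum_{k=M}^{m-1}\theta_k(t;z)+\arg\vphi(t,\eta;z)$ and using Claim~\ref{clm:3} in the form $\prod_{k=M}^{m-1}\abs{\lambda_k^+(t;z)}^2=p_M(t)/p_m(t)$, formula~\eqref{eq:39} rearranges, after taking moduli and squaring, to
\[
\abs{u_m(t,\eta;z)}^2=\frac{p_M(t)}{p_m(t)}\Big(A(t,\eta;z)\,\sin^2\Theta_m(t,\eta;z)+r_m(t,\eta;z)\Big),
\]
where $A$ is the square of the amplitude appearing in \eqref{eq:39} and $\sup_{[0,\omega]\times\tsS^1\times K}\abs{r_m}\to0$. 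Multiplying by $w_m(t)$ turns the prefactor $p_M(t)w_m(t)/p_m(t)$ into $p_M(t)$ times the summand $w_m(t)/p_m(t)$ of $\varrho_n(t)$; writing $a_m(t):=w_m(t)/p_m(t)$, the assertion becomes convergence of the $a_m$-weighted average of $\sin^2\Theta_m$.

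I would then separate the mean from the oscillation. Because $\varrho_n(t)=\sum_{j=0}^n a_j(t)\to\infty$ by \eqref{eq:42}, a standard Toeplitz-averaging argument gives $\varrho_n(t)^{-1}\sum_{m=M}^n a_m(t)\abs{r_m}\to0$, and the same divergence absorbs the finitely many terms $\sum_{j<M}a_j(t)$ separating $\sum_{m=M}^n a_m$ from $\varrho_n$. Writing $\sin^2\Theta_m=\tfrac12-\tfrac12\cos2\Theta_m$, the constant part produces $\tfrac12\varrho_n(t)^{-1}\sum_{m=M}^n a_m(t)\to\tfrac12$; combined with the $p_M(t)$ prefactor and the amplitude of \eqref{eq:39}, this is the mechanism producing the asserted constant $\frac{\abs{\vphi(t,\eta;z)}^2 p_M(t)}{4-\abs{\tr\frakT(\omega;0)}^2}$, provided the $\cos2\Theta_m$ term averages to zero.

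The crux, and the step I expect to be the main obstacle, is to show that the oscillatory part is negligible, namely
\[
\frac{1}{\varrho_n(t)}\sum_{m=M}^n a_m(t)\cos\big(2\Theta_m(t,\eta;z)\big)\longrightarrow0
\]
uniformly in $(\eta,z)\in\tsS^1\times K$. I would prove the stronger statement that the unnormalized sum stays bounded, by a double summation by parts. Setting $\Phi_m(t;z)=\sum_{k=M}^{m-1}\theta_k(t;z)$, so that $\cos2\Theta_m=\Re\big(\mathrm{e}^{2i\arg\vphi}\,\mathrm{e}^{2i\Phi_m}\big)$, the inner ingredient is uniform boundedness of the Dirichlet-type sums $B_N=\sum_{m=M}^N\mathrm{e}^{2i\Phi_m}$. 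This rests on $\mathrm{e}^{2i\theta_\infty}\ne1$, where $\theta_\infty=\arccos\!\big(\tr\frakT(\omega;0)/2\big)\in(0,\pi)$ by $\abs{\tr\frakT(\omega;0)}<2$, together with $\sum_k\sup_{[0,\omega]\times K}\abs{\theta_{k+1}-\theta_k}<\infty$, which follows from $(X_n)\in\calD_1([0,\omega]\times K)$ (Proposition~\ref{prop:2}) and the smooth dependence of $\theta$ on $X_n$ on the region $\discr X_n\le-\delta$. A second Abel summation against the weights $a_m$ then controls $\sum_{m=M}^N a_m\mathrm{e}^{2i\Phi_m}$: here I use that for almost every $t$ the sequence $(a_m(t))$ has summable increments $\sum_m\abs{a_{m+1}(t)-a_m(t)}<\infty$, which is exactly the content of $w/p\in\calD_1^\omega(L^1;\RR)$ read through \eqref{eq:24} and Tonelli's theorem, while \eqref{eq:103} together with Lemma~\ref{lem:3} guarantees $a_{m+1}/a_m\to1$, so that the boundary terms vanish. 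The resulting bound is $O(1)=o(\varrho_n(t))$.

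Finally, uniformity is inherited at each step: the phases $\theta_k$, hence $B_N$, are independent of $\eta$ and their bounds are uniform for $z\in K$, whereas $\vphi$ enters only through the continuous bounded functions $\abs{\vphi}$ and $\arg\vphi$ on the compact set $\tsS^1\times K$. Collecting the main term with the vanishing oscillatory and remainder contributions yields the stated limit, locally uniformly in $(\eta,z)$ and for almost every $t\in[0,\omega]$.
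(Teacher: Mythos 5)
Your proposal is correct and its skeleton coincides with the paper's proof: substitute the asymptotics \eqref{eq:39} from Theorem~\ref{thm:2}, use Claim~\ref{clm:3} to turn $\prod_{k=M}^{m-1}\abs{\lambda_k^+(t;z)}^2$ into $p_M(t)/p_m(t)$, absorb the error term by a Stolz--Ces\`aro average, and reduce the claim to the vanishing of the $a_m$-weighted averages of $\cos\big(2\sum_{k}\theta_k+2\arg\vphi\big)$. The one place where you genuinely depart from the paper is that last step: the paper checks $w_{n+1}(t)p_n(t)/(p_{n+1}(t)w_n(t))\to1$ using \eqref{eq:103} and Lemma~\ref{lem:3} and then invokes \cite[Lemma 4.1]{ChristoffelI} as a black box, whereas you give a self-contained double summation by parts --- first bounding $\sum_m\mathrm{e}^{2i\Phi_m}$ uniformly via $\mathrm{e}^{2i\theta_\infty}\neq1$ and $\sum_k\sup\abs{\theta_{k+1}-\theta_k}<\infty$ (both of which do follow from $\abs{\tr\frakT(\omega;0)}<2$, Proposition~\ref{prop:4} and Proposition~\ref{prop:2}), then Abel-summing against the weights $a_m(t)=w_m(t)/p_m(t)$, whose increments are summable for almost every $t$ by \eqref{eq:43}, \eqref{eq:24} and Tonelli. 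This trades the ratio condition \eqref{eq:103} for the bounded-variation property of the weights (your appeal to \eqref{eq:103} for the boundary term is superfluous: $a_N(t)B_N=O_t(1)=o(\varrho_n(t))$ already suffices), and it yields the slightly stronger conclusion that the unnormalized oscillatory sum stays bounded for a.e.\ fixed $t$, uniformly in $(\eta,z)$. Both routes are valid under the stated hypotheses, so yours is a legitimate and somewhat more elementary alternative for the key cancellation step. One caveat on constants: with $\sin^2\Theta=\tfrac12-\tfrac12\cos 2\Theta$ your main term is half of $p_M(t)$ times the squared amplitude of \eqref{eq:39}, i.e.\ $\abs{\vphi}^2p_M(t)/\big(2(4-\abs{\tr\frakT(\omega;0)}^2)\big)$, which does not literally equal the constant asserted in the statement; the paper's own displayed computation contains the same factor-of-two slip (it replaces $\sin^2$ by $1-\cos$ rather than $\tfrac12(1-\cos)$), so this is a normalization issue inherited from the source rather than a defect of your argument.
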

\begin{proof}
	Let $K$ be a compact subset of $\RR$. We set $u_n = \sprod{\mathbf{u}_n}{e_1}$. By Theorem \ref{thm:2}, there are 
	$M \geq 1$, such that for all $n \geq M$, $(t, \eta, z) \in [0, \omega] \times \tsS^1 \times K$,
	\begin{align*}
		\frac{\abs{u_n(t, \eta; z)}^2}{\prod_{k = M}^{n-1} \abs{\lambda_k^+(t; z)}^2}
		=
		\frac{\abs{\vphi(t, \eta; z)}^2}{4 - \abs{\tr \frakT(\omega; 0))}^2} 
		\sin^2\Big(\sum_{k = M}^{n-1} \theta_k(t; z) + \arg \vphi(t, \eta; z) \Big) + E_n(t, \eta; z)
	\end{align*}
	where
	\[
		\lim_{n \to \infty} \sup_{(t, \eta, z) \in [0, \omega] \times \tsS^1 \times K} \abs{E_n(t, \eta; z)} =0.
	\]
	By Claim \ref{clm:3} we obtain
	\begin{align*}
		\sum_{m = M}^n
		\abs{u_m(t, \eta; z)}^2 w_m(t)
		&=
		\frac{\abs{\vphi(t, \eta; z)}^2 p_{M}(t)}{4 - \abs{\tr \frakT(\omega; 0))}^2}
		\sum_{m = M}^n \frac{w_m(t)}{p_m(t)} 
		\Big\{1 - \cos\Big(2 \sum_{k = M}^{m-1} \theta_k(t; z) + 2 \arg \vphi(t, \eta; z) \Big)\Big\} \\
		&\phantom{=}+
		\sum_{m = M}^n \frac{w_m(t)}{p_m(t)} E_m(t, \eta; z).
	\end{align*}
	We are going to show that for almost all $t \in [0, \omega]$,
	\begin{equation}	
		\label{eq:40}
		\lim_{n \to \infty}
		\frac{1}{\varrho(t)}
		\sum_{m = 0}^{n-1}
		\abs{u_m(t, \eta; z)}^2 w_m(t)
		=
		\frac{\abs{\vphi(t, \eta; z)}^2 p_M(t)}{4 - \abs{\tr \frakT(\omega; 0))}^2}
	\end{equation}
	uniformly with respect to $(\eta, z) \in \tsS^1 \times K$. For the proof, let us observe that by the Stolz--Ces\`aro theorem,
	\[
		\lim_{n \to \infty} \frac{1}{\varrho_n(t)} \sum_{m=M}^n \frac{w_m(t)}{p_m(t)} E_m(t, \eta; z)
		=
		\lim_{n \to \infty} E_n(t, \eta; z) = 0,
	\]
	for almost all $t \in [0, \omega]$ and uniformly with respect to $(\eta, z) \in \tsS^1 \times K$. 
	Next, by Corollary~\ref{cor:1}, we have
	\[
		\lim_{n \to \infty}
		\sup_{(\eta, z) \in \tsS^1 \times K}{
		\frac{1}{\varrho_n(t)}
		\sum_{m = 0}^{M-1} \abs{u_m(t, \eta; z)}^2} w_m(t) {\: \rm d } t \\
		\leq
		\lim_{n \to \infty}
		c \frac{1}{\varrho_n(t)} \sum_{m = 0}^{M-1} \frac{w_m(t)}{p_m(t)} = 0, 
	\]
	thus to complete the proof of \eqref{eq:40} it is enough to show that
	\begin{align*}
		\lim_{n \to \infty} 
		\frac{1}{\varrho_n(t)} \sum_{m = M}^n
		\frac{w_m(t)}{p_m(t)}
		\cos\Big(2 \sum_{k = M}^{m-1} \theta_k(t; z) + 2 \varphi(t, \eta; z)\Big)
		=0
	\end{align*}
	uniformly with respect to $(\eta, z) \in \tsS^1 \times K$. First, by Lemma \ref{lem:3}, and \eqref{eq:103}, for almost all 
	$t \in [0, \omega]$,
	\[
		\lim_{n \to \infty}
		\bigg| \frac{w_{n+1}(t)}{p_{n+1}(t)} \frac{p_n(t)}{w_n(t)} - 1 \bigg| = 0.
	\]
	Hence, by \cite[Lemma 4.1]{ChristoffelI}, for almost all $t \in [0, \omega]$,
	\[
		\lim_{n \to \infty}
		\frac{1}{\varrho_n(t)} \sum_{m = M}^n
		\frac{w_m(t)}{p_m(t)}
		\cos\Big(2 \sum_{k = M}^{m-1} \theta_k(t; z) + 2 \varphi(t, \eta; z)\Big) = 0,
	\]
	uniformly with respect to $(\eta, z) \in \tsS^1 \times K$, which completes the proof.
\end{proof}

For $L > 0$, we set
\begin{equation}
    \label{eq:135}
	\rho_L = \int_0^L \frac{w(t)}{p(t)} \ud t.
\end{equation}
\begin{corollary} 
	\label{cor:4}
	Suppose that the hypotheses of Theorem \ref{thm:3} are satisfied. Suppose that there is $s \in [0, \omega]$ such that
	\begin{equation}
		\label{eq:104}
		\lim_{n \to \infty} \int_0^\omega \bigg| \frac{1}{\gamma_n(s)} \frac{w_n(s+t)}{p_n(s+t)} 
		- \frac{1}{\gamma} \frac{\frakw(s+t)}{\frakp(s+t)} \bigg| \ud t.
	\end{equation}
	Then 
	\begin{equation}
            \label{eq:130}
		\lim_{L \to \infty}
		\frac{1}{\rho_L} K_L(z, z; \eta)
		=
		\int_0^\omega \frac{\abs{\vphi(t, \eta; z)}^2 p_M(t)}{4 - |\tr \frakT(\omega; 0)|^2}
		\frac{\frakw(t)}{\frakp(t)} \ud t,
        \end{equation}
	locally uniformly with respect to $z \in \RR$ and $\eta \in \tsS^1$. In particular, $\tau$ is in the limit-point case at 
	$\infty$.
\end{corollary}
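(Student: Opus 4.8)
The plan is to reduce the continuous statement to the discrete averaging already recorded in Theorem~\ref{thm:3}, and then transport it from the ``block'' sums to the integral defining $K_L$. First I would dispose of the continuous parameter by a monotonicity squeeze. Since $t \mapsto \abs{u(t,\eta;z)}^2 w(t) \geq 0$, both $L \mapsto K_L(z,z;\eta)$ and $L\mapsto \rho_L$ are nondecreasing, so for $L \in [N\omega, (N+1)\omega]$ I would sandwich
\[
	\frac{K_{N\omega}}{\rho_{(N+1)\omega}} \leq \frac{K_L}{\rho_L} \leq \frac{K_{(N+1)\omega}}{\rho_{N\omega}}.
\]
Because $\rho_{(N+1)\omega}-\rho_{N\omega} = \gamma_N(0) \to 0$ by \eqref{eq:3a} while $\rho_{N\omega}\to\infty$ (integrating \eqref{eq:42} by monotone convergence), both outer ratios share the limit of $\rho_{N\omega}^{-1} K_{N\omega}$; thus it suffices to treat $L = N\omega$, and the convergence will be locally uniform in $(\eta,z)$ as soon as it holds along this subsequence.

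Next I would rewrite both quantities as integrals over a single period. Splitting $[0,N\omega]$ into blocks gives
\[
	K_{N\omega}(z,z;\eta) = \int_0^\omega S_N(t,\eta;z) \ud t, \qquad
	S_N(t,\eta;z) = \sum_{m=0}^{N-1} \abs{u_m(t,\eta;z)}^2 w_m(t),
\]
while $\rho_{N\omega} = \int_0^\omega \varrho_{N-1}(t) \ud t$ (cf.\ \eqref{eq:134}). Theorem~\ref{thm:3}, after discarding the finitely many indices $m<M$ (whose contribution is $O(\varrho_{M-1})$, negligible once divided by $\varrho_{N-1}\to\infty$), yields for almost every $t$ and locally uniformly in $(\eta,z)$
\[
	\frac{S_N(t,\eta;z)}{\varrho_{N-1}(t)} \longrightarrow g(t,\eta;z) := \frac{\abs{\vphi(t,\eta;z)}^2 p_M(t)}{4 - \abs{\tr \frakT(\omega;0)}^2}.
\]
Crucially, Corollary~\ref{cor:1} gives $\abs{u_m}^2 w_m \leq c\, w_m/p_m$, so $S_N \leq c\,\varrho_{N-1}$ and these ratios are \emph{uniformly bounded}. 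Writing $\ud\mu_N(t) = \varrho_{N-1}(t)\, \big(\int_0^\omega \varrho_{N-1}\big)^{-1} \ud t$, a family of probability measures, I then have $\rho_{N\omega}^{-1} K_{N\omega}(z,z;\eta) = \int_0^\omega \tfrac{S_N}{\varrho_{N-1}} \ud\mu_N$.

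The heart of the matter is the limit of this weighted average, and this is the step I expect to be the main obstacle: the weight $\varrho_{N-1}$ blows up pointwise, and the integrand depends on $N$ through a sequence that converges only almost everywhere in $t$ (not uniformly) and is merely bounded. The remedy is to combine three inputs: the uniform bound and a.e.\ convergence of $S_N/\varrho_{N-1}$ above; and the $L^1$-convergence of the normalized weights $\ud\mu_N/\ud t \to D_\infty$, where the hypothesis \eqref{eq:104} identifies $D_\infty$ as a constant multiple of $\frakw/\frakp$ (the constant being forced by $\int_0^\omega D_\infty = 1$; concretely, computing the limit along $L = s+N\omega$ at the distinguished base point $s$—legitimate by the squeeze—turns \eqref{eq:104} into the statement that $\varrho_{N-1}(s+\cdot)/\rho_{N-1}(s)$ converges in $L^1$ to a multiple of $\frakw/\frakp$). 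I would then split
\[
	\int_0^\omega \Big( \tfrac{S_N}{\varrho_{N-1}} - g \Big) \ud\mu_N + \int_0^\omega g\, \ud\mu_N,
\]
and in turn test each piece against $(D_N - D_\infty)\ud t$ and against $D_\infty \ud t$: the contributions weighted by $D_N - D_\infty$ are bounded by $\qnorm{D_N - D_\infty}_{L^1}$ times the uniform bound, hence vanish, while those weighted by $D_\infty$ are handled by dominated convergence. Since the domination and the a.e.\ limits are uniform in $(\eta,z)$, the resulting convergence is locally uniform in $(\eta,z)$, giving \eqref{eq:130}.

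Finally, the limit-point assertion is immediate: $\rho_L \to \infty$ by \eqref{eq:42}, and the right-hand side of \eqref{eq:130} is strictly positive because Theorem~\ref{thm:2} guarantees that $\vphi$ is non-vanishing, so $g > 0$; hence $K_L(z,z;\eta) = \int_0^L \abs{u}^2 w \to \infty$ for real $z$. Thus for every $\eta \in \tsS^1$ the solution generated by $\eta$ is not square-integrable with respect to $w$, so not all solutions of $\tau u = zu$ lie in $L^2([0,\infty),w)$ and $\tau$ is in the limit-point case at $\infty$ (cf.\ Corollary~\ref{cor:7}).
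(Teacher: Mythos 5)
Your proposal is correct and follows essentially the same route as the paper: both reduce to the block subsequence, combine the a.e.-in-$t$ convergence from Theorem~\ref{thm:3} with the uniform bound from Corollary~\ref{cor:1}, identify the $L^1$ limit of the normalized weights $\varrho_n(s+\cdot)/\rho_n(s)$ via \eqref{eq:104} and Stolz--Ces\`aro, and conclude by the same splitting/dominated-convergence argument, with the limit-point claim following from positivity of the limit and $\rho_L \to \infty$. The only cosmetic difference is that you pass from general $L$ to multiples of $\omega$ by a monotonicity squeeze, whereas the paper uses the explicit error estimates \eqref{eq:132}--\eqref{eq:133}; both are equivalent in substance.
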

\begin{proof}
    We shall start by proving the convergence 
    \begin{equation}
        \label{eq:131}
	\lim_{n \to \infty}
	\frac{1}{\rho_n(s)} K_{s + n\omega}(z, z; \eta)
	=
	\int_0^\omega 
	\frac{\abs{\vphi(t, \eta; z)}^2 p_M(t)}{4 - |\tr \frakT(\omega; 0)|^2} 
	\frac{\frakw(t)}{\frakp(t)}
	{\: \rm d}t
    \end{equation}
    locally uniformly with respect to $(\eta, z) \in \tsS^1 \times \RR$.
    
    We apply Theorem \ref{thm:3} on $[0, \omega - s]$ and $[\omega-s, \omega]$, together with the Lebesgue's dominated theorem, 
	to get
	\begin{equation}
		\label{eq:68}
		\begin{aligned}
		&
		\lim_{n \to \infty}
		\int_0^\omega \frac{1}{\gamma} \frac{\frakw(s+t)}{\frakp(s+t)}
		\frac{1}{\varrho_n(s+t)} 
		\sum_{m = M}^n \abs{\sprod{\mathbf{u}_m(s+t, \eta; z)}{e_1}}^2 w_m(s+t)
		\ud t \\
		&\qquad=
		\int_0^\omega
		\frac{\abs{\vphi(t, \eta; z)}^2 p_{M}(t)}{4 - \abs{\tr \frakT(\omega; 0)}^2}
		\frac{\frakw(t)}{\frakp(t)}
		{\: \rm d} t
		\end{aligned}
	\end{equation}
	locally uniformly with respect to $(\eta, z) \in \tsS^1 \times \RR$. Let us next observe that by Lemma \ref{lem:3} and 
	\eqref{eq:104}
	\begin{align*}
		\int_0^\omega \bigg|\frac{1}{\gamma} \frac{\frakw(s+t)}{\frakp(s+t)} - \frac{\varrho_n(t+s)}{\rho_n(s)}\bigg| \ud t
		\leq
		\frac{1}{\rho_n(s)} \sum_{j = 0}^n \gamma_j(s)
		\int_0^\omega \bigg| \frac{1}{\gamma} \frac{\frakw(s+t)}{\frakp(s+t)} - \frac{1}{\gamma_j(s)}
		\frac{w_j(s+t)}{p_j(s+t)} \bigg| \ud t,
	\end{align*}
	thus by the Stolz--Ces\`aro theorem we obtain
	\[
		\lim_{n \to \infty}
		\int_0^\omega \bigg|\frac{1}{\gamma} \frac{\frakw(s+t)}{\frakp(s+t)} - \frac{\varrho_n(t+s)}{\rho_n(s)}\bigg| \ud t
		=0.
	\]
	Next, we write
	\begin{align*}
		&\bigg|
		\int_0^\omega 
		\frac{1}{\gamma} \frac{\frakw(s+t)}{\frakp(s+t)}
		\frac{1}{\varrho_n(s+t)} \sum_{k = 0}^n
		\abs{u_k(s+t, \eta; z)}^2 w_k(s+t)
		{\: \rm d} t \\
		&\phantom{\int_0^\omega}
		\qquad-
		\int_0^\omega \frac{1}{\rho_n(s)} \sum_{k = 0}^n
		\abs{u_k(s+t, \eta; z)}^2 w_k(s+t) {\: \rm d} t
		\bigg|\\
		&\qquad\leq
		\int_0^\omega
		\bigg|
		\frac{1}{\gamma} \frac{\frakw(s+t)}{\frakp(s+t)} 
		-
		\frac{\varrho_n(s+t)}{\rho_n(s)}
		\bigg|
		\cdot
		\frac{1}{\varrho_n(s+t)}
		\bigg(
		\sum_{k = 0}^n \abs{u_k(s+t, \eta; z)}^2 w_k(t)
		\bigg)
		{\: \rm d} t \\
		&\qquad\leq
		c
		\int_0^\omega
		\bigg|
		\frac{1}{\gamma} \frac{\frakw(s+t)}{\frakp(s+t)} - \frac{\varrho_n(s+t)}{\rho_n(s)}
		\bigg|
		{\: \rm d} t.
	\end{align*}
	Therefore, by \eqref{eq:68},
	\[
		\lim_{n \to \infty}
		\frac{1}{\rho_n(s)}
		\int_0^\omega
		\sum_{k=0}^n
		\abs{u_k(s + t, \eta; z)}^2 w_k(s + t)
		{\: \rm d} t
		=
		\int_0^\omega
		\frac{\abs{\vphi(t, \eta; z)}^2 p_{M}(t)}{4 - |\tr \frakT(\omega; 0)|^2} 
		\frac{\frakw(t)}{\frakp(t)}
		{\: \rm d} t
	\]
	uniformly with respect to $(\eta, z) \in \tsS^1 \times K$. Since
	\begin{align*}
		K_{s+n\omega}(z, z; \eta) = 
		\int_0^{s + n\omega} \abs{u(t, \eta; z)}^2 w(t) {\: \rm d} t
		&=
		\sum_{k=0}^{n-1} \int_0^\omega \abs{u_k(s + t, \eta; z)}^2 w_k(t) \ud t  \\
		&\phantom{=}- \int_0^s \abs{u(t, \eta; z)}^2 w(t) \ud t
	\end{align*}
	we obtain \eqref{eq:131}
	uniformly with respect to $(\eta, z) \in \tsS^1 \times K$. Notice that, in view of \eqref{eq:104}, the function $u(\cdot, \eta; z)$ does not belong to $L^2(w)$, which implies that $\tau$ is in the limit-point case at $\infty$.
	
    Let us turn to the proof of \eqref{eq:130}. Let $(L_j : j \geq 1)$ be a strictly increasing sequence of positive integers. Then there is a sequence of positive integers $(n_j : j \geq 1)$ such that $L_j \in [n_j \omega, (n_{j}+1) \omega)$. We obviously have $\lim_{j \to \infty} n_j = \infty$. Notice that by Corollary~\ref{cor:1} there is a constant $c>0$ such that
    \begin{align}
        \nonumber
        \sup_{(\eta,z) \in \tsS^1 \times K} \big| K_{L_j}(z,z;\eta) - K_{s+n_j \omega}(z,z;\eta) \big| 
        &\leq
        \int_{n_j \omega}^{(n_j + 1) \omega} |u(t,\eta;z)|^2 w(t) \ud t \\
        \label{eq:132}
        &\leq
        c \int_{n_j \omega}^{(n_j + 1) \omega} \frac{w(t)}{p(t)} \ud t.
    \end{align}
    Next, by \eqref{eq:134} and \eqref{eq:135}
    \begin{equation}
        \label{eq:133}
        |\rho_{L_j} - \rho_{n_j}(s)| \leq 
        \int_{0}^s \frac{w(t)}{p(t)} \ud t +
        \int_{n_j \omega}^{(n_j + 1) \omega} \frac{w(t)}{p(t)} \ud t
    \end{equation}
    Since
    \begin{align*}
        \bigg|
        \frac{1}{\rho_{L_j}} K_{L_j}(z,z;\eta) - 
        \frac{1}{\rho_{n_j}(s)} K_{s+n_j \omega}(z,z;\eta) 
        \bigg| 
        &\leq
        \frac{1}{\rho_{L_j}}
        \big| K_{L_j}(z,z;\eta) - K_{s+n_j \omega}(z,z;\eta) \big| \\
        &+
        \frac{|\rho_{L_j} - \rho_{n_j}(s)|}{\rho_{L_j}} \frac{1}{\rho_{n_j}(s)} K_{s+n_j \omega}(z,z;\eta)
    \end{align*}
    by \eqref{eq:132}, \eqref{eq:133}, \eqref{eq:131}, \eqref{eq:3a} and the Carleman condition, the conclusion \eqref{eq:130} follows.
\end{proof}

\section{The density of states}
\label{sec:1}
Given $\eta \in \tsS^1$ and $L > 0$ let us consider an operator $H^L_{\eta} :\Dom \big( H^L_{\eta} \big) \to L^2([0,L], w)$ 
defined by $H^L_{\eta} f = \tau f$ with the domain
\begin{align}
    \label{eq:129}
    \Dom \big( H^L_{\eta} \big) = 
    \big\{ 
        f \in L^2([0,L], w) \colon
        &f, pf' \in \AC([0,L]), \tau f \in L^2([0,L],w), \\
    \nonumber
        &p(0) \eta_2 f(0) - \eta_1 pf'(0) = 0, f(L) = 0 
    \big\}.
\end{align}
Notice that according to \cite[Example, p.225]{Teschl2014} the operator $H_\eta^L$ is self-adjoint with the spectrum consisting
of simple eigenvalues only. In this section we shall be interested in asymptotic behavior of the family of the eigenvalue 
counting measures
\[
	\nu_{\eta}^L = 
	\sum_{\lambda \in \sigma(H^L_\eta)} \delta_\lambda, \quad L \in (0, \infty)
\]
where $\delta_\lambda$ denotes the Dirac's delta at $\lambda$. 

Let us begin with an adaptation of \cite[Lemma 3.1]{SwiderskiTrojan2023}, which allows to link asymptotic behavior of Christoffel--Darboux kernel on the diagonal with the vague limit of $(\nu^L_\eta : L > 0)$.
\begin{lemma}
    \label{lem:1}
    Let $\eta \in \tsS^1$ be given.
    Suppose that there are an open subset $U$ of the real line, a continuous function $\rho_\cdot : (0,\infty) \to (0,\infty)$ such that $\lim_{L \to \infty} \rho_L = \infty$ and a non-zero function $g : U \to \RR$ such that
    \begin{equation}
        \label{eq:138}
        \lim_{L \to \infty} \frac{1}{\rho_L} K_L(\lambda, \lambda; \eta) = g(\lambda)
    \end{equation}
    locally uniformly with respect to $\lambda \in U$. Then for each $f \in \calC_c(U)$
    \begin{equation}
        \label{eq:137}
        \lim_{L \to \infty} \frac{1}{\rho_L} \int_\RR f(\lambda) \nu^L_\eta(\ud \lambda) = 
        \int_\RR f(\lambda) g(\lambda) \mu_\eta(\ud \lambda).
    \end{equation}
\end{lemma}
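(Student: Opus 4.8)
The plan is to realize $K_L(\cdot,\cdot;\eta)$ as a reproducing kernel inside the spectral representation of $H_\eta$ and then to transport the hypothesis \eqref{eq:138} through an exact quadrature identity. Recall that the generalized Fourier transform $\calF_\eta : L^2([0,\infty),w) \to L^2(\mu_\eta)$ acts on compactly supported functions by $(\calF_\eta \phi)(\lambda) = \int_0^\infty \phi(t) u(t,\eta;\lambda) w(t)\,\ud t$. Writing $V_L = \calF_\eta\big(L^2([0,L],w)\big) \subset L^2(\mu_\eta)$, a direct computation with Fubini and the inversion $\calF_\eta^{-1}\calF_\eta = \Id$ shows that $\lambda \mapsto K_L(\lambda_0,\lambda;\eta)$ is the reproducing kernel of $V_L$, that is $\int_\RR K_L(\lambda_0,\lambda;\eta) F(\lambda)\,\mu_\eta(\ud\lambda) = F(\lambda_0)$ for every $F \in V_L$. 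First I would record the spectral description of $H_\eta^L$: its eigenvalues $(\lambda_k)$ are exactly the zeros of $\lambda \mapsto u(L,\eta;\lambda)$, the eigenfunctions $u(\cdot,\eta;\lambda_k)$ form an orthogonal basis of $L^2([0,L],w)$, and hence $K_L(\lambda_j,\lambda_k;\eta) = \delta_{jk}\, K_L(\lambda_k,\lambda_k;\eta)$.

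Next I would set up the associated Gaussian quadrature. The normalized kernels $e_k = K_L(\lambda_k,\cdot;\eta)/\sqrt{K_L(\lambda_k,\lambda_k;\eta)}$ are precisely the $\calF_\eta$-images of the normalized eigenfunctions, hence an orthonormal basis of $V_L$; expanding $F,G \in V_L$ in this basis and using the reproducing property yields the exact formula
\[
\int_\RR F(\lambda)\overline{G(\lambda)}\,\mu_\eta(\ud\lambda) = \sum_k \frac{F(\lambda_k)\overline{G(\lambda_k)}}{K_L(\lambda_k,\lambda_k;\eta)}.
\]
Equivalently, the discrete measure $\widetilde\mu_L = \sum_k K_L(\lambda_k,\lambda_k;\eta)^{-1}\delta_{\lambda_k}$ agrees with $\mu_\eta$ against every product $F\overline G$ with $F,G \in V_L$. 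The reason for introducing $\widetilde\mu_L$ is the purely algebraic identity, valid for any $f$,
\[
\frac{1}{\rho_L}\int_\RR f(\lambda)\,\nu_\eta^L(\ud\lambda) = \frac{1}{\rho_L}\sum_k f(\lambda_k) = \int_\RR f(\lambda)\,\frac{K_L(\lambda,\lambda;\eta)}{\rho_L}\,\widetilde\mu_L(\ud\lambda),
\]
which rewrites the left-hand side of \eqref{eq:137} as the integral of $f_L := f\cdot K_L(\cdot,\cdot;\eta)/\rho_L$ against $\widetilde\mu_L$.

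The main step, and the one I expect to be the real obstacle, is the vague convergence $\widetilde\mu_L \to \mu_\eta$. I would prove it by the classical sandwiching argument for quadrature measures: given $\phi \in \calC_c(\RR)$ with $\phi \geq 0$ and $\epsilon > 0$, choose $\phi_0$ compactly supported in $[0,\infty)$ so that $\psi = \calF_\eta \phi_0$ is an entire function of $\lambda$ with $|\psi|^2 \geq \phi$ and $\int_\RR (|\psi|^2 - \phi)\,\mu_\eta(\ud\lambda) \leq \epsilon$ (the existence of such majorizing squared transforms is where the bulk of the work sits, and is the continuous analogue of choosing polynomial majorants in the orthogonal-polynomial setting). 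Since $\phi_0$ is supported in some $[0,L_0]$, we have $\psi \in V_L$ for all $L \geq L_0$, so the exact formula gives $\int |\psi|^2\,\ud\widetilde\mu_L = \int |\psi|^2\,\ud\mu_\eta$ for $L \geq L_0$, whence $\limsup_L \int \phi\,\ud\widetilde\mu_L \leq \int|\psi|^2\,\ud\mu_\eta \leq \int\phi\,\ud\mu_\eta + \epsilon$. A symmetric lower bound using $|\psi|^2 \leq \phi$ supplies the matching $\liminf$, and decomposing a general $\phi$ into positive and negative parts yields $\widetilde\mu_L \to \mu_\eta$ vaguely. This is exactly the continuous counterpart of \cite[Lemma 3.1]{SwiderskiTrojan2023}, with $V_L$ and $K_L$ playing the role of the polynomial spaces and their Christoffel--Darboux kernels.

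Finally I would combine these ingredients. Fix $f \in \calC_c(U)$ and put $\Phi = fg$, noting that $g$ is continuous as a locally uniform limit of the continuous functions $\lambda \mapsto K_L(\lambda,\lambda;\eta)/\rho_L$, so $\Phi \in \calC_c(U)$. By \eqref{eq:138} the family $f_L$ converges to $\Phi$ uniformly on the compact set $\supp f \subset U$, and all $f_L$ are supported there. Choosing $\chi \in \calC_c(\RR)$ with $\chi \equiv 1$ on $\supp f$, the vague convergence gives $\sup_L \widetilde\mu_L(\supp f) \leq \sup_L \int \chi\,\ud\widetilde\mu_L < \infty$, so
\[
\Big| \int_\RR f_L\,\ud\widetilde\mu_L - \int_\RR \Phi\,\ud\widetilde\mu_L \Big| \leq \|f_L - \Phi\|_\infty\, \widetilde\mu_L(\supp f) \longrightarrow 0,
\]
while $\int \Phi\,\ud\widetilde\mu_L \to \int \Phi\,\ud\mu_\eta$ by vague convergence since $\Phi \in \calC_c(\RR)$. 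Together with the algebraic identity of the second paragraph this gives $\frac{1}{\rho_L}\int f\,\ud\nu_\eta^L \to \int f g\,\ud\mu_\eta$, which is \eqref{eq:137}.
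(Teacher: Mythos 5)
Your overall architecture coincides with the paper's: you introduce exactly the measure the paper calls $\sigma_\eta^L$ (your $\widetilde\mu_L$), observe the algebraic identity $\nu_\eta^L(\ud\lambda)=K_L(\lambda,\lambda;\eta)\,\sigma_\eta^L(\ud\lambda)$, and then combine the vague convergence $\sigma_\eta^L\to\mu_\eta$ with the locally uniform convergence of $\rho_L^{-1}K_L(\lambda,\lambda;\eta)$ to $g$ and the uniform bound on $\sigma_\eta^L(\supp f)$. That last combination step is carried out in essentially the same way in the paper. The difference is in how the vague convergence is obtained: the paper simply cites it as a known theorem (Weidmann, Theorem 10.1; see also Remling's formula (3.13)) on the convergence of the spectral measures of the regular truncations $H_\eta^L$ to $\mu_\eta$, whereas you attempt to prove it from scratch by a Gauss-quadrature sandwiching argument in the de Branges--type spaces $V_L=\calF_\eta(L^2([0,L],w))$.

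That is where your proof has a genuine gap, and in one half it is not merely incomplete but impossible as stated. For the minorant you require $\psi=\calF_\eta\phi_0$ with $\phi_0$ compactly supported and $|\psi|^2\le\phi$ for a given $\phi\in\calC_c(\RR)$, $\phi\ge 0$. But $\psi$ is then an entire function of $\lambda$ (since $u(t,\eta;\cdot)$ is entire and the integral is over a compact $t$-interval), so its restriction to $\RR$ is real-analytic; the inequality $0\le|\psi|^2\le\phi$ forces $\psi$ to vanish on the open set $\RR\setminus\supp\phi$, hence $\psi\equiv 0$, and the matching lower bound for $\liminf_L\int\phi\,\ud\widetilde\mu_L$ cannot be produced this way. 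The majorant half is also only asserted: the existence of $\psi\in V_{L_0}$ with $|\psi|^2\ge\phi$ on all of $\RR$ (needed because the eigenvalues of $H_\eta^L$ are unbounded) and with $\int(|\psi|^2-\phi)\,\ud\mu_\eta\le\epsilon$ is a Beurling--Selberg--type extremal problem in these spaces, which you acknowledge but do not solve. Your quadrature identity $\int F\overline{G}\,\ud\widetilde\mu_L=\int F\overline{G}\,\ud\mu_\eta$ for $F,G\in V_L$ is correct and could in principle feed a convergence proof, but as written the step $\widetilde\mu_L\to\mu_\eta$ is not established. The fix is simply to invoke the standard theorem the paper uses (or to prove it along the classical Weyl-circle lines), after noting, as the paper does, that $g\not\equiv 0$ forces the limit-point case so that $\mu_\eta$ is well defined.
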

\begin{proof}
    Since $g(\lambda_0) \neq 0$ for certain $\lambda_0 \in U$, we have that $u(\cdot,\eta;\lambda_0)$ does not belong to $L^2(w)$, so $\tau$ is in the limit-point case at $+\infty$.
    
    Let us define a collection of positive measures
	\[
		\sigma_\eta^{L} =
		\sum_{\lambda \in \sigma(H^L_\eta)} \frac{\delta_\lambda}{K_L(\lambda, \lambda; \eta)}, \quad L \in (0, \infty).
	\]
	By \cite[Theorem 10.1]{Weidmann2005} (see also \cite[formula (3.13)]{Remling2018} for a stronger statement) we have
	\begin{equation}
		\label{eq:87}
		\lim_{L \to \infty} 
		\int_\RR h(\lambda) \: \sigma_\eta^{L}({\rm d}\lambda) =
		\int_\RR h(\lambda) \: \mu_\eta({\rm d} \lambda), \quad h \in \calC_c(\RR).
	\end{equation}
	Notice that
	\begin{equation}
		\label{eq:86}
		\sigma_\eta^{L}({\rm d} \lambda) = \frac{1}{K_L(\lambda,\lambda;\eta)} \nu_\eta^L({\rm d} \lambda).
	\end{equation}
	Let $f \in \calC_c(U)$ and let $K = \supp(f)$. By \eqref{eq:86} we have
	\begin{align*}
		\bigg|
		\frac{1}{\rho_L} \int_\RR f(\lambda) \: \nu_\eta^L({\rm d} \lambda)
		-
		\int_\RR f(\lambda) g(\lambda) \: \sigma_\eta^L({\rm d} \lambda)
		\bigg|
		&=
		\bigg| 
		\int_\RR f(\lambda) \Big( \frac{1}{\rho_L} K_L(\lambda,\lambda;\eta) 
		- g(\lambda) \Big) \: \sigma_\eta^L({\rm d} \lambda) \bigg| \\
		&\leq
		\sup_{\lambda \in K} |f(\lambda)| \cdot 
		\sup_{\lambda \in K} 
		\bigg| \frac{1}{\rho_L} K_L(\lambda,\lambda;\eta) - g(\lambda) \bigg| 
		\cdot \sigma^L_\eta(K).
	\end{align*}
	By \eqref{eq:87} applied to $h \in \calC_c(\RR)$ such that $h(x) = 1$ for any $x \in K$ we get
	\[
		\limsup_{L \to \infty} \sigma^L_\eta(K) < \infty.
	\]
	Thus, by \eqref{eq:88} and Corollary~\ref{cor:4} we get
	\begin{equation}
		\label{eq:89}
		\lim_{L \to \infty}
		\frac{1}{\rho_L} \int_\RR f(\lambda) \: \nu_\eta^L ({\rm d} \lambda)
		=
		\lim_{L \to \infty}
		\int_\RR f(x) g(\lambda) \: \sigma_\eta^L ({\rm d} \lambda).
	\end{equation}
	Since $f \cdot g \in \calC_c(\RR)$, by \eqref{eq:87} we conclude that
	\begin{equation}
		\label{eq:90}
		\lim_{L \to \infty}
		\int_\RR f(\lambda) g(\lambda) \: \sigma_\eta^L({\rm d} \lambda)
		=
		\int_\RR f(\lambda) g(\lambda) \: \mu_\eta({\rm d} \lambda).
	\end{equation}
    Since $f \in \calC_c(U)$ was arbitrary, by combining \eqref{eq:89} with \eqref{eq:90}, the conclusion~\eqref{eq:137} follows.
\end{proof}

Recall that in Corollary~\ref{cor:4}, under some hypotheses, we proved \eqref{eq:138} (cf. \eqref{eq:130}). We would like to express the value of $g$ more explicitly. In view of \eqref{eq:137}, one way to do that is to compute the value of the left-hand side of \eqref{eq:137} in a different way.

By \cite[Example, p.225]{Teschl2014},
\[
	\sigma(H^L_\eta) = \{ \lambda : u(L,\eta;\lambda) = 0 \}.
\]
For any $L \in (0, \infty)$ and any $\eta \in \tsS^1$, by \cite[Theorem D.6]{Bennewitz2020}, the function $\CC \ni z \mapsto u(L,\eta;z)$ is entire of 
the order less or equal to $1/2$. Thus the Hadamard factorization theorem (see e.g. \cite[Theorem E.2.7]{Bennewitz2020}) 
implies that there are $A \neq 0, k \in \NN_0$ and $(\lambda_j : j \geq 1)$, such that 
\[
	u(L, \eta; z) 
	= 
	A z^{k} 
	\prod_{j=1}^\infty 
	\bigg( 1 -\frac{z}{\lambda_j} \bigg).
\]
Since $u(L,\eta;\cdot)$ has simple zeros, we must have $k \in \{0, 1\}$. Thus,
\begin{equation}
	\label{eq:74}
	-\frac{\partial_z u(L,\eta;z)}{u(L,\eta;z)}
	=
	\sum_{\lambda \in \sigma(H^L_\eta)} \frac{1}{\lambda - z} 
	=
	\calC \big[ \nu^L_\eta \big](z), \quad z \in \CC \setminus \RR
\end{equation}
where for a positive measure $\nu$ on the real line, $\calC[\mu]$ denotes its \emph{Cauchy transform}, that is
\[
	\calC[\nu](z) = 
	\int_\RR \frac{1}{\lambda - z} \nu({\rm d} \lambda), \quad z \in \CC \setminus \RR
\]
provided the integral exists. Motivated by \cite[Lemma 4.1]{SwiderskiTrojan2023} we are going to analyze the asymptotic 
behavior of the left-hand side of \eqref{eq:74}.

\begin{theorem}
	\label{thm:9}
	Suppose that the hypotheses of Theorem \ref{thm:4} are satisfied. Then for any $t \in [0,\omega]$
	\begin{equation}
		\label{eq:85}
		\lim_{n \to \infty} 
		\calC \big[ \tfrac{1}{\rho_{t+n\omega}} \nu_\eta^{t+n\omega} \big](z) =
		\frac{1}{\gamma}
		\frac{\partial_z \tr \frakT(\omega;0)}{i \sqrt{-\discr \frakT(\omega;0)}}, 
		\quad z \in \CC_{\varsigma}.
	\end{equation}
\end{theorem}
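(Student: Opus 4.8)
The plan is to start from the factorization identity \eqref{eq:74}, which expresses the Cauchy transform as a logarithmic derivative,
\[
	\calC\big[\nu_\eta^{t+n\omega}\big](z) = -\frac{\partial_z u(t+n\omega, \eta; z)}{u(t+n\omega,\eta;z)}, \quad z \in \CC \setminus \RR,
\]
so that it suffices to determine the asymptotics of $\partial_z \log u(t+n\omega,\eta;z)$ and divide by $\rho_{t+n\omega}$. I fix $z \in \CC_\varsigma$ and choose a compact $K \subset \CC_\varsigma$ with nonempty interior containing $z$. By Theorem \ref{thm:4} there is a basis $\{\mathbf{u}^+(\cdot;z), \mathbf{u}^-(\cdot;z)\}$ of solutions to \eqref{eq:10}, holomorphic in $\intr{K}$, for which $u_n^\pm(t;z)/\prod_{M\le j<n}\lambda_j^\pm(s;z)$ converge uniformly to the non-vanishing $\phi^\pm(t;z)$. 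I would expand $\mathbf{u}(\cdot,\eta;z) = a(z)\mathbf{u}^+(\cdot;z) + b(z)\mathbf{u}^-(\cdot;z)$, the coefficients being holomorphic on $\intr{K}$ since they arise from Wronskians, which by \eqref{eq:91} are non-zero constants in $x$.

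The decisive structural input is that $\mathbf{u}^+$ dominates and that $a(z)$ never vanishes. By Proposition \ref{prop:1}, $\prod_{j\ge M}|\lambda_j^+(s;z)/\lambda_j^-(s;z)| = +\infty$, whence $u_n^-(t;z)/u_n^+(t;z)\to 0$ uniformly on $K$. Moreover $a(z)\neq0$ throughout $\CC_\varsigma$: were $a(z)=0$, the solution $\mathbf{u}(\cdot,\eta;z)$ would be proportional to the geometrically decaying $\mathbf{u}^-(\cdot;z)$, hence lie in $L^2([0,\infty),w)$ while satisfying the boundary condition encoded by $\eta$, making the non-real $z$ an eigenvalue of the self-adjoint $H_\eta$ — impossible. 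Consequently one may write $u(t+n\omega,\eta;z) = a(z)u_n^+(t;z)(1 + r_n(z))$ with $r_n = (b/a)\,u_n^-/u_n^+\to0$ uniformly on $K$, and, using $u_n^+ = \phi^+\prod\lambda_j^+(1+o(1))$,
\[
	\partial_z \log u(t+n\omega,\eta;z) = \partial_z\log\big(a(z)\phi^+(t;z)\big) + \sum_{M\le j<n}\frac{\partial_z\lambda_j^+(s;z)}{\lambda_j^+(s;z)} + \varepsilon_n(z),
\]
where $\varepsilon_n$ collects the logarithmic derivatives of the two error factors.

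The main obstacle is controlling $\varepsilon_n$: uniform convergence of the correction factors to constants does not by itself control their derivatives. Here I would invoke holomorphy in $\intr{K}$ together with Vitali's theorem (equivalently Cauchy's estimates applied to the uniformly bounded, $0$-convergent holomorphic sequences), which upgrades uniform convergence on $K$ to uniform convergence of derivatives on compact subsets of $\intr{K}$; this is precisely the mechanism behind \cite[Lemma 4.1]{SwiderskiTrojan2023}, which I would adapt to conclude $\varepsilon_n(z)\to0$. Dividing by $\rho_{t+n\omega}$, the finite term $\partial_z\log(a\phi^+)$ drops out since $\rho_{t+n\omega}\to\infty$ by the Carleman condition \eqref{eq:26}, and for the remaining sum I apply the Stolz--Ces\`aro theorem with increments $\gamma_n(s)$: Proposition \ref{prop:3} gives $\gamma_n(s)^{-1}\partial_z\lambda_n^+(s;z)/\lambda_n^+(s;z)\to \tfrac{1}{\gamma}\partial_z\tr\frakT(\omega;0)/(i\sqrt{-\discr\frakT(\omega;0)})$, while $\sum_{M\le j<n}\gamma_j(s)$ and $\rho_{t+n\omega}$ are asymptotically equal (their difference is bounded by $\int_0^s w/p$ plus a per-period integral tending to $0$ via \eqref{eq:3a}, both negligible against $\rho_{t+n\omega}$). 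Combining these limits and accounting for the sign coming from \eqref{eq:74} yields \eqref{eq:85}.
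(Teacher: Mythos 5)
Your overall architecture coincides with the paper's proof: expand $\mathbf{u}(\cdot,\eta;z)$ in the basis $\{\mathbf{u}^+,\mathbf{u}^-\}$ from Theorem~\ref{thm:4}, show the coefficient of $\mathbf{u}^+$ is non-zero, pass to the logarithmic derivative of the holomorphic, uniformly convergent, non-vanishing ratios $u_n/\prod\lambda_j^+$ (this is exactly how the paper controls your $\varepsilon_n$), and finish with Stolz--Ces\`aro, Proposition~\ref{prop:3}, and $\rho_n(s)/\rho_{t+n\omega}\to 1$. That part is sound.

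There is, however, one genuine gap: you justify $a(z)\neq 0$ by asserting that $\mathbf{u}^-(\cdot;z)$ is ``geometrically decaying, hence in $L^2([0,\infty),w)$.'' In the regime $\abs{\tr\frakT(\omega;0)}<2$ there is no geometric decay: by Proposition~\ref{prop:3} the individual eigenvalues satisfy $\lambda_n^{\pm}(s;z)\to\lambda_\infty^{\pm}$ with $\abs{\lambda_\infty^{\pm}}=1$ (indeed $\abs{\lambda_n^+\lambda_n^-}=\det X_n = p_n(s)/p_{n+1}(s)\to 1$), so both basis solutions have factors tending to the unit circle; only the \emph{ratio} $\prod\abs{\lambda_j^+/\lambda_j^-}$ diverges, and possibly very slowly, by Proposition~\ref{prop:1}. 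Consequently $\mathbf{u}^-\in L^2(w)$ does not follow from any pointwise decay and must be argued differently. The paper's route is: any solution $\mathbf{v}=f\mathbf{u}^++g\mathbf{u}^-$ with $f(z)\neq 0$ satisfies $\abs{v_n}^2\gtrsim\prod\abs{\lambda_j^+}^2=p_M(s)/p_n(s)$, hence $\int\abs{v}^2w\gtrsim\int w/p=\infty$ by the Carleman condition \eqref{eq:26}; since $\tau$ is in the limit-point case (Corollary~\ref{cor:4}), there exists a non-trivial $L^2(w)$ solution, which must therefore have $f=0$, i.e.\ be a multiple of $\mathbf{u}^-$. Only then does your contradiction argument for $a(z)\neq 0$ (non-real $z$ cannot be an eigenvalue of the self-adjoint $H_\eta$) close. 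The gap is repairable with exactly this lemma, but as written your step ``$a(z)=0\Rightarrow\mathbf{u}(\cdot,\eta;z)\in L^2$'' is unsupported.
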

\begin{proof}
    Let $K$ be a compact subset of $\CC_{\varsigma}$ with non-empty interior. By Theorem~\ref{thm:4} for any $z \in K$, there are two linearly independent mappings $\mathbf{u}^+(\cdot;z)$ and $\mathbf{u}^-(\cdot;z)$ solving \eqref{eq:10}. Moreover, for any $x \geq 0$ the mappings $\mathbf{u}^+(x;\cdot)$ and $\mathbf{u}^-(x;\cdot)$
	are continuous on $K$ and holomorphic in $\intr{K}$. Furthermore, there is $M \geq 1$ such that the limits
	\begin{equation}
            \label{eq:148}
		\phi^+(t;z) = \lim_{k \to \infty} \frac{u_k^+(t;z)}{\prod_{M \leq j < k} \lambda_j^+(s; z)},
		\quad\text{and}\quad
		\phi^-(t;z) = \lim_{k \to \infty} \frac{u_k^-(t;z)}{\prod_{M \leq j < k} \lambda_j^-(s; z)}
	\end{equation}
	exist uniformly with respect to $(t,z) \in [0,\omega] \times K$ and define non-vanishing functions.
    
	Let $\mathbf{v}(\cdot; z)$ be any solution to \eqref{eq:10}. By the linear independence of $\mathbf{u}^+(\cdot;z)$ and $\mathbf{u}^-(\cdot;z)$, there are functions $f, g: K \to \CC$,
	such that
	\begin{equation} 
		\label{eq:76}
		\mathbf{v}(\cdot;z) 
		= 
		f(z) \mathbf{u}^+(\cdot;z) + g(z) \mathbf{u}^-(\cdot;z).
	\end{equation}
	By computing the Wronskians we get
	\begin{align*}
		\Wrk \big( \mathbf{v}(\cdot; z), \mathbf{u}^-(\cdot; z) \big) 
		&=
		f(z) \Wrk \big( \mathbf{u}^+(\cdot; z), \mathbf{u}^-(\cdot; z) \big)
		\intertext{and}
		\Wrk \big( \mathbf{v}(\cdot;z), \mathbf{u}^+(\cdot;z) \big) &=
		-g(z) \Wrk \big( \mathbf{u}^+(\cdot;z), \mathbf{u}^-(\cdot;z) \big).
	\end{align*}
	Thus,
	\[
		f(z) = 
		\frac
		{\Wrk \big( \mathbf{v}(\cdot;z), \mathbf{u}^-(\cdot;z) \big)}
		{\Wrk \big( \mathbf{u}^+(\cdot;z), \mathbf{u}^-(\cdot;z) \big)}, 
		\quad \text{and} \quad
		g(z) = 
		-
		\frac
		{\Wrk \big( \mathbf{v}(\cdot;z), \mathbf{u}^+(\cdot;z) \big)}
		{\Wrk \big( \mathbf{u}^+(\cdot;z), \mathbf{u}^-(\cdot;z) \big)}.
	\]
    In particular, $\mathbf{v}$ is continuous on $K$ and holomorphic on $\intr{K}$ if and only if functions $f$ and $g$ are continuous on $K$ and holomorphic on $\intr{K}$.
	
    Next, by \eqref{eq:76} we have
	\begin{equation}
		\label{eq:79}
		v_n(t; z) = f(z) u_n^+(t;z) + g(z) u_n^-(t;z), 
            \quad t \in [0,\omega], n \geq 0.
	\end{equation}
	Thus, for $n \geq M$
	\begin{align*}
		\frac{v_n(t; z)}{\prod_{M \leq j < n} \lambda_j^+(s; z)} 
		&=
		f(z) \frac{u_n^+(t;z)}{\prod_{M \leq j < n} \lambda_j^+(s; z)} +
		g(z) \frac{u_n^-(t;z)}{\prod_{M \leq j < n} \lambda_j^+(s; z)} \\
		&=
		f(z) \frac{u_n^+(t;z)}{\prod_{M \leq j < n} \lambda_j^+(s; z)} +
		g(z) \frac{u_n^-(t;z)}{\prod_{M \leq j < n} \lambda_j^-(s; z)} \cdot
		\frac{\prod_{M \leq j \leq n} \lambda_j^-(s; z)}{\prod_{M \leq j < n} \lambda_j^+(s; z)}.
	\end{align*}
    Now, by \eqref{eq:77} and \eqref{eq:148} we obtain
	\begin{equation} 
		\label{eq:80}
		\lim_{n \to \infty}
		\frac{v_n(t; z)}{\prod_{M \leq j \leq n} \lambda_j^+(s; z)} 
		=
		f(z) \phi^+(t;z)
	\end{equation}
    uniformly with respect to $(t,z) \in [0,\omega] \times K$. Recall that 
    $\phi^+$ is a continuous non-vanishing function. Therefore, if $f(z) \neq 0$, then there is $\delta > 0$, $M' \geq M$,
	such that for all $n \geq M'$ and all $t \in [0,\omega]$
	\[
		\delta
		\prod_{M \leq j < n} |\lambda_j^+(s; z)|^2
		\leq
		|v_n(t; z)|^2
	\]
    By \eqref{eq:106}, \eqref{eq:99}, \eqref{eq:41} and \eqref{eq:16} we have
    \[
        |\lambda_j^+(s;z)|^2 \geq \det X_{j}(s;z) = \frac{p_j(s)}{p_{j+1}(s)}.
    \]
    Thus,
    \[
        \delta \frac{p_M(s)}{p_n(s)} w_n(t) \leq |v_n(t;z)|^2 w_n(t).
    \]
    By Lemma~\ref{lem:3} there is $\delta'>0$ such that 
    \[
        \delta' \frac{w_n(t)}{p_n(t)} \leq |v_n(t;z)|^2 w_n(t).
    \]
    Therefore,
    \[
        \int_{M' \omega}^\infty \frac{w(x)}{p(x)} \ud x \leq 
        \int_{M' \omega}^\infty |v(x;z)|^2 w(x) \ud x.
    \]
    Consequently, by the Carleman condition \eqref{eq:26}, if $v(\cdot; z) \in L^2(w)$, then $f(z) = 0$. Since $\tau$ is in the limit point case, it yields $u^-(\cdot;z) \in L^2(w)$ for any $z \in K$.

    Let us turn to the proof of \eqref{eq:85}. Let us fix $t \in [0,\omega]$ and define
    \begin{equation}
        \label{eq:174}
	\phi_n(z) = \frac{u_n(t, \eta; z)}{\prod_{M \leq j \leq n} \lambda_j^+(s; z)}, \quad z \in \intr(K).
    \end{equation}
    Then, by \eqref{eq:80},
	\[
		\lim_{n \to \infty}
		\phi_n(z)
		=
		\phi(z)
	\]
	uniformly with respect to $z \in K$, where
        \[
		\phi(z) = f(z) \phi^+(t; z), \quad z \in K.
	\]
    Let us show that $\phi$ is non-vanishing. 
    Since $\tau$ is in the limit point case at $+\infty$, the operator $H_\eta$ is self-adjoint. Thus, we have $\sigma(H_\eta) \subset \RR$. In particular, $K \cap \sigma(H_\eta) = \emptyset$. Notice that since $u(\cdot, \eta;z)$ satisfies the boundary conditions of $H_\eta$, it cannot belong to $L^2(w)$, otherwise it would be an eigenfunction of $H_\eta$. Consequently, $u(\cdot,\eta;z)$ is linearly independent with $u^-(\cdot;z)$, which implies that $f(z) \neq 0$. Since $(\phi_n : n \geq M)$ is a sequence of holomorphic functions on $\intr(K)$ 
	and $\phi$ is everywhere non-zero, we get
	\begin{equation}
		\label{eq:84}
		\lim_{n \to \infty} \frac{\partial_z \phi_n(z)}{\phi_n(z)} = \frac{\partial_z \phi(z)}{\phi(z)}
	\end{equation}
	uniformly with respect to $z \in \intr(K)$. Notice that
	\begin{equation}
		\label{eq:81}
		\frac{\partial_z \phi_n(z)}{\phi_n(z)} = 
		\frac{\partial_z u_n(t, \eta; z)}{u_n(t, \eta; z)}
		-
		\sum_{j=M}^n \frac{\partial_z \lambda_j^+(s; z)}{\lambda_j^+(s; z)}.
	\end{equation}
	Since the Carleman's condition \eqref{eq:26} is satisfied,
	\[
		\lim_{n \to \infty} \rho_n(s) = +\infty.
	\]
	Therefore, by \eqref{eq:84} and \eqref{eq:81} we obtain
	\begin{equation}
		\label{eq:54}
		\lim_{n \to \infty} 
		\frac{1}{\rho_n(s)} \frac{\partial_z u_n(t,\eta; z)}{u_n(t,\eta; z)} 
		=
		\lim_{n \to \infty}
		\frac{1}{\rho_n(s)}
		\sum_{j=M}^n \frac{\partial_z \lambda_j^+(s; z)}{\lambda_j^+(s; z)}.
	\end{equation}
	To compute the right-hand side of \eqref{eq:54}, we are going to use the Stolz--Ces\`{a}ro theorem. Namely,
	we have 
	\begin{align}
            \label{eq:176}
		\lim_{n \to \infty}
		\frac{1}{\rho_n(s)}
		\sum_{j=M}^n \frac{\partial_z \lambda_j^+(s; z)}{\lambda_j^+(s; z)}
		&=
		\lim_{n \to \infty}
		\frac{1}{\gamma_n(s)}
		\frac{\partial_z \lambda_n^+(s; z)}{\lambda_n^+(s; z)} \\
            \nonumber
		&=
		\frac{1}{\gamma}
		\frac{\partial_z \tr \frakT(\omega;0)}{i \sqrt{-\discr \frakT(\omega;0)}}
	\end{align}
	where the last equality follows by Proposition~\ref{prop:3}. By the Carleman condition we easily get
    \[
        \lim_{n \to \infty} \frac{\rho_n(s)}{\rho_{t+n\omega}} = 1
    \]
    (cf. \eqref{eq:135} and \eqref{eq:134}). Therefore, by \eqref{eq:74} we get \eqref{eq:85}. Since $t \in [0,\omega]$ was arbitrary, it completes the proof of the theorem.
\end{proof}

\begin{corollary} 
	\label{cor:5}
	Suppose that the hypotheses of Theorem~\ref{thm:4} are satisfied. Then for every $f \in \calC(\RR)$ such that 
	\[
		\sup_{\lambda \in \RR} (1+\lambda^2)|f(\lambda)| < \infty
	\]
	we have for any $t \in [0,\omega]$
	\begin{equation}
            \label{eq:136}
		\lim_{n \to \infty} 
		\frac{1}{\rho_{t+n\omega}}
		\int_\RR f(\lambda) \: \nu_\eta^{t+n\omega}({\rm d} \lambda)  
		=
		\int_\RR f(\lambda) \: \nu_\infty({\rm d} \lambda),
	\end{equation}
	where the measure $\nu_\infty$ is purely absolutely continuous with the density
	\begin{equation}
        \label{eq:141}
		\frac{\ud \nu_\infty}{\ud \lambda} \equiv 
            \frac{1}{\pi}
		\frac{|\partial_z \tr \frakT(\omega;0)|}{\gamma \sqrt{-\discr \frakT(\omega;0)}}.
	\end{equation}
\end{corollary}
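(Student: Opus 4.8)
The plan is to deduce the statement from the Cauchy transform asymptotics already established in Theorem~\ref{thm:9}, combined with a general principle that upgrades pointwise convergence of Cauchy transforms on a half-plane to weak convergence of the underlying measures against the prescribed test class. The precise vehicle is \cite[Lemma 4.1]{SwiderskiTrojan2023}: since \eqref{eq:74} identifies $\calC[\nu_\eta^L]$ with $-\partial_z \log u(L,\eta;\cdot)$, and Theorem~\ref{thm:9} already controls the limit of this quantity, only two tasks remain — to recognize the limiting constant as the Cauchy transform of the claimed measure $\nu_\infty$, and to check the (tightness-type) hypotheses of that lemma.

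First I would identify the limit measure via Stieltjes inversion. A Cauchy transform that is \emph{constant} in $z$ on the half-plane $\CC_\varsigma$ is, by the inversion formula, the Cauchy transform of a measure with constant Lebesgue density: for the measure with density $\kappa$, translation invariance of Lebesgue measure gives
\[
\calC[\kappa\,\ud\lambda](z) = \kappa \int_\RR \frac{\ud\lambda}{\lambda - z} = i\pi\kappa\,\sign{\Im z},
\]
which is indeed constant on each half-plane. Comparing the modulus of this value on $\CC_\varsigma$ with that of the right-hand side of \eqref{eq:85} forces $\pi\kappa = |\partial_z\tr\frakT(\omega;0)| \big/ \big(\gamma\sqrt{-\discr\frakT(\omega;0)}\big)$, i.e. exactly the density recorded in \eqref{eq:141}. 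Thus the constant limit in \eqref{eq:85} is $\calC[\nu_\infty](z)$ for $z \in \CC_\varsigma$, which both pins down $\nu_\infty$ and supplies the candidate weak limit.

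Next I would verify the hypotheses of \cite[Lemma 4.1]{SwiderskiTrojan2023} for the normalized family $\tfrac{1}{\rho_{t+n\omega}}\nu_\eta^{t+n\omega}$ with $t$ fixed. The pointwise convergence of the Cauchy transforms on $\CC_\varsigma$ is precisely \eqref{eq:85}. The required uniform bound on the $(1+\lambda^2)^{-1}$-weighted masses follows by evaluating the transform at a single interior point $z_0 \in \CC_\varsigma$: since
\[
\frac{1}{\rho_{t+n\omega}} \int_\RR \frac{|\Im z_0|}{|\lambda - z_0|^2}\, \nu_\eta^{t+n\omega}(\ud\lambda) = \bigl|\Im \calC[\tfrac{1}{\rho_{t+n\omega}} \nu_\eta^{t+n\omega}](z_0)\bigr|
\]
converges as $n \to \infty$ by \eqref{eq:85}, the left-hand side is bounded in $n$, and $|\lambda - z_0|^{-2}$ is comparable to $(1+\lambda^2)^{-1}$. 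With these two inputs the lemma yields \eqref{eq:136} for every $f \in \calC(\RR)$ with $\sup_\lambda (1+\lambda^2)|f(\lambda)| < \infty$, and identifies the limit as $\int_\RR f\,\ud\nu_\infty$. Since $t \in [0,\omega]$ is arbitrary, the corollary follows.

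The step I expect to be the most delicate is confirming that the \emph{degenerate} situation — a limiting Cauchy transform that is constant in $z$ on the half-plane — is genuinely covered by \cite[Lemma 4.1]{SwiderskiTrojan2023} and corresponds to the infinite (yet $(1+\lambda^2)^{-1}$-integrable) measure $\nu_\infty$ rather than to a loss of mass escaping to infinity. It is exactly the uniform tightness estimate above that rules out such escape, and this also clarifies why the natural normalization here is $\rho_{t+n\omega}$, for which $\rho_L/L \to 0$, rather than $L$.
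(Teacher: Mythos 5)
Your proposal is correct and follows essentially the same route as the paper: the paper's proof likewise combines Theorem~\ref{thm:9} with \cite[Lemma 4.1]{SwiderskiTrojan2023}, invoking \cite[Theorem A.3 and Remark A.4]{SwiderskiTrojan2023} for the identification of the constant limit as the Cauchy transform of the Lebesgue measure with density \eqref{eq:141}, which you instead carry out directly by Stieltjes inversion and a modulus comparison. Your explicit tightness check at a single point $z_0 \in \CC_\varsigma$ is a reasonable unpacking of the hypotheses that the cited lemma handles.
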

\begin{proof}
    Let us recall that $\varsigma = \sign{\partial_z \tr \frakT(\omega;0)}$. Now, the conclusion easily follows by \cite[Lemma 4.1]{SwiderskiTrojan2023} together with Theorem~\ref{thm:9} and \cite[Theorem A.3 and Remark A.4]{SwiderskiTrojan2023}.
\end{proof}

\begin{corollary}
	\label{cor:6}
	Suppose that the hypotheses of Corollary \ref{cor:4} are satisfied. 
    Then the measure $\mu_\eta$ is purely absolutely 
	continuous on $\RR$ with the density
	\begin{equation}
            \label{eq:140}
		\mu_\eta'(\lambda) = 
		\frac{1}{\pi} 
		\frac{|\partial_z \tr \frakT(\omega;0)|}{\gamma \sqrt{-\discr \frakT(\omega;0)}} 
		\frac{1}{g(\lambda)}, \quad \lambda \in \RR,
	\end{equation}
	where 
	\begin{equation}
		\label{eq:88}
		g(\lambda) = 
		\lim_{L \to \infty} 
		\frac{1}{\rho_L} K_L(\lambda,\lambda;\eta), \quad \lambda \in \RR.
	\end{equation}
	In particular, $\mu_\eta'$ is a continuous and positive function on $\RR$.
\end{corollary}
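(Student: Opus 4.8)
The plan is to synthesize the two asymptotic statements already at our disposal: the diagonal behavior of the Christoffel--Darboux kernel furnished by Corollary~\ref{cor:4}, and the limiting density of states furnished by Corollary~\ref{cor:5}. First I would confirm that the standing hypotheses of Corollary~\ref{cor:4} entail everything needed to invoke Corollary~\ref{cor:5}. The only point that is not immediate is the Carleman condition~\eqref{eq:26}: since $\rho_n(s) = \int_0^\omega \varrho_n(s+t) \ud t$ (cf.~\eqref{eq:134}) and $\varrho_n(t) \to \infty$ pointwise by~\eqref{eq:42}, Fatou's lemma forces $\int_0^\omega \varrho_n(t) \ud t \to \infty$, which is precisely $\int_0^\infty \tfrac{w}{p} = \infty$. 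Hence all hypotheses of Theorem~\ref{thm:4}, and therefore of Corollary~\ref{cor:5}, hold.

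Next I would feed Corollary~\ref{cor:4} into Lemma~\ref{lem:1}. Formula~\eqref{eq:130} is exactly hypothesis~\eqref{eq:138} with $U = \RR$ and limit function $g$ as in~\eqref{eq:88}; this $g$ is continuous and strictly positive, being the integral over $[0,\omega]$ of a manifestly positive integrand (recall $\vphi$ is non-vanishing, $p_M > 0$, $4 - |\tr \frakT(\omega;0)|^2 > 0$, and $\frakw/\frakp > 0$). Lemma~\ref{lem:1} then yields, for every $f \in \calC_c(\RR)$,
\[
	\lim_{L \to \infty} \frac{1}{\rho_L} \int_\RR f(\lambda) \, \nu_\eta^L(\ud \lambda)
	= \int_\RR f(\lambda) g(\lambda) \, \mu_\eta(\ud \lambda).
\]
On the other hand, fixing $t \in [0,\omega]$ and restricting to $L = t + n\omega$, Corollary~\ref{cor:5} gives
\[
	\lim_{n \to \infty} \frac{1}{\rho_{t+n\omega}} \int_\RR f(\lambda) \, \nu_\eta^{t+n\omega}(\ud \lambda)
	= \int_\RR f(\lambda) \, \nu_\infty(\ud \lambda)
	= c_0 \int_\RR f(\lambda) \, \ud \lambda,
\]
where $c_0 = \tfrac{1}{\pi} |\partial_z \tr \frakT(\omega;0)| \big/ \big( \gamma \sqrt{-\discr \frakT(\omega;0)} \big)$ is the constant density~\eqref{eq:141}. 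Since the first limit exists as $L \to \infty$ through all real values, it in particular agrees with its value along the sequence $L = t + n\omega$; equating the two right-hand sides yields $\int_\RR f g \, \ud \mu_\eta = c_0 \int_\RR f \, \ud \lambda$ for every $f \in \calC_c(\RR)$.

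Finally I would read off this identity at the level of measures. It asserts that the positive Radon measure $g \, \mu_\eta$ coincides with $c_0$ times Lebesgue measure on $\RR$ (equality on $\calC_c(\RR)$ determines Radon measures). Since $g$ is continuous and everywhere positive, $1/g$ is locally bounded, so dividing gives $\ud \mu_\eta = \tfrac{c_0}{g} \, \ud \lambda$, which is exactly~\eqref{eq:140} and shows $\mu_\eta$ is purely absolutely continuous. Continuity and positivity of $\mu_\eta'$ follow at once from the corresponding properties of $g$ together with $c_0 > 0$ (note $-\discr \frakT(\omega;0) > 0$ because $|\tr \frakT(\omega;0)| < 2$, that $\gamma > 0$, and that $\partial_z \tr \frakT(\omega;0) \neq 0$ by Lemma~\ref{lem:8}). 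The only genuine subtlety of the argument is the hypothesis-matching of the first paragraph; the rest is a direct comparison of two convergent limits followed by division by a positive continuous function, so I do not anticipate a serious obstacle.
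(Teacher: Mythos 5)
Your proposal is correct and follows essentially the same route as the paper: apply Lemma~\ref{lem:1} to the kernel asymptotics of Corollary~\ref{cor:4} to get $\lim_{L\to\infty}\rho_L^{-1}\int f\,\ud\nu_\eta^L=\int fg\,\ud\mu_\eta$, equate this with the density-of-states limit of Corollary~\ref{cor:5}, and divide by the positive continuous function $g$. Your extra remarks (that \eqref{eq:42} already forces the Carleman condition, and the explicit restriction to the subsequence $L=t+n\omega$) are correct details that the paper leaves implicit.
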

\begin{proof}
    By Lemma~\ref{lem:1} we have
    \begin{equation}
        \label{eq:142}
        \lim_{L \to \infty} 
        \frac{1}{\rho_{L}} 
        \int_\RR f(\lambda) \nu_\eta^{L}(\ud \lambda)
        =
        \int_\RR f(\lambda) g(\lambda) \mu_\eta(\ud \lambda), \quad f \in \calC_c(\RR).
    \end{equation}
    Thus, by \eqref{eq:136} we have 
    \[
        \int_\RR f(\lambda) \nu_\infty(\ud \lambda) = 
        \int_\RR f(\lambda) g(\lambda) \mu_\eta(\ud \lambda), \quad f \in \calC_c(\RR).
    \]
    Since $g$ is positive everywhere, by \eqref{eq:141} the formula \eqref{eq:140} follows.
\end{proof}

\section{The case of empty essential spectrum}
\label{sec:9}
In this section we investigate the case when $|\tr \frakT(\omega; 0)| > 2$. We seek for assumptions under which the operator
$H_\eta$ has the essential spectrum empty.

The proof of \cite[Lemma 5.13]{Moszynski2009} together with the subordinacy theory for Sturm--Liouville operators
(see e.g. \cite{Clark1993}) yields the following lemma (cf. \cite[Theorem 8.1b]{Weidmann2005}).
\begin{proposition} 
	\label{prop:7}
	Suppose that $\tau$ is regular at $0$ and limit point at $+\infty$. Let $K \subset \RR$ be a compact interval with 
	non-empty interior. If for any $\lambda \in K$ there exists a non-zero $u \in L^2([0,\infty), w)$ such that 
	$\tau u = \lambda u$, then for any $\eta \in \sS^1$ the operator $H_\eta$ is pure point in $K$.
\end{proposition}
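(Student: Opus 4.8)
The plan is to invoke the Gilbert--Pearson subordinacy theory in the Sturm--Liouville form of Clark and Hinton \cite{Gilbert1987,Clark1993}, which applies precisely under the present hypotheses: $\tau$ is regular at $0$ and in the limit point case at $+\infty$, so $H_\eta$ is self-adjoint and carries a canonical spectral measure $\mu_\eta$. Recall that a solution $u$ of $\tau u = \lambda u$ is \emph{subordinate} at $+\infty$ if $\lim_{L \to \infty} \lVert u \rVert_{L^2([0,L],w)} / \lVert v \rVert_{L^2([0,L],w)} = 0$ for every solution $v$ linearly independent of $u$. The theory then asserts that $(\mu_\eta)_{\mathrm{ac}}$ gives no mass to any set on which a subordinate solution exists, while the singular part $(\mu_\eta)_{\mathrm{s}}$ is concentrated on the set $A_\eta = \{\lambda \in \RR : \mathsf{s}_\eta(\cdot;\lambda) \text{ is subordinate at } +\infty\}$, where $\mathsf{s}_\eta$ is the solution satisfying the boundary condition at $0$ encoded by $\eta$.

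First I would produce a subordinate solution at every $\lambda \in K$. By hypothesis there is a non-zero $u_\lambda \in L^2([0,\infty),w)$ with $\tau u_\lambda = \lambda u_\lambda$; since $\tau$ is in the limit point case, the space of $L^2([0,\infty),w)$ solutions is one-dimensional, so $u_\lambda$ spans it. Any solution $v$ independent of $u_\lambda$ is then not $L^2$, whence $\lVert v \rVert_{L^2([0,L],w)} \to \infty$ while $\lVert u_\lambda \rVert_{L^2([0,L],w)}$ converges to $\lVert u_\lambda \rVert_{L^2([0,\infty),w)} < \infty$; thus $u_\lambda$ is subordinate. As a subordinate solution exists throughout $K$, the first half of the theory yields $(\mu_\eta)_{\mathrm{ac}}(K) = 0$.

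The decisive step is to upgrade the singular part to pure point on $K$, and I expect this to be the main obstacle, since subordinacy by itself only excludes absolutely continuous spectrum and localizes—but does not a priori discretize—the singular part. The extra input is the one-dimensionality of the $L^2$ solution space forced by the limit point assumption. Concretely, I would show $A_\eta \cap K \subset \sigma_{\mathrm{p}}(H_\eta)$: if $\mathsf{s}_\eta(\cdot;\lambda)$ is subordinate for some $\lambda \in K$, it cannot be linearly independent of $u_\lambda$, for otherwise testing the subordinacy of $\mathsf{s}_\eta(\cdot;\lambda)$ against $v = u_\lambda$ would force $\lVert \mathsf{s}_\eta(\cdot;\lambda) \rVert_{L^2([0,L],w)} \to 0$, which is impossible for a non-trivial solution as this quantity is non-decreasing in $L$. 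Hence $\mathsf{s}_\eta(\cdot;\lambda) \in L^2([0,\infty),w)$; since it also satisfies the boundary condition at $0$ and $\tau \mathsf{s}_\eta(\cdot;\lambda) = \lambda \mathsf{s}_\eta(\cdot;\lambda) \in L^2([0,\infty),w)$, it lies in $\Dom(H_\eta)$ and is an eigenfunction, so $\lambda \in \sigma_{\mathrm{p}}(H_\eta)$. As $H_\eta$ is self-adjoint on the separable Hilbert space $L^2([0,\infty),w)$, its point spectrum is countable, so $A_\eta \cap K$ is countable and $(\mu_\eta)_{\mathrm{s}}$ restricted to $K$ is concentrated on a countable set, hence pure point. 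Combining this with $(\mu_\eta)_{\mathrm{ac}}(K) = 0$ shows that $\mu_\eta$ restricted to $K$ is a pure point measure, i.e. $H_\eta$ is pure point in $K$, as claimed.
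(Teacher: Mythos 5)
Your proposal is correct and follows essentially the same route as the paper, which does not write out a proof but simply invokes subordinacy theory for Sturm--Liouville operators (citing Clark--Hinton and the proof of Moszy\'nski's Lemma 5.13, cf. Weidmann's Theorem 8.1b); your argument supplies exactly the details behind that citation. In particular, the two key points you make explicit --- that the limit point hypothesis forces the $L^2$ solution space at each $\lambda \in K$ to be one-dimensional, so that a subordinate solution exists everywhere on $K$ (killing the absolutely continuous part) and any subordinate $\mathsf{s}_\eta(\cdot;\lambda)$ must itself be the $L^2$ solution and hence an eigenfunction (reducing the singular part to a countable set) --- are precisely the standard mechanism the paper is relying on.
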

Taking into account Proposition~\ref{prop:7}, the proof of \cite[Theorem 5.3]{Silva2007} yields the following result.
\begin{theorem}
	\label{thm:11}
	Suppose that $\tau$ is regular at $0$ and limit point at $+\infty$. Let $K \subset \RR$ be a compact interval with 
	non-empty interior. If there exists a family $\{ u(\cdot; \lambda) : \lambda \in K \}$ of non-zero functions such that 
	$\tau u(\cdot;\lambda) = \lambda u(\cdot;\lambda)$, satisfying
	\begin{equation}
		\label{eq:114}
		\sup_{\lambda \in K} \int_0^\infty |u(x; \lambda)|^2 w(x) \ud x < \infty,
	\end{equation}
	and
	\begin{equation} 
		\label{eq:100}
		\lim_{L \to \infty} \sup_{\lambda \in K} \int_L^\infty |u(x; \lambda)|^2 w(x) \ud x = 0,
	\end{equation}
	and for all $L \in (0, \infty)$, and $\lambda \in \intr(K)$,
	\begin{equation}
		\label{eq:101}
		\lim_{\lambda' \to \lambda} 
		\int_0^L | u(x; \lambda) - u(x;\lambda') |^2 w(x) \ud x = 0.
	\end{equation}
	Then for every $\eta \in \tsS^1$, the operator $H_\eta$ satisfies $\sigmaEss(H_\eta) \cap \intr{K} = \emptyset$.
\end{theorem}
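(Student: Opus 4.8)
The plan is to combine Proposition~\ref{prop:7} with a continuity argument for the family $u(\cdot;\lambda)$ that forces the eigenvalues of $H_\eta$ to stay discrete inside $\intr{K}$. First I would apply Proposition~\ref{prop:7} (its boundary conditions, parametrized by $\sS^1$, describe the same self-adjoint operators as those parametrized by $\tsS^1$): by \eqref{eq:114} each $u(\cdot;\lambda)$ is a non-zero element of $L^2([0,\infty),w)$ solving $\tau u(\cdot;\lambda)=\lambda u(\cdot;\lambda)$, so the hypotheses of Proposition~\ref{prop:7} hold and $H_\eta$ is pure point on $K$. Hence $\sigma(H_\eta)\cap K$ carries no absolutely continuous or singular continuous part, and since $\tau$ is limit point at $+\infty$ every eigenvalue is simple. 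Consequently any point of $\sigmaEss(H_\eta)\cap\intr{K}$ must be an accumulation point of (simple) eigenvalues of $H_\eta$; it therefore suffices to rule out such an accumulation.

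The next step is to show that $\lambda\mapsto u(\cdot;\lambda)$ is continuous from $\intr{K}$ into $L^2([0,\infty),w)$. Fix $\lambda_0\in\intr{K}$ and $\varepsilon>0$. By \eqref{eq:100} I would choose $L$ so large that $\sup_{\mu\in K}\int_L^\infty|u(x;\mu)|^2 w(x)\,\ud x<\varepsilon$; then, using $|a-b|^2\leq 2|a|^2+2|b|^2$, the tail satisfies $\int_L^\infty|u(x;\lambda)-u(x;\lambda_0)|^2 w(x)\,\ud x\leq 4\varepsilon$ uniformly in $\lambda\in K$, while by \eqref{eq:101} the contribution of $[0,L]$ tends to $0$ as $\lambda\to\lambda_0$. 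Splitting the integral at $L$ yields $\lim_{\lambda\to\lambda_0}\int_0^\infty|u(x;\lambda)-u(x;\lambda_0)|^2 w(x)\,\ud x=0$, which is the claimed continuity.

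I would then argue by contradiction. Suppose $\lambda_0\in\sigmaEss(H_\eta)\cap\intr{K}$; by the first paragraph there are distinct eigenvalues $E_j\to\lambda_0$ with $E_j\in\intr{K}$ for large $j$. Because $\tau$ is regular at $0$ and limit point at $+\infty$, for each real $E_j$ the space of $L^2([0,\infty),w)$ solutions of $\tau u=E_j u$ is one-dimensional; since $u(\cdot;E_j)$ is a non-zero such solution, the eigenfunction of $H_\eta$ at $E_j$ is a scalar multiple of it. As eigenfunctions of the self-adjoint operator $H_\eta$ attached to distinct eigenvalues are orthogonal, this gives $\int_0^\infty u(x;E_i)\overline{u(x;E_j)}\,w(x)\,\ud x=0$ for $i\neq j$. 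On the other hand, by the continuity just established $u(\cdot;E_j)\to u(\cdot;\lambda_0)$ in $L^2(w)$, and $c_0^2:=\int_0^\infty|u(x;\lambda_0)|^2 w(x)\,\ud x>0$ because $u(\cdot;\lambda_0)$ is a non-trivial solution and $w>0$ almost everywhere. Hence $\int_0^\infty u(x;E_i)\overline{u(x;E_j)}\,w(x)\,\ud x\to c_0^2>0$ as $i,j\to\infty$ with $i\neq j$, contradicting the orthogonality. Thus no accumulation of eigenvalues occurs in $\intr{K}$, and $\sigmaEss(H_\eta)\cap\intr{K}=\emptyset$ for every $\eta\in\tsS^1$.

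The substantive points are the global $L^2(w)$-continuity of the family $u(\cdot;\lambda)$ — where the uniform tail bound \eqref{eq:100} is precisely what upgrades the bounded-interval convergence \eqref{eq:101} to convergence in $L^2([0,\infty),w)$ — together with the identification of each eigenfunction with a scalar multiple of the corresponding $u(\cdot;E_j)$, which relies on the one-dimensionality of the $L^2$ solution space in the limit-point case. Once these are in place the orthogonality contradiction is immediate, so I expect the continuity estimate to be the main technical step while the rest is bookkeeping around Proposition~\ref{prop:7}.
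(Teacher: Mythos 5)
Your proof is correct and follows essentially the same route as the paper: Proposition~\ref{prop:7} reduces the claim to excluding accumulation of eigenvalues in $\intr{K}$, the hypotheses \eqref{eq:100} and \eqref{eq:101} yield continuity of the relevant $L^2(w)$ inner products, and orthogonality of eigenfunctions for distinct eigenvalues gives the contradiction. The only (cosmetic) difference is that you establish $L^2(w)$-continuity of $\lambda \mapsto u(\cdot;\lambda)$ directly, whereas the paper phrases the same estimates as uniform convergence of the truncated kernels $f_L(\lambda,\lambda')$ to a continuous $f_\infty$; your explicit appeal to the one-dimensionality of the $L^2$ solution space in the limit-point case to identify $u(\cdot;E_j)$ with the eigenfunction is a welcome clarification of a step the paper leaves implicit.
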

\begin{proof}
	Let $\eta \in \tsS^1$. In view of \eqref{eq:114} and Proposition~\ref{prop:7} the spectrum of $H_\eta$ is pure point on $K$. 
	Therefore, it is enough to prove that $\sigmaP(H_\eta)$ has no accumulation points in $\intr{K}$. 
	Let us define a family of functions $(f_L : L \in (0,\infty] )$ by the setting 
	\[
		f_L(\lambda, \lambda') =
		\int_0^L u(x;\lambda) \overline{u(x;\lambda')} w(x) \ud x, \quad \lambda,\lambda' \in K.
	\]
	By the Cauchy--Schwarz inequality, we have
	\[
		|f_L(\lambda, \lambda')| \leq
		\bigg( \int_0^L |u(x;\lambda)|^2 w(x) \ud x \bigg)^{1/2}
		\bigg( \int_0^L |u(x;\lambda')|^2 w(x) \ud x \bigg)^{1/2},
	\]
	thus
	\[
		\sup_{\lambda,\lambda' \in K} |f_L(\lambda, \lambda')| \leq
		\sup_{\lambda \in K} \int_0^\infty |u(x;\lambda)|^2 w(x) \ud x,
	\]
	which by \eqref{eq:114} is finite. Next, we claim that for each $L \in (0,\infty)$ the function $f_L$ is continuous. 
	Let us show that $f_L$ is continuous with respect to the first variable. By the Cauchy--Schwarz inequality we have
	\begin{align*}
  	  	|f_L(\lambda'', \lambda') - f_L(\lambda, \lambda')| 
  	  	&\leq
		\bigg( \int_0^L |u(x;\lambda'') - u(x;\lambda)|^2 w(x) \ud x \bigg)^{1/2}
		\bigg( \int_0^L |u(x;\lambda')|^2 w(x) \ud x \bigg)^{1/2}.
	\end{align*}
	In view of \eqref{eq:114} and \eqref{eq:101}, we obtain
	\[
  	  \lim_{\lambda'' \to \lambda} f_L(\lambda'',\lambda') = f_L(\lambda, \lambda').
	\]
	Similarly one can show continuity of $f_L$ with respect to the second variable. Finally, let us observe that
	\[
		|f_L(\lambda, \lambda') - f_\infty(\lambda, \lambda')| \leq 
		\sup_{\lambda \in K} \int_L^\infty |u(x;\lambda)|^2 w(x) \ud x,
	\]
	which by \eqref{eq:100} tends to $0$ as $L$ approaches infinity. Consequently, the function $f_\infty$ is also continuous.

	Suppose, on the contrary to our claim, that $\sigmaEss(H_\eta) \cap \intr(K) \neq \emptyset$. Let 
	$\lambda \in \sigmaEss(H_\eta) \cap \intr(K)$. Then there exists a sequence $(\lambda_n : n \geq 0) \subseteq K$ such that
	\begin{enumerate}[label=(\roman*), ref=\roman*, leftmargin=3em]
		\item $\lambda_n \in \intr{K} \cap \sigmaP(H_\eta)$ for any $n$;
		\item $\lim_{n \to \infty} \lambda_n = \lambda$;
		\item all $\lambda_n$ are different.
	\end{enumerate}
	Then by the continuity of $f_\infty$, we have
	\[
		0 < f_\infty(\lambda, \lambda) = \lim_{n \to \infty} f_\infty(\lambda_{n+1},\lambda_n) = 0
	\]
	where the last equality follows from the fact that for self-adjoint operators eigenspaces corresponding to different
	eigenvalues are orthogonal. This leads to contradiction and the theorem follows.
\end{proof}
Let us state the main result of this section.


\begin{theorem} 
	\label{thm:10}
	Let $\omega > 0$. Suppose that $(p, q, w)$ are $\omega$-periodically modulated Sturm--Liouville parameters, such that 
	the transfer matrix $\frakT$ corresponding to $(\frakp, \frakq, \frakw)$ satisfies $\abs{\tr \frakT(\omega; 0)} > 2$. 
	Assume that
	\[
    	\frac{q}{p}, \frac{w}{p}, \frac{p'}{p} \in \calD_1^\omega(L^1; \RR).
	\]
	If $\tau$ is in the limit point case at $+\infty$, then for each $\eta \in \tsS^1$ the operator $H_\eta$ satisfies 
	$\sigmaEss(H_\eta) = \emptyset$.
\end{theorem}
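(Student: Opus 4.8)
The plan is to verify the hypotheses of Theorem~\ref{thm:11} for every compact interval $K \subset \RR$ with non-empty interior; since $\sigmaEss(H_\eta) \cap \intr(K) = \emptyset$ for all such $K$ forces $\sigmaEss(H_\eta) = \emptyset$, this suffices. The endpoint $0$ is regular because $p$ is positive and continuous while $1/p, q, w \in \Lloc$, and $\tau$ is limit point at $+\infty$ by assumption, so Theorem~\ref{thm:11} is applicable once we produce, for each $\lambda \in K$, a non-zero solution $u(\cdot;\lambda)$ of $\tau u = \lambda u$ that decays exponentially in the $L^2(w)$ sense, uniformly in $\lambda$, and depends continuously on $\lambda$ in the sense of \eqref{eq:101}. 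The whole difficulty is therefore the construction of this family.

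First I would set up the diagonalization in the hyperbolic regime, in exact analogy with Section~\ref{sec:3}. Since $\det \frakT(\omega;0) = 1$ and $\abs{\tr \frakT(\omega;0)} > 2$, we have $\discr \frakT(\omega;0) > 0$, so $\frakT(\omega;0)$ has real eigenvalues $\lambda_\infty^+, \lambda_\infty^-$ with $\lambda_\infty^+ \lambda_\infty^- = 1$ and $\abs{\lambda_\infty^+} > 1 > \abs{\lambda_\infty^-} > 0$. A crucial feature of periodically modulated parameters is that, by Proposition~\ref{prop:4}, $X_n(t;z) \to \frakT_t(\omega;0)$ \emph{independently of} $z$, and by Remark~\ref{rem:1} $\discr \frakT_t(\omega;0) = \discr \frakT(\omega;0) > 0$ for every $t$. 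Hence, for any compact $\tilde K \subset \CC$ there are $M \geq 1$ and $\delta > 0$ such that $\discr X_n(t;z) \geq \delta$ and $\abs{[X_n(t;z)]_{12}} \geq \delta$ uniformly for $n \geq M$, $t \in [0,\omega]$, $z \in \tilde K$; thus $X_n(t;z)$ is uniformly diagonalizable with real-analytic eigenvalues $\lambda_n^\pm(t;z) \to \lambda_\infty^\pm$. As in Corollary~\ref{cor:2}, the assumption \eqref{eq:D1-reg} together with Proposition~\ref{prop:2} shows that the diagonalizing and diagonal factors belong to $\calD_1$, so the discrete Levinson-type theorem \cite[Theorem 6.1]{SwiderskiTrojan2023} applies. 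In contrast to the elliptic case, the spectral dichotomy is automatic here: $\prod_{j \geq M} \abs{\lambda_j^+(s;z)/\lambda_j^-(s;z)} = +\infty$ follows at once from $\abs{\lambda_\infty^+/\lambda_\infty^-} > 1$, so no Carleman condition is required.

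Choosing $\tilde K$ to be a complex box with $K \subset \intr(\tilde K)$ and applying \cite[Theorem 6.1]{SwiderskiTrojan2023} to the branch $\lambda_n^-$ produces, for a fixed base point $s \in [0,\omega]$, a solution $\mathbf{u}^-(\cdot;z)$ of \eqref{eq:10}, built as in \eqref{eq:170}, which is holomorphic in $z \in \intr(\tilde K)$ (in particular continuous on $K$) and satisfies $u_n^-(t;z)/\prod_{M \leq j < n} \lambda_j^-(s;z) \to \phi^-(t;z)$ with $\phi^-$ non-vanishing, exactly as in Theorem~\ref{thm:4}. For the decay I would use $\lambda_j^+ \lambda_j^- = \det X_j = p_j/p_{j+1}$ from \eqref{eq:16} to write $\prod_{j=M}^{n-1} \abs{\lambda_j^-(s;z)}^2 = (p_M(s)/p_n(s))^2 \big/ \prod_{j=M}^{n-1} \abs{\lambda_j^+(s;z)}^2$. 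Combining this with Lemma~\ref{lem:3} (so that $p_n(s) \asymp p_n(t)$ uniformly) yields, for real $\lambda \in K$ and all large $n$, the bound $\int_0^\omega \abs{u_n^-(t;\lambda)}^2 w_n(t)\, \ud t \leq C\,\big(\prod_{j=M}^{n-1}\abs{\lambda_j^+(s;\lambda)}^2\big)^{-1} \int_0^\omega (w_n/p_n)(t)\, \ud t$, where the first factor decays geometrically with the $\lambda$-independent rate $\abs{\lambda_\infty^+}^{-2}$ and the second is uniformly bounded (indeed $\to 0$ by \eqref{eq:3a}). Summing in $n$ gives both the uniform $L^2(w)$ bound \eqref{eq:114} and the uniform tail decay \eqref{eq:100}. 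Finally, \eqref{eq:101} follows from the representation \eqref{eq:170}: since $T(x;\cdot)$ is entire and $\mathbf{u}_M^-(s;\cdot)$ is continuous, $\mathbf{u}^-(x;\cdot)$ is continuous in $\lambda$ locally uniformly in $x$, giving $L^2_\mathrm{loc}$ continuity. Theorem~\ref{thm:11} then yields $\sigmaEss(H_\eta) \cap \intr(K) = \emptyset$, and letting $K$ exhaust $\RR$ completes the argument.

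The step I expect to be the main obstacle is the faithful transfer of the diagonalization of Section~\ref{sec:3} and of \cite[Theorem 6.1]{SwiderskiTrojan2023} to the hyperbolic regime, and in particular extracting the geometric decay estimate \emph{uniformly} over $\lambda \in K$ together with the non-vanishing of $\phi^-$; once the automatic dichotomy and this uniform decay are established, the verification of \eqref{eq:114}--\eqref{eq:101} is routine.
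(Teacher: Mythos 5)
Your overall strategy is exactly the paper's: build, for each compact $K$, a family of non-zero exponentially decaying solutions $u^-(\cdot;\lambda)$ that is uniformly $L^2(w)$-bounded and $L^2_{\mathrm{loc}}$-continuous in $\lambda$, and feed it into Theorem~\ref{thm:11}. The decay estimate you derive (via $\lambda_j^+\lambda_j^-=\det X_j = p_j/p_{j+1}$, Lemma~\ref{lem:3}, and \eqref{eq:3a}) is the same as the paper's \eqref{eq:97}--\eqref{eq:154}, and your observation that the dichotomy is automatic in the hyperbolic regime, so that no Carleman condition is needed, matches the paper.

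There is, however, one genuine gap in your justification of the diagonalization. You assert that $\discr \frakT_t(\omega;0)>0$ implies $\abs{[X_n(t;z)]_{12}}\geq\delta$ for large $n$. That inference is valid only in the elliptic case: when $\discr<0$ a real matrix cannot be triangular, so the off-diagonal entry is forced to be non-zero. When $\discr \frakT(\omega;0)>0$ the limit matrix has real distinct eigenvalues and may perfectly well satisfy $[\frakT_t(\omega;0)]_{12}=0$ (even at $t=0$), so the explicit eigenvector matrix $C_n$ of \eqref{eq:31b} is not available uniformly, and \cite[Theorem 6.1]{SwiderskiTrojan2023} cannot be invoked exactly as in Theorem~\ref{thm:4}. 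The paper avoids this by a local argument: it fixes $z_0\in\RR$, uses Proposition~\ref{prop:2} to place $(X_n(0;\cdot))$ in $\calD_1(K;\GL(2,\RR))$, and then appeals to \cite[Lemma 4.2]{Discrete} to produce a possibly smaller compact interval $K'\ni z_0$ on which the hypotheses of the Levinson-type theorem \cite[Theorem 4.1]{Discrete} hold; this yields the decaying solution on $K'$, gives $\sigmaEss(H_\eta)\cap\{z_0\}=\emptyset$, and one concludes by letting $z_0$ range over $\RR$. Your argument can be repaired in the same way (or by choosing a different eigenvector normalization where $[X_n]_{12}$ degenerates), but as written the uniform diagonalizability over all of $[0,\omega]\times K$ -- which you yourself flag as the main obstacle -- is not established. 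A second, minor point: only the base point $s=0$ needs to be diagonalized; the extension to all $t$ goes through $U_{0;n}(t;z)\to\frakT(t;0)$, which is bounded, so no uniformity of the diagonalization in $t$ is actually required.
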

\begin{proof}
	Let $z_0 \in \RR$. Let $K \subset \RR$ be a compact interval containing $z_0$ in its interior. We consider the equation 
	\eqref{eq:14} for $s=0$, that is
	\begin{equation} 
		\label{eq:94}
		\mathbf{u}_{n+1}(0;z) = X_n(0;z) \mathbf{u}_n(0;z), \quad n \geq 0, z \in K.
	\end{equation}
	Let us recall that $(X_n(0;\cdot) : n \geq 0)$ is a sequence of continuous mappings on $K$ with values in $\GL(2,\RR)$.
	Since
	\[
		\discr \frakT(\omega; 0) = (\tr \frakT(\omega; 0))^2 - 4 > 0,
	\]
	the matrix $\frakT(\omega; 0)$ has two distinct eigenvalues
	\begin{equation}
		\label{eq:108}
		\lambda^-_\infty = \xi_-\bigg(\frac{\tr \frakT(\omega; 0)}{2} \bigg),
		\quad \text{and} \quad 
		\lambda^+_\infty = \xi_+\bigg(\frac{\tr \frakT(\omega; 0)}{2} \bigg).
	\end{equation}
	Since $|\tr \frakT(\omega; 0)| > 2$, we have
	\begin{equation}
		\label{eq:111}
		0 < |\lambda^-_\infty| < 1 < |\lambda^+_\infty|. 
	\end{equation}
	Moreover, by \eqref{eq:99} and Proposition~\ref{prop:4}, 
	\begin{equation}
		\label{eq:107}
		\lim_{n \to \infty} X_n(0; z) = \frakT(\omega; 0)
	\end{equation}
	uniformly with respect to $z \in K$. Therefore, there are $M \geq 1$ and $\delta > 0$ such that for all $n \geq M$,
	\[
		\discr X_n(0; z) > \delta. 
	\]
	Hence, for all $n \geq M$ and $z \in K$, the matrix $X_n(0; z)$ has two distinct eigenvalues
	\begin{equation} 
		\label{eq:110}
		\lambda^-_n(0;z) = \sqrt{\det X_n(0;z)} 
		\xi_- \bigg( \frac{\tr X_n(0;z)}{2 \sqrt{\det X_n(0;z)}} \bigg), \quad
		\lambda^+_n(0;z) = \sqrt{\det X_n(0;z)} 
		\xi_+ \bigg( \frac{\tr X_n(0;z)}{2 \sqrt{\det X_n(0;z)}} \bigg)
	\end{equation}
	see the formula \eqref{eq:106}. Since $|\lambda^-_n(z) \leq |\lambda^+_n(z)|$ for all $n \in \NN$ and $z \in K$, we must have
	\begin{equation}
		\label{eq:115}
		\lim_{n \to \infty} \lambda^-_n(0; z) = \lambda^-_\infty,
		\quad
		\lim_{n \to \infty} \lambda^+_n(0; z) = \lambda^+_\infty,
	\end{equation}
	uniformly with respect to $z \in K$. In view of Proposition~\ref{prop:2}, 
	$(X_n(0;\cdot) : n \geq 0) \in \calD_1(K; \GL(2,\RR))$. By \cite[Lemma 4.2]{Discrete} there exists a compact interval 
	$K' \subset K$ containing $z_0$ in its interior such that the hypotheses of \cite[Theorem 4.1]{Discrete} are satisfied. 
	Moreover, by the proof of \cite[Theorem 4.1]{Discrete} for $r=1$ there are a constant $M \geq 1$ and a sequence 
	$(\mathbf{u}^-_n(0; \cdot) : n \geq 0)$ of continuous mappings on $K'$ satisfying \eqref{eq:94}, and such that
	\begin{equation}
		\label{eq:96}
		\lim_{n \to \infty} 
		\sup_{z \in K'}
		\bigg\| \frac{\mathbf{u}^-_n(0;z)}{\prod_{j=M}^{n-1} \lambda_j^-(0;z)} -   \nu^- \bigg\| = 0
	\end{equation}
	where $\nu^-$ is a non-zero vector. Hence, there is a constant $c>0$ such that
	\[
		\| \mathbf{u}^-_n(0;z) \| \leq c
		\prod_{j=M}^{n-1} |\lambda_j^-(0;z)|, \quad n \geq M, z \in K'.
	\]
	Notice that by \eqref{eq:110} and \eqref{eq:16} we have
	\[
		\| \mathbf{u}^-_n(0;z) \| \leq c
		\sqrt{\frac{p_M(0)}{p_n(0)}}
		\prod_{j=M}^{n-1} 
		\bigg| \xi_- \bigg( \frac{\tr X_j(0;z)}{2 \sqrt{\det X_n(0;z)}} \bigg) \bigg|, \quad n \geq M, z \in K'.
	\]
	By \eqref{eq:107} and \eqref{eq:111}, for every $\epsilon \in (0, 1- |\lambda_\infty^-|)$ there are $M' \geq M$ 
	and $c'>0$ such that
	\begin{equation}
		\label{eq:97}
		\| \mathbf{u}^-_n(0;z) \| \leq c'
		\sqrt{\frac{1}{p_n(0)}} 
		(1-\epsilon)^{n}, \quad n \geq M', z \in K'.
	\end{equation}
    For any $x \geq 0$ define
	\[
		\mathbf{u}^-(x;z) = T(x;z) \mathbf{u}^-_0(0;z).
	\]
    Notice that since $\mathbf{u}^-_0(0;\cdot)$ is continuous on $K'$, the mapping $\mathbf{u}^-$ is continuous on $[0,\infty) \times K'$.
    Using \eqref{eq:14}, \eqref{eq:99} and \eqref{eq:41} we easily get
    \[
        \mathbf{u}^-(n\omega;z) = \mathbf{u}^-_n(0;z), \quad n \geq 0.
    \]
    Then by \eqref{eq:6} we get for any $t \in [0,\omega]$
	\begin{equation}
		\label{eq:98}
		\mathbf{u}^-(t + n\omega;z) = U_{0;n}(t;z) \mathbf{u}^-_n(0;z).
	\end{equation}
	In view of Proposition~\ref{prop:4} we have
	\[
		\lim_{n \to \infty} U_{0;n}(t;z) = \frakT(t;0)
	\]
	uniformly with respect to $z \in K'$. Since $\frakT(\cdot;0)$ is continuous, it is uniformly bounded on $[0,\omega]$. 
	Therefore, in view of \eqref{eq:98} and \eqref{eq:97} there is a constant $c>0$ such that for any $t \in [0,\omega]$ 
	and any $z \in K'$
	\[
		\| \mathbf{u}^-(t + n\omega;z) \| \leq  c \sqrt{\frac{1}{p_n(0)}} (1-\epsilon)^n.
	\]
	Therefore,
	\begin{align*}
		\int_{M' \omega}^\infty \sup_{z \in K'} |u^-(x;z)|^2 w(x) \ud x 
		&=
		\sum_{n=M'}^\infty \int_0^\omega \sup_{z \in K'} |u^-_n(t;z)|^2 w_n(t) \ud t \\
		&=
		\int_0^\omega \sum_{n=M}^\infty \sup_{z \in K'} |u^-_n(t;z)|^2 w_n(t) \ud t \\
		&\leq
		c
		\sum_{n=M'}^\infty (1-\epsilon)^{2n} 
		\int_0^\omega \frac{w_n(t)}{p_n(0)} \ud t.
	\end{align*}
	Next, we notice that
	\[
		\frac{w_n(t)}{p_n(0)} = \frac{w_n(t)}{p_n(t)} \frac{p_n(t)}{p_n(0)},
	\]
	thus, by Lemma~\ref{lem:3} and \eqref{eq:3a}
	\[
		\lim_{n \to \infty} 
		\int_0^\omega \frac{w_n(t)}{p_n(0)} \ud t
		=
		\lim_{n \to \infty}
		\int_0^\omega \frac{w_n(t)}{p_n(t)} \frac{\frakp(t)}{\frakp(0)} \ud t.
	\]
	Because the function $\frakp$ is continuous, its uniformly bounded on $[0,\omega]$. Therefore, by \eqref{eq:3a} we obtain
	\[
		\lim_{n \to \infty} 
		\int_0^\omega \frac{w_n(t)}{p_n(0)} \ud t = 0.
	\]
	Consequently, we get
	\begin{equation}
            \label{eq:154}
		\int_{M' \omega}^\infty \sup_{z \in K'} |u^-(x;z)|^2 w(x) \ud x < \infty.
	\end{equation}
	Since $u^-$ is continuous function on $[0, \infty) \times K'$, we get
	\begin{equation}
	    \label{eq:112}
		\int_{0}^\infty \sup_{z \in K'} |u^-(x;z)|^2 w(x) \ud x < \infty.
	\end{equation}
	Observe that \eqref{eq:112} implies \eqref{eq:114} and \eqref{eq:100}. Since $u^-$ is continuous, we also have \eqref{eq:101}.
	Hence, if $\tau$ is in the limit point case at $+\infty$, then Theorem~\ref{thm:11} implies that for any $\eta \in \tsS^1$ 
	the operator $H_\eta$ satisfies $\sigmaEss(H_\eta) \cap \{z_0\} = \emptyset$. Since $z_0 \in \RR$ was arbitrary, 
	the theorem follows.
\end{proof}

\begin{remark}
	Let us recall that if $\tau$ is regular at $0$, and it is in the limit-circle case at $+\infty$, then all self-adjoint 
	extensions of $H_\eta$ have empty essential spectrum. Therefore, the conclusion of Theorem~\ref{thm:10} holds true also 
	for such extensions.
\end{remark}

\section{Examples}
\label{sec:10}
In this section we provide a collection of examples where our method applies. The first example demonstrates that the assumptions 
regarding the Sturm–Liouville parameters do not necessarily lead to them being regularly varying.
\begin{example}
    \label{ex:5}
    Let $\omega > 0$ and let $\frakq \in \Lloc([0,\infty))$ be a $\omega$-periodic real-valued function. For $0 < a < b < 1$ let $p$ be a strictly positive continuously differentiable function such that
    \[
        p(t) = \exp \Big( \frac{a+b}{2} \log{t} + \frac{b-a}{2} \log{t} \cdot \sin \big( \log{\log{t}} \big) \Big), \quad t > \exp(\exp(1)).
    \]
    Notice that
    \begin{equation}
     \label{eq:144}
         t^b \leq p(t) \leq t^a, \quad t > \exp(\exp(1)).
    \end{equation}
    Both bounds are attained infinitely many times. In particular, $p$ is not regularly varying. Let us consider
    \[
      q(t) = p(t) \frakq(t), \quad w(t) \equiv 1, \quad t \geq 0.
    \]
    Let $\frakp(t) \equiv 1$ and $\frakw(t) \equiv 1$. Notice that there is a constant $c>0$ such that for each $t > \exp(\exp(1))$,
    \[
        \frac{w(t)}{p(t)} \leq \frac{1}{t^b}, \quad
        \frac{q(t)}{p(t)} = \frac{\frakq(t)}{\frakp(t)}, \quad
        \big| (\log p)'(t) \big| \leq c \frac{1}{t},
    \]
    which easily implies that \eqref{eq:3a}, \eqref{eq:3b} and \eqref{eq:3c} are satisfied. Moreover, by \eqref{eq:144} 
    the condition~\eqref{eq:3d} holds true as well. Next, by direct computation with a help of \eqref{eq:144}, one can show that there 
    is a constant $c>0$ such that for all $t > \exp(\exp(1))$,
    \[
        \bigg| \bigg( \frac{w}{p} \bigg)'(t) \bigg| \leq \frac{c}{t^{1+b}}, \quad
        \big| (\log p)''(t) \big| \leq \frac{c}{t^2}.
    \]
    In particular, $(w/p)', (\log p)'' \in L^1([0,\infty))$. Hence, by \cite[p. 2]{Stolz1991a} it easily implies that \eqref{eq:D1-reg} 
    is satisfied.
\end{example}

\begin{example} 
    \label{ex:2}
    Let $\omega > 0$ and let $\frakq \in \Lloc([0,\infty))$ be a $\omega$-periodic real-valued function. For $0 < \kappa \leq 1$ we 
    consider the following Sturm--Liouville parameters
    \[
        p(t) = c_\kappa^2 (1+t)^{2 \kappa}, \quad
        q(t) = (1+t)^{2\kappa} \frakq(t) + q_{\mathrm{cor},\kappa}(t), \quad 
        w(t) \equiv 1, \quad t \geq 0,
    \]
    where
    \begin{equation}
        \label{eq:128}
        c_\kappa = 
        \begin{cases}
            \frac{1}{1-\kappa} & \text{if } 0<\kappa<1, \\
            1 & \text{otherwise } \kappa=1,
        \end{cases} \quad\text{and}\quad
        q_{\mathrm{cor},\kappa}(t) =
        -\frac{c_\kappa^2}{4} \cdot
        \begin{cases}
            \kappa (3\kappa-2) (1+t)^{2\kappa-2} & \text{if } 0<\kappa<1, \\
            1 & \text{otherwise }\kappa=1.
        \end{cases}
    \end{equation}
    Let $\frakp(t) \equiv c_\kappa^2$ and $\frakw(t) \equiv 1$. We notice that
    \begin{align} 
        \label{eq:121a}
        \frac{w(t)}{p(t)} &= \frac{1}{c_\kappa^2(1+t)^{2\kappa}}, \\
        \label{eq:121b}
        \frac{q(t)}{p(t)} &=
        \frac{\frakq(t)}{\frakp(t)} - \frac{1}{4(1+t)^{2}} \cdot
        \begin{cases}
            \kappa(3\kappa-2) & \text{if } 0<\kappa<1, \\
            1 & \text{otherwise } \kappa=1,
        \end{cases} \\
        \label{eq:121c}
        \frac{p'(t)}{p(t)} &= \frac{\frakp'(t)}{\frakp(t)} + \frac{2 \kappa}{1+t},
    \end{align}
    which easily implies \eqref{eq:3a}, \eqref{eq:3b}, \eqref{eq:3c}. We obviously have \eqref{eq:3d}. From \eqref{eq:121a}, 
    \eqref{eq:121b} and \eqref{eq:121c} it easily follows that the condition~\eqref{eq:D1-reg} is satisfied. 
    We are now going to apply the Liouville transformation to $(p,q,w)$ (see, e.g. \cite[Section 7]{Everitt2005}). It will 
    produce a unitary equivalent Schr\"{o}dinger operator on $[0,\infty)$ with a corresponding potential $V$. Define a function 
    by the formula
    \[
        x(t) = 
        \int_0^t \sqrt{\frac{w(t')}{p(t')}} \ud t', \quad t \geq 0.
    \]
    We immediately get
    \[
        x(t) = 
        \begin{cases}
            (1+t)^{1-\kappa} -1 & \text{if } 0<\kappa<1, \\
        \log(1+t) & \text{otherwise } \kappa=1.
        \end{cases}
    \]
    Consequently, its inverse is equal to
    \[
        t(x) = 
        \begin{cases}
            (1+x)^{1/(1-\kappa)} - 1 & \text{if } 0<\kappa<1, \\
            \ue^x -1 & \text{otherwise } \kappa=1,
        \end{cases} \quad x \geq 0.
    \]
    It can be computed that
    \[
        \bigg(\frac{p(t)}{(w(t))^3} \bigg)^{1/4} \cdot 
        \bigg( p \cdot \Big( \Big(\frac{p}{w} \Big)^{-1/4} \Big)' \bigg)'(t) = 
        q_{\mathrm{cor},\kappa}(t).
    \]
    Therefore, the potential $V$ is equal to
    \begin{align}
        \nonumber
        V(x) &= \big( 1+t(x) \big)^{2\kappa} \frakq \big( t(x) \big) \\
        \label{eq:123}
        &= 
        \begin{cases}
            (1+x)^{2\kappa/(1-\kappa)} \frakq \big( (1+x)^{1/(1-\kappa)} - 1 \big) & \text{if }0<\kappa<1, \\
            \ue^{2x} \frakq(\ue^{x} - 1) & \text{otherwise } \kappa=1,
        \end{cases}
        \quad x \geq 0.
    \end{align}
    In the case $0<\kappa<1$ let us define $a=\frac{2\kappa}{1-\kappa}, b = \frac{1}{1-\kappa}$. Then we easily get that 
    $a>0$ and
    \begin{equation} 
        \label{eq:122}
        V(x) = (1+x)^{a} \frakq \big( (1+x)^{(2+a)/2} - 1 \big), \quad x \geq 0.
    \end{equation}
\end{example}
In the following example we shall specify specific choices of $\frakq$ in Example~\ref{ex:2}.
\begin{example} 
    \label{ex:4}
    For any $c \in \RR$ consider the following $2\pi$-periodic Sturm--Liouville parameters
    \begin{equation} 
        \label{eq:127}
        \frakp(t) \equiv c_\kappa^2, \quad 
        \frakq(x) = -c + \sin(t),
        \quad \frakw(t) \equiv 1,
    \end{equation}
    where $c_\kappa$ is defined in \eqref{eq:128}. 

    Figures~\ref{img:1} and \ref{img:2} contain plots of a numerical computation of $\tr \frakT(2\pi;0)$. 
    In the first plot we have $\kappa=0.5$ and $c \in [-0.75,20]$, whereas in the second case we have $c=0$ 
    and $\kappa \in (0,1)$. The solution of $\tr \frakT(2\pi;0)=-2$ for $c=0$ satisfies $\kappa \approx 0.326$. 
    For concreteness, let us also mention that our numerical computations give: $\tr \frakT(2\pi;0) \approx 0.77$ for 
    $c=0,\kappa=0.5$ and $\tr \frakT(2\pi;0) \approx -2.61$ for $c=1,\kappa=0.5$. 

    \begin{figure}[!tbp]
    \centering
    \begin{minipage}[b]{0.45\textwidth}
        \includegraphics[width=\textwidth]{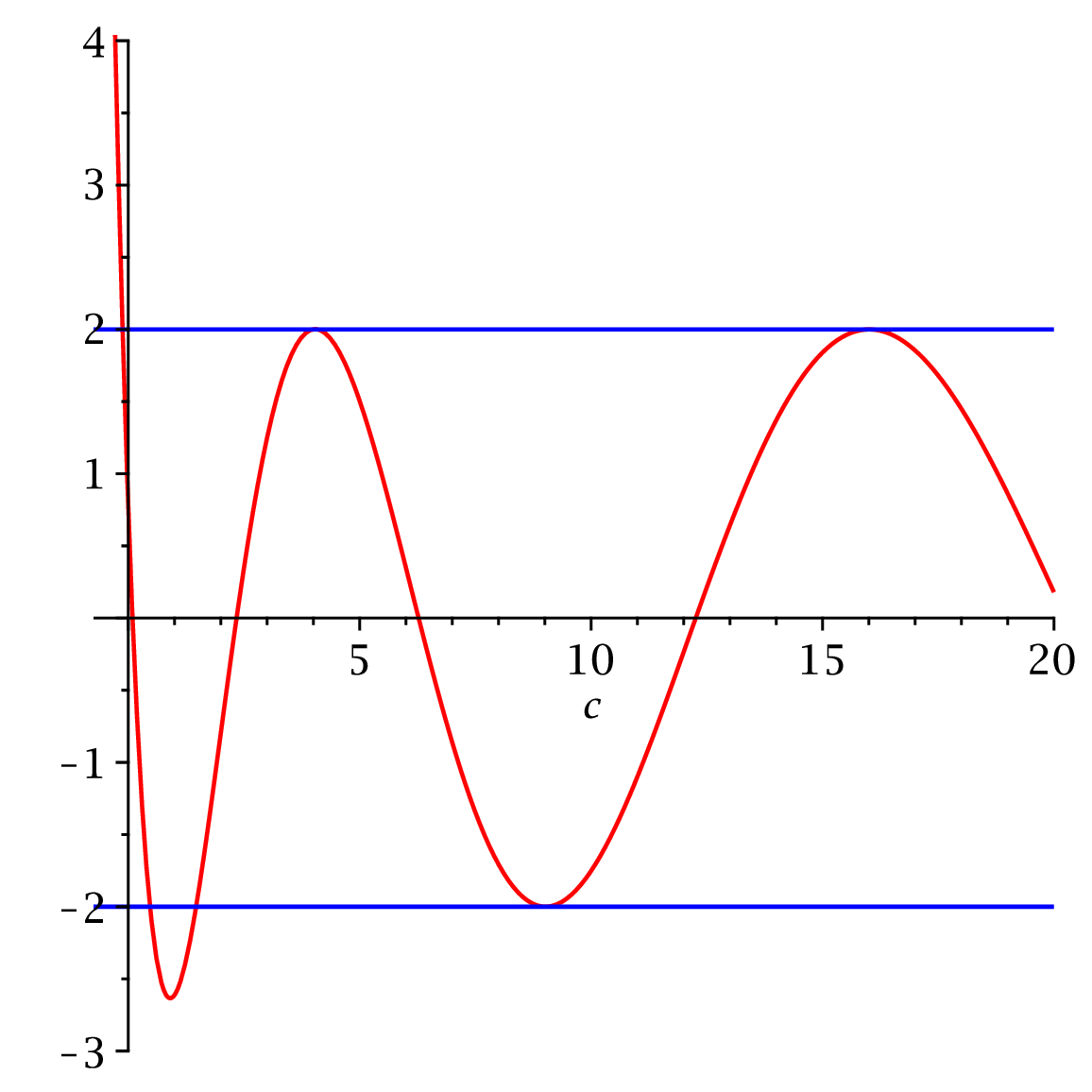}
        \caption{A plot of $\tr \frakT(2\pi;0)$ for \eqref{eq:127}, $c\in[-0.75,20]$ and $\kappa=0.5$.}
        \label{img:1}
    \end{minipage}
    \hfill
    \begin{minipage}[b]{0.45\textwidth}
        \includegraphics[width=\textwidth]{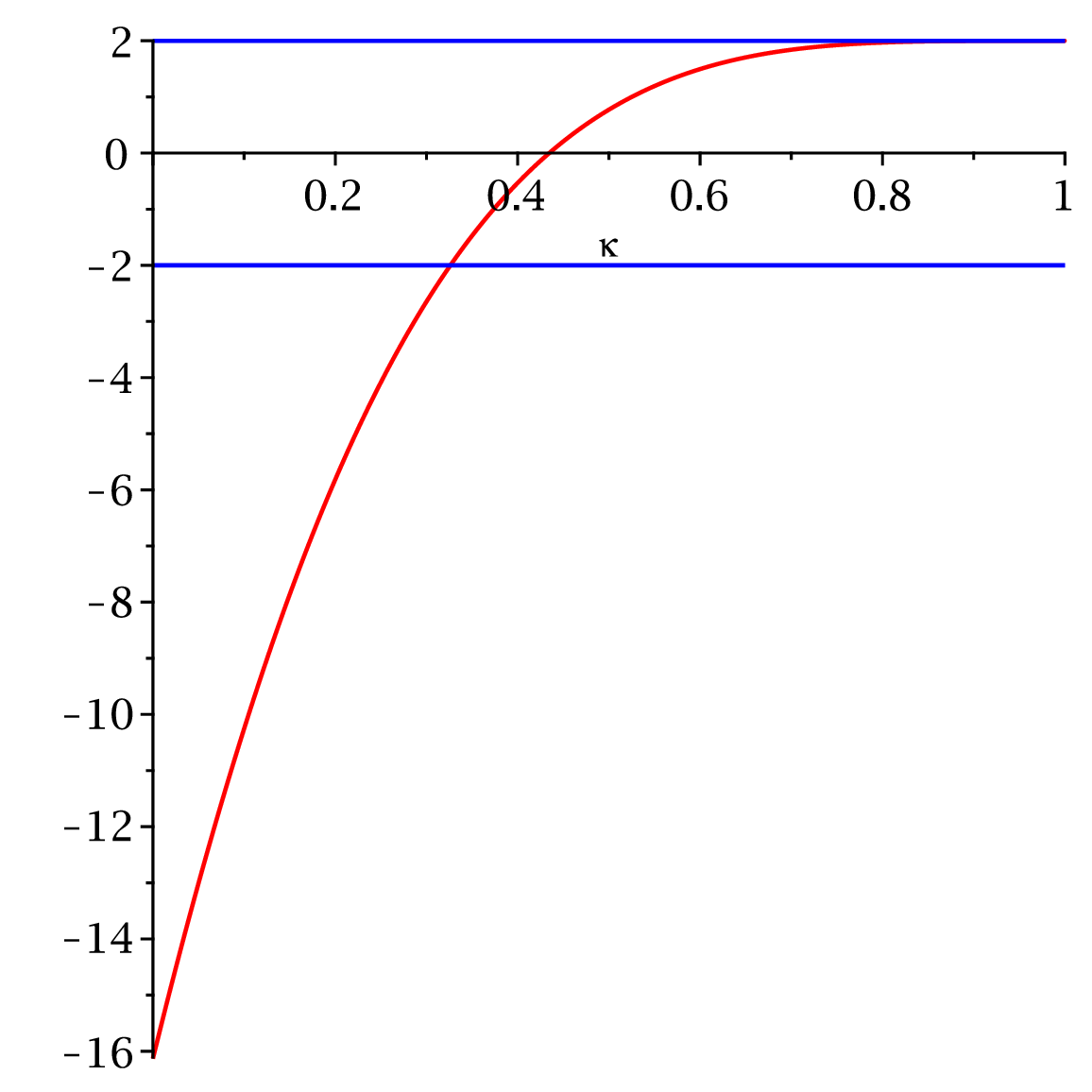}
        \caption{A plot of $\tr \frakT(2\pi;0)$ for \eqref{eq:127}, $c=0$ and $\kappa\in(0,1)$.}
        \label{img:2}
        \end{minipage}
    \end{figure}

Table~\ref{table:1} contains a summary of spectral properties of $H_\eta$ from Example~\ref{ex:2} depending on the trace of the monodromy matrix corresponding to \eqref{eq:127}. 

\begin{table}[h]
\caption{Spectral properties of $H_\eta$ depending on the trace of the monodromy matrix of \eqref{eq:127}.}
\centering
\renewcommand{\arraystretch}{1.75}
\begin{tabular}{|c|c|c|c|}
    \hline
    & $0 < \kappa \leq \frac{1}{2}$ & $\frac{1}{2} < \kappa  < 1$ & $\kappa=1$ \\
    \hline 
    $\big|\tr \frakT(2\pi;0)\big| < 2$ & $\sigmaAC(H_\eta) = \RR$ & 
    \multicolumn{2}{|c|}{$\tau$ is limit circle at $+\infty$} \\
    \hline
    $\big|\tr \frakT(2\pi;0)\big| > 2$ & 
    \multicolumn{3}{|c|}{all self-adjoint extensions of $H_\eta$ have no essential spectrum} \\
    \hline
\end{tabular}
\label{table:1}
\end{table}
\end{example}

\appendix
\section{Asymptotically periodic parameters}
\label{sec:11}
Recently in \cite{Behrndt2023} there has been studied another class of Sturm--Liouville parameters which is a perturbation of 
$\omega$-periodic case. We say that $(p, q, w)$ are \emph{$\omega$-asymptotically periodic Sturm--Liouville parameters} 
if $(p, q, w)$ are Sturm--Liouville parameters such that there are $\omega$-periodic Sturm--Liouville parameters 
$(\frakp, \frakq, \frakw)$ satisfying
\begin{equation}
    \label{eq:45}
    \lim_{n \to \infty} 
    \int_0^\omega 
    \big|w_n(t) - \frakw(t)\big| +
    \big|q_n(t) - \frakq(t)\big| + 
    \bigg|\frac{1}{p_n(t)} - \frac{1}{\frakp(t)} \bigg|
    \ud t = 0.
\end{equation}
Let us notice that \eqref{eq:45} implies
\begin{equation}
    \label{eq:165}
    \int_0^\infty w(x) \ud x = \infty.
\end{equation}

We aim to study solutions of \eqref{eq:2}. Let $u(\cdot;z)$ be any solution of \eqref{eq:2}. We write
\begin{equation}
    \label{eq:173}
    \mathbf{u}(t;z) = T(t;z) \mathbf{u}(0;z) 
    \quad \text{where} \quad
    \mathbf{u}(t;z) = 
    \begin{pmatrix}
        u(t;z) \\
        (p \partial_t u)(t;z)
    \end{pmatrix},
    \quad t \in [0,\infty), z \in \CC,
\end{equation}
and whereas the function $T : [0,\infty) \times \CC \to \SL(2,\CC)$, called here \emph{transfer matrix}, is the unique solution to
\[
    T(t; z) = \Id + \int_0^t b(t'; z) T(t'; z) \ud t' 
    \quad \text{where} \quad
	b(t; z) = 
	\begin{pmatrix}
		0 & \frac{1}{p(t)} \\
		q(t) - z w(t) & 0
	\end{pmatrix}.
\]
For $\omega$-periodic Sturm--Liouville parameters $(\frakp,\frakq,\frakw)$ we shall denote by $\frakT$ and $\frakb$ the mappings $T$ and $b$, respectively.

For any $\eta \in \sS^1$, we denote by $\mathbf{u}$ the solution to
\begin{equation}
    \label{eq:179'}
    \mathbf{u}(t;z) = \eta + \int_0^t b(t';z) \mathbf{u}(t';z) \ud t', \quad t \geq 0.
\end{equation}
Then we set
\[
    \mathbf{u}_n(s, \eta; z) = \mathbf{u}(s + n \omega; z), \quad s \in [0,\omega], n \in \NN_0.
\]
We notice that by \eqref{eq:173} we have $\mathbf{u}_0(s,\eta;z) = T(s;z) \eta$ and
\begin{equation}
    \label{eq:70'}
	\mathbf{u}_{n+1}(s, \eta; z) = X_n(s; z) \mathbf{u}_n(s, \eta; z),
\end{equation}
where $X_n(t; z) = U_{s; n}(\omega; z)$, whereas
\[
    U_{s; n}(t; z) = T(s+t+n\omega; z) T(s + n\omega; z)^{-1}.
\]

First, we investigate the result corresponding to Proposition \ref{prop:4}.
\begin{proposition}
	\label{prop:5}
	Let $\omega > 0$. Suppose that $(p, q, w)$ are $\omega$-asymptotically periodic Sturm--Liouville parameters,
	and let $\frakT$ be the transfer matrix corresponding to $(\frakp, \frakq, \frakw)$. Then
	\begin{equation}
		\label{eq:49}
		\lim_{n \to \infty} U_{s; n}(t; z) = \frakT_s(t; z)
	\end{equation}
	locally uniformly with respect to $s, t \in [0, \omega]$ and $z \in \CC$. Moreover, for every compact set $K \subset \CC$, 
	there is $C > 0$ such that
	\begin{equation}
		\label{eq:55}
		\sup_{s,t \in [0, \omega]} \sup_{z \in K}
		\big\|
		U_{s; n+1}(s; z) - U_{s; n}(t; z)
		\big\|
		\leq
		C \int_0^{2 \omega} \big\| b_{n+1}(t') - b_n(t') \big\| {\: \rm d} t'.
	\end{equation}
\end{proposition}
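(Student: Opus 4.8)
The plan is to follow the proof of Proposition~\ref{prop:4} almost verbatim, the sole structural difference being that in the asymptotically periodic regime the coefficient matrices $b_n$ converge to $\frakb$ \emph{at the same spectral parameter} $z$, rather than at $z=0$. Indeed, whereas in the modulated case the relation \eqref{eq:3a} forces $w_n/p_n \to 0$ and the spectral parameter washes out in the limit, here \eqref{eq:45} furnishes convergence of the full coefficient matrix, so the natural comparison object is $\frakT_s(\cdot;z)$ itself.

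First I would record that $U_{s;n}(\cdot;z)$ solves the linear equation
\[
	U(t) = \Id + \int_0^t b_{s;n}(t';z) U(t') \ud t',
	\qquad b_{s;n}(t;z) = b(s+t+n\omega;z),
\]
which is immediate from $\partial_t T(t;z) = b(t;z) T(t;z)$ together with the definition of $U_{s;n}$. Since $\frakT_s(\cdot;z)$ satisfies the analogous equation with coefficient $\frakb_s(t;z) = \frakb(s+t;z)$, the variation of parameters formula yields
\[
	U_{s;n}(t;z) = \frakT_s(t;z) + \frakT_s(t;z)\int_0^t \frakT_s(t';z)^{-1}\big(b_{s;n}(t';z) - \frakb_s(t';z)\big) U_{s;n}(t';z) \ud t'.
\]
Because $b$ has vanishing trace, $\frakT_s(\cdot;z) \in \SL(2,\CC)$; consequently both $\|\frakT_s(t;z)\|$ and $\|\frakT_s(t;z)^{-1}\|$ are bounded on the compact set $[0,\omega]^2 \times K$, the latter since inverting an $\SL(2)$ matrix only permutes its entries and changes their signs.

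The key estimate is the pointwise bound
\[
	\big\|b_n(t;z) - \frakb(t;z)\big\| \leq \bigg|\frac{1}{p_n(t)} - \frac{1}{\frakp(t)}\bigg| + \big|q_n(t) - \frakq(t)\big| + \abs{z}\,\big|w_n(t) - \frakw(t)\big|,
\]
whose integral over $[0,\omega]$ tends to $0$ by \eqref{eq:45}, locally uniformly in $z$. Feeding this into the integral equation and invoking Gronwall's inequality first shows that $(U_{s;n})$ is locally uniformly bounded; substituting this bound back into the same equation gives
\[
	\big\|U_{s;n}(t;z) - \frakT_s(t;z)\big\| \leq C \int_0^\omega \big\|b_{s;n}(t';z) - \frakb_s(t';z)\big\| \ud t'.
\]
The shift in $s$ is treated exactly as in Proposition~\ref{prop:4}: rewriting the integral over $[s,s+\omega]$ and bounding it by the integral over $[0,2\omega]$ yields \eqref{eq:49}. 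For the second assertion I would subtract the integral equations for $U_{s;n+1}$ and $U_{s;n}$, estimate the two resulting terms using the uniform boundedness just obtained, and apply Gronwall once more to arrive at \eqref{eq:55}.

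Since the argument is a direct transcription, there is no serious obstacle; the only points requiring genuine attention are tracking the spectral parameter in the comparison matrix $\frakT_s(\cdot;z)$ and verifying that the $\SL(2,\CC)$-structure supplies the two-sided bound on $\frakT_s$ and its inverse — a role played in the modulated setting by the determinant identity \eqref{eq:16}.
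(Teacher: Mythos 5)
Your proposal is correct and follows essentially the same route as the paper: the variation-of-parameters identity comparing $U_{s;n}(\cdot;z)$ with $\frakT_s(\cdot;z)$ at the \emph{same} spectral parameter, Gronwall to get uniform boundedness and then the difference bound, the $[s,s+\omega]\subset[0,2\omega]$ trick for the shift, and a second Gronwall for \eqref{eq:55}. Your explicit justification that $\frakT_s(\cdot;z)\in\SL(2,\CC)$ (so its inverse is controlled by its norm) is a slightly cleaner way of stating what the paper asserts as two-sided boundedness of $\|\frakT_s(t;z)\|$.
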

\begin{proof}
	Observe that for $s \in [0, \omega]$, $\frakT_s(\cdot; z)$ satisfies
	\[
		\frakT_s(t; z) = \Id + \int_0^t \frakb_s(t'; z) \frakT_s(t'; z) \ud t', \quad t' \geq 0,
	\]
	by the variation of parameters we have
	\begin{equation}
		\label{eq:72}
		U_{s; n}(t; z) = \frakT_s(t; z) 
		+ \frakT_s(t; z) \int_0^t \frakT_s(t'; z)^{-1} \big(b_{s; n}(t'; z) - \frakb_s(t'; z) \big) U_{s; n}(t'; z)
		{\: \rm d} t'.
	\end{equation}
	Let $K$ be a compact subset of $\CC$. Since $\|\frakT_s(t; z)\|$ is uniformly bounded from above and from below,
	we have
	\[
		\big\|U_{s; n}(t; z) \big\| \leq C \bigg(1 + 
		\int_0^t \big\|b_{s; n}(t'; z) - \frakb_s(t'; z) \big\| \cdot \|U_{s; n}(t'; z)\| {\: \rm d} t'
		\bigg).
	\]
	Thus by the Gronwall's inequality
	\[
		\|U_{s; n}(t; z)-\frakT_s(t; z) \|
		\leq
		C \int_0^t \big\|b_{s; n}(t'; z) - \frakb_s(t'; z) \big\| {\: \rm d} t'.
	\]
	Since
	\[
		\int_0^t \big\|b_{s; n}(t'; z) - \frakb_s(t'; z) \big\| {\: \rm d} t'
		\leq
		C
		\int_0^{2\omega}
		\big|q_n(t') - \frakq(t') \big| + \big|w_n(t') - \frakw(t') \big| + \bigg|\frac{1}{p_n(t')} - \frac{1}{\frakp(t')} \bigg| 
		{\: \rm d} t',
	\]
	by \eqref{eq:45} we get
	\[
		\lim_{n \to \infty} U_{s; n}(t; z) = \frakT_s(t; z)
	\]
	locally uniformly with respect to $s, t \in [0, \omega]$ and $z \in \CC$. 
\end{proof}

For $s \in [0, \omega]$ and $n \in \NN$, we set
\[
	\gamma_n(s) = \int_0^\omega w_n(s+t) \ud t, \quad\text{and}\quad
	\gamma = \int_0^\omega \frakw(t) \ud t.
\]
\begin{proposition}
    Let $\omega > 0$. Suppose that $(p, q, w)$ are $\omega$-asymptotically periodic Sturm--Liouville parameters. Then for any $s \in [0,\omega]$,
    \begin{equation}
            \label{eq:145}
		\lim_{n \to \infty} \int_0^\omega \bigg| \frac{1}{\gamma_n(s)} w_n(s+t) - \frac{1}{\gamma} \frakw(s+t)
		\bigg| \ud t
		=0.
\end{equation}
\end{proposition}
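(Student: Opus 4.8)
The plan is to reduce the claim to two elementary facts: (i) the shifted $L^1$-convergence $\int_0^\omega |w_n(s+t) - \frakw(s+t)| \ud t \to 0$, and (ii) the convergence of the normalizing constants $\gamma_n(s) \to \gamma$, together with the positivity $\gamma > 0$. Once these are in hand, the normalized statement follows by a routine triangle-inequality splitting. Note that the defining condition \eqref{eq:45} already supplies the \emph{unshifted} version $\int_0^\omega |w_n(t) - \frakw(t)| \ud t \to 0$, so the only genuine work is to transfer this to the shift by $s$ and to control the quotients.

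First I would establish (i). Substituting $t' = s+t$ gives $\int_0^\omega |w_n(s+t) - \frakw(s+t)| \ud t = \int_s^{s+\omega} |w_n(t') - \frakw(t')| \ud t'$. Splitting the interval at $\omega$ and applying the change of variables $t' \mapsto t' - \omega$ to the piece over $[\omega, s+\omega]$, I use the identities $w_n(t'+\omega) = w_{n+1}(t')$ (from $w_n(\cdot) = w(\cdot + n\omega)$) and $\frakw(t'+\omega) = \frakw(t')$ (periodicity of $\frakw$) to rewrite that piece as $\int_0^s |w_{n+1}(t') - \frakw(t')| \ud t'$. Hence
\[
    \int_0^\omega |w_n(s+t) - \frakw(s+t)| \ud t
    \leq
    \int_0^\omega |w_n(t') - \frakw(t')| \ud t'
    + \int_0^\omega |w_{n+1}(t') - \frakw(t')| \ud t',
\]
and both terms on the right vanish as $n \to \infty$ by \eqref{eq:45}. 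The same periodicity argument gives $\int_0^\omega \frakw(s+t)\ud t = \gamma$, whence $|\gamma_n(s) - \gamma| \leq \int_0^\omega |w_n(s+t) - \frakw(s+t)| \ud t \to 0$, establishing (ii); positivity $\gamma > 0$ is immediate since $\frakw$ is positive almost everywhere.

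Finally I would write
\[
    \frac{1}{\gamma_n(s)} w_n(s+t) - \frac{1}{\gamma} \frakw(s+t)
    =
    \frac{1}{\gamma_n(s)}\big(w_n(s+t) - \frakw(s+t)\big)
    + \Big(\frac{1}{\gamma_n(s)} - \frac{1}{\gamma}\Big) \frakw(s+t),
\]
integrate the absolute value over $[0,\omega]$, and bound the result by $\gamma_n(s)^{-1} \int_0^\omega |w_n(s+t) - \frakw(s+t)| \ud t + |\gamma_n(s)^{-1} - \gamma^{-1}| \, \gamma$. Since $\gamma_n(s) \to \gamma > 0$, the factor $\gamma_n(s)^{-1}$ is eventually bounded and $\gamma_n(s)^{-1} - \gamma^{-1} \to 0$; combined with (i), both summands tend to $0$, which yields \eqref{eq:145}. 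There is no serious obstacle here: the argument is entirely elementary, and the only point requiring care is the bookkeeping of the shift by $s$, where the index jump $n \to n+1$ produced by crossing a full period must be matched against \eqref{eq:45} for the shifted index.
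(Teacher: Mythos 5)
Your proof is correct and follows essentially the same route as the paper: the same add-and-subtract splitting (the paper attaches $1/\gamma_n(s)-1/\gamma$ to $w_n$ rather than to $\frakw$, a symmetric variant), combined with the convergence $\gamma_n(s)\to\gamma$ and the shifted $L^1$ convergence derived from \eqref{eq:45}. Your explicit bookkeeping of the shift by $s$ (splitting $[s,s+\omega]$ at $\omega$ and using $w_n(\cdot+\omega)=w_{n+1}(\cdot)$ together with the periodicity of $\frakw$) fills in a step the paper leaves implicit, but it is the same argument.
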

\begin{proof}
    We have
    \[
        \int_0^\omega 
        \bigg| \frac{1}{\gamma_n(s)} w_n(s+t) - \frac{1}{\gamma} \frakw(s+t)
		\bigg| \ud t
        \leq 
        \bigg| \frac{1}{\gamma_n(s)} - \frac{1}{\gamma} \bigg|
        \int_s^{s+\omega} w_n(t) \ud t 
        + 
        \frac{1}{\gamma} 
        \int_s^{s+\omega} |w_n(t) - \frakw(t)| \ud t.
    \]
    Since by \eqref{eq:45}
    \begin{equation}
        \label{eq:155}
        \lim_{n \to \infty}
        |\gamma_n(s) - \gamma| \leq 
        \lim_{n \to \infty}
        \int_s^{s+\omega} |w_n(t) - \frakw(t)| \ud t
        = 0,
    \end{equation}
    the formula~\eqref{eq:145} easily follows from \eqref{eq:45}.
\end{proof}

\begin{proposition} 
    \label{prop:8}
	Let $\omega > 0$. Suppose that $(p, q, w)$ are $\omega$-asymptotically periodic Sturm--Liouville parameters,
	and let $\frakT$ be the transfer matrix corresponding to $(\frakp, \frakq, \frakw)$. Then for any $s \in [0,\omega]$,
	\begin{equation}
            \label{eq:146}
		\lim_{n \to \infty} \frac{1}{\gamma_n(s)} \partial_z X_n(s; z) 
		=
		\frac{1}{\gamma} \partial_z \frakT_s(\omega; z)
	\end{equation}
	locally uniformly with respect to $z \in \CC$.
\end{proposition}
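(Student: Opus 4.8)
The plan is to follow the proof of Lemma~\ref{lem:5} almost verbatim, replacing Proposition~\ref{prop:4} by Proposition~\ref{prop:5} and the hypothesis \eqref{eq:119} by its already-established counterpart \eqref{eq:145}. First I would differentiate in $z$ the integral equation satisfied by $U_{s;n}(\cdot;z)$ and apply the variation of parameters to obtain
\[
	\partial_z U_{s;n}(t;z) = U_{s;n}(t;z)\int_0^t U_{s;n}(t';z)^{-1}\,\partial_z b_{s;n}(t';z)\,U_{s;n}(t';z)\,\ud t',
\]
and likewise
\[
	\partial_z \frakT_s(t;z) = \frakT_s(t;z)\int_0^t \frakT_s(t';z)^{-1}\,\partial_z \frakb_s(t';z)\,\frakT_s(t';z)\,\ud t'.
\]
Setting $t=\omega$ and recalling that $X_n(s;z)=U_{s;n}(\omega;z)$ turns the left-hand sides into exactly the two quantities appearing in \eqref{eq:146}.

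Next I would compute the relevant derivatives explicitly. From the form of $b$ in the appendix one reads off $\partial_z b_{s;n}(t';z) = -w_n(s+t')\,E$ and $\partial_z \frakb_s(t';z) = -\frakw(s+t')\,E$, where $E$ is the constant matrix having $1$ in the $(2,1)$ entry and zeros elsewhere (independent of $t'$ and $z$). Consequently \eqref{eq:145} is precisely the assertion that
\[
	\int_0^\omega \Big\| \tfrac{1}{\gamma_n(s)}\partial_z b_{s;n}(t';z) - \tfrac{1}{\gamma}\partial_z \frakb_s(t';z) \Big\|\,\ud t' \longrightarrow 0,
\]
uniformly on compact subsets of $\CC$ (indeed the kernel does not depend on $z$ at all).

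I would then split the difference in \eqref{eq:146} as $\tfrac{1}{\gamma_n(s)}\partial_z X_n(s;z) - \tfrac{1}{\gamma}\partial_z\frakT_s(\omega;z) = A_n(z)+B_n(z)$, where
\[
	B_n(z) = U_{s;n}(\omega;z)\int_0^\omega U_{s;n}(t';z)^{-1}\Big(\tfrac{1}{\gamma}\partial_z\frakb_s(t';z)\Big)U_{s;n}(t';z)\,\ud t' - \tfrac{1}{\gamma}\partial_z\frakT_s(\omega;z)
\]
compares the conjugation by $U_{s;n}$ with that by $\frakT_s$ on the fixed kernel, and $A_n(z)$ retains the conjugation by $U_{s;n}$ while comparing the two kernels. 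For $B_n$ I would invoke \eqref{eq:49} together with the dominated convergence theorem, exactly as in Lemma~\ref{lem:5}, to get $B_n(z)\to 0$ locally uniformly in $z$. For $A_n$ I would use that the transfer matrices lie in $\SL(2,\CC)$ (the matrix $b$ has zero trace), so that by Proposition~\ref{prop:5} both $\|U_{s;n}(\cdot;z)\|$ and $\|U_{s;n}(\cdot;z)^{-1}\|$ are bounded uniformly on $[0,\omega]\times K$ for every compact $K\subset\CC$; combining this bound with the displayed $L^1$-convergence yields $A_n(z)\to 0$ locally uniformly.

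Since the argument is a transcription of Lemma~\ref{lem:5}, I do not expect a genuine obstacle; the only points needing care are the uniform two-sided bounds on $U_{s;n}$ and its inverse---here \emph{cleaner} than in the main text because $\det U_{s;n}\equiv 1$ rather than $p_n(s)/p_{n+1}(s)$---and the routine verification that dominated convergence applies to the conjugation term $B_n(z)$.
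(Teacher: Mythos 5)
Your proof is correct, but it takes a genuinely different (and considerably longer) route than the paper. The paper's own proof is essentially two lines: since each $X_n(s;\cdot)$ is holomorphic and, by Proposition~\ref{prop:5}, $X_n(s;\cdot)\to\frakT_s(\omega;\cdot)$ locally uniformly on $\CC$, the Weierstrass convergence theorem (Cauchy estimates) immediately gives $\partial_z X_n(s;z)\to\partial_z\frakT_s(\omega;z)$ locally uniformly; then \eqref{eq:155}, i.e.\ $\gamma_n(s)\to\gamma>0$, lets one divide by the normalizing constants harmlessly. The key structural point the paper exploits --- and which distinguishes this proposition from Lemma~\ref{lem:5} --- is that in the asymptotically periodic setting the normalization $\gamma_n(s)$ converges to a \emph{positive} limit, so it carries no information and the derivative limit is free; in the periodically modulated case of Lemma~\ref{lem:5} one has $\gamma_n(s)\to 0$, which is exactly why the variation-of-parameters machinery you transcribe is unavoidable there. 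Your argument is sound: the formula $\partial_z U_{s;n}(t;z)=U_{s;n}(t;z)\int_0^t U_{s;n}(t';z)^{-1}\partial_z b_{s;n}(t';z)U_{s;n}(t';z)\,\ud t'$ is correct, $\partial_z b_{s;n}$ is indeed $-w_n(s+\cdot)$ times a constant matrix so that \eqref{eq:145} is precisely the needed $L^1$ kernel convergence, the two-sided bounds on $U_{s;n}$ follow from Proposition~\ref{prop:5} together with $\det U_{s;n}\equiv 1$, and the $A_n+B_n$ splitting closes the argument. What your route buys is independence from holomorphy in $z$ and a uniformity statement that would survive even if $\gamma_n(s)$ degenerated; what it costs is that it reproves by hand something the locally uniform convergence of holomorphic maps already gives for free, and it invokes the hypothesis \eqref{eq:145} where the paper only needs the weaker fact $\gamma_n(s)\to\gamma$.
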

\begin{proof}
    Recall that $(X_n(s;\cdot) : n \geq 1)$ is a sequence of holomorphic mappings, which by Proposition~\ref{prop:5} converges locally uniformly on $\CC$ to $\frakT_s(\omega;\cdot)$. Therefore, 
    \[
        \lim_{n \to \infty} \partial_z X_n(s; z) =
        \partial_z \frakT_s(\omega;z)
    \]  
    locally uniformly with respect to $z \in \CC$. In view of \eqref{eq:155} the result follows.
\end{proof}

\begin{proposition}
	\label{prop:6}
	Let $\omega > 0$. Suppose that $(p, q, w)$ are $\omega$-asymptotically periodic Sturm--Liouville parameters
	such that the transfer matrix $\frakT$ corresponding to $(\frakp, \frakq, \frakw)$.
	Let $K$ be a compact subset of $\CC$. If
	\[
		q, w, \frac{1}{p} \in \calD^\omega_1\big(L^1; \RR\big)
	\]
	then the sequence $(X_n : n \in \NN)$ belongs to $\calD_1\big([0, \omega]\times K; \GL(2, \CC))$.
\end{proposition}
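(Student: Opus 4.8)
The plan is to transcribe the proof of Proposition~\ref{prop:2} almost verbatim, replacing the periodic input Proposition~\ref{prop:4} by its asymptotically periodic counterpart Proposition~\ref{prop:5}, and adjusting for the different shape of $b$ in \eqref{eq:173}. Concretely, I must verify the two defining conditions of $\calD_1\big([0,\omega]\times K;\GL(2,\CC)\big)$: the uniform boundedness $\sup_n \sup_{[0,\omega]\times K}\|X_n\|<\infty$ and the summability $\sum_n \sup_{[0,\omega]\times K}\|\Delta X_n\|<\infty$.

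First I would record the pointwise estimate on the increments of $b$. Since in the present setting $b$ has its non-trivial entries in the other off-diagonal positions, a direct computation gives
\[
b_{n+1}(t; z) - b_n(t; z) =
\begin{pmatrix}
0 & \Delta_\omega\big(\tfrac{1}{p}\big)(t + n\omega) \\
\Delta_\omega q(t + n\omega) - z\, \Delta_\omega w(t + n\omega) & 0
\end{pmatrix},
\]
whence, for $z\in K$,
\[
\big\| b_{n+1}(t; z) - b_n(t; z) \big\|
\leq
\Big| \Delta_\omega\big(\tfrac{1}{p}\big)(t + n\omega) \Big|
+ \big| \Delta_\omega q(t + n\omega) \big|
+ |z|\, \big| \Delta_\omega w(t + n\omega) \big|.
\]
This is the only place where the argument differs from Proposition~\ref{prop:2}: the increments of the \emph{individual} coefficients $q$, $w$, $1/p$ appear, rather than those of the ratios $q/p$, $w/p$, $p'/p$.

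Next I would invoke the estimate \eqref{eq:55} of Proposition~\ref{prop:5}, which controls $\sup_{t\in[0,\omega]}\sup_{z\in K}\|\Delta X_n(t;z)\|$ by $C\int_0^{2\omega}\|b_{n+1}(t';z)-b_n(t';z)\|\,\ud t'$. Combining this with the pointwise bound and substituting $s=t'+n\omega$ turns the $[0,2\omega]$ integral into one over $[n\omega,(n+2)\omega]$, so that, with $C_K=\sup_{z\in K}|z|$,
\[
\sup_{t\in[0,\omega]}\sup_{z\in K}\big\|\Delta X_n(t;z)\big\|
\leq
C\int_{n\omega}^{(n+2)\omega}
\Big(\big|\Delta_\omega\big(\tfrac{1}{p}\big)(s)\big|
+\big|\Delta_\omega q(s)\big|
+C_K\big|\Delta_\omega w(s)\big|\Big)\,\ud s.
\]
Summing over $n\geq 0$ (each $s$ lying in at most two consecutive windows) and using \eqref{eq:24} to identify the resulting single integral over $[0,\infty)$ with the $\calD_1^\omega(L^1;\RR)$-series of each coefficient, I get a finite bound precisely because $q,w,1/p\in\calD_1^\omega(L^1;\RR)$. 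The boundedness requirement follows from the convergence \eqref{eq:49}: since $X_n(\cdot;\cdot)=U_{\cdot;n}(\omega;\cdot)\to\frakT_{\cdot}(\omega;\cdot)$ uniformly on the compact set $[0,\omega]\times K$ and the continuous limit is bounded there, the sequence $(X_n)$ is uniformly bounded. I do not anticipate any genuine obstacle here; the whole argument is a direct adaptation, the only bookkeeping care being the passage via \eqref{eq:24} from the sum of windowed integrals to the half-line integral, exactly as in Proposition~\ref{prop:2}.
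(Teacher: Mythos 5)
Your proof is correct and follows essentially the same route as the paper: the pointwise bound on $\|b_{n+1}-b_n\|$ in terms of $\Delta_\omega q$, $\Delta_\omega w$, $\Delta_\omega(1/p)$, then the estimate \eqref{eq:55} from Proposition~\ref{prop:5}, the change of variables to $[n\omega,(n+2)\omega]$, and summation via \eqref{eq:24}. Your explicit verification of the uniform boundedness of $(X_n)$ via \eqref{eq:49} is a small completion the paper leaves implicit, but otherwise the arguments coincide.
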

\begin{proof}
	We have
	\[
		\big\|\Delta b_n (t; z)\big\|
		\leq
		|q_{n+1}(t) - q_n(t)| + \abs{z} |w_{n+1}(t) - w_n(t)| + \bigg|\frac{1}{p_{n+1}(t)} - \frac{1}{p_n(t)} \bigg|,
	\]
	thus by Proposition \ref{prop:5},
	\[
		\big\|\Delta X_{n+1}(t; z) \big\|
		\leq
		c \int_{n\omega}^{(n+2)\omega} 
		|\Delta_\omega q(s)| + \abs{z} |\Delta_\omega w(s) | 
		+ \bigg|\Delta_\omega \bigg(\frac{1}{p}\bigg)(s) \bigg| {\: \rm d} s.
	\]
	Hence,
	\[
		\sum_{n = 0}^\infty \sup_{t \in [0, \omega]}  \sup_{z \in K}
		\big\|\Delta X_{n+1}(t; z) \big\|
		\leq
		2 c \int_0^\infty |\Delta_\omega q(s)| + \abs{z} |\Delta_\omega w(s) |
		+ \bigg|\Delta_\omega \bigg(\frac{1}{p}\bigg)(s) \bigg| {\: \rm d} s,
	\]
	which finishes the proof.
\end{proof}

Next let us describe diagonalization procedure. Fix $K \subset \RR$ a compact subset of
\[
	\Lambda_- = \big\{x \in \RR : \discr \frakT(\omega; x) < 0 \big\}.
\]
Since both $\frakT(s + \omega; z)$ and $\frakT(s; z) \frakT(\omega; z)$ satisfy
\[
	\mathfrak{U}(t; z) = \frakT(\omega; z) + \int_0^t \frakb(t'; z) \mathfrak{U}(t'; z) \ud t,
\]
we must have $\frakT(s; z) \frakT(\omega; z) = \frakT(s + \omega; z)$. Hence,
\[
	\frakT_s(\omega; z) = \frakT(s; z) \frakT(\omega; z) \frakT(s; z)^{-1}.
\]
In particular, $\discr \frakT_s(\omega; z) = \discr \frakT(\omega; z)$. Thus for $z \in \Lambda_-$ and $t \in [0, \omega]$, 
we have $\discr \frakT_t(\omega; z) < 0$. Moreover, $[\frakT_t(\omega; z)]_{12} \neq 0$. Hence, by Proposition \ref{prop:6}, 
there are $\delta > 0$ and $M \geq 1$ such that for all $t \in [0, \omega]$, $n \geq M$ and $z \in K$,
\begin{equation}
	\label{eq:57}
	\discr X_n(t; z) < -\delta,
	\quad\text{and}\quad
	\abs{[X_n(t; z)]_{12}} \geq \delta.
\end{equation}
Consequently, each matrix $X_n(t; z)$ has two distinct eigenvalues
\[
	\lambda_n^+(t; z) = \frac{\tr X_n(t; z) + i \sqrt{-\discr X_n(t; z)}}{2}, 
	\quad\text{and}\quad
	\lambda_n^-(t; z) = \frac{\tr X_n(t; z) - i \sqrt{-\discr X_n(t; z)}}{2}.
\]
In view of \eqref{eq:57}, $X_n(t; z)$ can be diagonalized
\begin{equation}
    \label{eq:153}
    X_n(t; z) = C_n(t; z) D_n(t; z) C_n(t; z)^{-1}
\end{equation}
where
\begin{equation}
    \label{eq:149}
	C_n = 
	\begin{pmatrix}
		1 & 1 \\
		\frac{\lambda_n^+ - [X_n]_{11}}{[X_n]_{12}} & \frac{\lambda_n^- - [X_n]_{11}}{[X_n]_{12}}
	\end{pmatrix}
	\quad\text{and}\quad
	D_n = 
	\begin{pmatrix}
	\lambda_n^+ & 0 \\
	0 & \lambda_n^-
	\end{pmatrix}.
\end{equation}
By \eqref{eq:24}, Proposition \ref{prop:5} and \cite[Lemma 2]{SwiderskiTrojan2019} we obtain the following.
\begin{corollary}
	Let $\omega > 0$. Suppose that $(p, q, w)$ are $\omega$-asymptotically periodic Sturm--Liouville parameters,
	and let $\frakT$ be the transfer matrix corresponding to $(\frakp, \frakq, \frakw)$. Let
	$K$ be a compact subset of $\Lambda_-$. If
	\[
		q, w, \frac{1}{p} \in \calD^\omega_1\big(L^1; \RR\big),
	\]
	then there is $M \geq 1$ such that both sequences $(C_n : n \geq M)$ and $(D_n : n \geq M)$ belongs to
	$\calD_1\big([0, \omega] \times K; \GL(2, \CC) \big)$.
\end{corollary}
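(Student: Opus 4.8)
The plan is to reduce the statement to the abstract transfer principle \cite[Lemma 2]{SwiderskiTrojan2019}, which guarantees that a $\calD_1$ sequence of matrices admitting a spectral decomposition with uniformly separated eigenvalues produces diagonalizing matrices and diagonal parts that again lie in $\calD_1$. Thus the real work consists in verifying the two hypotheses of that lemma for the tail $(X_n : n \geq M)$: membership in $\calD_1$, and a uniform spectral gap together with a uniform lower bound on the relevant off-diagonal entry.

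First I would record that $(X_n : n \in \NN) \in \calD_1\big([0,\omega] \times K; \GL(2,\CC)\big)$; this is precisely the content of Proposition \ref{prop:6}, whose hypotheses $q, w, 1/p \in \calD_1^\omega(L^1;\RR)$ are assumed here. At this point the identity \eqref{eq:24} is what converts the discrete sum $\sum_n \sup_{[0,\omega]\times K} \big\|\Delta X_n\big\|$ into the integral $\int_0^\infty \big\|\Delta_\omega b\big\| \, \ud s$ that is controlled in the proof of Proposition \ref{prop:6}, so that boundedness and summability are both secured.

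Next I would invoke the uniform diagonalizability already set up in the discussion preceding the corollary. By \eqref{eq:57} there are $\delta > 0$ and $M \geq 1$ such that $\discr X_n(t;z) < -\delta$ and $\abs{[X_n(t;z)]_{12}} \geq \delta$ for all $n \geq M$, $t \in [0,\omega]$ and $z \in K$; these constants are uniform over the compact set $[0,\omega] \times K$ thanks to the locally uniform convergence $X_n \to \frakT_t(\omega; \cdot)$ from Proposition \ref{prop:5} together with the continuity and $\omega$-periodicity of $\frakT$. The first bound forces the eigenvalues $\lambda_n^{\pm} = \tfrac{1}{2}\big(\tr X_n \pm i\sqrt{-\discr X_n}\big)$ to stay separated by at least $\sqrt{\delta}$, while the second keeps the denominators in the eigenvector formulas \eqref{eq:149} bounded away from zero, so that the spectral decomposition \eqref{eq:153} is well-conditioned uniformly in $n \geq M$.

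With these two ingredients in hand, \cite[Lemma 2]{SwiderskiTrojan2019} applies directly and yields that both $(C_n : n \geq M)$ and $(D_n : n \geq M)$ belong to $\calD_1\big([0,\omega] \times K; \GL(2,\CC)\big)$, completing the argument. I expect the only genuine obstacle to be bookkeeping: one must confirm that the spectral gap and the lower bound on $[X_n]_{12}$ are uniform over the entire compact set $[0,\omega] \times K$ rather than merely pointwise, and that passing to the tail $n \geq M$ is harmless, since discarding finitely many terms affects neither the uniform boundedness nor the summability defining $\calD_1$. Everything else — the Lipschitz dependence of $\lambda_n^{\pm}$ and of the eigenvector entries on $X_n$ away from the branch point of the square root — is encapsulated in the cited lemma and requires no separate computation.
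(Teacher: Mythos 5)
Your proposal is correct and follows essentially the same route as the paper: the paper derives this corollary in one line from \eqref{eq:24}, the convergence in Proposition \ref{prop:5} (equivalently, the $\calD_1$ membership of $(X_n)$ established in Proposition \ref{prop:6}), the uniform bounds \eqref{eq:57}, and \cite[Lemma 2]{SwiderskiTrojan2019}, exactly as you do. Your version merely spells out the verification of the hypotheses of the cited lemma that the paper leaves implicit.
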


Now, Proposition \ref{prop:6} allows us to repeat the arguments in the proof of Theorem \ref{thm:1} to get the next
theorem. For asymptotically periodic Sturm--Liouville parameters it is more convenient to define the Tur\'an determinant
as
\[
    S(t, \eta; z) 
	=
	\det
	\begin{pmatrix}
		u(t+\omega,\eta;z) & u(t,\eta;z) \\
		(p \partial_t u)(t+\omega,\eta;z) & (p \partial_t u)(t,\eta; z)
	\end{pmatrix}.
\]
Such a definition incorporates $p$ and assures that it is a continuous function.
\begin{theorem}
	\label{thm:5}
	Let $\omega > 0$. Suppose that $(p, q, w)$ are $\omega$-asymptotically periodic Sturm--Liouville parameters,
	and let $\frakT$ be the transfer matrix corresponding to $(\frakp, \frakq, \frakw)$. Assume that
	\[
		q, w, \frac{1}{p} \in \calD_1^\omega(L^1; \RR). 
	\]
	Then each solution $\mathbf{u}$ of \eqref{eq:179'} 
	the limit
	\[
		\lim_{n \to \infty} |S_n(t, \eta; z)|
	\]
	exists locally uniformly with respect to $(t, \eta, z) \in [0, \omega] \times \sS^1 \times \Lambda_-$, and it is 
	a positive continuous function.
\end{theorem}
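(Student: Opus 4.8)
The plan is to follow the proof of Theorem~\ref{thm:1} essentially verbatim, exploiting the simplifications afforded by the fact that now $T(\cdot;z)\in\SL(2,\CC)$, so that $\det X_n\equiv1$. First I would record that, with $\mathbf{u}$ as in \eqref{eq:179'} and $S_n(t,\eta;z)=S(t+n\omega,\eta;z)$, one has $S_n=\sprod{E\mathbf{u}_{n+1}}{\mathbf{u}_n}$ with $E=\begin{pmatrix}0&-1\\1&0\end{pmatrix}$; here the convention \eqref{eq:173}, in which the second coordinate of $\mathbf{u}$ is exactly $p\partial_t u$, is what makes $S$ a continuous function. Fixing a compact $K\subset\Lambda_-$, the diagonalization \eqref{eq:153}--\eqref{eq:149} together with \eqref{eq:57} is available for $n\geq M$, and by the corollary following Proposition~\ref{prop:6} the sequences $(C_n)$ and $(D_n)$ belong to $\calD_1([0,\omega]\times K;\GL(2,\CC))$.

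The key algebraic step is to telescope as in Theorem~\ref{thm:1}. Using \eqref{eq:62} in the form $X^tE=(\det X)EX^{-1}=EX^{-1}$ (since $\det X_n=1$) and the recursion \eqref{eq:70'}, I would obtain, for every $k\geq1$,
\[
S_n - S_{n+k} = \Big\langle E\, Y_{n,k}\Big(\prod_{j=1}^k X_{n+j}\Big)^{-1}\mathbf{u}_{n+k},\ \mathbf{u}_{n+k-1}\Big\rangle,\qquad Y_{n,k}=\prod_{j=0}^{k-1}X_{n+j}-\prod_{j=1}^{k}X_{n+j}.
\]
Crucially, the ratios $p_{n+1}/p_n$ and the weight $p_{n+1}$ that appeared in the periodically modulated case now drop out because $\det X_n=1$. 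Moreover, for real $z$ the matrix $X_n$ is real with $\discr X_n<0$ and $\lambda^-_n=\overline{\lambda^+_n}$, whence $|\lambda^\pm_n|=1$ and $\|D_n\|=1$; consequently the products $\prod_{j=1}^k X_{n+j}$ and their inverses are uniformly bounded on $[0,\omega]\times K$. Applying \cite[Proposition 1 \& 2]{SwiderskiTrojan2019} to the uniformly diagonalizable $\calD_1$ sequence $(X_n)$ then renders $\|Y_{n,k}\|$ uniformly small, so that for each $\epsilon>0$ there is $n_0$ with $|S_n-S_{n+k}|\leq\epsilon\,\|\mathbf{u}_{n+k}\|^2$ for all $n\geq n_0$ and $k\geq1$, by Cauchy--Schwarz.

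For the lower bound I would write $S_n=\sprod{\sym(EX_n)\mathbf{u}_n}{\mathbf{u}_n}$ and use $\det\sym(EX_n)=-\tfrac14\discr X_n\geq\tfrac14\delta$ from \eqref{eq:57}, so that $\sym(EX_n)$ is definite and $|S_n|\geq c\|\mathbf{u}_n\|^2$ for $n\geq M$; in particular $S_n\neq0$ for a non-trivial solution. Combining this with the previous step yields $|S_n/S_m-1|\leq c\epsilon$ uniformly on $[0,\omega]\times\sS^1\times K$ for $m\geq n\geq n_0$. Since $\log\abs{u}\leq\abs{1-\abs{u}}$, the sequence $(\log\abs{S_n})$ is uniformly Cauchy, hence converges uniformly to a continuous function; therefore $\abs{S_n}$ converges locally uniformly to a continuous, everywhere positive limit, which is the assertion.

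The only genuine work beyond transcribing the proof of Theorem~\ref{thm:1} is bookkeeping: checking that every occurrence of $p_{n+1}/p_n$ and of the prefactor $p_{n+1}$ in $S_n=p_{n+1}\scrD_n$ becomes trivial once $\det X_n=1$, and that the modulus-one eigenvalues make the product estimates immediate. I expect the main obstacle to be notational rather than conceptual, namely invoking the uniform-in-$k$ smallness of $\|Y_{n,k}\|\cdot\|(\prod_j X_{n+j})^{-1}\|$ from \cite[Proposition 1 \& 2]{SwiderskiTrojan2019} correctly; the positivity of the limit is then automatic from the non-vanishing lower bound.
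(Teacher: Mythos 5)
Your proposal is correct and follows essentially the same route as the paper: the paper's proof performs exactly this telescoping of $S_n=\sprod{E\mathbf{u}_{n+1}}{\mathbf{u}_n}$ with $\det X_n=1$ eliminating the $p_{n+1}/p_n$ factors, and then explicitly defers to the argument of Theorem~\ref{thm:1} for the lower bound via $\sym(EX_n)$ and the uniform Cauchy estimate on $\log\abs{S_n}$. The details you supply for that deferred part (unimodular eigenvalues, $\calD_1$ control of $(C_n)$, $(D_n)$, and the product estimates from \cite[Proposition 1 \& 2]{SwiderskiTrojan2019}) are the intended ones.
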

\begin{proof}
	Let us observe that
	\[
		S_n(t, \eta, z) 
		= \sprod{E \mathbf{u}_{n+1}(t, \eta; z)}{\mathbf{u}_n(t, \eta; z)}.
	\]
	Since $\det X_n = 1$, for each $k \in \NN$ we have
	\[
		S_n 
		= 
		\bigg\langle E  \bigg(\prod_{j = 0}^{k-1} X_{n+j}\bigg) \bigg(\prod_{j=1}^k X_{n+j}\bigg)^{-1}
		\mathbf{u}_{n+k}, \mathbf{u}_{n+k-1} \bigg\rangle.
	\]
	Therefore,
	\[
		S_n - S_{n+k}
		=
		\bigg\langle E Y_{n,k} \bigg(\prod_{j=1}^k X_{n+j} \bigg)^{-1} 
		\mathbf{u}_{n+k}, \mathbf{u}_{n+k-1} \bigg\rangle
	\]
	where
	\[
		Y_{n, k}= \bigg(\prod_{j=0}^{k-1} X_{n+j} \bigg) - \bigg(\prod_{j=1}^k X_{n+j} \bigg),
	\]
	which by the Cauchy--Schwarz inequality leads to
	\[
		\big|S_n - S_{n+k} \big|
		\leq
		c
		\bigg\| \bigg(\prod_{j=1}^k X_{n+j}\bigg)^{-1} \bigg\| \cdot \big\|Y_{n, k} \big\| 
		\cdot \big\|\mathbf{u}_{n+k}\big\|^2.
	\]
	For a fixed compact subset $K \subset \Lambda_-$, by Proposition \ref{prop:5}, we have
	\[
		\lim_{n \to \infty} \sup_{z \in K} \sup_{t \in [0, \omega]} 
		\big\| X_n(t; z) - \frakT_t(\omega; z) \big\| = 0.
	\]
	The rest of reasoning follows the same line as in the proof of Theorem \ref{thm:1}.
\end{proof}
The next statement is a straightforward consequence of Theorem \ref{thm:5}.
\begin{corollary}
	Suppose that the hypotheses of Theorem \ref{thm:5} are satisfied. Then for any compact subset $K \subset \Lambda_-$, there 
	is $c > 0$ such that for each solution $\mathbf{u}$ to \eqref{eq:179'}, every $n \in \NN$, $t \in [0, \omega]$, 
	$\eta \in \sS^1$, and $z \in K$, we have
	\begin{equation}
            \label{eq:158}
		c^{-1} \leq \big\|\mathbf{u}_n(t, \eta; z) \big\|^2 \leq c. 
	\end{equation}
\end{corollary}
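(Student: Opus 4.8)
The plan is to deduce the two-sided bound directly from the convergence of the Tur\'an determinant established in Theorem~\ref{thm:5}, exploiting the fact that in the asymptotically periodic setting $\det X_n \equiv 1$, so that—in contrast with Corollary~\ref{cor:1}—no power of $p_n$ intervenes and the Tur\'an determinant alone sandwiches $\|\mathbf{u}_n\|^2$. Fix a compact set $K \subset \Lambda_-$ and let $M \geq 1$ be as in \eqref{eq:57}.

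First I would record a pointwise sandwich relating $|S_n|$ to $\|\mathbf{u}_n\|^2$. Writing $S_n(t,\eta;z) = \sprod{E X_n(t;z) \mathbf{u}_n(t,\eta;z)}{\mathbf{u}_n(t,\eta;z)}$ by means of \eqref{eq:70'}, and using the identity $\det\big(\sym(E X_n)\big) = -\tfrac14 \discr X_n$ together with \eqref{eq:57}, the symmetric part of $E X_n$ is uniformly definite for $n \geq M$. Hence there are $c_1, c_2 > 0$ such that for all $n \geq M$ and $(t,\eta,z) \in [0,\omega]\times\sS^1\times K$,
\[
    c_1 \|\mathbf{u}_n(t,\eta;z)\|^2 \leq |S_n(t,\eta;z)| \leq c_2 \|\mathbf{u}_n(t,\eta;z)\|^2,
\]
where the upper bound follows from the Cauchy--Schwarz inequality together with the uniform boundedness of $\|X_n\|$ on $[0,\omega]\times K$ supplied by Proposition~\ref{prop:5}.

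Next I would invoke Theorem~\ref{thm:5}: the sequence $|S_n|$ converges uniformly on $[0,\omega]\times\sS^1\times K$ to a positive continuous function, which therefore has a strictly positive infimum and a finite supremum over this compact set. Consequently $|S_n|$ is bounded between two positive constants for all sufficiently large $n$. Combining this with the sandwich above immediately yields positive upper and lower bounds for $\|\mathbf{u}_n\|^2$, uniform in $(t,\eta,z)$, valid for all $n \geq M'$ for some $M' \geq M$.

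Finally, the remaining finitely many indices $n < M'$ are treated by compactness: for each such $n$ the map $(t,\eta,z)\mapsto \mathbf{u}_n(t,\eta;z) = T(t+n\omega;z)\eta$ is continuous on $[0,\omega]\times\sS^1\times K$ and nowhere zero, since $\eta \neq 0$ and the transfer matrices are invertible; hence $\|\mathbf{u}_n\|$ attains a positive minimum and a finite maximum there. Taking $c$ to be the worst of these finitely many constants together with those from the large-$n$ regime completes the proof. I expect no genuine obstacle in this argument; the only point demanding care is the uniformity of the definiteness constant $c_1$ in the lower sandwich bound, which is however already guaranteed by the uniform lower bound $-\discr X_n > \delta$ of \eqref{eq:57}.
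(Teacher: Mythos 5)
Your proof is correct. It follows the same basic template as the paper's analogue in the main body (Corollary~\ref{cor:1}), but with one genuine simplification in how the upper bound is obtained. The paper's route for the upper estimate goes through the diagonalization product bound (Claim~\ref{clm:1} combined with Claim~\ref{clm:3}, which in the asymptotically periodic setting reduces to $\prod_{k}\abs{\lambda_k^+}=1$ because $\det X_k\equiv 1$), while you extract \emph{both} bounds from the two-sided sandwich $c_1\qnorm{\mathbf{u}_n}^2\leq\abs{S_n}\leq c_2\qnorm{\mathbf{u}_n}^2$ together with the uniform convergence of $\abs{S_n}$ to a positive continuous limit on the compact set $[0,\omega]\times\sS^1\times K$. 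The lower half of your sandwich, via $\det\big(\sym(EX_n)\big)=-\tfrac14\discr X_n\geq\delta/4$ and the uniform bound on $\qnorm{X_n}$, is exactly the definiteness argument already used inside the proof of Theorem~\ref{thm:1}/\ref{thm:5}; note it relies on $\mathbf{u}_n$ being real, which holds since $z\in K\subset\Lambda_-\subset\RR$ and $\eta\in\sS^1\subset\RR^2$. What your version buys is that the entire estimate is self-contained in the Tur\'an determinant machinery, with no appeal to the transfer-matrix product estimates; the cost is that you use facts from the \emph{proof} of Theorem~\ref{thm:5} (the sandwich and the uniform boundedness of $\abs{S_n}$) rather than only its statement, though these are immediate to re-derive. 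The treatment of the finitely many initial indices by continuity and invertibility of $T$ is fine.
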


Taking advantage of the bound \eqref{eq:158} we can prove absolute continuity of $H_\eta$ on $\Lambda_-$.
\begin{corollary}
    \label{cor:9}
    Suppose that the hypotheses of Theorem \ref{thm:5} are satisfied. Then $\tau$ is in the limit point case. Moreover, for any $\eta \in \sS^1$ measure $\mu_\eta$ is absolutely continuous on $\Lambda_-$ and for any compact $K \subset \Lambda_-$ there are constants $c_1,c_2>0$ such that the density of $\mu_\eta$ satisfies
    \[
        c_1 < \mu_\eta'(\lambda) < c_2
    \]
    for almost all $\lambda \in K$.
\end{corollary}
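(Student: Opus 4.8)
The plan is to verify the hypothesis \eqref{eq:int:7} of the subordinacy method on every compact subset $K \subset \Lambda_-$ with non-empty interior and then invoke \cite[Theorem 3.1]{Clark1993}; the argument runs parallel to the proof of Corollary~\ref{cor:7}, with the two-sided bound \eqref{eq:158} playing the role of Corollary~\ref{cor:1} and with \eqref{eq:165} replacing the Carleman condition. First I would record the analogue of Corollary~\ref{cor:8}. From \eqref{eq:70'} we have $u_{n+1} = [X_n]_{11} u_n + [X_n]_{12} (p \partial_t u)_n$, and since $\abs{[X_n(t;z)]_{12}} \geq \delta$ by \eqref{eq:57}, one may solve for the second coordinate to obtain, for $n \geq M$, $t \in [0,\omega]$, $\eta \in \sS^1$ and $z \in K$,
\[
    \abs{u_n(t,\eta;z)}^2 \leq \qnorm{\mathbf{u}_n(t,\eta;z)}^2 \leq c\big(\abs{u_n(t,\eta;z)}^2 + \abs{u_{n+1}(t,\eta;z)}^2\big).
\]
Combined with the lower bound in \eqref{eq:158}, this yields the non-degeneracy $\abs{u_n}^2 + \abs{u_{n+1}}^2 \geq c^{-2}$, which is the estimate I will exploit, while the upper bound gives $\abs{u_n}^2 \leq \qnorm{\mathbf{u}_n}^2 \leq c$.

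Next I would prove that $\tau$ is in the limit point case. Fix $z \in \Lambda_-$ and $\eta \in \sS^1$ and write $\int_{M\omega}^{n\omega} \abs{u(x,\eta;z)}^2 w(x)\,\ud x = \sum_{k=M}^{n-1} \int_0^\omega \abs{u_k(t,\eta;z)}^2 w_k(t)\,\ud t$. Pairing consecutive indices and replacing $w_{k+1}$ by $w_k$ at the cost of the error $\sum_k \int_0^\omega \abs{u_{k+1}}^2 \abs{w_{k+1}-w_k}\,\ud t$, which is finite because $\abs{u_{k+1}}^2 \leq c$ and, by \eqref{eq:24}, $w \in \calD_1^\omega(L^1;\RR)$, one obtains
\[
    \int_{M\omega}^{n\omega} \abs{u(x,\eta;z)}^2 w(x)\,\ud x \geq \frac{1}{2c^2} \int_{M\omega}^{(n-1)\omega} w(x)\,\ud x - C.
\]
By \eqref{eq:165} the right-hand side tends to $+\infty$, so $u(\cdot,\eta;z) \notin L^2([0,\infty),w)$; since this holds for every $\eta \in \sS^1$, not all solutions at $z$ lie in $L^2([0,\infty),w)$, whence $\tau$ is in the limit point case.

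With the limit point property in hand, I would establish the ratio bound. Setting $R_L = \int_0^L w(x)\,\ud x$, the upper bound from \eqref{eq:158} gives $K_L(\lambda,\lambda;\eta) \leq C(1 + R_L)$, while the lower estimate of the previous paragraph gives $K_L(\lambda,\lambda;\eta) \geq \tfrac{1}{2c^2} R_L - C$ once $L$ is aligned with a multiple of $\omega$, the fractional part being absorbed exactly as in Corollary~\ref{cor:4}. All constants are uniform over $\lambda \in K$ and $\eta \in \sS^1$ by \eqref{eq:158}, so dividing yields
\[
    \limsup_{L \to \infty} \sup_{\lambda \in K} \sup_{\eta,\eta' \in \sS^1} \frac{K_L(\lambda,\lambda;\eta)}{K_L(\lambda,\lambda;\eta')} < \infty.
\]
By \cite[Theorem 3.1]{Clark1993} (cf. \eqref{eq:int:7}--\eqref{eq:int:8}) the measure $\mu_\eta$ is absolutely continuous on $\intr(K)$ with $c_1 < \mu_\eta'(\lambda) < c_2$ for almost all $\lambda \in K$; exhausting the open set $\Lambda_-$ by such compacta completes the proof.

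The main obstacle is the lower bound on $K_L$: because $\abs{u_n(t,\eta;z)}^2$ may vanish along the oscillation, no pointwise lower bound on an individual term is available, and the estimate must be extracted from the pairing $\abs{u_n}^2 + \abs{u_{n+1}}^2 \geq c^{-2}$ together with careful control of the weight increments $\abs{w_{k+1}-w_k}$ through the Stolz class $w \in \calD_1^\omega(L^1;\RR)$ and the alignment of $L$ with multiples of $\omega$. The remaining steps are either immediate consequences of \eqref{eq:158} or verbatim repetitions of the corresponding arguments in Corollaries~\ref{cor:7} and \ref{cor:4}.
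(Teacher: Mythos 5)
Your proposal is correct and follows essentially the same route as the paper: the paper's proof consists precisely of establishing the analogue of Corollary~\ref{cor:8} (the two-sided comparison between $|u_n|^2+|u_{n+1}|^2$ and $\qnorm{\mathbf{u}_n}^2$ via $\abs{[X_n]_{12}}\geq\delta$) and then repeating the argument of Corollary~\ref{cor:7}, with the uniform bounds \eqref{eq:158} in place of Corollary~\ref{cor:1} and \eqref{eq:165} in place of the Carleman condition, exactly as you describe. The pairing argument controlling $\abs{w_{k+1}-w_k}$ through $w\in\calD_1^\omega(L^1;\RR)$, the passage from $L=n\omega$ to general $L$, and the final appeal to the subordinacy criterion \eqref{eq:int:7}--\eqref{eq:int:8} all match the paper's intended proof.
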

\begin{proof}
    By analogous reasoning to Corollary~\ref{cor:8} we can show that for any compact $K \subset \Lambda_-$ there are constants $c_3>0$ and $M \geq 1$ such that for any $z \in K$, $t \in [0,\omega]$, $\eta \in \sS^1$ and $n \geq M$,
    \begin{equation}
        \label{eq:156'}
        |u_n(t,\eta;z)|^2
        \leq
        \| \mathbf{u}_n(t,\eta;z) \|^2 
        \leq 
        c_3 \big( |u_n(t,\eta;z)|^2 + |u_{n+1}(t,\eta;z)|^2\big).
    \end{equation}
    The rest of the proof follows along the same lines as the proof of Corollary~\ref{cor:7}.
\end{proof}

Now the proof of Theorem \ref{thm:2} leads to the following results.
\begin{theorem}
	Let $\omega > 0$. Suppose that $(p, q, w)$ are $\omega$-asymptotically periodic Sturm--Liouville parameters,
	and let $\frakT$ be the transfer matrix $\frakT$ corresponding to $(\frakp, \frakq, \frakw)$. Assume that
	\[
		q, w, \frac{1}{p} \in \calD_1^\omega(L^1; \RR).
	\]
	Then for each compact $K \subset \Lambda_-$, there are $M \geq 1$ and a non-vanishing function 
	$\vphi: [0, \omega] \times \sS^1 \times K \rightarrow \CC$ such that for every solution $\mathbf{u}$ of \eqref{eq:179'},
	\begin{equation}
		\label{eq:59}
		\lim_{n \to \infty} \sup_{(t, \eta, z) \in [0, \omega] \times \sS^1 \times K}
		\bigg|\frac{u_{n+1}(t, \eta; z) - \lambda_n^-(t; z) u_n(t, \eta; z)}
		{\prod_{k = M}^{n-1} \lambda_k^+(t, z)}
		-\vphi(t, \eta; z)\bigg| = 0
	\end{equation}
	where $u_n = \sprod{\mathbf{u}_n}{e_1}$. Furthermore,
	\begin{equation}
		\label{eq:58}
		\frac{u_n(t, \eta; z)}{\prod_{k = M}^{n-1} \lambda_k^+(t; z)}
		=
		\frac{\vphi(t, \eta; z)}{\sqrt{4 - \abs{\tr \frakT(\omega; z)}^2}}
		\sin\Big(\sum_{k = M}^{n-1} \theta_k(t; z) + \arg \vphi(t, \eta; z) \Big)
		+
		E_n(t, \eta; z)
	\end{equation}
	where
	\[
		\theta_k(t; z) = \arccos\bigg(\frac{\tr X_k(t; z)}{2 \sqrt{\det X_k(t; z)}}\bigg),
	\]
	and
	\[
		\lim_{n \to \infty} \sup_{(t, \eta, z) \in [0, \omega] \times \sS^1 \times K} {\abs{E_n(t, \eta; z)}} = 0.
	\]
\end{theorem}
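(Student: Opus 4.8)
The plan is to follow the proof of Theorem~\ref{thm:2} almost verbatim, replacing Proposition~\ref{prop:4} by Proposition~\ref{prop:5} and Theorem~\ref{thm:1} by Theorem~\ref{thm:5}, while exploiting the simplifications coming from $\det X_n \equiv 1$. By the discussion preceding the statement (see \eqref{eq:57}, \eqref{eq:153} and \eqref{eq:149}), for any compact $K \subset \Lambda_-$ there are $M \geq 1$ and $\delta > 0$ such that for all $n \geq M$, $t \in [0,\omega]$ and $z \in K$ the matrix $X_n(t;z)$ is diagonalizable with two distinct eigenvalues $\lambda_n^\pm(t;z)$ converging uniformly to the eigenvalues $\lambda_\infty^\pm(z)$ of $\frakT(\omega;z)$. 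Since here $\det X_n \equiv 1$, for real $z \in \Lambda_-$ we have $\lambda_n^-(t;z) = \overline{\lambda_n^+(t;z)}$ and $|\lambda_n^+(t;z)| = 1$, so the analogue of Claim~\ref{clm:3} reads simply $\prod_{k=m}^{n-1} |\lambda_k^+(t;z)|^2 = 1$; this is exactly the structural feature that makes the argument lighter than in the modulated case.

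Following the proof of Theorem~\ref{thm:2}, I would set $\phi_n = \bigl(u_{n+1} - \lambda_n^- u_n\bigr)/\prod_{k=M}^{n-1}\lambda_k^+$ and, for each sufficiently large $L$, introduce the frozen-coefficient approximations $q_{n;L} = \sprod{C_\infty \bigl(\prod_{k=L}^{n-1} D_k\bigr) C_{L-1}^{-1} \mathbf{u}_L}{e_1}$ together with the corresponding $\psi_{n;L}$, where now $C_\infty = \lim_n C_n(t;z)$ is built from $\lambda_\infty^\pm(z)$ and the entries $[\frakT_t(\omega;z)]_{ij}$. The estimate $|u_n - q_{n;L}| \leq c \bigl(\prod_{k=L}^{n-1}|\lambda_k^+|\bigr) \sum_{k=L}^n \|\Delta C_{k-1}\|$ from \cite[Proposition 1]{SwiderskiTrojan2019}, combined with the summability of $(\Delta C_k)$ guaranteed by the corollary preceding Theorem~\ref{thm:5}, shows that $|\phi_n - \psi_{n;L}|$ is uniformly small once $L$ is large. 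The exact-solution tail then telescopes as before: using $(D_n - \lambda_n^-\Id)\prod_k D_k$ together with $\begin{pmatrix}1&0\\0&0\end{pmatrix} C_\infty^t e_1 = e_1$, the sequence $\psi_{n;L}$ converges uniformly to an explicit limit $\psi_{\infty;L}$.

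The crux, exactly as in Theorem~\ref{thm:2}, is to show $\liminf_{L\to\infty} |\psi_{\infty;L}(t,\eta;z)| > 0$, which forces the limit $\vphi = \phi_\infty$ to be non-vanishing. Here the contradiction argument is more transparent: if $|\psi_{\infty;L_j}| \to 0$ along some subsequence, taking imaginary parts (recall $\mathbf{u}_{L_j}$ is real) yields $\|\mathbf{u}_{L_j}\|/\prod_{k=M}^{L_j-1}|\lambda_k^+| \to 0$; but $\det X_k \equiv 1$ gives $\prod_{k}|\lambda_k^+| = 1$ on the real axis, while the norm bound \eqref{eq:158}, itself a consequence of Theorem~\ref{thm:5}, gives $\|\mathbf{u}_{L_j}\|^2 \geq c^{-1}$. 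This is a contradiction, so $\vphi$ is non-vanishing and \eqref{eq:59} follows.

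Finally, to pass from \eqref{eq:59} to the oscillatory formula \eqref{eq:58}, I would rewrite \eqref{eq:59} with the unimodular factor $\lambda_k^+/\overline{\lambda_k^+} = \lambda_k^+/\lambda_k^-$ and take the imaginary part, using that $u_n$ is real and $\Im \lambda_n^+(t;z) = \tfrac12 \sqrt{-\discr X_n(t;z)}$. Since $\det X_n \equiv 1$ forces $\sqrt{-\discr X_n(t;z)} \to \sqrt{4 - (\tr \frakT(\omega;z))^2}$, one reads off the claimed amplitude and phase $\sum_{k=M}^{n-1}\theta_k(t;z) + \arg\vphi$, the error being controlled by $\sum_{k\geq n}\sup\|\Delta X_k\|$. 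The only genuine obstacle is the non-vanishing step, but as noted the $\SL(2)$ structure renders it sharper than in the modulated case, so no new ideas beyond those of Theorem~\ref{thm:2} are required.
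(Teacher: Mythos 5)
Your proposal is correct and follows essentially the same route as the paper: the authors likewise reduce everything to the proof of Theorem~\ref{thm:2}, observing that the only step needing modification is the non-vanishing claim (Claim~\ref{clm:2}), which they settle exactly as you do --- via $\det X_k \equiv 1$ forcing $\prod_k |\lambda_k^+|^2 = 1$ together with the uniform lower bound on $\|\mathbf{u}_n\|$ coming from Theorem~\ref{thm:5}. No substantive differences.
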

\begin{proof}
	Let us observe that the only part that requires modification is the proof of Claim \ref{clm:2}. Notice that 
	\[
		\abs{\lambda^+_k(t; z)}^2 = \lambda^+_k(t; z) \overline{\lambda^+_k(t; z)} = \det D_k(t; z) = \det X_k(t; z) = 1,
	\]
	thus
	\[
		\prod_{k = m}^{n-1} \abs{\lambda_k^+(t; z)}^2 = 1.
	\]
	On the other hand, by Theorem \ref{thm:5}, we have
	\[
		c_1 \leq p_{L_j}(t) D_{L_j}(t, \eta; z) \leq c_2 \|\mathbf{u}_{L_j}(t, \eta; z)\|^2.
	\]
	Therefore,
	\[
		\frac{\|\mathbf{u}_{L_j}(t, \eta; z)\|}{\prod_{k = m}^{L_j-1} \abs{\lambda_k^+(t; z)}^2}
		\geq
		\sqrt{\frac{c_1}{c_2}},
	\]
	which proves the claim. The rest of reasoning is the same as in the proof of Theorem \ref{thm:2}
\end{proof}

For $s \in [0, \omega]$ and $n \in \NN$, we set
\[
	\rho_n(s) = \sum_{j = 0}^n \gamma_j(s) = \int_s^\omega \varrho_n(s + t) \ud t, 
    \quad \text{where} \quad
    \varrho_n(s) = \sum_{j = 0}^n w_j(s).
\]
Lastly, we set
\begin{equation}
    \label{eq:175}
	\rho_L = \int_0^L w(t) \ud t.
\end{equation}
Now, the reasoning as in the proof of Theorem \ref{thm:3}, leads to the following statement.
\begin{theorem}
	\label{thm:6}
	Let $\omega > 0$. Suppose that $(p, q, w)$ are $\omega$-asymptotically periodic Sturm--Liouville parameters,
	and let $\frakT$ be the transfer matrix $\frakT$ corresponding to $(\frakp, \frakq, \frakw)$. Assume that
	\begin{equation}
            \label{eq:143}
		q, w, \frac{1}{p} \in \calD_1^\omega(L^1; \RR).
	\end{equation}
	If for almost all $t \in [0, \omega]$,
	\[
		\lim_{n \to \infty} \varrho_n(t) = +\infty,
		\qquad\text{and}\qquad
		\lim_{n \to \infty} \frac{w_n(t)}{w_{n+1}(t)} = 1,
	\]
	then for every solution $\mathbf{u}$ to \eqref{eq:179'} there is $M \geq 1$, such that 
	\[
		\lim_{n \to \infty} 
		\frac{1}{\rho_n(t)} \sum_{m = M}^n 
		\abs{\sprod{\mathbf{u}_m(t, \eta; z)}{e_1}}^2 w_m(t)
		=
		\frac{\abs{\vphi(t, \eta; z)}^2}{4 - \abs{\tr \frakT(\omega; z)}^2} p_M(t),
	\]
	uniformly with respect to $(\eta, z) \in \sS^1 \times \Lambda_-$.
\end{theorem}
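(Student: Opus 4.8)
The plan is to imitate the proof of Theorem~\ref{thm:3} line by line, feeding in the asymptotic expansion \eqref{eq:58} of the preceding theorem in place of \eqref{eq:39}. Fix a compact $K \subset \Lambda_-$, write $u_n = \sprod{\mathbf{u}_n}{e_1}$, and let $M \geq 1$ be as in that theorem, so that uniformly on $[0,\omega] \times \sS^1 \times K$
\[
	\frac{u_n}{\prod_{k=M}^{n-1} \lambda_k^+} = \frac{|\vphi|}{\sqrt{4 - |\tr \frakT(\omega;z)|^2}} \sin\Big( \sum_{k=M}^{n-1} \theta_k + \arg \vphi \Big) + E_n,
\]
with $\sup_{[0,\omega] \times \sS^1 \times K} |E_n| \to 0$. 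Squaring this identity, multiplying by $w_m(t)$, and summing over $M \leq m \leq n$ reconstitutes the sum on the left-hand side; it then remains to normalize by $\varrho_n(t) = \sum_{j=0}^n w_j(t)$ and pass to the limit.

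The decisive simplification relative to the regular case is that the transfer matrix now takes values in $\SL(2,\CC)$, so $\det X_k(t;z) = 1$ for every $k$. Consequently, for real $z$, where $\lambda_k^-(t;z) = \overline{\lambda_k^+(t;z)}$, one has $|\lambda_k^+(t;z)|^2 = \det X_k(t;z) = 1$ and hence $\prod_{k=M}^{n-1} |\lambda_k^+(t;z)|^2 = 1$. This trivial product replaces the telescoping identity of Claim~\ref{clm:3}, and it is exactly the reason the correct normalization is the pointwise weight sum $\varrho_n(t)$ rather than $\sum_j w_j/p_j$. Writing $\sin^2 = \tfrac12(1 - \cos)$, the constant part of $1 - \cos\big(2\sum_k \theta_k + 2\arg \vphi\big)$ produces the main term after the Stolz--Ces\`aro theorem (applicable since $\lim_n \varrho_n(t) = \infty$ for a.e.\ $t$), while the oscillatory part is killed by \cite[Lemma 4.1]{ChristoffelI}; the hypothesis $\lim_n w_n(t)/w_{n+1}(t) = 1$ supplies precisely the consecutive-weight-ratio condition that lemma demands, taking over the role played by $\lim_n (w_{n+1}/p_{n+1})(w_n/p_n)^{-1} = 1$ in Theorem~\ref{thm:3}.

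Finally, the error and boundary contributions are handled as before: $\varrho_n(t)^{-1} \sum_{m=M}^n w_m(t) E_m \to 0$ by Stolz--Ces\`aro since $E_m \to 0$ uniformly, and the finitely many terms with $m < M$ are $O(\varrho_n(t)^{-1}) \to 0$ once $|u_m|^2 w_m$ is bounded via \eqref{eq:158}. I expect the only delicate point to be the \emph{uniform} treatment of the oscillatory cosine sum: one must verify that the almost-everywhere-in-$t$ conditions $\lim_n \varrho_n(t) = \infty$ and $\lim_n w_n(t)/w_{n+1}(t) = 1$, together with the uniform convergence on $[0,\omega] \times \sS^1 \times K$ of the phases $\theta_k$ and of $\vphi$, make \cite[Lemma 4.1]{ChristoffelI} applicable with an error that is uniform in $(\eta,z) \in \sS^1 \times \Lambda_-$. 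This is the same obstacle as in the regular case and is resolved in the same way.
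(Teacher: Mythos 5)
Your proposal follows exactly the route the paper takes: Theorem~\ref{thm:6} is justified there by the single remark that ``the reasoning as in the proof of Theorem~\ref{thm:3}'' applies, and the substantive change you isolate --- that the $\SL(2,\CC)$ normalization gives $\det X_k\equiv 1$, hence $\prod_k\abs{\lambda_k^+}^2=1$ in place of Claim~\ref{clm:3}, with $w_{n+1}/w_n\to 1$ supplying the ratio condition for the oscillatory-sum lemma --- is precisely the intended adaptation. The only caveat is cosmetic: carried out literally your computation yields the limit normalized by the pointwise sum $\varrho_n(t)$ and without the factor $p_M(t)$, which indicates that the $p_M(t)$ and the symbol $\rho_n(t)$ in the printed statement are vestiges of Theorem~\ref{thm:3} rather than defects of your argument.
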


Taking into account Proposition~\ref{prop:8} we get the following result.
\begin{corollary} 
    \label{thm:12}
	Suppose that the hypotheses of Theorem \ref{thm:6} are satisfied. Then
	\[
		\lim_{L \to \infty}
		\frac{1}{\rho_L} K_L(z, z; \eta)
		=
		\int_0^\omega 
		\frac{ \abs{\vphi(t, \eta; z)}^2 }{4 - |\tr \frakT(\omega; z)|^2} \ud t,
	\]
	locally uniformly with respect to $(\eta, z)  \in \sS^1 \times \Lambda_-$.
\end{corollary}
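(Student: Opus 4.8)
The plan is to follow the two-step scheme of the proof of Corollary~\ref{cor:4}, taking advantage of the simplifications available in the asymptotically periodic setting. Since here $\det X_n \equiv 1$, we have $\abs{\lambda_n^+(t;z)} = 1$, so no telescoping product of the kind appearing in Claim~\ref{clm:3} enters the estimates; moreover the weight convergence \eqref{eq:145} holds \emph{automatically}, so no separate hypothesis of the form \eqref{eq:104} is required. First I would establish the convergence along the arithmetic subsequence $L = s + n\omega$, and then bridge to the full limit $L \to \infty$.

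For the first step, fix $s \in [0,\omega]$ and decompose
\[
	K_{s+n\omega}(z,z;\eta) = \sum_{k=0}^{n-1}\int_0^\omega \abs{u_k(s+t,\eta;z)}^2 w_k(s+t)\,\ud t + \int_0^s \abs{u(t,\eta;z)}^2 w(t)\,\ud t.
\]
I would then apply Theorem~\ref{thm:6} pointwise in $t$, together with the dominated convergence theorem, to pass the limit inside the integral over $t$. The role played in Corollary~\ref{cor:4} by the factor $\tfrac{1}{\gamma}\tfrac{\frakw}{\frakp}$ is now played by $\tfrac{1}{\gamma}\frakw$: using \eqref{eq:145}, \eqref{eq:155} and the Stolz--Ces\`aro theorem I would show
\[
	\lim_{n\to\infty}\int_0^\omega \Bigl| \frac{\varrho_n(s+t)}{\rho_n(s)} - \frac{1}{\gamma}\frakw(s+t)\Bigr|\,\ud t = 0,
\]
and then combine this with the (uniform in $(\eta,z)$) limit of Theorem~\ref{thm:6}; integrating that limit against the weight $\tfrac{1}{\gamma}\frakw(s+\cdot)$ and simplifying yields
\[
	\lim_{n\to\infty}\frac{1}{\rho_n(s)}K_{s+n\omega}(z,z;\eta) = \int_0^\omega \frac{\abs{\vphi(t,\eta;z)}^2}{4-\abs{\tr\frakT(\omega;z)}^2}\,\ud t,
\]
locally uniformly on $\sS^1\times\Lambda_-$. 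The bound \eqref{eq:158} keeps the truncated initial sum $\sum_{k=0}^{M-1}$ and the segment $\int_0^s$ negligible after division by $\rho_n(s)$, which tends to infinity by \eqref{eq:165}.

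For the second step, given a strictly increasing sequence $L_j\to\infty$, I would choose integers $n_j$ with $L_j\in[n_j\omega,(n_j+1)\omega)$. Using \eqref{eq:158} one bounds
\[
	\sup_{(\eta,z)}\bigl|K_{L_j}(z,z;\eta)-K_{s+n_j\omega}(z,z;\eta)\bigr| \le c\int_{n_j\omega}^{(n_j+1)\omega} w(t)\,\ud t,
\]
and directly from \eqref{eq:175} one has $\abs{\rho_{L_j}-\rho_{n_j}(s)}\le \int_0^s w + \int_{n_j\omega}^{(n_j+1)\omega} w$. Dividing by $\rho_{L_j}$, invoking $\rho_{L_j}\to\infty$ (from \eqref{eq:165}) and the limit established in the first step then gives the claimed convergence for arbitrary $L\to\infty$, locally uniformly on $\sS^1\times\Lambda_-$.

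The main obstacle is the first step: the careful passage from the pointwise-in-$t$ asymptotics of Theorem~\ref{thm:6} to the integrated statement, uniformly in $(\eta,z)$ on compact subsets of $\Lambda_-$. The subtlety is that the normalizing factors $\varrho_n(s+t)$ depend on $t$, so one must replace them by the common factor $\rho_n(s)$ at the cost of an $L^1$-error controlled by the Stolz--Ces\`aro argument above; the uniform two-sided bound \eqref{eq:158} is precisely what legitimizes this replacement under a single integral sign and justifies the use of dominated convergence. Everything else is a routine transcription of the corresponding steps in the proof of Corollary~\ref{cor:4}.
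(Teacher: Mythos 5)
Your proposal is correct and follows essentially the same route as the paper: the paper gives no separate argument for this corollary beyond pointing to Proposition~\ref{prop:8} and the scheme of Corollary~\ref{cor:4}, and you reconstruct exactly that scheme — the arithmetic-subsequence limit via Theorem~\ref{thm:6}, the Stolz--Ces\`aro replacement of $\varrho_n(s+t)/\rho_n(s)$ by $\frakw(s+t)/\gamma$ (now automatic thanks to \eqref{eq:145}), and the bridging to general $L$ using \eqref{eq:158} and \eqref{eq:175}. You also correctly identify the two simplifications of the asymptotically periodic setting ($\det X_n\equiv 1$ and the absence of any hypothesis of type \eqref{eq:104}), so nothing essential is missing.
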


Let us consider an analogue of Theorem~\ref{thm:10}.
\begin{theorem} \label{thm:13}
Let $\omega > 0$. Suppose that $(p, q, w)$ are $\omega$-asymptotically periodic Sturm--Liouville parameters,
	and let $\frakT$ be the transfer matrix corresponding to $(\frakp, \frakq, \frakw)$. Assume that
	\[
		q, w, \frac{1}{p} \in \calD_1^\omega(L^1; \RR). 
	\]
    Then for each $\eta \in \sS^1$ we have 
    \begin{equation}
        \label{eq:162}
        \sigmaS(H_\eta) \cap \Lambda_- = \emptyset \quad \text{and} \quad
        \sigmaAC(H_\eta) = \sigmaEss(H_\eta) = \cl{\Lambda_-}.
    \end{equation}
\end{theorem}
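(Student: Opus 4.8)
The plan is to deduce \eqref{eq:162} by combining the absolute continuity already furnished by Corollary~\ref{cor:9} with an adaptation of the exponential-decay construction from the proof of Theorem~\ref{thm:10}. First I would invoke Corollary~\ref{cor:9}: under the present hypotheses $\tau$ is in the limit point case, so $H_\eta$ is self-adjoint, and $\mu_\eta$ is absolutely continuous on $\Lambda_-$ with a density bounded above and below by positive constants on every compact subset of $\Lambda_-$. Vanishing of the singular part on the open set $\Lambda_-$ gives $\sigmaS(H_\eta) \cap \Lambda_- = \emptyset$, while the strictly positive lower bound on the density shows $\Lambda_- \subseteq \sigmaAC(H_\eta)$; since $\sigmaAC(H_\eta)$ is closed we obtain $\cl(\Lambda_-) \subseteq \sigmaAC(H_\eta) \subseteq \sigmaEss(H_\eta)$. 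It therefore remains only to prove the reverse inclusion $\sigmaEss(H_\eta) \subseteq \cl(\Lambda_-)$, after which $\sigmaAC(H_\eta) = \sigmaEss(H_\eta) = \cl(\Lambda_-)$ follows by squeezing.

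To establish $\sigmaEss(H_\eta) \subseteq \cl(\Lambda_-)$, I would fix $z_0 \in \RR \setminus \cl(\Lambda_-)$; this set is open and disjoint from $\Lambda_-$, so $\discr \frakT(\omega; x) \ge 0$ for $x$ near $z_0$. Assuming first $\Lambda_- \neq \emptyset$, the entire function $\discr \frakT(\omega; \cdot)$ is not identically zero, hence its real zeros are discrete, and away from them one may choose a compact interval $K \ni z_0$ on which $\discr \frakT(\omega; x) > 0$, i.e.\ $\abs{\tr \frakT(\omega; x)} > 2$. On such $K$ the argument of Theorem~\ref{thm:10} applies almost verbatim: by Proposition~\ref{prop:5} the eigenvalues $\lambda^\pm_n(0;z)$ of $X_n(0;z)$ converge uniformly to $\lambda^\pm_\infty$ with $0 < \abs{\lambda^-_\infty} < 1 < \abs{\lambda^+_\infty}$, and by Proposition~\ref{prop:6} the sequence $(X_n(0;\cdot))$ lies in $\calD_1(K; \GL(2,\CC))$, so \cite[Lemma 4.2 \& Theorem 4.1]{Discrete} produce a continuous family $(\mathbf{u}^-_n(0;\cdot))$ of solutions of \eqref{eq:70'} asymptotic to $\big(\prod_j \lambda^-_j(0;z)\big)\nu^-$. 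Here the estimates are in fact cleaner than in Theorem~\ref{thm:10} because $\det X_n \equiv 1$: the product $\prod_j \abs{\lambda^-_j(0;z)}$ decays geometrically like $(1-\epsilon)^n$, and since $\int_0^\omega w_n(t)\,\ud t \to \gamma < \infty$, the tail $\sum_n (1-\epsilon)^{2n}\int_0^\omega w_n(t)\,\ud t$ converges. Propagating the solution by the transfer matrix as in the proof of Theorem~\ref{thm:10} and using the uniform bound on $U_{0;n}$ from Proposition~\ref{prop:5}, I would verify the uniform square-integrability \eqref{eq:114}, the tail estimate \eqref{eq:100}, and the continuity \eqref{eq:101}; Theorem~\ref{thm:11} then yields $\sigmaEss(H_\eta) \cap \intr(K) = \emptyset$, so $z_0 \notin \sigmaEss(H_\eta)$.

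The main obstacle is the handling of the isolated degenerate points, namely those $z_0 \in \RR \setminus \cl(\Lambda_-)$ with $\discr \frakT(\omega; z_0) = 0$, where the two eigenvalues collide on the unit circle and the dichotomy breaks down. At such a point the preceding paragraph shows that $\sigmaEss(H_\eta)$ is empty on a punctured neighborhood of $z_0$; since $\sigmaEss(H_\eta)$ is closed and the eigenvalues of these Sturm--Liouville operators are simple, $z_0$ cannot be an isolated point of $\sigmaEss(H_\eta)$, as that would force an eigenvalue of infinite multiplicity, and hence $z_0 \notin \sigmaEss(H_\eta)$. Finally, the degenerate situation $\Lambda_- = \emptyset$ is covered by the same construction applied at every real point: the decaying solution lies in $L^2([0,\infty),w)$ while the complementary solution grows geometrically and so does not, confirming the limit point case directly and giving $\sigmaEss(H_\eta) = \emptyset = \cl(\Lambda_-)$. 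Combining all cases proves \eqref{eq:162}.
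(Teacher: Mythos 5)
Your proposal is correct and follows essentially the same route as the paper: Corollary~\ref{cor:9} supplies $\sigmaS(H_\eta)\cap\Lambda_-=\emptyset$ and $\cl(\Lambda_-)\subseteq\sigmaAC(H_\eta)\subseteq\sigmaEss(H_\eta)$, and the reverse inclusion is obtained by running the exponential-decay construction of Theorem~\ref{thm:10} on compact subsets of $\Lambda_+$ (where $\det X_n\equiv 1$ simplifies the estimates and \eqref{eq:45} gives summability of the tails) and feeding the resulting family into Theorem~\ref{thm:11}. The only difference is that you explicitly dispose of the isolated real zeros of $\discr\frakT(\omega;\cdot)$ lying outside $\cl(\Lambda_-)$ via closedness of $\sigmaEss$ and simplicity of eigenvalues, a point the paper's proof passes over silently; this is a welcome extra precision rather than a divergence in method.
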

\begin{proof}
In view of Corollary~\ref{cor:9} we only need to prove that $\sigmaEss(H_\eta) \cap \Lambda_+ = \emptyset$, where
\[
        \Lambda_+ = \{ z \in \RR : \discr \frakT(\omega;z) > 0 \}.
\]
To do so, we proceed similarly as in the proof of Theorem~\ref{thm:10}. Namely, let $z_0 \in \Lambda_+$ and let $K \subset \Lambda_+$ be a compact interval containing $z_0$ in its interior. Again, we consider the equation~\eqref{eq:94}. Now we have that the matrix $\frakT(\omega;z)$ has two eigenvalues
\[
    \lambda_-(z) = \xi_- \Big( \frac{\tr \frakT(\omega;z)}{2} \Big), \quad \text{and} \quad
    \lambda_+(z) = \xi_+ \Big( \frac{\tr \frakT(\omega;z)}{2} \Big).
\]
Since $|\tr \frakT(\omega;z)|>2$, we have
\[
    0<|\lambda_-(z)| < 1 < |\lambda_+(z)|, \quad z \in K.
\]
By \eqref{eq:49} we again obtain~\eqref{eq:110} and by the continuity of Joukowsky map we get
\[
    \lim_{n \to \infty} \lambda^-_n(0; z) = 
    \lambda_-(z),
    \quad
    \lim_{n \to \infty} \lambda^+_n(0; z) =
    \lambda_+(z),
\]
uniformly with respect to $z \in K$. In view of Proposition~\ref{prop:6} we have $(X_n(0;\cdot) : n \geq 0) \in \calD_1(K; \GL(2,\RR))$. Analogously as in the proof of Theorem~\ref{thm:10} there exists a compact interval $K' \subset K$ containing $z_0$ in its interior such that for any $\epsilon$ satisfying
\[
    0<\epsilon< 1 - \sup_{z \in K'} |\lambda_-(z)| < 1
\] 
there are constants $c>0$ and $M' \geq M$ such that 
\[
    \| \mathbf{u}^-(t+n\omega;z) \| \leq c (1-\epsilon)^n, \quad t \in [0,\omega], n \geq M, z \in K'.
\]
Therefore,
\[
    \int_{M'\omega}^\infty \sup_{z \in K'} |u^-(x;z)|^2 w(x) \ud x \leq 
    c^2 \sum_{n=M'}^\infty (1-\epsilon)^{2n} \int_0^\omega w_n(t) \ud t,
\]
which thanks to \eqref{eq:45} is finite. Therefore, our result follows by the reasoning below \eqref{eq:154}.
\end{proof}

\begin{theorem} 
    \label{thm:14}
    Let $\omega > 0$. Suppose that $(p, q, w)$ are $\omega$-asymptotically periodic Sturm--Liouville parameters,
	and let $\frakT$ be the transfer matrix corresponding to $(\frakp, \frakq, \frakw)$. Assume that
	\[
		q, w, \frac{1}{p} \in \calD_1^\omega(L^1; \RR). 
	\]
    Then for every $f \in \calC(\RR)$ such that
    \[
        \lim_{|\lambda| \to \infty} (1+\lambda^2) |f(\lambda)| = 0.
    \]
    we have
    \[
        \lim_{n \to \infty} \frac{1}{\rho_{n\omega}} \int_\RR f \ud \nu_\eta^{n\omega} = \int_\RR f \ud \nu_\infty,
    \]
    where the measure $\nu_\infty$ is absolutely continuous with the density
    \[
        \frac{\ud \nu_\infty}{\ud \lambda} = \frac{1}{\pi} \frac{|\partial_z \tr \frakT(\omega;\lambda)|}{\gamma \sqrt{4 - |\tr \frakT(\omega;\lambda)|^2}} \mathds{1}_{\Lambda_-}(\lambda).
    \]
\end{theorem}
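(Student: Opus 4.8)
The plan is to reproduce, in the asymptotically periodic setting, the chain Theorem~\ref{thm:4} $\Rightarrow$ Theorem~\ref{thm:9} $\Rightarrow$ Corollary~\ref{cor:5}; the argument is in fact lighter here because the relevant eigenvalues converge to \emph{fixed} limits. Recall first that $\tau$ is in the limit point case by Corollary~\ref{cor:9}, so $H_\eta$ is self-adjoint and $\sigma(H_\eta) \subset \RR$. As in \eqref{eq:74}, for every $z \in \CC \setminus \RR$ we have $\calC[\nu_\eta^{n\omega}](z) = -\partial_z \log u(n\omega, \eta; z)$, so the whole matter reduces to the off-axis asymptotics of the entire functions $z \mapsto u(n\omega,\eta;z)$. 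Once I identify the pointwise (locally uniform) limit
\[
	\lim_{n \to \infty} \calC\big[\tfrac{1}{\rho_{n\omega}} \nu_\eta^{n\omega}\big](z) = G(z), \qquad z \in \CC \setminus \RR,
\]
I would invoke \cite[Lemma 4.1]{SwiderskiTrojan2023} to upgrade this to the asserted weak convergence against every $f$ with $\lim_{|\lambda| \to \infty}(1+\lambda^2)|f(\lambda)| = 0$ (the vanishing at infinity, rather than mere boundedness of $(1+\lambda^2)|f|$, is exactly what makes that lemma applicable), and \cite[Theorem A.3 and Remark A.4]{SwiderskiTrojan2023} to recover $\nu_\infty$ from $G$ by Stieltjes inversion, $\frac{\ud\nu_\infty}{\ud\lambda} = \frac{1}{\pi}\lim_{\epsilon \to 0^+}\Im G(\lambda + i\epsilon)$.

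The first step is the complex-plane analogue of Theorem~\ref{thm:4}: a basis of solutions of \eqref{eq:173} with prescribed asymptotics. Since the transfer matrix lies in $\SL(2,\CC)$, the eigenvalues $\lambda_\pm(z)$ of $\frakT(\omega;z)$ satisfy $\lambda_+(z)\lambda_-(z) = 1$; were they of equal modulus they would lie on the unit circle, forcing $\tr\frakT(\omega;z) \in [-2,2]$ and hence $z$ into the (real) spectrum of the periodic operator, a contradiction. Thus $|\lambda_+(z)| > 1 > |\lambda_-(z)|$ for every $z \in \CC \setminus \RR$. With $(X_n) \in \calD_1$ from Proposition~\ref{prop:6}, together with Proposition~\ref{prop:5} and \cite[Theorem 6.1]{SwiderskiTrojan2023}, I would obtain, exactly as in Theorem~\ref{thm:4}, linearly independent solutions $\mathbf{u}^+(\cdot;z),\mathbf{u}^-(\cdot;z)$, continuous on compact $K \subset \CC\setminus\RR$ and holomorphic in $\intr{K}$, with nonvanishing limits $\phi^\pm(t;z) = \lim_k u_k^\pm(t;z)/\prod_{M \le j < k}\lambda_j^\pm(0;z)$. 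Here no Carleman-type assumption enters: $|\lambda_j^+/\lambda_j^-|$ converges to a constant $>1$, so $\prod_j|\lambda_j^+/\lambda_j^-|$ diverges geometrically and the separation hypothesis of \cite[Theorem 6.1]{SwiderskiTrojan2023} holds on compact subsets of $\CC\setminus\RR$. The geometric decay of $\prod_j|\lambda_j^-|$ together with \eqref{eq:16} also gives $\mathbf{u}^-(\cdot;z) \in L^2(w)$.

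The second step mirrors Theorem~\ref{thm:9}. Writing the solution $\mathbf{u}(\cdot,\eta;z)$ of \eqref{eq:179'} as $f(z)\mathbf{u}^+(\cdot;z)+g(z)\mathbf{u}^-(\cdot;z)$, I would show $f(z) \ne 0$: as $z \notin \RR \supseteq \sigma(H_\eta)$, the boundary-condition solution $u(\cdot,\eta;z)$ cannot belong to $L^2(w)$ (else it would be an eigenfunction with non-real eigenvalue), whereas $g(z)\mathbf{u}^-(\cdot;z)$ does. Hence $u_n(0,\eta;z)/\prod_{M \le j < n}\lambda_j^+(0;z) \to f(z)\phi^+(0;z) \ne 0$, and taking logarithmic derivatives,
\[
	\frac{1}{\rho_n(0)}\frac{\partial_z u_n(0,\eta;z)}{u_n(0,\eta;z)} = \frac{1}{\rho_n(0)}\sum_{j=M}^{n-1}\frac{\partial_z\lambda_j^+(0;z)}{\lambda_j^+(0;z)} + o(1).
\]
Using $\partial_z\lambda_j^+/\lambda_j^+ = \partial_z\tr X_j/\sqrt{(\tr X_j)^2-4}$, the Stolz--Ces\`aro theorem, Proposition~\ref{prop:8}, and the trace identity of Corollary~\ref{cor:3} (which uses only the conjugacy $\frakT_s(\omega;z)=\frakT(s;z)\frakT(\omega;z)\frakT(s;z)^{-1}$ recorded in the appendix), the right-hand side tends to $\frac{1}{\gamma}\frac{\partial_z\tr\frakT(\omega;z)}{\sqrt{\discr\frakT(\omega;z)}}$, the branch of the root being the one selected by $|\lambda_+(z)|>1$. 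Since $\rho_n(0)/\rho_{n\omega}\to1$, \eqref{eq:74} then gives $G(z) = -\frac{1}{\gamma}\frac{\partial_z\tr\frakT(\omega;z)}{\sqrt{\discr\frakT(\omega;z)}}$.

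It remains to read off the density. For $\lambda \in \Lambda_-$ one has $\discr\frakT(\omega;\lambda)<0$, so the boundary value is $\sqrt{\discr\frakT(\omega;\lambda+i0)} = i\,\sign{\partial_z\tr\frakT(\omega;\lambda)}\sqrt{4-|\tr\frakT(\omega;\lambda)|^2}$ (the sign being fixed by the branch above), whence $\frac{1}{\pi}\Im G(\lambda+i0) = \frac{1}{\pi}\frac{|\partial_z\tr\frakT(\omega;\lambda)|}{\gamma\sqrt{4-|\tr\frakT(\omega;\lambda)|^2}}$; for $\lambda \in \Lambda_+$ the value $G(\lambda+i0)$ is real, so the density vanishes, which accounts for the factor $\mathds{1}_{\Lambda_-}$. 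This is precisely the claimed density. The step I expect to be the genuine obstacle is the second one: producing the nonvanishing holomorphic functions $\phi^\pm$ and the coefficient $f(z)\neq 0$ \emph{uniformly} on compact subsets of $\CC\setminus\RR$, i.e.\ faithfully reproducing the bookkeeping of Theorems~\ref{thm:4} and \ref{thm:9} in the asymptotically periodic regime; the boundary-value computation and the final measure-theoretic passage are then routine.
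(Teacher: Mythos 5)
Your proposal follows the paper's own route almost step for step: the same construction of the basis $\mathbf{u}^\pm(\cdot;z)$ via Propositions~\ref{prop:5} and \ref{prop:6} and \cite[Theorem 6.1]{SwiderskiTrojan2023} (the paper secures the hyperbolicity $|\lambda_+(z)|>1>|\lambda_-(z)|$ and the nondegeneracy $[\frakT(\omega;z)]_{12}\neq 0$ for $z\in\CC\setminus\RR$ by citing \cite[formulas (7.5.47), (7.5.68)]{GesztesyBook2024}, which is exactly the content of your unit-circle argument), the same Wronskian decomposition showing $f(z)\neq 0$ because $u(\cdot,\eta;z)\notin L^2(w)$ while $u^-(\cdot;z)\in L^2(w)$, and the same logarithmic-derivative/Stolz--Ces\`aro computation giving the limit of $\tfrac{1}{\rho_n(0)}\partial_z u_n/u_n$, followed by \cite[Lemma 4.1]{SwiderskiTrojan2023}. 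The one genuine divergence is the final identification of $\nu_\infty$: you compute the density directly by Stieltjes inversion, evaluating the boundary values $\Im G(\lambda+i0)$ and fixing the branch of $\sqrt{\discr\frakT(\omega;\cdot)}$ by the condition $|\lambda_+(z)|>1$ (equivalently, by Herglotz positivity of the Cauchy transform), whereas the paper sidesteps the branch bookkeeping entirely by observing that the same limit formula holds for the unperturbed periodic parameters $(\frakp,\frakq,\frakw)$, for which the density of states is classical (\cite[p.~44]{Brown2013}), and then matching normalizations via $\lim_{n\to\infty} n\omega/\rho_{n\omega}=\omega/\gamma$. Both closings are valid; yours is self-contained but requires the sign analysis you carry out (which does come out correctly, yielding $\Im G(\lambda+i0)=\gamma^{-1}|\partial_z\tr\frakT|/\sqrt{4-|\tr\frakT|^2}>0$ on $\Lambda_-$ and a real boundary value on $\Lambda_+$), while the paper's buys the answer from the known periodic case at the cost of an external reference.
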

\begin{proof}
    Let us start by proving an analogue of Theorem~\ref{thm:4} for $s=0$. 
    By Proposition~\ref{prop:5} we have
    \begin{equation}
        \label{eq:168}
        \lim_{n \to \infty} X_n(0;z) = \frakT(\omega;z)
    \end{equation}
    locally uniformly with respect to $z \in \CC$. Recall that by \cite[formula (7.5.47)]{GesztesyBook2024}
    \begin{equation} 
        \label{eq:166}
        \big[ \tr \frakT(\omega;\cdot) \big]^{-1}([-2,2]) \subset \RR
    \end{equation}
    and by \cite[formula (7.5.68)]{GesztesyBook2024} 
    \begin{equation}
        \label{eq:167}
        [\frakT(\omega;z)]_{1,2} \neq 0, \quad z \in \CC \setminus \RR.
    \end{equation}
    Therefore, for each compact set $K \subset \CC_+$ there are $\delta>0$  and $L_0 \geq 1$ such that for all $n \geq L_0$, $t \in [0,\omega]$ and $z \in K$ the formula~\eqref{eq:48} holds true. Therefore, we can define the eigenvalues $\lambda^+_n(0;z)$ and $\lambda^-_n(0;z)$ of $X_n(0;z)$ as in formula~\eqref{eq:106}. Note that by~\eqref{eq:168} we have
    \begin{equation}
        \label{eq:168}
        \lambda_\infty^+(z) := \lim_{n \to \infty} \lambda_n^+(0;z) = 
        \xi_+ \Big( \frac{\tr \frakT(\omega;z)}{2} \Big)
        \quad \text{and} \quad
        \lambda_\infty^-(z) :=
        \lim_{n \to \infty} \lambda_n^-(0;z) = 
        \xi_- \Big( \frac{\tr \frakT(\omega;z)}{2} \Big)
    \end{equation}
    uniformly with respect to $z \in K$. In view of \eqref{eq:166} it implies that there is $M \geq L_0$ and $\epsilon \in (0,1) $ such that for any $n \geq M$
    \begin{equation}
        \label{eq:169}
        \inf_{z \in K} |\lambda^+_n(0;z)| \geq (1+\epsilon)
        \quad \text{and} \quad 
        \sup_{z \in K} |\lambda^-_n(0;z)| \leq
        (1-\epsilon).
    \end{equation}
    In particular, \eqref{eq:77} is satisfied for $s=0$. Next, we can diagonalize $X_n(0;z)$ in the form~\eqref{eq:31a} and \eqref{eq:31b}. By Proposition~\ref{prop:6} we easily get that both sequences $(C_n : n \geq M)$ and $(D_n : n \geq M)$ belong to $\calD_1 \big( [0,\omega] \times K; \GL(2,\CC) \big)$. Then by \cite[Theorem 6.1]{SwiderskiTrojan2023} we get sequences of maps $(\mathbf{u}^-_n(0;\cdot) : n \geq M)$ and $(\mathbf{u}^+_n(0;\cdot) : n \geq M)$, solving \eqref{eq:70'}, continuous on $K$ and holomorphic on $\intr{K}$ and satisfying \eqref{eq:117} uniformly with respect to $z \in K$, where
    \begin{equation}
        \label{eq:172}
        \calC_\infty(z) = 
        \begin{pmatrix}
            1 & 1 \\
            \frac{\lambda^+_\infty(z) - [\frakT(\omega;z)]_{1,1}}{[\frakT(\omega;z)]_{1,2}} &
            \frac{\lambda^-_\infty(z) - [\frakT(\omega;z)]_{1,1}}{[\frakT(\omega;z)]_{1,2}}
        \end{pmatrix}.
    \end{equation}
    By following the proof of Theorem~\ref{thm:4} we construct the mappings $\mathbf{u}^+(\cdot;z)$ and $\mathbf{u}^-(\cdot;z)$, which are linearly independent solutions of \eqref{eq:179'}. Moreover, they satisfy \eqref{eq:117a} and \eqref{eq:117b}. Let us define $\phi^+$ and $\phi^-$ by the formula \eqref{eq:171}. Then by \eqref{eq:172} we immediately get $\phi^+(0;z) \neq 0$ and $\phi^-(0;z) \neq 0$ for any $z \in K$. Observe that by \eqref{eq:117b} there is a constant $c>0$ such that
    \[
        \| \mathbf{u}_n(t;z) \|^2 \leq c \prod_{M \leq j < n} |\lambda_j^-(0;z)|^2, \quad t \in [0,\omega], n \geq M.
    \]
    Therefore, by \eqref{eq:169} there is a constant $c'>0$ such that
    \[
        \int_{M\omega}^\infty |u^-(x;z)|^2 w(x) \ud x \leq
        c' \sum_{n=M}^\infty (1-\epsilon)^n \int_0^\omega w_n(t) \ud t,
    \]
    which by \eqref{eq:45} is finite. Thus $u^-(\cdot;z) \in L^2(w)$.

    Now, let us follow the proof of Theorem~\ref{thm:9}. Then if $\mathbf{v}(\cdot;z)$ is any solution to \eqref{eq:173}, it can be written in the form~\eqref{eq:76}. Then by a similar argument we get \eqref{eq:80}. Now, follow the proof from \eqref{eq:174}. In view of \eqref{eq:175} and \eqref{eq:165} we get \eqref{eq:54}. Now, by \eqref{eq:54}, \eqref{eq:176} and \eqref{eq:168} together with \eqref{eq:155} we obtain
    \begin{equation}
        \label{eq:177}
        \lim_{n \to \infty} \frac{1}{\rho_n(0)} \frac{\partial_z u_n(0,\eta;z)}{u_n(0,\eta;z)} = \frac{1}{\gamma} \frac{\partial_z \lambda_\infty^+(z)}{\lambda_\infty^+(z)}
    \end{equation}
    uniformly on $K$. Therefore, by \cite[Lemma 4.1]{SwiderskiTrojan2023} there exists a Borel measure measure $\nu_\infty$ on the real line such that for every $f \in \calC(\RR)$ satisfying $\lim_{|\lambda| \to \infty} (1+\lambda^2) |f(x)| = 0$ we have
    \[
        \lim_{n \to \infty} \frac{1}{\rho_{n\omega}} \int_\RR f \ud \nu^{n\omega} =
        \int_\RR f \ud \nu_\infty.
    \]
    Moreover, the measure $\nu_\infty$ is uniquely defined by the right-hand side of \eqref{eq:177}. Therefore, we need only to indentify the measure $\nu_\infty$. To do so, notice that \eqref{eq:177} holds in particular for $(\frakp, \frakq, \frakw)$. In such a case it is known (see, e.g. \cite[p. 44]{Brown2013}) that
    \[
        \lim_{n \to \infty} \frac{1}{n \omega} \int_\RR f \ud \nu^{n\omega} =
        \int_{\Lambda_-} f(\lambda) \frac{1}{\pi \omega} \frac{|\partial_z \tr \frakT(\omega;\lambda)|}{\sqrt{4 - |\tr \frakT(\omega;\lambda)|^2}} \ud \lambda, \quad f \in \calC_c(\RR).
    \]
    Since by Stolz--Ces\`{a}ro theorem and \eqref{eq:155}
    \[
        \lim_{n \to \infty} \frac{n \omega}{\rho_{n\omega}}  = 
        \lim_{n \to \infty} \frac{\omega}{\gamma_n(0)} = \frac{\omega}{\gamma}
    \]
    our result easily follows.
\end{proof}

Taking into account Theorem~\ref{thm:14} and Corollary~\ref{thm:12} the proof of the following result is analogous to the proof of Corollary~\ref{cor:6}.
\begin{corollary}
    Suppose that the hypotheses of Theorem~\ref{thm:6} are satisfied. Then the measure $\mu_\eta$ is absolutely continuous on $\Lambda_-$ with the density
    \[
        \mu_\eta'(\lambda) = \frac{1}{\pi} \frac{|\partial_z \tr \frakT(\omega;\lambda)|}{\gamma \sqrt{4 - |\tr \frakT(\omega;\lambda)}|^2} \frac{1}{g(\lambda)}, \quad \lambda \in \Lambda_-,
    \]
    where
    \[
        g(\lambda) = \lim_{L \to \infty} \frac{1}{\rho_L} K_L(\lambda,\lambda;\eta), \quad \lambda \in \Lambda_-.
    \]
    In particular, $\mu_\eta'$ is a continuous and positive function on $\Lambda_-$.
\end{corollary}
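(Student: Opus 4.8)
The plan is to reproduce the proof of Corollary~\ref{cor:6} essentially line by line, feeding in the asymptotically periodic analogues of its two ingredients. First I would check that the hypotheses of Lemma~\ref{lem:1} are met with $U = \Lambda_-$. The set $\Lambda_-$ is open, being the preimage of $(-\infty,0)$ under the continuous map $z \mapsto \discr\frakT(\omega;z)$; the normalization $\rho_L = \int_0^L w(t)\,\ud t$ of \eqref{eq:175} is continuous and satisfies $\rho_L \to \infty$ by \eqref{eq:165}; and Corollary~\ref{thm:12} provides
\[
    g(\lambda) = \lim_{L\to\infty} \frac{1}{\rho_L} K_L(\lambda,\lambda;\eta) = \int_0^\omega \frac{|\varphi(t,\eta;\lambda)|^2}{4 - |\tr\frakT(\omega;\lambda)|^2}\,\ud t,
\]
the convergence being locally uniform on $\Lambda_-$. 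This $g$ is continuous and strictly positive there, since $\varphi$ is non-vanishing and $|\tr\frakT(\omega;\lambda)| < 2$ on $\Lambda_-$; in particular $g$ is a non-zero function in the sense required by Lemma~\ref{lem:1}.

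With these facts secured, Lemma~\ref{lem:1} yields
\[
    \lim_{L\to\infty} \frac{1}{\rho_L} \int_\RR f(\lambda)\,\nu_\eta^L(\ud\lambda) = \int_\RR f(\lambda)\,g(\lambda)\,\mu_\eta(\ud\lambda), \quad f \in \calC_c(\Lambda_-).
\]
I would then restrict attention to the subsequence $L = n\omega$, along which the left-hand side also equals $\int_\RR f\,\ud\nu_\infty$ by Theorem~\ref{thm:14} (any $f \in \calC_c(\Lambda_-)$ trivially meets its decay hypothesis). Comparing the two evaluations gives, for every $f \in \calC_c(\Lambda_-)$,
\[
    \int_\RR f(\lambda)\,g(\lambda)\,\mu_\eta(\ud\lambda) = \int_{\Lambda_-} f(\lambda)\,\frac{1}{\pi}\,\frac{|\partial_z\tr\frakT(\omega;\lambda)|}{\gamma\sqrt{4 - |\tr\frakT(\omega;\lambda)|^2}}\,\ud\lambda.
\]

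To finish, since $g$ is continuous and strictly positive on $\Lambda_-$, I would test the last identity against $f = h/g$ for arbitrary $h \in \calC_c(\Lambda_-)$; this isolates $\mu_\eta$ paired with $h$ and shows that on $\Lambda_-$ the measure $\mu_\eta$ is absolutely continuous with density $\mu_\eta'(\lambda) = \frac{1}{\pi}\frac{|\partial_z\tr\frakT(\omega;\lambda)|}{\gamma\sqrt{4 - |\tr\frakT(\omega;\lambda)|^2}}\,g(\lambda)^{-1}$. Continuity and positivity of $\mu_\eta'$ on $\Lambda_-$ then drop out: the radical is positive and continuous where $|\tr\frakT(\omega;\lambda)| < 2$, the factor $g^{-1}$ is continuous and positive, and $\partial_z\tr\frakT(\omega;\lambda) \neq 0$ on $\Lambda_-$ by Lemma~\ref{lem:8} (which applies since $\discr\frakT(\omega;\lambda) < 0$ forces $\tr\frakT(\omega;\lambda) \in (-2,2)$). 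The only step demanding genuine care — the main obstacle, such as it is — is the juxtaposition of the two limiting procedures: Lemma~\ref{lem:1} runs over a continuous parameter $L$ whereas Theorem~\ref{thm:14} is phrased along $L = n\omega$, so one must apply the former along this discrete subsequence rather than treat the two normalizations as interchangeable; once this is noted the identification of densities is immediate.
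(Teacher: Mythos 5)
Your proposal is correct and follows essentially the same route as the paper, which itself only remarks that the proof is ``analogous to the proof of Corollary~\ref{cor:6}'': apply Lemma~\ref{lem:1} with $U=\Lambda_-$ and the locally uniform limit $g$ from Corollary~\ref{thm:12}, compare along $L=n\omega$ with Theorem~\ref{thm:14} to identify $g\,\ud\mu_\eta$ with $\ud\nu_\infty$ on $\Lambda_-$, and divide by the positive continuous $g$. Your explicit remarks on reconciling the continuous-$L$ and discrete-$n\omega$ normalizations and on invoking Lemma~\ref{lem:8} for the positivity of $|\partial_z\tr\frakT(\omega;\lambda)|$ are exactly the details the paper leaves implicit.
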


Let us recall that recently in \cite{Behrndt2023} Sturm--Liouville parameters such that
\begin{equation}
    \label{eq:164}
    \int_0^\infty
	\Big|\frac{1}{p(x)} - \frac{1}{\frakp(x)} \Big| +
	|q(x) - \frakq(x)| +
	|w(x) - \frakw(x)| \ud x < \infty,
\end{equation}
were considered. One of their main result, \cite[Theorem 1.1]{Behrndt2023}, showed \eqref{eq:162}. Let us observe that \eqref{eq:164} implies the hypotheses of Theorem~\ref{thm:13}. The next example shows that our conditions are weaker than \eqref{eq:164}.

\begin{example}
    Consider
    \[
        p(x) \equiv 1, \quad 
        q(x) \equiv 1, \quad 
        w(x) = 2 + \frac{\sin(\log\log{x})}{\log{x}} \mathds{1}_{\big( \exp(\exp(1)),\infty \big)}(x).
    \]
    It immediately follows that \eqref{eq:45} is satisfied for
    \[
        \frakp(x) \equiv 1, \quad \frakq(x) \equiv 1, \quad \frakw(x) \equiv 2
    \]
    and any $\omega>0$.
    Notice that $w' \in L^1([0,\infty))$. Therefore, by \cite[p.2]{Stolz1991a} we easily get that \eqref{eq:143} is satisfied. It can be shown that the rest of the hypotheses of Corollary~\ref{thm:12} are satisfied. However, our $(p,q,w)$ do not satisfy \eqref{eq:164}.
\end{example}

\begin{bibliography}{jacobi,schrodinger}
	\bibliographystyle{amsplain}
\end{bibliography}

\end{document}